\newcommand{\FF}{{\bm F}}
\newcommand{\GG}{{\bm G}}
\newcommand{\tin}{\mathrm{in}}
\newcommand{\tout}{\mathrm{out}}
\providecommand{\eps}{\varepsilon}
\let\emptyset\varnothing
\newcommand{\weaksto}{\overset{*}{\rightharpoonup}}
\newcommand{\weakstarto}{\overset{*}{\rightharpoonup}}
\renewcommand{\d}{\mathrm{d}}
\newcommand{\D}{\mathrm{D}}
  \let\div\relax
  \DeclareMathOperator{\div}{div}
\DeclareMathOperator{\spt}{spt}
\DeclareMathOperator{\dist}{dist}
\DeclareMathOperator{\ext}{ext}
\DeclareMathOperator{\tr}{tr}
\DeclareMathOperator{\Id}{Id}
\DeclareMathOperator{\Lip}{Lip}
\DeclareMathOperator{\loc}{loc}
\DeclareMathOperator{\sep}{sep}
\DeclareMathOperator*{\wslim}{w^{\ast}-lim}
\newcommand{\mres}{\mathbin{\vrule height 1.6ex depth 0pt width 0.13ex\vrule height 0.13ex depth 0pt width 1.3ex}}
\DeclareMathOperator{\LL}{L}
\DeclareMathOperator{\CC}{C}
\DeclareMathOperator{\WW}{W}
\newcommand{\rmc}{\mathrm{c}}
\newcommand{\rmb}{\mathrm{b}}
\theoremstyle{definition}
\newtheorem{thm}{Theorem}[section]
\newtheorem*{thm*}{Theorem}
\newtheorem{cor}[thm]{Corollary}
\newtheorem{defn}[thm]{Definition}
\newtheorem{definition}[thm]{Definition}
\newtheorem{eg}[thm]{Example}
\newtheorem{lem}[thm]{Lemma}
\newtheorem{prop}[thm]{Proposition}
\newtheorem{rem}[thm]{Remark}
\newtheorem{remark}[thm]{Remark}
 \newcommand{\linkdest}[1]{\Hy@raisedlink{\hypertarget{#1}{}}}
\numberwithin{equation}{section}
\begin{document}

\title[Normal trace space of $\mathcal{DM}$-fields]{On the normal trace space of \\extended divergence-measure fields}
\author{Christopher Irving}
\address{Department of Mathematics and Statistics,  Georgetown University, 3700 Reservoir Road NW, Washington, D.C., 20057, USA}
\email{ci152@georgetown.edu}
\date{\today}
\keywords{Divergence-measure fields, normal trace, Gauss-Green formula, Arens-Eells space}
\subjclass[2020]{35R06, 49Q15}

\maketitle

\begin{abstract}
  We characterise the normal trace space associated to extended (measure-valued) divergence-measure fields on the boundary of a set $E \subset \mathbb R^n$, as the Arens-Eells space $\text{\AE}(\partial E)$.
  Such a trace operator is constructed for any Borel set $E$, and under a mild regularity condition, which includes Lipschitz domains, this trace operator is shown to moreover be surjective.
  This relies in part on a new pointwise description of the Anzellotti pairing $\overline{\nabla \phi \cdot {\boldsymbol F}}$ between a $\mathrm{W}^{1,\infty}$ function $\phi$ and extended divergence-measure field ${\boldsymbol F}$.
  As an application, we prove extension theorems for divergence-measure fields and divergence-free measures.
  Results for $\mathrm{L}^1$-fields are also obtained.
 \end{abstract}

\section{Introduction}

We are concerned with the structure of the distributional normal trace $\langle \FF \cdot \nu, \,\cdot\,\rangle_{\partial E}$ associated to (extended) divergence-measure fields.
Motivated by problems from continuum mechanics, we seek to understand whether the classical Gauss-Green formula,
\begin{equation}\label{eq:classical_gauss_green}
  \int_{\partial U} \phi \,\FF \cdot \nu_{\partial U} \,\d\mathcal H^{n-1} = -\int_U \nabla\phi \cdot \FF \,\d x - \int_U \phi \,\div \FF \,\d x,
\end{equation} 
valid for suitably regular domains $U \subset \mathbb R^n$ with inwards pointing normal $\nu_{\partial U}$, vector fields $\FF$ and scalar functions $\phi$, remains valid when the vector field is highly irregular.
We will consider \emph{divergence-measure fields} $\FF \in \mathcal{DM}^{\ext}(\Omega)$; these are vector-valued Radon measures $\FF$ defined on some open set $\Omega \subset \mathbb R^n$, whose divergence is also a measure on $\Omega$.
Given such a field $\FF$ and any Borel subset $E \subset \Omega$, it is by now customary (following \emph{e.g.}\,\cite{AmbrosioEtAl2005,ChenFrid2003,Silhavy2009,Schuricht2007}) to define the normal trace as a distribution \emph{via} the Gauss-Green formula as
\begin{equation}\label{eq:normal_trace}
\langle \FF \cdot \nu, \phi \rangle_{\partial E} := -\int_E \nabla \phi \cdot \d \FF - \int_E \phi\,\d(\div \FF) \quad\mbox{for all $\phi \in \CC^1_{\rmb}(\Omega)$},
\end{equation} 
where the minus sign corresponds to an inwards pointing normal.
We thereby recast the problem of understanding the Gauss-Green formula to that of studying this distributional normal trace.

The case where the underlying field is represented by a function in $\LL^{\infty}$ has been extensively studied, starting with the seminal papers of \textsc{Anzellotti} \cite{Anzellotti1983} and \textsc{Chen \& Frid} \cite{ChenFrid1999}.
Here the normal trace as defined in \eqref{eq:normal_trace} can be represented by a function in $\LL^{\infty}(\partial U,\mathcal H^{n-1})$, which holds in the generality of sets of finite perimeter, as was established independently by \textsc{Chen \& Torres} in \cite{ChenTorres2005} and \textsc{\v{S}ilhav\'y} in \cite{Silhavy2005}.
While the bounded case will not be the focus of this paper, we will mention there have been many interesting developments since, such as \cite{ChenTorresZiemer2009,ChenLiTorres2020,CrastaDeCicco2019,ComiEtAl2024,ComiLeonardi2025,PhucTorres2008,SchevenSchmidt2016}.

Comparatively little is known when underlying field $\FF$ is unbounded or measure-valued. 
A major difficultly stems from the fact that, even for regular domains $U$, the normal trace $\langle \FF \cdot \nu,\,\cdot\,\rangle_{\partial U}$ need not be represented by a locally $\mathcal{H}^{n-1}$-integrable function on the boundary (\emph{c.f.}\,\cite{ChenFrid2003,Schuricht2007}). 
To make matters worse, the normal trace may fail to even be a measure, as was shown in \cite{Silhavy2009}.
For this reason, many authors study the validity of the Gauss-Green formula on \emph{almost all} surfaces in a suitable sense, starting with \cite{Silhavy1991,DegiovanniEtAl1999} in the context of Cauchy fluxes, which was further developed in \cite{ChenFrid2003,ChenEtAl2019,ChenEtAl2024,Schuricht2007,Silhavy2005,Silhavy2009}.
We also mention \cite{ComiEtAl2023,SchonherrSchuricht2025,Silhavy2008} for further contributions in this setting.

A natural question is whether we can \emph{characterise} the space of distributions that arise as the normal trace of a divergence-measure field.
In this direction \textsc{\v{S}ilhav\'y} in \cite{Silhavy2009} proved that, for any open set $U \subset \mathbb R^n$, we can view the normal trace as a linear functional
\begin{equation}\label{eq:intro_lipdual}
  \mathrm{N}_{U}(\FF) \in \Lip_{\rmb}(\partial U)^{\ast}.
\end{equation} 
As far as the author is aware, in this generality, this is the sharpest description available in the literature up until now.
The purpose of the present work is to settle this problem by showing that \eqref{eq:intro_lipdual} is \emph{not} optimal, by identifying the correct space that the normal trace surjects onto.
As a consequence of this characterisation, we will also obtain extension results for extended divergence-measure fields; these appear to be entirely new.

\subsection{Main results}

We will first establish the following refinement of \eqref{eq:intro_lipdual}, where we show that the normal trace enjoys improved continuity properties.

\begin{thm}\label{thm:intro_normaltrace}
  Let $\FF \in \mathcal{DM}^{\ext}(\Omega)$ and $E \subset \Omega$ be any Borel set.
  Then there exists a unique weakly${}^{\ast}$-continuous linear functional $\mathrm N_E(\FF) \in \Lip_{\rmb}(\partial E)^{\ast}$ satisfying
  \begin{equation}
    \langle\mathrm N_E(\FF),\phi\rvert_{\partial E}\rangle = - \int_E \nabla\phi\cdot\d \FF - \int_E \phi\,\d(\div \FF) \quad\mbox{for all $\phi \in \CC^1_{\rmb}(\Omega)$}.
  \end{equation} 
\end{thm}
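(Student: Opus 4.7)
The plan is to construct $\mathrm{N}_E(\FF)$ in two stages: first as a bounded linear functional on the subspace $\CC^1_\rmb(\Omega)\rvert_{\partial E} \subset \Lip_\rmb(\partial E)$ defined through the right-hand side of \eqref{eq:normal_trace}, and then as its unique weak$^{\ast}$-continuous extension to all of $\Lip_\rmb(\partial E)$, viewed as the dual of the Arens--Eells space $\text{\AE}(\partial E)$. Three ingredients are required: well-definedness of the Gauss--Green right-hand side as a functional of $\phi\rvert_{\partial E}$, a $\Lip_\rmb(\partial E)^{\ast}$-norm bound, and weak$^{\ast}$-density of $\CC^1_\rmb(\Omega)\rvert_{\partial E}$ in $\Lip_\rmb(\partial E)$.

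For well-definedness I would show that whenever $\psi \in \CC^1_\rmb(\Omega)$ vanishes on $\partial E$, the right-hand side of \eqref{eq:normal_trace} vanishes. Let $d_\eps \in \CC^\infty(\Omega)$ be a mollification of $\dist(\cdot,\partial E)$ with $\|d_\eps - \dist(\cdot,\partial E)\|_\infty \lesssim \eps$ and $\|\nabla d_\eps\|_\infty \leq 1$, and let $\eta_\eps \in \CC^\infty(\mathbb R)$ be a cut-off equal to $1$ on $[\eps,\infty)$ and to $0$ on $(-\infty,\eps/2]$ with $|\eta_\eps'| \lesssim 1/\eps$. Setting $\psi_\eps := \psi\,\eta_\eps(d_\eps)$, the estimate $|\psi| \leq \Lip(\psi)\,\dist(\cdot,\partial E)$ near $\partial E$ absorbs the $1/\eps$-blow-up of $\eta_\eps'$, so $(\psi_\eps)$ is uniformly bounded in $\CC^1_\rmb(\Omega)$, converges uniformly to $\psi$, and has gradient converging to $\nabla\psi$ pointwise off $\partial E$. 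Each $\psi_\eps$ vanishes on a neighbourhood of $\partial E$, so its restriction to $\overline{E}$ is compactly supported in $E^\circ$, and the defining identity of distributional divergence gives that \eqref{eq:normal_trace} vanishes on $\psi_\eps$. Dominated convergence against $|\FF|$ and $|\div\FF|$ then transfers the identity to $\psi$.

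For the norm bound, the freedom afforded by well-definedness allows one to replace $\phi$ by any convenient $\CC^1_\rmb$-extension of $f := \phi\rvert_{\partial E}$. Taking the McShane extension $\bar f \in \Lip_\rmb(\mathbb R^n)$ with $\Lip(\bar f) = \Lip(f)$ and $\|\bar f\|_\infty = \|f\|_\infty$, multiplying by a smooth bump supported in a relatively compact neighbourhood $N \Subset \Omega$ of $\partial E$, and mollifying, one obtains an extension $\tilde\phi \in \CC^1_\rmb(\Omega)$ with $\|\tilde\phi\|_\infty + \|\nabla\tilde\phi\|_\infty \lesssim \|f\|_{\Lip_\rmb(\partial E)}$ and $\spt\tilde\phi \subset N$. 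Plugging into \eqref{eq:normal_trace} yields the required bound with constant $\lesssim |\FF|(N) + |\div\FF|(N) < \infty$. Weak$^\ast$-density of $\CC^1_\rmb(\Omega)\rvert_{\partial E}$ in $\Lip_\rmb(\partial E) = \text{\AE}(\partial E)^\ast$ follows similarly from mollified McShane extensions, which provide a norm-bounded approximating sequence converging pointwise on $\partial E$. Together these produce the unique weak$^\ast$-continuous extension $\mathrm{N}_E(\FF) \in \Lip_\rmb(\partial E)^\ast$ satisfying the stated identity.

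The principal obstacle is the well-definedness step, specifically the uniform $\CC^1_\rmb$-control of $\psi_\eps$, which hinges on the interplay between the Lipschitz decay of $\psi$ near $\partial E$ and the gradient blow-up of $\eta_\eps \circ d_\eps$. A further subtle point arises if $|\FF|$ charges $\partial E$: since $\nabla \psi_\eps$ vanishes there by construction, the pointwise limit of $\int_E \nabla \psi_\eps \cdot \d\FF$ records no $\partial E$-contribution, whereas $\int_E \nabla\psi \cdot \d\FF$ in principle does. Reconciling these — almost certainly via the pointwise Anzellotti-pairing description invoked in the introduction — is the delicate point that distinguishes the measure-valued case from the classical $\LL^\infty$ setting.
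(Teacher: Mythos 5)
Your skeleton (define the functional on restrictions $\phi\rvert_{\partial E}$ of $\CC^1_\rmb(\Omega)$-functions, show it is well defined, bound it, extend by density) parallels the paper's proof of Theorem~\ref{lem:optimal_trace}, but both analytic cores are missing, and neither can be supplied by the mollification arguments you describe. The first gap is in the well-definedness step, exactly at the point you flag at the end. Your cutoffs satisfy $\nabla\psi_\eps \equiv 0$ on a neighbourhood of $\partial E$, while $\nabla\psi$ need not vanish on $\partial E$; so dominated convergence gives $\int_E \nabla\psi_\eps\cdot\d\FF \to \int_{E\setminus\partial E}\nabla\psi\cdot\d\FF$, and, granting the (fixable) vanishing statement for each $\psi_\eps$, your argument establishes only
\begin{equation*}
  \int_E \nabla\psi\cdot\d\FF + \int_E \psi\,\d(\div\FF) \;=\; \int_{E\cap\partial E}\nabla\psi\cdot\d\FF .
\end{equation*}
When $\lvert\FF\rvert$ charges $E\cap\partial E$ (perfectly possible for a Borel set $E$), the vanishing of the right-hand side is not a soft approximation fact: it is the assertion that $\nabla\psi\cdot\tfrac{\d\FF}{\d\lvert\FF\rvert}=0$ holds $\lvert\FF\rvert$-a.e.\,on the level set $\{\psi=0\}\supset\partial E$, i.e.\,that the polar of $\FF$ is measure-theoretically tangent to level sets. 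This is precisely Proposition~\ref{prop:improved_support}, which the paper proves by decomposing $\FF$ into curves (Theorem~\ref{thm:decomposition_domain}) and using that $(\psi\circ\gamma)'=0$ $\mathcal L^1$-a.e.\,on $\{\psi\circ\gamma=0\}$ for each curve $\gamma$. It is a structural property of $\mathcal{DM}^{\ext}$-fields: for instance $\mathrm{e}_1\,\mathcal H^1\mres(\{0\}\times\mathbb R)$ in the plane violates it with $\psi(x)=x_1$, but that field is not in $\mathcal{DM}^{\ext}$ since its divergence is not a measure. Your suggested fix, the pointwise pairing formula of Theorem~\ref{thm:intro_pairingmeasure}, does not close the gap either: for $\psi\in\CC^1_\rmb(\Omega)$ that formula reduces to the tautology $\overline{\nabla\psi\cdot\FF}=\nabla\psi\cdot\FF$ and carries no information about what happens on $\partial E$.

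The second gap is your concluding step: a norm-bounded linear functional on a weak${}^{\ast}$-dense subspace of $\Lip_\rmb(\partial E)$ does \emph{not} automatically admit a weak${}^{\ast}$-continuous extension. On $\ell^\infty=(\ell^1)^{\ast}$ the subspace $c$ of convergent sequences is weak${}^{\ast}$-dense and the functional $x\mapsto\lim_n x_n$ is norm-bounded on it, yet it has no weak${}^{\ast}$-continuous extension: such an extension would be evaluation against some $a\in\ell^1$, and testing on the unit vectors forces $a=0$, contradicting $\lim(1,1,\dots)=1$. Weak${}^{\ast}$-density gives you \emph{uniqueness} of a weak${}^{\ast}$-continuous extension, but for \emph{existence} you must prove that the Gauss--Green functional is itself sequentially weak${}^{\ast}$-continuous: if $\sup_k\lVert\phi_k\rVert_{\WW^{1,\infty}(\Omega)}<\infty$ and $\phi_k\to\phi$ pointwise, then $-\int_E\d(\overline{\nabla\phi_k\cdot\FF})-\int_E\phi_k\,\d(\div\FF)$ converges to the corresponding quantity for $\phi$. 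This is Proposition~\ref{prop:normaltrace_weakcont}, whose proof requires the setwise convergence $\overline{\nabla\phi_k\cdot\FF}(E)\to\overline{\nabla\phi\cdot\FF}(E)$ of Theorem~\ref{thm:setwise_conv} (again resting on the Smirnov decomposition), followed by a Krein--\v{S}mulian argument (and, in the paper, a weak${}^{\ast}$-sequentially continuous Lipschitz extension operator) to upgrade sequential continuity on $\Lip_\rmb(\partial E)$ to genuine weak${}^{\ast}$ continuity. Your norm bound yields neither ingredient. In short, the two steps your proposal treats as routine or defers are exactly the theorems the paper has to prove, and without them the argument does not close.
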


A more precise statement is given in Theorem \ref{lem:optimal_trace} which additionally asserts that, as a consequence of the weak${}^{\ast}$ continuity, $\mathrm{N}_U(\FF)$ in fact lies in the \emph{predual} of $\Lip_{\rmb}(\partial E)$, known as the \emph{Arens-Eells space} $\text{\AE}(\partial E)$ (see Definition~\ref{defn:arenseells}).
Theorem~\ref{thm:intro_normaltrace} will rely on a fine description of the Anzellotti-type \emph{pairing measure} $\overline{\nabla \phi \cdot \FF}$ between Lipschitz functions $\phi$ and divergence-measure fields $\FF$.
Introduced in \cite{Silhavy2009}, this measure is characterised by the product rule
\begin{equation}
  \div(\phi \FF) = \overline{\nabla\phi \cdot \FF}  + \phi \,\div \FF \quad\mbox{in }\mathcal D'(\Omega).
\end{equation} 
We will show this pairing measure admits the following pointwise description and continuity property:

\begin{thm}\label{thm:intro_pairingmeasure}
  Let $\FF \in \mathcal{DM}^{\ext}(\Omega)$ and $\phi \in \WW^{1,\infty}(\Omega)$.
  Then $\overline{\nabla \phi \cdot \FF} \ll \lvert \FF\rvert$ and the associated Radon-Nikod\'ym derivative is given by
  \begin{equation}
    \frac{\d}{\d\lvert \FF\rvert}(\overline{\nabla \phi \cdot \FF})(x) =  \nabla\phi(x) \cdot \frac{\d \FF}{\d \lvert \FF\rvert}(x) \quad\mbox{for $\lvert \FF\rvert$-a.e.\,$x \in \Omega$},
  \end{equation} 
  where $\nabla \phi\cdot v$ denotes the directional derivative of $\phi$ in direction $v \in \mathbb S^{n-1}$, and the relevant derivatives exist $\lvert\FF\rvert$-a.e.
  Furthermore, if $(\phi_k)_k \subset \WW^{1,\infty}(\Omega)$ is a sequence such that $\phi_k \weaksto \phi$ weakly${}^{\ast}$ in $\WW^{1,\infty}(\Omega)$, then the associated pairing measures converge \emph{setwise} in that
  \begin{equation}
    \lim_{k \to \infty} \overline{\nabla \phi_k \cdot \FF}(E) = \overline{\nabla \phi \cdot \FF}(E) \quad\mbox{for all $E \subset \Omega$ Borel.}
  \end{equation} 
\end{thm}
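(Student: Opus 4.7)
The plan is to prove the pointwise Radon--Nikod\'ym description first, and then deduce the setwise continuity from it by a weak-$\ast$ compactness argument.

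For the first assertion, I would begin with mollification. Setting $\phi^\eps := \phi * \rho_\eps$ for standard mollifiers, the pairing measure for smooth $\phi^\eps$ reduces to the classical product $\overline{\nabla\phi^\eps \cdot \FF} = (\nabla\phi^\eps \cdot \sigma)|\FF|$, where $\sigma := \d\FF/\d|\FF|$ denotes the polar; in particular its total variation is dominated by $\|\nabla\phi\|_{\LL^\infty}|\FF|$. Passing $\eps \to 0$ via the continuity of the Anzellotti pairing under $\WW^{1,\infty}$-weak-$\ast$ convergence (which follows directly from the defining product rule $\overline{\nabla\phi \cdot \FF} = \div(\phi\FF) - \phi\,\div\FF$ together with continuity of each right-hand term), the same bound survives, yielding $\overline{\nabla\phi \cdot \FF} \ll |\FF|$ with density $g \in \LL^\infty(|\FF|)$.

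The heart of the argument is identifying $g(x)$ with the directional derivative $\partial_{\sigma(x)}\phi(x)$. By Besicovitch differentiation,
\[
  g(x) = \lim_{r \downarrow 0} \frac{\overline{\nabla\phi \cdot \FF}(B_r(x))}{|\FF|(B_r(x))} \quad\text{for $|\FF|$-a.e.\ $x$,}
\]
and expanding the numerator via the product rule, followed by a blow-up at a $|\FF|$-typical $x$---where Lebesgue differentiation for the vector measure $\FF$ gives $\FF(B_r(x))/|\FF|(B_r(x)) \to \sigma(x)$---reduces the ratio to a difference quotient of $\phi$ along the direction $\sigma(x)$. The main obstacle, and likely the most delicate technical step, is to establish that this directional derivative $\partial_{\sigma(x)}\phi(x)$ actually exists for $|\FF|$-a.e.\ $x$: on the Lebesgue-absolutely continuous part of $|\FF|$ it is immediate from Rademacher's theorem, but on the singular part one must exploit the finer structure of $\mathcal{DM}^{\ext}$ fields---for instance via an Alberti-type restriction of $\phi$ to rectifiable pieces supporting $|\FF|^{\rms}$, along which $\phi$ is automatically $\mathcal H^1$-a.e.\ differentiable.

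For the setwise convergence, once the pointwise formula is available I would write
\[
  \overline{\nabla\phi_k \cdot \FF}(E) = \int_E g_k\,\d|\FF|, \qquad g_k(x) := \nabla\phi_k(x) \cdot \sigma(x),
\]
with the densities $(g_k)$ uniformly bounded in $\LL^\infty(|\FF|)$ by $\sup_k \|\nabla\phi_k\|_{\LL^\infty} < \infty$. By Banach--Alaoglu, along a subsequence $g_k \weakstarto g_\infty$ in $\LL^\infty(|\FF|)$, while the assumed weak-$\ast$ convergence $\phi_k \weaksto \phi$ in $\WW^{1,\infty}$ forces distributional convergence of the pairings via the product-rule definition; combined with the pointwise formula this identifies $g_\infty = \nabla\phi \cdot \sigma$ $|\FF|$-a.e. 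Uniqueness of the limit then upgrades to convergence of the full sequence, and testing against $\mathbbm 1_E \in \LL^1_{\loc}(|\FF|)$ gives the claimed setwise convergence for Borel $E$ of finite $|\FF|$-measure, with the general case following by standard exhaustion arguments.
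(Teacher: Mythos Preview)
Your approach to the setwise convergence is sound and in fact cleaner than the paper's: once the pointwise formula is established, writing $\overline{\nabla\phi_k\cdot\FF}=g_k\,|\FF|$ with $(g_k)$ bounded in $\LL^\infty(|\FF|)$, extracting a weak-$\ast$ limit, and identifying it via the distributional convergence of the pairings is a clean functional-analytic argument. The paper instead re-invokes Smirnov's decomposition and proves setwise convergence curve-by-curve via dominated convergence over the decomposing measure $\nu$.

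The genuine gap is in your blow-up argument for the pointwise formula. You claim that expanding $\overline{\nabla\phi\cdot\FF}(B_r(x))$ via the product rule and using $\FF(B_r(x))/|\FF|(B_r(x))\to\sigma(x)$ ``reduces the ratio to a difference quotient of $\phi$ along the direction $\sigma(x)$''. This step is not justified and does not follow from the ingredients you list. The product rule gives $\overline{\nabla\phi\cdot\FF}(B_r(x))=\div(\phi\FF)(B_r(x))-\int_{B_r(x)}\phi\,\d(\div\FF)$, and the first term has no elementary expression in terms of values of $\phi$ near $x\pm r\sigma(x)$ unless you already know that $\FF$ is, at small scales near $x$, essentially supported on a curve (or line) through $x$ in direction $\sigma(x)$. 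Knowing only that the \emph{average} direction $\FF(B_r(x))/|\FF|(B_r(x))$ converges to $\sigma(x)$ is far too weak---the mass of $\FF$ in $B_r(x)$ could be spread in ways that make $\div(\phi\FF)(B_r(x))$ bear no relation to a one-dimensional difference quotient of $\phi$. The issue you flag (existence of $\partial_{\sigma(x)}\phi$ on the singular part of $|\FF|$) is real, but even granting that existence, the identification of the blow-up limit with this derivative is a separate and harder problem.

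This is precisely why the paper proceeds through Smirnov's decomposition: it writes $\FF$ as a superposition $\int_{\mathscr C_1}\llbracket\gamma\rrbracket\,\d\nu(\gamma)$ of curves, proves the formula for each $\llbracket\gamma\rrbracket$ directly (where the product rule genuinely yields $(\phi\circ\gamma)'$ along the curve), invokes the Alberti--Marchese decomposability bundle to supply the $|\FF|$-a.e.\ directional differentiability, and then integrates over $\nu$. Your ``Alberti-type restriction to rectifiable pieces'' gestures in the right direction, but the actual mechanism---decomposing $\FF$ itself, not just its support, into one-dimensional objects---is what makes the argument go through.
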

The proof of Theorem~\ref{thm:intro_pairingmeasure} relies on a decomposition of divergence-measure fields into curves, due \textsc{Smirnov} \cite{Smirnov1993}. 
We note that the differentiability statement was already proven by \textsc{Alberti \& Marchese} in \cite{AlbertiMarchese2016}, whose proof also relies on Smirnov's decomposition result.

Theorem~\ref{thm:intro_normaltrace} asserts that the normal trace $\mathrm{N}_E$ maps into a strict subspace of $\Lip_{\rmb}(\partial E)^{\ast}$. We show that this is moreover \emph{optimal}, assuming the following mild regularity condition on the domain.

\begin{defn}\label{defn:intro_lrc}
  We say an open set $U \subset \mathbb R^n$ is \emph{locally uniformly quasiconvex} if there exists $\eps,\delta>0$ such that for any $p,q \in \overline U$ such that $\lvert p - q\rvert < \delta$, there exists a rectifiable curve $\gamma$ connecting $p$ to $q$ through $U$, whose length satisfies $\ell(\gamma)\leq \eps^{-1}\lvert p - q\rvert$.
\end{defn}

\begin{thm}\label{thm:intro_surjective}
  Let $U \subset \mathbb R^n$ be an open set which is locally uniformly quasiconvex. Then
  \begin{equation}
    \mathrm{N}_{U} \colon \mathcal{DM}^{\ext}(U) \twoheadrightarrow\text{\AE}(\partial U) \quad\mbox{is surjective.}
  \end{equation} 
  More precisely, there exists a discrete set $\Lambda \subset U$ such that for each $m \in \text{\AE}(\partial U)$, there exists $\FF \in \mathcal{DM}^{\ext}(U)$ such that $\mathrm{N}_U(\FF) = m$, $\div \FF$ is supported on $\Lambda$, and we have the estimate
  \begin{equation}
    \lVert \FF \rVert_{\mathcal{DM}^{\ext}(U)} \leq C \lVert m \rVert_{\text{\AE}(\partial U)}.
  \end{equation} 
\end{thm}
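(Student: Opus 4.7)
The strategy is to construct $\FF$ by explicit path integration on a dense subclass of $\text{\AE}(\partial U)$ and extend by continuity. The basic building block is the \emph{curve field}: for a rectifiable $\gamma\colon[0,L]\to\overline U$ parameterised by arclength with endpoints $a,b$, the vector Radon measure $\FF_\gamma\coloneqq\dot\gamma\,\mathcal H^1\mres\gamma$ satisfies $|\FF_\gamma|(U)\leq L$ and, as a distribution on $\mathbb R^n$, $\div\FF_\gamma=\delta_a-\delta_b$. Consequently, when $a,b\in\partial U$ one computes $\div\FF_\gamma\mres U=0$ and $\mathrm N_U(\FF_\gamma)=\delta_a-\delta_b$ (as an element of $\text{\AE}(\partial U)$); when $a\in\partial U$ and $b\in U$, $\div\FF_\gamma\mres U=-\delta_b$ and $\mathrm N_U(\FF_\gamma)=\delta_a$. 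These two identities, verified directly from the definition \eqref{eq:normal_trace} and the fundamental theorem of calculus along $\gamma$, give all the atomic normal traces.

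Let $(\eps,\delta)$ be the constants from Definition~\ref{defn:intro_lrc}, and fix $\Lambda\subset U$ to be a maximal $(\delta/4)$-separated subset; it is automatically discrete, and the argument in the previous sentence forces every point of $\partial U$ to lie within distance $\delta/2$ of $\Lambda$. Given a balanced molecule $\delta_p-\delta_q$ with $p,q\in\partial U$: if $|p-q|<\delta$, a single curve supplied by Definition~\ref{defn:intro_lrc} of length $\leq\eps^{-1}|p-q|$ realises it with zero divergence in $U$; if $|p-q|\geq\delta$, select a chain $p=x_0\to x_1\to\cdots\to x_k=q$ with $x_1,\dots,x_{k-1}\in\Lambda$, consecutive distances below $\delta$, and total Euclidean length comparable to $|p-q|$, then concatenate the corresponding short curve fields. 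The divergences at each interior $x_i$ telescope to zero, so $\div\FF\mres U=0$. For the non-balanced atoms $\delta_p$ arising from the $\Lip_{\rmb}$-predual structure of $\text{\AE}(\partial U)$, a single short curve from $p$ to its nearest $\lambda\in\Lambda$ gives $\mathrm N_U=\delta_p$ with one compensating Dirac on $\Lambda$ in $\div\FF$.

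For the assembly, by the very definition of $\|\cdot\|_{\text{\AE}(\partial U)}$, every $m\in\text{\AE}(\partial U)$ admits, for each $\eta>0$, a (possibly infinite) molecular representation $m=\sum_j a_j(\delta_{p_j}-\delta_{q_j})$ with $\sum_j|a_j|\,|p_j-q_j|\leq(1+\eta)\|m\|_{\text{\AE}(\partial U)}$. Apply the construction above to each term to obtain $\FF_j$ with $\|\FF_j\|_{\mathcal{DM}^{\ext}(U)}\leq C|p_j-q_j|$ and $\spt(\div\FF_j)\subset\Lambda$, then set $\FF\coloneqq\sum_j a_j\FF_j$. The series converges absolutely in the Banach space $\mathcal{DM}^{\ext}(U)$, producing $\FF$ with $\|\FF\|_{\mathcal{DM}^{\ext}(U)}\leq C\|m\|_{\text{\AE}(\partial U)}$ and the correct support condition for $\div\FF$; the identity $\mathrm N_U(\FF)=m$ then follows from the atomic computations of Step~1 together with the weak${}^\ast$-continuity of $\mathrm N_U$ established in Theorem~\ref{thm:intro_normaltrace}.

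The principal obstacle is the quasi-geodesic chaining required for long molecules: Definition~\ref{defn:intro_lrc} only supplies short-scale connectivity with uniform constants $(\eps,\delta)$, and producing, uniformly in boundary pairs $(p,q)$, a chain through $\Lambda$ whose total length is comparable to $|p-q|$ requires a covering/iteration argument (arguably the true geometric content of the local rectifiable convexity hypothesis). Without it, the constant $C$ cannot be extracted uniformly; with it, linearity and the Arens--Eells summability above upgrade the dense-subspace construction to surjectivity.
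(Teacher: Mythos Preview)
Your overall architecture---curve fields as building blocks, construct on a dense molecular class, extend by continuity using the boundedness of $\mathrm N_U$---matches the paper. The gap lies in your treatment of long molecules and, relatedly, in your molecular norm formula.

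You write that $m$ admits a representation with $\sum_j|a_j|\,|p_j-q_j|\leq(1+\eta)\|m\|_{\text{\AE}(\partial U)}$, and that for $|p-q|\geq\delta$ one should find a chain through $\Lambda$ of total length comparable to $|p-q|$. Both claims are problematic. The $\text{\AE}(\partial U)$ norm dual to $\Lip_{\rmb}$ is computed with respect to the \emph{bounded} metric $\rho(p,q)=\min\{|p-q|,2\}$ (together with an auxiliary point $e$ at distance $1$), not the Euclidean metric; a molecule $\delta_p-\delta_q$ with $|p-q|$ large has $\text{\AE}$-norm at most $2$, so no representation achieves $\sum_j|a_j|\,|p_j-q_j|$ comparable to the norm. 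And the quasi-geodesic chaining you need is \emph{not} a consequence of local rectifiable convexity: $U$ may be disconnected (so no chain exists between boundary points of different components), or may have long narrow necks forcing intrinsic distances arbitrarily larger than $|p-q|$, while still satisfying Definition~\ref{defn:intro_lrc} with fixed $(\eps,\delta)$.

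The paper avoids both issues by exploiting the boundedness of $\rho$. For $\rho(p_i,q_i)\geq\delta$ it does \emph{not} attempt to connect $p_i$ to $q_i$; instead it connects each separately to $\Lambda$ by curves of length at most $\sep_\gamma(\Lambda,U)$, which is a finite quantity depending only on the geometry. This costs two Dirac masses in $\div\FF$ supported on $\Lambda$---precisely what the theorem permits---and since $\rho(p_i,q_i)\geq\delta$ one still has $|\FF_i|(U)+|\div\FF_i|(U)\leq C\rho(p_i,q_i)$ with $C$ depending only on $\eps,\delta,\sep_\gamma(\Lambda,U),\dist(\Lambda,\partial U)$. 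Your short-molecule case and your handling of unbalanced atoms are fine as stated; once the long-molecule case is replaced by this ``connect-to-$\Lambda$'' manoeuvre, and the norm formula is corrected to use $\rho$, your density argument goes through.
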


Thus, for a general class of open sets $U$, we completely characterise the normal trace space on $\partial U$ for divergence-measure fields. As we will discuss in \S\ref{sec:trace_surj} (namely Theorem~\ref{cor:trace_surject_twosided}), we can infer surjectivity results for the trace $N_U \colon \mathcal{DM}^{\ext}(\Omega) \twoheadrightarrow\text{\AE}(\partial U)$ for $U \subset \Omega$ by applying the above on both $U$ and $\overline U^{\rmc}$; here we require not only that $U$ and $\overline{U}^{\rmc}$ satisfies Definition~\ref{defn:intro_lrc}, but also the topological condition $\partial U = \partial \overline U^{\rmc}$.
These conditions are satisfied by all bounded Lipschitz domains, and also by certain fractal domains (see Example \ref{eg:koch_snowflake}).

As a consequence of this characterisation, we obtain extension results for divergence-measure fields; detailed statements can be found in \S\ref{sec:extension}.

\begin{thm}\label{thm:intro_extension}
  Let $U \subset \mathbb R^n$ be a open set such that $\overline{U}^{\rmc}$ is locally uniformly quasiconvex and that $\partial U = \partial \overline U^{\rmc}$. Then there exists a (not necessarily linear) extension operator
  \begin{equation}
    \mathcal{E}_U \colon \mathcal{DM}^{\ext}(U) \to \mathcal{DM}^{\ext}(\mathbb R^n)
  \end{equation} 
  such that $\mathcal{E}_U(\FF) \mres U = \FF$ for all $\FF \in \mathcal{DM}^{\ext}(\Omega)$, and $\mathcal{E}_U$ is bounded in that
  \begin{equation}
    \lVert \mathcal{E}_U(\FF) \rVert_{\mathcal{DM}^{\ext}(\mathbb R^n)} \leq C \lVert \FF \rVert_{\mathcal{DM}^{\ext}(\Omega)} \quad\mbox{for all $\FF \in \mathcal{DM}^{\ext}(\Omega)$}.
  \end{equation} 
\end{thm}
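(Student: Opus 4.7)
The plan is to extend $\FF$ by zero across $\partial U$ and kill the boundary singularity on $\partial U$ with a dual field on the exterior, supplied by the surjectivity result Theorem~\ref{thm:intro_surjective}. Write $V := \overline U^{\rmc}$; by hypothesis $V$ is open, locally rectifiably convex in the sense of Definition~\ref{defn:intro_lrc}, and $\partial V = \partial U$, so the Arens--Eells spaces $\text{\AE}(\partial V)$ and $\text{\AE}(\partial U)$ coincide. This matching of boundaries is the structural reason the construction can succeed.

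\textbf{Construction and verification.} First, I would extract the normal trace $m := \mathrm{N}_U(\FF) \in \text{\AE}(\partial U)$ via Theorem~\ref{thm:intro_normaltrace}, together with the continuity bound $\lVert m\rVert_{\text{\AE}(\partial U)} \lesssim \lVert \FF\rVert_{\mathcal{DM}^{\ext}(U)}$ obtained by testing \eqref{eq:normal_trace} against Lipschitz bounded functions. Next, apply Theorem~\ref{thm:intro_surjective} on the exterior domain $V$ to select $\GG \in \mathcal{DM}^{\ext}(V)$ with $\mathrm{N}_V(\GG) = -m$ and $\lVert \GG\rVert_{\mathcal{DM}^{\ext}(V)} \lesssim \lVert m\rVert_{\text{\AE}(\partial U)}$. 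Finally, set
\[
\mathcal{E}_U(\FF) := \FF \mres U + \GG \mres V,
\]
viewed as a vector-valued Radon measure on $\mathbb R^n$ (the overlap set $\partial U$ carries neither $\FF$ nor $\GG$, since both live on open sets). To verify that $\mathcal{E}_U(\FF) \in \mathcal{DM}^{\ext}(\mathbb R^n)$, I would test $\div \mathcal{E}_U(\FF)$ against $\phi \in \CC^1_{\rmc}(\mathbb R^n)$ and unwind \eqref{eq:normal_trace} separately on $U$ and $V$:
\[
-\int_{\mathbb R^n} \nabla\phi \cdot \d \mathcal{E}_U(\FF) = \int_U \phi\,\d(\div\FF) + \langle m,\phi\rvert_{\partial U}\rangle + \int_V \phi\,\d(\div\GG) - \langle m,\phi\rvert_{\partial V}\rangle.
\]
Since $\partial U = \partial V$, the two Arens--Eells pairings cancel, identifying $\div \mathcal{E}_U(\FF) = (\div\FF)\mres U + (\div\GG)\mres V$ as a Radon measure on $\mathbb R^n$. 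The restriction $\mathcal{E}_U(\FF)\mres U = \FF$ is then immediate by disjointness of $U$ and $V$, and the norm bound follows by summing the two component estimates.

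\textbf{Main obstacle.} The delicate point is the cancellation step, which demands that $\phi\rvert_{\partial U}$ and $\phi\rvert_{\partial V}$ agree as elements of $\Lip_{\rmb}(\partial U)$; this is exactly why the topological hypothesis $\partial U = \partial \overline U^{\rmc}$ is indispensable. If instead $\partial U \setminus \partial \overline U^{\rmc}$ were nonempty (\emph{e.g.}~for $U$ a ball with an isolated interior puncture, or a domain with outward cusps), the trace of $\FF$ on this exceptional set would have no exterior partner to annihilate it and would persist as a non-measure distribution, blocking the argument. Once the topological hypothesis is enforced, the construction reduces to bookkeeping, and $\mathcal{E}_U$ inherits non-linearity solely from the (in general non-linear) choice of right inverse to $\mathrm{N}_V$ in Theorem~\ref{thm:intro_surjective}.
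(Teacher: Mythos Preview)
Your proposal is correct and follows essentially the same route as the paper: extract $m = \mathrm{N}_U(\FF) \in \text{\AE}(\partial U)$, invoke the surjectivity theorem on the exterior $V = \overline U^{\rmc}$ to find $\GG$ with matching (opposite) trace, define the extension as $\FF\mres U + \GG\mres V$, and verify that the boundary contributions cancel when computing the distributional divergence. The paper's proof (Theorem~\ref{thm:extension_main}) is line-for-line the same argument, with the same use of $\partial U = \partial\overline U^{\rmc}$ to identify the two Arens--Eells spaces and effect the cancellation.
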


As Example~\ref{eg:extension_counterexample} will illustrate, the condition $\partial U = \partial \overline U^{\rmc}$ is in general necessary.
Similarly as in Theorem~\ref{thm:intro_surjective}, the extension $\widetilde\FF$ we construct satisfies $\div \widetilde\FF = 0$ away from a discrete set $\Lambda$ on the complement.
This allows us to establish extension results for divergence-free fields; for this we let
\begin{equation}\label{eq:divfree_meas}
  \mathcal M_{\div}(U;\mathbb R^n) = \{ \FF \in \mathcal{M}(\Omega;\mathbb R^n) : \div \FF = 0 \} \subset \mathcal{DM}^{\ext}(U).
\end{equation} 

\begin{thm}\label{thm:intro_divfree}
  Let $U \subset \mathbb R^n$ be a open set such that $\overline U^{\rmc}$ is locally uniformly quasiconvex and that $\partial U = \partial \overline U^{\rmc}$.
  Then there exists an open set $\widetilde{U} \subset \mathbb R^n$ such that $\overline U \subset \widetilde U$ and $\dist(U,\partial \widetilde U) > 0$, and an extension operator
  \begin{equation}
    \mathcal E_{U,\widetilde U} \colon \mathcal{M}_{\div}(U;\mathbb R^n) \to \mathcal{M}_{\div}(\widetilde U;\mathbb R^n)
  \end{equation} 
  such that $\mathcal E_{U,\widetilde U}(\FF) \mres U = \FF$ for all $\FF \in \mathcal{M}_{\div}(U;\mathbb R^n)$.
  Furthermore, if $\partial U$ is bounded and $\overline{U}^{\rmc}$ is connected in addition, then we can choose $\widetilde{U} = \mathbb R^n$.
\end{thm}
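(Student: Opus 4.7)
The plan is to start from the extension $\widetilde\FF := \mathcal{E}_U(\FF) \in \mathcal{DM}^{\ext}(\mathbb R^n)$ provided by Theorem~\ref{thm:intro_extension} and cancel its divergence by subtracting tangential line measures along rectifiable curves supported in $\overline U^{\rmc}$. Since $\div \FF = 0$ on $U$ and $\mathcal E_U$ is constructed so that $\div \widetilde\FF$ is concentrated on a discrete set $\Lambda \subset \overline U^{\rmc}$ which is independent of $\FF$, we may decompose
\begin{equation*}
  \div \widetilde\FF = \sum_{p \in \Lambda} c_p(\FF)\, \delta_p, \qquad \sum_{p \in \Lambda}\lvert c_p(\FF)\rvert \leq C\lVert \FF\rVert.
\end{equation*}
The task then reduces to constructing, uniformly in $\FF$, vector measures supported in $\overline U^{\rmc}$ which cancel each $c_p(\FF)\delta_p$ inside $\widetilde U$ without affecting $\FF$ on $U$.

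For this I would use the vectorial line measure $\mu_\gamma := \gamma_{\#}(\dot\gamma\,\mathcal L^1 \mres [0,L])$ associated to an arclength-parametrised rectifiable curve $\gamma\colon[0,L]\to \mathbb R^n$; the chain rule applied to test functions yields $\div \mu_\gamma = \delta_{\gamma(0)} - \delta_{\gamma(L)}$ together with $\lvert \mu_\gamma\rvert(\mathbb R^n) = L$. For each $p \in \Lambda$ I would fix a curve $\gamma_p$ from $p$ to a point $q_p \notin \widetilde U$, entirely contained in $\overline U^{\rmc}$, and set
\begin{equation*}
  \mathcal{E}_{U,\widetilde U}(\FF) := \biggl(\widetilde\FF - \sum_{p \in \Lambda} c_p(\FF)\,\mu_{\gamma_p}\biggr)\mres \widetilde U.
\end{equation*}
The restriction discards the unwanted sinks $\delta_{q_p}$, leaving zero divergence on $\widetilde U$; since no $\gamma_p$ meets $U$, we have $\mathcal{E}_{U,\widetilde U}(\FF)\mres U = \FF$; and the total mass is controlled by $\sum_p \lvert c_p(\FF)\rvert\,\ell(\gamma_p) \lesssim \lVert \FF\rVert$ provided $\sup_p \ell(\gamma_p) < \infty$.

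The main obstacle will be to choose the pair $(\widetilde U,\{\gamma_p\}_{p \in \Lambda})$ independently of $\FF$ so that simultaneously (i) $\overline U \subset \widetilde U$ with $\dist(U,\partial\widetilde U)>0$ and $\Lambda \cap \widetilde U = \emptyset$; (ii) each $\gamma_p \subset \overline U^{\rmc}$ reaches the complement of $\widetilde U$ with uniformly bounded length; and (iii) the family $\{\gamma_p\}$ is locally finite in $\widetilde U$. Here the local rectifiable convexity of $\overline U^{\rmc}$ (Definition~\ref{defn:intro_lrc}) is essential: it produces, for sufficiently close pairs of points in $\overline{\overline U^{\rmc}}$, connecting curves inside $\overline U^{\rmc}$ whose lengths are comparable to the Euclidean distance. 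Taking $\widetilde U$ to be a fattening of $\overline U$ thin enough that $\Lambda$ is uniformly separated from it then allows each $\gamma_p$ to be routed from $p$ to $\partial \widetilde U$ within $\overline U^{\rmc}$ with $\ell(\gamma_p) \lesssim \dist(p,\mathbb R^n\setminus\widetilde U)$.

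For the second assertion, when $\partial U$ is bounded and $\overline U^{\rmc}$ is connected, the latter contains the exterior of some large ball and is path-connected through $\overline U^{\rmc}$. For each $p \in \Lambda$ one may then route $\gamma_p$ from $p$ to infinity within $\overline U^{\rmc}$, interpreting $\mu_{\gamma_p}$ as a locally finite vector measure satisfying $\div \mu_{\gamma_p} = \delta_p$. Since $\sum_p \lvert c_p(\FF)\rvert < \infty$ and only finitely many $\gamma_p$ can be arranged to intersect any fixed compact set (by spreading their tails along asymptotically disjoint radial directions beyond a common sphere), the resulting correction is locally finite on $\mathbb R^n$, and we may take $\widetilde U = \mathbb R^n$.
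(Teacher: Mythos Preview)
For the first assertion, your plan is more elaborate than needed and the listed conditions are mutually inconsistent: once you arrange $\Lambda \cap \widetilde U = \varnothing$ as in your condition (i), every $p \in \Lambda$ already lies outside $\widetilde U$, so the divergence of $\widetilde\FF$ restricted to $\widetilde U$ vanishes and no curve correction is necessary. The paper (Corollary~\ref{cor:divfree_extension1}) does exactly this, taking $\widetilde U = \mathbb R^n \setminus \Lambda$; since $\dist(\Lambda,\partial U)>0$ by Lemma~\ref{lem:concentration_set}, this $\widetilde U$ satisfies the required properties and $\mathcal E_U(\FF)\mres\widetilde U$ is divergence-free there without modification.

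For the second assertion there is a genuine gap. In this paper $\mathcal M_{\div}(\mathbb R^n;\mathbb R^n)$ is a subspace of the \emph{finite} vector-valued Radon measures (see \eqref{eq:divfree_meas} and the conventions in \S\ref{sec:prelims}), so a curve $\gamma_p$ running to infinity yields a correction $c_p\,\mu_{\gamma_p}$ of infinite total mass whenever $c_p \neq 0$; the resulting field is only locally finite and does not lie in $\mathcal M_{\div}(\mathbb R^n;\mathbb R^n)$. The paper's argument (Corollary~\ref{cor:divfree_extension2}) avoids this entirely by a different mechanism: when $\partial U$ is bounded and $\overline U^{\rmc}$ is connected, Lemma~\ref{lem:concentration_set} allows the choice $\Lambda = \{e\}$, a single point, so that $\div\widetilde\FF = \lambda\,\delta_e$ for some $\lambda \in \mathbb R$. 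The key observation you are missing is that $\lambda = 0$ automatically: since $\FF$ is divergence-free one has $\langle \widetilde\FF \cdot \nu, \mathbbm{1}_{\mathbb R^n}\rangle_{\partial U} = 0$, and using $\lvert\widetilde\FF\rvert(\partial U)=0$ one obtains $\lambda = (\div\widetilde\FF)(\overline U^{\rmc}) = -\langle\widetilde\FF\cdot\nu,\mathbbm{1}_{\mathbb R^n}\rangle_{\partial\overline U^{\rmc}} = \langle\widetilde\FF\cdot\nu,\mathbbm{1}_{\mathbb R^n}\rangle_{\partial U} = 0$. Thus no correction is needed at all, and the extension is already globally divergence-free.
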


Finally, we show that the restriction of the trace operator to $\mathcal{DM}^1$ remains surjective; despite being a seemingly stronger statement, this will follow as a consequence of Theorem~\ref{thm:intro_extension}.
This also implies an extension theorem analogous to Theorem~\ref{thm:intro_extension} for fields in $\mathcal{DM}^1(U)$, which we will detail in \S\ref{sec:dm1}.

\begin{thm}
  Let $U \subset \mathbb R^n$ be an open set satisfying Definition~\ref{defn:intro_lrc}.
  Then restriction of the trace operator
  \begin{equation}
    \mathrm{N}_U \colon \mathcal{DM}^1(U) \twoheadrightarrow\text{\AE}(\partial U) \quad\mbox{is surjective.}
  \end{equation} 
  More precisely, for each $m \in \text{\AE}(\partial U)$ there exists $\GG \in \mathcal{DM}^1(U)$ such that $\mathrm{N}_U(\GG)= m$ and the estimate
  \begin{equation}
    \lVert \GG \rVert_{\mathcal{DM}^1(U)} \leq C \lVert m \rVert_{\text{\AE}(\partial U)}
  \end{equation}
  holds. We moreover have that $\div \GG \in \LL^1(U)$.
\end{thm}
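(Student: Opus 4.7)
My plan is to upgrade the $\mathcal{DM}^{\ext}$-surjectivity of Theorem~\ref{thm:intro_surjective} to $\mathcal{DM}^1$ by mollifying a suitable extension. Fix $m \in \text{\AE}(\partial U)$. Theorem~\ref{thm:intro_surjective} produces $\FF \in \mathcal{DM}^{\ext}(U)$ with $\mathrm{N}_U(\FF) = m$, $\div \FF$ supported on a discrete set $\Lambda \subset U$, and $\lVert \FF\rVert_{\mathcal{DM}^{\ext}(U)} \leq C\lVert m\rVert_{\text{\AE}(\partial U)}$. Mollifying $\FF$ directly would alter the trace by smearing mass across $\partial U$, so I would first extend $\FF$ to $\widetilde\FF \in \mathcal{DM}^{\ext}(\widetilde U)$ on an open neighbourhood $\widetilde U \supset \overline U$ via a suitable adaptation of the curve-continuation construction underlying Theorem~\ref{thm:intro_extension}, carried out inside the collar $\widetilde U \setminus \overline U$. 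The resulting $\widetilde\FF$ satisfies $\widetilde\FF \mres U = \FF$, has discrete divergence, bounded $\mathcal{DM}^{\ext}$-norm, and can be arranged so that $\lvert\widetilde\FF\rvert(\partial U) = 0 = \lvert\div\widetilde\FF\rvert(\partial U)$. Then $\GG_\eta := (\widetilde\FF * \rho_\eta)\mres U$, with symmetric mollifier at scale $\eta < \dist(\overline U, \partial\widetilde U)$, lies in $\CC^\infty(U;\mathbb R^n) \cap \mathcal{DM}^1(U)$, with $\div \GG_\eta \in \LL^1(U)$ and $\lVert \GG_\eta\rVert_{\mathcal{DM}^1(U)} \leq \lVert \widetilde\FF\rVert_{\mathcal{DM}^{\ext}(\widetilde U)}$ uniformly in $\eta$.

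Next, I would show $\mathrm{N}_U(\GG_\eta) \to m$ in the $\text{\AE}(\partial U)$-norm. Using Fubini to transfer the convolution onto the test function, for $\phi \in \Lip_{\rmb}(\partial U)$ extended to $\widetilde U$ we get $\mathrm{N}_U(\GG_\eta)(\phi) = -\int_{\widetilde U} ((\nabla\phi)\mathbf{1}_U * \rho_\eta) \cdot \d\widetilde\FF - \int_{\widetilde U} ((\phi\mathbf{1}_U) * \rho_\eta)\,\d(\div\widetilde\FF)$. Since $\widetilde\FF$ and $\div\widetilde\FF$ place no mass on $\partial U$, the mollified cutoffs converge to $\phi\mathbf{1}_U$ and $(\nabla\phi)\mathbf{1}_U$ pointwise a.e.\ with respect to these measures; dominated convergence together with the setwise continuity of the Anzellotti pairing measure (Theorem~\ref{thm:intro_pairingmeasure}) then yields convergence of $\mathrm{N}_U(\GG_\eta)(\phi)$ to $m(\phi)$ uniformly for $\phi$ in the unit ball of $\Lip_{\rmb}(\partial U)$. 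Via Theorem~\ref{thm:intro_normaltrace} this is precisely norm convergence in $\text{\AE}(\partial U)$. A geometric-series iteration then closes the residual: once $\eta$ is chosen so that $\lVert \mathrm{N}_U(\GG_\eta) - m\rVert_{\text{\AE}} \leq \tfrac12 \lVert m\rVert_{\text{\AE}}$, repeat with $m$ replaced by the residual trace; the telescoping sum $\GG := \sum_k \GG^{(k)}$ converges absolutely in $\mathcal{DM}^1(U)$, satisfies $\mathrm{N}_U(\GG) = m$ by boundedness of $\mathrm{N}_U$, and obeys the desired estimate $\lVert \GG \rVert_{\mathcal{DM}^1(U)} \leq C\lVert m\rVert_{\text{\AE}(\partial U)}$.

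The main obstacle I anticipate is this upgrade from weak${}^{*}$ convergence (which is automatic from Theorem~\ref{thm:intro_normaltrace}) to norm convergence of $\mathrm{N}_U(\GG_\eta)$ in $\text{\AE}(\partial U)$; the required uniformity over the unit ball of $\Lip_{\rmb}(\partial U)$ relies crucially on the no-charge property $\lvert\widetilde\FF\rvert(\partial U) = 0$, which is the reason for extending into a collar rather than mollifying $\FF$ directly in $U$. A secondary technical point is that Theorem~\ref{thm:intro_extension} in the stated form requires $\overline{U}^{\mathrm c}$ to be locally rectifiably convex, which is not assumed here; its curve-continuation argument must therefore be adapted inside a generically chosen collar $\widetilde U \setminus \overline U$, which can always be arranged to support the necessary rectifiable connections between boundary points and new interior sinks.
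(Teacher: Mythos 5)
Your proposal has a fatal gap at its first step, and the paper's proof takes a completely different (and much shorter) route. The extension into a collar cannot work under the stated hypotheses: the theorem assumes only that $U$ is locally rectifiably convex, with no assumption on $\overline{U}^{\rmc}$, and in particular $\partial U = \partial\overline{U}^{\rmc}$ may fail. When it fails, part of $\partial U$ lies in the interior of $\overline{U}$, and near such boundary points there is no exterior region at all to continue curves into; any continuation re-enters $U$ and destroys the requirement $\widetilde\FF \mres U = \FF$. This is not a genericity issue that a clever choice of collar repairs, and it does not depend on which field with trace $m$ you try to extend. The paper's Example~\ref{eg:extension_counterexample} makes this concrete: $U = B_1(0)\setminus\Lambda$ with $\Lambda = \{0\}\cup\{2^{-k}\mathrm{e}_1 : k \in \mathbb N\}$ is locally rectifiably convex (since $\Lambda$ is countable, short paths avoiding it always exist), yet the trace $m$ of the field constructed there acts on test functions supported in $B_1(0)$ as $2\pi\sum_k(\delta_{2^{-k}\mathrm{e}_1}-\delta_0)$, which is an element of $\text{\AE}(\partial U)$ but not a finite measure. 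For \emph{any} $\GG'$ with $\mathrm{N}_U(\GG') = m$, the identity $\div(\mathbbm{1}_U\GG') = \mathbbm{1}_U\div\GG' - \langle \GG'\cdot\nu,\cdot\rangle_{\partial U}$ then shows $\mathbbm{1}_U\GG' \notin \mathcal{DM}^{\ext}(B_1(0))$, and since (by the result of \v{S}ilhav\'y quoted in the example) an extension could differ from $\mathbbm{1}_U\GG'$ only by a measure on $\Lambda$, which must vanish, no extension to any neighbourhood of $\overline U$ exists. So the theorem cannot be reduced to Theorem~\ref{thm:intro_extension}; its content is precisely that surjectivity in $\mathcal{DM}^1$ holds even for domains where extension across $\partial U$ is impossible.

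There is a second, independent gap in the convergence argument. For fixed $\phi$, dominated convergence together with the setwise continuity of the pairing gives $\langle\mathrm{N}_U(\GG_\eta),\phi\rangle \to \langle m,\phi\rangle$; since $\Lip_{\rmb}(\partial U) \simeq \text{\AE}(\partial U)^{\ast}$, this is only \emph{weak} convergence in $\text{\AE}(\partial U)$. Uniformity over the unit ball of $\Lip_{\rmb}(\partial U)$, i.e.\ norm convergence, does not follow: Arens-Eells spaces need not have the Schur property (for instance $\text{\AE}([0,1])$ is isomorphic to $\LL^1([0,1])$), and the natural bound $\lVert\mathrm{N}_U(\GG_\eta)-m\rVert_{\text{\AE}(\partial U)} \leq C\lVert\GG_\eta-\FF\rVert_{\mathcal{DM}^{\ext}(U)}$ is useless because mollifications of a singular measure never converge in total variation. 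Without norm smallness of the residual, your geometric-series iteration does not close. The paper avoids both problems by never leaving $U$: by Proposition~\ref{prop:normaltrace_weakcont} the normal trace of $\FF = \mathrm{E}_U(m)$ is weakly${}^{\ast}$-continuous on $\WW^{1,\infty}(U) \cong \WW^{-1,1}(U)^{\ast}$ (Lemma~\ref{lem:w1infty_predual}), hence it \emph{is} an element $g_0 + \sum_i \partial_{x_i} g_i$ of the predual $\WW^{-1,1}(U)$; setting $\GG = (g_1,\ldots,g_n)$ immediately gives $\GG \in \mathcal{DM}^1(U)$ with $\div\GG = g_0 \in \LL^1(U)$, the same trace as $\FF$, and the norm estimate (Proposition~\ref{prop:dm1_rep}). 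That duality argument replaces both your collar extension and the weak-to-norm upgrade.
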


\textbf{Organisation}: We recall some necessary results in \S\ref{sec:prelims} regarding divergence-measure fields, the space of Lipschitz functions and its predual, along with Smirnov's decomposition theorem.
A proof of Smirnov's theorem, in the form we use, is also included in Appendix~\ref{sec:smirnov_proof}.
Equipped with these results, \S\ref{sec:anzelotti_rep} is dedicated to the proof of Theorem~\ref{thm:intro_pairingmeasure}, and consequences of said theorem is explored in \S\ref{sec:pairing_applications}, where Theorem~\ref{thm:intro_normaltrace} is proved.
The surjectivity of this refined trace operator is proven in \S\ref{sec:trace_surj},
from which we can infer the extension results, namely Theorems~\ref{thm:intro_extension} and \ref{thm:intro_divfree}, in \S\ref{sec:extension}.
Finally, in \S\ref{sec:dm1} we establish similar surjectivity and extension results in $\mathcal{DM}^1$.

\section{Preliminaries}\label{sec:prelims}

We begin by setting our conventions, and recording some results that will be used in the sequel. We will start with:

\textbf{Notation}: Throughout the paper, we will consider an open set $\Omega \subset \mathbb R^n$ with $n \geq 2$.
Given any set $A \subset \mathbb R^n$ we denote its complement by $A^{\rmc}$ and the associated indicator function by $\mathbbm{1}_A$.
In $\Omega$ we denote the space of finite signed Radon measures by $\mathcal M(\Omega)$, the space of bounded Lipschitz functions by $\Lip_{\rmb}(\Omega)$, and the space of bounded Borel functions by $\mathcal B_{\rmb}(\Omega)$.
Also we denote by $\CC^k_{\rmb}(\Omega)$ as the space of $k$-times continuously differentiable functions with bounded derivatives in $\Omega$, and put $\CC_{\rmb}(\Omega) = \CC_{\rmb}^0(\Omega)$ for the space of bounded continuous functions.

If $\mu$ is a Borel measure on $\Omega$ and $A \subset \Omega$ is a Borel subset, then we will denote by $\mu \mres A$ the Borel measure on $\mathbb R^n$ satisfying $(\mu \mres A)(B) = \mu(A \cap B)$ for any Borel set $B \subset \mathbb R^n$.
We also denote the $n$-dimensional Lebesgue measure and $k$-dimensional Hausdorff measure on $\mathbb R^n$ by $\mathcal L^n$ and $\mathcal H^k$ respectively.

In general for a function space $\mathrm X$, we will write $\mathrm X(\Omega;\mathbb R^n)$ for the space of functions valued in $\mathbb R^n$, and write $\mathrm X_{\rmc}$, $\mathrm X_{\loc}$ for compactly supported functions in $\mathrm X(\Omega)$ and functions locally in $\mathrm X$ respectively.
Also given two Banach spaces $\mathcal X$ and $\mathcal Y$, will write $\mathcal X \simeq \mathcal Y$ if they are isomorphic, and $\mathcal X \cong \mathcal Y$ they are moreover isometrically isomorphic. The dual space of $\mathcal X$ will be denoted $\mathcal X^{\ast}$.

\subsection{Divergence-measure fields}

We recall the central notions of interest in this paper.

\begin{defn}\label{defn:divmeas}
  Let $\Omega \subset \mathbb R^n$ be an open set.
  A measure-valued field $\FF = (F_1,\ldots,F_n) \in \mathcal{M}(\Omega;\mathbb R^n)$ is called a \emph{divergence-measure field} if $\div \FF$ is represented by a finite measure.
  That is, there exists a signed measure $\div \FF \in \mathcal M(\Omega)$ which satisfies
  \begin{equation}
    \int_{\Omega} \nabla \phi \cdot \d\FF = - \int_{\Omega} \phi \,\d(\div \FF) \quad\mbox{for all $\phi \in \CC_{\rmc}^{1}(\Omega)$}.
  \end{equation} 
  The space of divergence-measure fields will be denoted by $\mathcal{DM}^{\ext}(\Omega)$, which we equip with the norm
  \begin{equation}
    \lVert \FF\rVert_{\mathcal{DM}^{\ext}(\Omega)} = \lvert \FF\rvert(\Omega) + \lvert \div \FF\rvert(\Omega).
  \end{equation} 
  If the field $\FF$ is represented by a function in $\LL^p(\Omega)$ with $1 \leq p \leq \infty$, we write $\FF \in \mathcal{DM}^p(\Omega)$.
\end{defn}

In \S \ref{sec:extension} we will also consider divergence-free fields on $\Omega$; the space of such fields will be denoted by $\mathcal{M}_{\div}(\Omega;\mathbb R^n) \subset \mathcal M(\Omega;\mathbb R^n)$.

\begin{remark}
  A \emph{normal $1$-current} in $\Omega$ is a $1$-current $T \in \mathcal{D}_1(\Omega)$ such that both $T$ and $\partial T$ are represented by finite Radon measures in $\Omega$, and the space of such currents will be denoted by $\mathcal{N}_1(\Omega)$.
  Note that in component form we can write
  \begin{equation}
    T = \sum_{i=1}^n T_i \,\d x_i, \quad \partial T = -\sum_{i=1}^n \D_iT_i,
  \end{equation} 
  from which we see that $T \in \mathcal{N}_1(\Omega)$ if and only if the measure-valued field $\bm{T} := (T_1,\ldots,T_n)$ is a divergence-measure field.
  This gives a one-to-one correspondence
  \begin{equation}
    \mathcal{N}_1(\Omega) \cong \mathcal{DM}^{\ext}(\Omega),
  \end{equation} 
  thereby providing a geometric viewpoint which allows us to naturally identify curves as divergence-measure fields (see \S\ref{sec:smirnov}).
\end{remark}

\begin{defn}
  Let $\FF \in \mathcal{DM}^{\ext}(\Omega)$ and $E \subset \Omega$ a Borel subset.
  The \emph{normal trace} of $\FF$ on $\partial E$ is defined as the linear functional
  \begin{equation}
    \langle \FF \cdot \nu, \phi \rangle_{\partial E} = - \int_E \nabla\phi\cdot\d\FF - \int_E \phi\,\d(\div \FF) \quad\mbox{for all $\phi \in \CC_{\rmb}^1(\Omega)$}.
  \end{equation} 
  If the normal trace can be represented as a measure on $\partial E$, we will denote it by $(\FF \cdot \nu)_{\partial E}$.
\end{defn}
We will adopt the convention of taking the inner unit normal as in \cite{ChenEtAl2024}.
By considering the restriction of $\phi \in \CC^1_{\rmc}(\mathbb R^n)$ to $\Omega$, as distributions in $\mathbb R^n$ we have
\begin{equation}\label{eq:normaltrace_distributional}
  \langle \FF \cdot \nu, \,\cdot\,\rangle_{\partial E} = \div(\mathbbm{1}_E\FF) -\mathbbm{1}_E\div \FF,
\end{equation} 
so we see that the normal trace is represented by a measure if and only if $\mathbbm{1}_U \FF \in \mathcal{DM}^{\ext}(\mathbb R^n)$.

We will briefly recall some useful properties of the normal trace.
Given an open set $\Omega \subset \mathbb R^n$, for each $\delta>0$ we set 
\begin{equation}\label{eq:domain_approx}
  \widetilde{\Omega}^{\delta} := \Big\{ x \in \Omega : \lvert x\rvert < \frac1{\delta}, \ \ \dist(x,\partial\Omega)>\delta\Big\}.
\end{equation} 
Observe that $\widetilde\Omega^{\delta_2} \Subset \widetilde\Omega^{\delta_1} \Subset \Omega$ for all $\delta_2 > \delta_1>0$ and $\bigcup_{\delta>0} \widetilde\Omega^{\delta} = \Omega$.

\begin{lem}\label{lem:trace_measure_approx}
  Let $\Omega \subset \mathbb R^n$ be an open set and $\FF \in \mathcal{DM}^{\ext}(\Omega)$. Then for $\mathcal L^1$-a.e.\,$\delta>0$, the normal trace $(\FF \cdot \nu)_{\partial\widetilde\Omega^{\delta}}$ is represented by a measure on $\partial\widetilde\Omega^{\delta}$.
  Moreover, for any such $\delta>0$, we have $\mathbbm{1}_{\widetilde{\Omega}^{\delta}}\FF \in \mathcal{DM}^{\ext}(\Omega)$ with
  \begin{equation}\label{eq:approx_divF}
    \div(\mathbbm{1}_{\widetilde{\Omega}^{\delta}}\FF) = \mathbbm{1}_{\widetilde{\Omega}^{\delta}} \div \FF + (\FF \cdot \nu)_{\partial\widetilde{\Omega}^{\delta}}.
  \end{equation} 
\end{lem}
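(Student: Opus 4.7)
The plan is to realise the family $\{\widetilde\Omega^\delta\}_{\delta > 0}$ as the superlevel sets of a single continuous function and then apply a slicing argument to the divergence-measure field. Define $\sigma\colon \Omega \to (0, \infty]$ by $\sigma(x) := \min(\dist(x, \partial\Omega), |x|^{-1})$ for $x \ne 0$ (and $\sigma(0) := +\infty$ if $0 \in \Omega$). Then $\widetilde\Omega^\delta = \{\sigma > \delta\}$ for all $\delta > 0$, and $\sigma$ is Lipschitz on every sublevel set $\{\sigma \leq M\}$, with constant depending on $M$. The first ingredient is a product rule: for any $\zeta \in \Lip_{\rmc}(\Omega)$ and $\phi \in \CC^1_{\rmb}(\Omega)$, the function $\zeta\phi$ is Lipschitz with compact support in $\Omega$, so mollifying and testing against $\div\FF$ yields, in the limit,
\[
-\int_\Omega \zeta \nabla\phi \cdot \d\FF - \int_\Omega \zeta\phi \, \d(\div\FF) = \Lambda_\zeta(\phi), \qquad \Lambda_\zeta(\phi) := \lim_{\rho \downarrow 0} \int_\Omega \phi \, (\nabla\zeta)_\rho \cdot \d\FF,
\]
together with the control $|\Lambda_\zeta(\phi)| \leq \|\phi\|_\infty \|\nabla\zeta\|_{\LL^\infty} |\FF|(\spt\zeta)$.

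Apply this identity with $\zeta = \zeta^\delta_\eps(x) := \min\!\bigl(\eps^{-1}(\sigma(x) - \delta)_+, 1\bigr) \in \Lip_{\rmc}(\Omega)$ and integrate against $\psi(\delta)\,\d\delta$ for $\psi \in \CC^\infty_{\rmc}((0,\infty))$. Fubini, together with $\int_0^\infty \psi(\delta)\, \zeta^\delta_\eps(x) \,\d\delta \to \Psi(\sigma(x))$ as $\eps \downarrow 0$ (where $\Psi(t) := \int_0^t \psi$), gives via dominated convergence the exact identity
\[
\int_0^\infty \psi(\delta)\, T_\delta(\phi)\,\d\delta = \Lambda_{\Psi \circ \sigma}(\phi), \qquad T_\delta(\phi) := -\!\int_{\widetilde\Omega^\delta}\!\nabla\phi \cdot \d\FF - \!\int_{\widetilde\Omega^\delta}\!\phi\,\d(\div\FF).
\]
Specialising to $\psi = \eps^{-1}\mathbbm 1_{[\delta_0, \delta_0+\eps]}$ and letting $\eps \downarrow 0$, the left-hand side converges to $T_{\delta_0}(\phi)$ for $\mathcal{L}^1$-a.e.\,$\delta_0$ by Lebesgue differentiation of the bounded measurable map $\delta \mapsto T_\delta(\phi)$, whereas the right-hand side equals $\Lambda_{\zeta^{\delta_0}_\eps}(\phi)$ and is dominated by $\|\phi\|_\infty \Lip(\sigma)\cdot \eps^{-1} |\FF|(\{\delta_0 < \sigma < \delta_0 + \eps\})$. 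Since $\delta \mapsto |\FF|(\{\sigma > \delta\})$ is bounded and monotone, its classical derivative is finite for a.e.\,$\delta_0 > 0$, so the last quantity remains bounded in $\eps$.

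Hence $|T_{\delta_0}(\phi)| \leq C_{\delta_0}\|\phi\|_\infty$ for a.e.\,$\delta_0$ and every $\phi \in \CC^1_{\rmc}(\Omega)$, and picking the full-measure set common to a countable dense family of test functions, $T_{\delta_0}$ extends by density to a Radon measure on $\overline\Omega$, supported on $\partial\widetilde\Omega^{\delta_0}$ by~\eqref{eq:normaltrace_distributional}. The remaining identity $\div(\mathbbm 1_{\widetilde\Omega^\delta}\FF) = \mathbbm 1_{\widetilde\Omega^\delta}\div \FF + (\FF\cdot\nu)_{\partial\widetilde\Omega^\delta}$ is then~\eqref{eq:normaltrace_distributional} read as an equality of finite Radon measures, yielding $\mathbbm 1_{\widetilde\Omega^\delta}\FF \in \mathcal{DM}^{\ext}(\Omega)$. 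The main technical obstacle is the Lebesgue-differentiation step in $\delta$: one must guarantee that a single full-measure set of $\delta_0$ works simultaneously for all test functions $\phi$, which is handled by restricting to a countable dense family of $\phi$ and intersecting the corresponding null sets together with the null set of non-differentiability of $\delta \mapsto |\FF|(\{\sigma > \delta\})$.
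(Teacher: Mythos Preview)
Your argument is correct and follows the same underlying idea as the paper's proof—slicing via the superlevel sets of the auxiliary function $\sigma$ and using a.e.\ differentiability of the monotone map $\delta\mapsto|\FF|(\{\sigma>\delta\})$ to bound the shell mass—except that the paper outsources these two ingredients to \cite[Lem.~7.3, 7.4]{ChenEtAl2024} and \cite[Thm.~2.4(ii)]{Silhavy2009} respectively, while you carry them out explicitly. One small correction: the claim that $\sigma$ is Lipschitz on the sublevel sets $\{\sigma\leq M\}$ is not quite right as stated (these sets may be disconnected and $\sigma$ need not extend to a global Lipschitz function there), but what you actually need and use, namely that $\zeta^{\delta_0}_\eps\in\Lip_{\rmc}(\Omega)$, follows since $|\nabla\sigma|\leq\max(1,(\delta_0+\eps)^2)$ a.e.\ on the shell $\{\delta_0<\sigma<\delta_0+\eps\}$ and $\zeta^{\delta_0}_\eps$ is locally constant off that shell.
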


\begin{proof}
  By \cite[Lem.\,7.3, 7.4]{ChenEtAl2024}, for $\mathcal L^1$-a.e.\,$\delta>0$ we have
  \begin{equation}
    \limsup_{\eps \to 0} \frac1{\eps} \lvert \FF \rvert(\{ x \in \widetilde\Omega^{\delta} : \dist(x,\partial\widetilde\Omega^{\delta}) < \eps)\}) < \infty,
  \end{equation} 
  so by \cite[Thm.\,2.4(ii)]{Silhavy2009} the normal trace of $\FF$ on $\partial\widetilde\Omega^{\delta}$ is a measure, and \eqref{eq:approx_divF} follows from \eqref{eq:normaltrace_distributional}.
\end{proof}

\begin{definition}\label{defn:pairing_measure}
  Let $\FF \in \mathcal{DM}^{\ext}(\Omega)$ and $\phi \in \WW^{1,\infty}(\Omega)$.
  We define the \emph{pairing measure} between $\nabla \phi$ and $\FF$ via
  \begin{equation}\label{eq:pairing_def}
    \overline{\nabla \phi \cdot \FF} := \div(\phi \FF) - \phi \div \FF \quad\text{ in } \mathcal D'(\Omega).
  \end{equation} 
  Moreover, if $\phi \in \CC^1_{\rmb}(\Omega)$, we can understand $\overline{\nabla \phi \cdot \FF} = \nabla \phi \cdot \FF$ in the classical pointwise sense.
\end{definition}

Although $\overline{\nabla \phi \cdot \FF}$ is defined as a distribution in $\Omega$, as the terminology suggests, it is in fact represented by a measure, which is the content of the following lemma.
We refer to \cite[Prop.\,5.2]{Silhavy2008} for a proof (see also \cite[Thm.\,2.7]{ChenEtAl2024}).

\begin{lem}[Product rule, {\cite{Silhavy2009}}]\label{lem:product_rule}
  Let $\FF \in \mathcal{DM}^{\ext}(\Omega)$ and $\phi \in \WW^{1,\infty}(\Omega)$.
  Then $\phi \FF \in \mathcal{DM}^{\ext}(\Omega)$, so $\overline{\nabla\cdot\FF}$ is also a finite measure on $\Omega$ via \eqref{eq:pairing_def}.
  Moreover for any $A \subset \Omega$ open, if $(\phi_k)_k \in \CC_{\rmb}^1(A)$ such that $\phi_k \to \phi\rvert_A$ weakly${}^{\ast}$ in $\WW^{1,\infty}(A)$, then
  \begin{equation}\label{eq:productrule_limit}
    \wslim_{k \to \infty} \nabla \phi_k \cdot \FF \mres A = \overline{\nabla \phi \cdot \FF} \mres A
  \end{equation} 
  as measures.
  We also have
  \begin{equation}\label{eq:pairing_abscont}
    \lvert \overline{\nabla \phi \cdot \FF}\rvert \leq \lVert \nabla\phi\rVert_{\LL^{\infty}(\Omega)}\lvert \FF\rvert
  \end{equation} 
  as measures in $\Omega$.
\end{lem}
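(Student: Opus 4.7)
The plan is three-staged: handle the smooth case by direct calculation, extend to arbitrary $\phi \in \WW^{1,\infty}(\Omega)$ by mollification, and then deduce stability under weak${}^{\ast}$ convergent sequences.

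\textbf{Step 1 (smooth case).} For $\phi \in \CC^1_{\rmb}(\Omega)$, the product $\phi\FF$ is manifestly a finite vector-valued measure on $\Omega$. Testing against $\psi \in \CC^1_{\rmc}(\Omega)$ and expanding $\nabla(\phi\psi) = \phi\nabla\psi + \psi\nabla\phi$ before integrating against $\FF$ yields the classical product rule $\div(\phi\FF) = \phi\,\div\FF + \nabla\phi\cdot\FF$ as measures. This establishes $\phi\FF \in \mathcal{DM}^{\ext}(\Omega)$ together with the coordinate-wise bound $\lvert \nabla\phi\cdot\FF\rvert \leq \lVert\nabla\phi\rVert_{\LL^{\infty}}\lvert\FF\rvert$.

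\textbf{Step 2 (general $\phi$).} Since the claim is local, I would fix $A \Subset \Omega$ and mollify $\phi$ at scale $\varepsilon$ on a slightly larger set to produce $\phi_\varepsilon \in \CC^{\infty}(A)$ with $\lVert\nabla\phi_\varepsilon\rVert_{\LL^{\infty}(A)} \leq \lVert\nabla\phi\rVert_{\LL^{\infty}(\Omega)}$, $\phi_\varepsilon \to \phi$ uniformly on $A$, and $\nabla\phi_\varepsilon \weaksto \nabla\phi$ in $\LL^{\infty}(A)$. Applying Step 1 to each $\phi_\varepsilon$, the measures $\nabla\phi_\varepsilon\cdot\FF$ are uniformly bounded in $\mathcal M(A)$ and satisfy
\begin{equation*}
  \nabla\phi_\varepsilon\cdot\FF = \div(\phi_\varepsilon\FF) - \phi_\varepsilon\,\div\FF \quad \text{in } \mathcal D'(A).
\end{equation*}
Uniform convergence of $\phi_\varepsilon$ and boundedness give $\phi_\varepsilon\FF \to \phi\FF$ and $\phi_\varepsilon\,\div\FF \to \phi\,\div\FF$ in total variation, so the right-hand side converges distributionally to $\div(\phi\FF) - \phi\,\div\FF =: \overline{\nabla\phi\cdot\FF}$. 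Combined with the uniform total variation bound, weak${}^{\ast}$ compactness of $\mathcal M(A)$ forces this distributional limit to be a finite measure, and lower semicontinuity of the total variation along weak${}^{\ast}$ limits yields \eqref{eq:pairing_abscont} on $A$. Exhausting $\Omega$ by such $A$ globalises the statement.

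\textbf{Step 3 (weak${}^{\ast}$ stability).} Given $(\phi_k) \subset \CC^1_{\rmb}(A)$ with $\phi_k \weaksto \phi\rvert_A$ in $\WW^{1,\infty}(A)$, the sequence is uniformly Lipschitz and uniformly bounded, so by Arzelà--Ascoli together with uniqueness of the weak${}^{\ast}$ limit, $\phi_k \to \phi$ locally uniformly on $A$. Hence $\phi_k\FF \weaksto \phi\FF$ and $\phi_k\,\div\FF \weaksto \phi\,\div\FF$ as measures on $A$, and applying Step 1 to each $\phi_k$ and taking distributional limits in the product rule delivers \eqref{eq:productrule_limit}.

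\textbf{Main obstacle.} The only genuine subtlety is to identify the distributional limit in Step 2 as a \emph{finite measure} rather than merely a distribution; this hinges on the uniform total variation bound $\lvert\nabla\phi_\varepsilon\cdot\FF\rvert \leq \lVert\nabla\phi\rVert_{\LL^{\infty}(\Omega)}\lvert\FF\rvert$ supplied by the smooth case, which in turn passes to the limit via lower semicontinuity to give \eqref{eq:pairing_abscont}.
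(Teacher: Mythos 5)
Your proof is correct. The paper does not prove this lemma itself but defers to \cite[Prop.\,5.2]{Silhavy2008} and \cite[Thm.\,2.7]{ChenEtAl2024}, and the argument there is the same one you give: the classical product rule for smooth $\phi$, mollification plus the uniform bound $\lvert \nabla\phi_\eps\cdot\FF\rvert \le \lVert\nabla\phi\rVert_{\LL^\infty}\lvert\FF\rvert$ to identify the distributional limit as a finite measure, and weak${}^{\ast}$ stability from the uniform total-variation bound together with local uniform convergence of the $\phi_k$.
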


Using this pairing measure, we can extend the normal trace to be defined on $\WW^{1,\infty}(\Omega)$ as
\begin{equation}
  \langle \FF \cdot \nu, \phi \rangle_{\partial E} = - \int_{E} \,\d(\overline{\nabla \phi \cdot \FF}) - \int_E \phi \,\d(\div \FF) \quad\mbox{for all $\phi \in \WW^{1,\infty}(\Omega)$.}
\end{equation} 
With this we can formulate the following result from \cite{Silhavy2009} (see also \cite[Lem.\,10.2]{ChenEtAl2024}) we mentioned in the introduction.

\begin{lem}\label{lem:normal_lipdual}
  Let $\FF \in \mathcal{DM}^{\ext}(\Omega)$ and $U \subset \Omega$ be an open set. Then there exists a bounded linear operator
  \begin{equation}
    \mathrm{N}_U(\FF) \colon \mathcal{DM}^{\ext}(\Omega) \to \Lip_{\mathrm{b}}(\partial U)^{\ast}
  \end{equation} 
  which satisfies
  \begin{equation}
    \langle \mathrm{N}_U(\FF), \phi\rvert_{\partial U} \rangle = \langle \FF \cdot \nu, \phi \rangle_{\partial U} \quad\mbox{for all $\phi \in \Lip_{\mathrm{b}}(\Omega)$.}
  \end{equation} 
\end{lem}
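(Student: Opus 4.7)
The plan is to construct $\mathrm N_U(\FF)$ by extending the distributional normal trace continuously from test functions to $\WW^{1,\infty}(\Omega)$, showing the resulting value $\langle\FF\cdot\nu,\phi\rangle_{\partial U}$ depends only on boundary data, and then passing to $\Lip_\rmb(\partial U)^\ast$ via McShane extension. The continuity is immediate from Lemma~\ref{lem:product_rule}: for $\phi\in\WW^{1,\infty}(\Omega)$ the pairing measure $\overline{\nabla\phi\cdot\FF}$ is a finite Radon measure with $|\overline{\nabla\phi\cdot\FF}|\leq\lVert\nabla\phi\rVert_{\LL^\infty(\Omega)}|\FF|$ by \eqref{eq:pairing_abscont}, so that
\[|\langle\FF\cdot\nu,\phi\rangle_{\partial U}|\leq\lVert\nabla\phi\rVert_{\LL^\infty(\Omega)}|\FF|(U)+\lVert\phi\rVert_{\LL^\infty(\Omega)}|\div\FF|(U)\leq\lVert\phi\rVert_{\WW^{1,\infty}(\Omega)}\lVert\FF\rVert_{\mathcal{DM}^{\ext}(\Omega)}.\]
This bound will also yield the operator norm on $\Lip_\rmb(\partial U)^\ast$.

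The main obstacle is well-definedness: showing $\langle\FF\cdot\nu,\phi\rangle_{\partial U}=0$ whenever $\phi\in\Lip_\rmb(\Omega)$ satisfies $\phi\rvert_{\partial U}=0$. I would argue by truncation. With $L=\Lip(\phi)$, set $\phi_\eta:=\operatorname{sgn}(\phi)(|\phi|-L\eta)_+$; then $\phi_\eta\in\Lip_\rmb(\Omega)$ with $\Lip(\phi_\eta)\leq L$, and since $|\phi(x)|\leq L\dist(x,\partial U)$ for $x\in U$ (Lipschitz plus vanishing on $\partial U$), $\phi_\eta$ vanishes on $\{x\in U:\dist(x,\partial U)\leq\eta\}$. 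For each fixed $\eta>0$, approximate $\phi_\eta\rvert_U$ by a sequence $\psi_k\in\CC^1_\rmc(U)$, all supported in a common compact subset of $U$, with $\psi_k\weakstarto\phi_\eta\rvert_U$ in $\WW^{1,\infty}(U)$ (mollify; for unbounded $U$ prepend a smooth cutoff at infinity). The defining identity for $\div\FF$ gives $\int_U\nabla\psi_k\cdot\d\FF=-\int_U\psi_k\,\d(\div\FF)$, and passing $k\to\infty$ via Lemma~\ref{lem:product_rule} (paired against a bump equal to one on the common support) together with dominated convergence on the right-hand side yields $\langle\FF\cdot\nu,\phi_\eta\rangle_{\partial U}=0$.

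The most delicate step is then the limit $\eta\to 0$, since weak${}^\ast$ convergence of Radon measures does not by itself yield convergence of mass on the non-closed set $U$. To bypass this, note \eqref{eq:pairing_abscont} gives a uniform $\LL^\infty(|\FF|)$-bound of $L$ on the Radon--Nikod\'ym densities $g_\eta:=\d\overline{\nabla\phi_\eta\cdot\FF}/\d|\FF|$, which combined with weak${}^\ast$ convergence of the pairing measures from Lemma~\ref{lem:product_rule} (and density of $\CC_\rmc(\Omega)$ in $\LL^1(|\FF|)$) upgrades to $g_\eta\weakstarto g:=\d\overline{\nabla\phi\cdot\FF}/\d|\FF|$ weakly${}^\ast$ in $\LL^\infty(|\FF|)$. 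Pairing against $\mathbbm 1_U\in\LL^1(|\FF|)$ (valid since $|\FF|(U)<\infty$) gives $\int_U\d(\overline{\nabla\phi_\eta\cdot\FF})\to\int_U\d(\overline{\nabla\phi\cdot\FF})$, while $\int_U\phi_\eta\,\d(\div\FF)\to\int_U\phi\,\d(\div\FF)$ by dominated convergence, whence $\langle\FF\cdot\nu,\phi\rangle_{\partial U}=0$. Finally, any $f\in\Lip_\rmb(\partial U)$ admits a truncated McShane extension $\widetilde f\in\Lip_\rmb(\Omega)$ with $\Lip(\widetilde f)=\Lip(f)$ and $\lVert\widetilde f\rVert_{\LL^\infty}\leq\lVert f\rVert_{\LL^\infty}$, and setting $\langle\mathrm N_U(\FF),f\rangle:=\langle\FF\cdot\nu,\widetilde f\rangle_{\partial U}$ produces a well-defined bounded linear functional with $\lVert\mathrm N_U(\FF)\rVert_{\Lip_\rmb(\partial U)^\ast}\leq\lVert\FF\rVert_{\mathcal{DM}^{\ext}(\Omega)}$; linearity in $\FF$ is immediate.
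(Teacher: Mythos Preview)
The paper does not actually prove this lemma; it is stated as a known result with a citation to \v{S}ilhav\'y \cite{Silhavy2009} (see also \cite[Lem.\,10.2]{ChenEtAl2024}). So there is no direct comparison to make with a proof in the paper. Your argument is essentially correct and self-contained, with two minor technical points worth flagging.

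First, your appeal to Lemma~\ref{lem:product_rule} for the weak${}^\ast$ convergence $\overline{\nabla\phi_\eta\cdot\FF}\weakstarto\overline{\nabla\phi\cdot\FF}$ is not quite on the nose: that lemma only asserts convergence along $\CC^1_\rmb$ approximants, whereas your $\phi_\eta$ are merely Lipschitz. The conclusion still holds, but one should instead argue directly from the definition $\overline{\nabla\phi_\eta\cdot\FF}=\div(\phi_\eta\FF)-\phi_\eta\div\FF$, using that $\phi_\eta\to\phi$ uniformly and the total variations are uniformly bounded. Second, the ``common compact support'' claim in step (a) is false for unbounded $U$, but your own density trick (upgrading weak${}^\ast$ convergence of measures to weak${}^\ast$ convergence of densities in $\LL^\infty(\lvert\FF\rvert)$, then testing against $\mathbbm 1_U$) handles this limit just as well as the $\eta\to 0$ one, so the bump-function detour is unnecessary.

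More interestingly, your $\LL^\infty(\lvert\FF\rvert)$ duality argument in the $\eta\to 0$ step is not specific to this family: it shows that whenever $\phi_k\weakstarto\phi$ in $\WW^{1,\infty}(\Omega)$, the densities $g_k:=\d(\overline{\nabla\phi_k\cdot\FF})/\d\lvert\FF\rvert$ converge weakly${}^\ast$ in $\LL^\infty(\lvert\FF\rvert)$, which immediately gives setwise convergence $\overline{\nabla\phi_k\cdot\FF}(E)\to\overline{\nabla\phi\cdot\FF}(E)$ for every Borel $E$. This is precisely the content of Theorem~\ref{thm:setwise_conv}, which the paper proves via Smirnov's decomposition theorem. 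Your route is considerably more elementary---it uses only \eqref{eq:pairing_abscont} and density of $\CC_\rmc(\Omega)$ in $\LL^1(\lvert\FF\rvert)$---and bypasses the Smirnov machinery entirely for that result. The paper's approach, on the other hand, yields the pointwise representation of the pairing (Theorem~\ref{thm:pairing_pointwise}) as well, which your argument does not recover.
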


\subsection{The predual of the space of Lipschitz functions}
Given any subset $X \subset \mathbb R^n$, we will denote by $\Lip_{\rmb}(X)$ the space of Lipschitz continuous functions $\phi$ on $X$ which are bounded in that
\begin{equation}
\lVert \phi \rVert_{\Lip_{\rmb}(X)} := \max\Big\{\lVert \phi\rVert_{\LL^{\infty}(X)},\,\Lip(\phi,X)\Big\} < \infty.
\end{equation} 
This norm is equivalent to the more standard choice $\lVert \phi\rVert_{\LL^{\infty}(X)}+\Lip(\phi,X)$, however the above form will be more convenient in describing the predual.
We observe that if $\phi \in \Lip_{\rmb}(X)$, where $X \subset \mathbb R^n$ is any set, we have $\phi$ is uniformly continuous and hence admits a unique continuous extension to $\overline X$, which is remains Lipschitz continuous with the same norm. This gives a natural identification $\Lip_{\rmb}(X) \cong \Lip_{\rmb}(\overline X)$.

Note that for $U \subset \mathbb R^n$ open we have $\Lip_{\rmb}(U) \subset \WW^{1,\infty}(U)$; while equality holds for domains satisfying Definition~\ref{defn:intro_lrc} (see \cite[Thm.\,4.1]{Heinonen2005}), the inclusion is in general strict.
We will show that both spaces can be identified as a dual space, and thus can be equipped with the topology of weak${}^{\ast}$-convergence.

\begin{lem}\label{lem:w1infty_predual}
  Let $U \subset \mathbb R^n$ be an open set. Then there is an isometric isomorphism
  $ \WW^{1,\infty}(U) \cong \WW^{-1,1}(U)^{\ast} $
  induced by the pairing
  \begin{equation}\label{eq:w1infty_pairing}
    \langle g,\phi\rangle = \int_{U} f_0 \phi \,\d x  - \sum_{i=1}^n \int_{U} f_i \partial_{x_i}\phi \,\d x, \quad \mbox{for all }\phi \in \WW^{1,\infty}(U),\ g \in \WW^{-1,1}(U),
  \end{equation}
  where $g = f_0 + \sum_{i=1}^n \partial_{x_i}f_i$ with $f_0,f_1,\cdots,f_n \in \LL^1(U)$.
\end{lem}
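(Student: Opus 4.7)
The plan is to identify $\WW^{1,\infty}(U)$ with a weak-star closed subspace of a dual Banach space and invoke the standard correspondence between such subspaces and quotients of the predual.

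First, I would consider the isometric embedding
\begin{equation*}
  J \colon \WW^{1,\infty}(U) \hookrightarrow \LL^\infty(U;\mathbb{R}^{n+1}), \qquad J(\phi) := (\phi, -\partial_{x_1}\phi, \ldots, -\partial_{x_n}\phi),
\end{equation*}
where the target carries the maximum norm $\lVert (g_0,\ldots,g_n)\rVert_{\infty} := \max_i \lVert g_i\rVert_{\LL^\infty(U)}$. With this convention one has $\lVert \phi\rVert_{\WW^{1,\infty}(U)} = \lVert J(\phi)\rVert_{\infty}$, so $J$ is an isometry onto its image $V := J(\WW^{1,\infty}(U))$.

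Next, I would verify that $V$ is weak-star closed in $\LL^\infty(U;\mathbb{R}^{n+1}) \cong \LL^1(U;\mathbb{R}^{n+1})^{\ast}$. A tuple $(g_0,\ldots, g_n)$ belongs to $V$ precisely when the distributional equations $g_i + \partial_{x_i} g_0 = 0$ in $\mathcal{D}'(U)$ hold for each $i = 1,\ldots, n$. Tested against a single $\psi \in \CC^\infty_{\rmc}(U)$, each equation is the vanishing of a weak-star continuous linear functional on $\LL^\infty(U;\mathbb{R}^{n+1})$, namely the pairing with a suitable $\LL^1$-tuple built from $\psi$ and $\partial_{x_i}\psi$. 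Since $V$ is the intersection of these weak-star closed hyperplanes (over all $\psi$ and $i$), it is weak-star closed.

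With this in hand, I would apply the standard Banach-space duality: any weak-star closed subspace $V \subset X^{\ast}$ is isometrically isomorphic via the restriction map to $(X / V_\perp)^{\ast}$, where $V_\perp := \{x \in X : \langle v, x\rangle = 0 \mbox{ for all } v \in V\}$. Taking $X = \LL^1(U;\mathbb{R}^{n+1})$, one computes
\begin{equation*}
  V_\perp = \Big\{ (f_0, \ldots, f_n) \in \LL^1(U; \mathbb{R}^{n+1}) : \int_U f_0 \phi \,\d x - \sum_{i=1}^n \int_U f_i \partial_{x_i}\phi \,\d x = 0 \mbox{ for all } \phi \in \WW^{1,\infty}(U) \Big\},
\end{equation*}
yielding $\WW^{1,\infty}(U) \cong V \cong \bigl(\LL^1(U;\mathbb{R}^{n+1})/V_\perp\bigr)^{\ast}$ isometrically via the stated pairing.

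The main obstacle I anticipate is the final identification of $\LL^1(U;\mathbb{R}^{n+1})/V_\perp$ with $\WW^{-1,1}(U)$. Writing $Q(f_0,\ldots,f_n) := f_0 + \sum_{i=1}^n \partial_{x_i} f_i$ for the natural distributional surjection, the inclusion $V_\perp \subseteq \ker Q$ is immediate by testing against $\phi \in \CC^\infty_{\rmc}(U) \subset \WW^{1,\infty}(U)$. The reverse inclusion, which is precisely the well-definedness of the pairing formula on $\WW^{-1,1}(U)$, amounts to extending the distributional identity $Q(f_0,\ldots,f_n) = 0$ to the integral identity against arbitrary $\phi \in \WW^{1,\infty}(U)$. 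Depending on the convention adopted for $\WW^{-1,1}(U)$ in the paper, this may either follow from an approximation argument or be built directly into the definition of $\WW^{-1,1}(U)$ as the appropriate quotient, in which case the identification is formal.
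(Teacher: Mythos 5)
The paper does not prove this lemma itself --- it cites \cite{AmbrosioEtAl2000} --- and your argument is precisely the standard one underlying that reference: realise $\WW^{1,\infty}(U)$ as a weak${}^{\ast}$-closed subspace $V$ of $\LL^{\infty}(U;\mathbb R^{n+1})$ and apply the duality $V \cong (X/V_{\perp})^{\ast}$. The embedding, the weak${}^{\ast}$-closedness via testing against $\CC^{\infty}_{\rmc}$-built $\LL^1$-tuples, and the computation of $V_{\perp}$ are all correct.

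On the hedge in your final paragraph: only the second of your two alternatives is viable, and it is worth seeing why. The reverse inclusion $\ker Q \subseteq V_{\perp}$ is genuinely \emph{false} for general open $U$: take $U=(0,1)^n$, $f_0=0$, $f_1=\mathbbm{1}_U$, $f_2=\cdots=f_n=0$, so that $Q(f)=\partial_{x_1}\mathbbm{1}_U=0$ in $\mathcal D'(U)$, yet pairing against $\phi(x)=x_1\in\WW^{1,\infty}(U)$ gives $-\int_U\partial_{x_1}\phi\,\d x=-1\neq 0$. So no approximation argument can close this step, and $V_{\perp}\subsetneq\ker Q$ in general. Consequently the identity $g=f_0+\sum_i\partial_{x_i}f_i$ in the statement must be read as an identity of functionals on $\WW^{1,\infty}(U)$ (equivalently, $\WW^{-1,1}(U):=\LL^1(U;\mathbb R^{n+1})/V_{\perp}$ with the quotient norm, which is exactly the stated infimum norm), not as an identity of distributions on $U$; with that reading your identification is indeed formal and the proof is complete. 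This convention is also the one the paper implicitly uses later (Proposition~\ref{prop:dm1_rep}), where an element of $\WW^{-1,1}(U)$ is taken to \emph{act} on all of $\WW^{1,\infty}(U)$ through a chosen representative $(g_0,\ldots,g_n)$.
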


Here $\WW^{-1,1}(U)$ is equipped with the norm $\lVert g \rVert_{\WW^{-1,1}(U)} = \inf\{\sum_{i=0}^n \lVert f_i \rVert_{\LL^1(U)}\}$, where the infimum is taking over all such representations of $g$.
We refer to \cite[Ex.\,2.3,Rmk.\,3.12]{AmbrosioEtAl2000} for a proof.

From this we see that $\WW^{1,\infty}(U)$ is the dual of a separable space, and we will also use this precise description in \S\ref{sec:dm1}.
For a general set $X \subset\mathbb R^n$, $\Lip_{\rmb}(X)$ also has the structure of a dual space, which was described by \textsc{Arens \& Eells} in \cite{ArensEells1956}; for this we introduce the following definition.

\begin{defn}\label{defn:arenseells}
  Let $X \subset \mathbb R^n$ be a closed set. We define the \emph{Arens-Eells space} $\text{\AE}(X)$ to be the completion of the space
  \begin{equation}
    \text{\AE}_0(X) := \mathrm{span}\{ \delta_x : x \in X \} \subset \Lip_{\rmb}(X)^{\ast}
  \end{equation} 
  with respect to the dual norm on $\Lip(X)^{\ast}$, which we denote by $\lVert \cdot \rVert_{\text{\AE}(X)}$.
  We also denote the induced pairing by $\langle \cdot, \cdot \rangle_{\text{\AE}}$, which satisfies
  \begin{equation}\label{eq:ae_pairing}
    \langle m, \phi \rangle_{\text{\AE}} = \sum_{i=1}^k a_i \phi(p_i), \quad\mbox{where } \ m = \sum_{i=1}^k a_i \delta_{p_i} \in \text{\AE}_0(X),\ \phi \in \Lip_{\rmb}(X).
  \end{equation} 
\end{defn}

We will need a more concrete description in \S \ref{sec:trace_surj};
for this, following \cite[\S 3]{Weaver2018} we also introduce the following related space.
Note that our conventions differ from the aforementioned text, since we do not consider pointed spaces.

\begin{defn}
  Let $X \subset \mathbb R^n$ be closed, and fix $e \notin X$.
  Then writing $\widetilde X = X \cup \{e\}$, define $A(X,e)$ as the completion of the space
  \begin{equation}
    A_0(X,e) = \mathrm{span}\{ \delta_{q} - \delta_{p} : p,q \in \widetilde X \}
  \end{equation} 
  with respect to the norm
  \begin{equation}\label{eq:AE_equivalent_norm}
    \lVert m \rVert_A = \inf\left\{ \sum_{i=1}^k \lvert a_i\rvert \rho(p_i,q_i) : m = \sum_{i=1}^k a_i(\delta_{q_i} - \delta_{p_i}),
    \begin{matrix} a_i \in \mathbb R, \, p_i,q_i \in \widetilde X \\  k \in \mathbb N, \, 1 \leq i \leq k\end{matrix} \right\},
  \end{equation} 
  where $\rho$ is a modified metric on $\widetilde X = X \cup \{e\}$ given by
  \begin{alignat}{3}\label{eq:metric_rho}
    \rho(p,q) &= \min\{\lvert p -q\rvert, 2 \} \quad&&\mbox{for all $p,q \in X$}, \\
    \rho(p,e) &= 1 \quad&&\mbox{for all $p \in X$}.\label{eq:metric_rho2}
  \end{alignat}
  We also denote the induced pairing by $\langle \cdot, \cdot\rangle_A$, which satisfies
  \begin{equation}\label{eq:dense_pairing}
    \langle m, \phi \rangle_A = \sum_{i=1}^k a_i (\phi(q_i)-\phi(p_i)), \quad m = \sum_{i=1}^k a_i (\delta_{q_i}-\delta_{p_i}) \in A_0(X,e), \ \phi \in \Lip_{\rmb}(X),
  \end{equation} 
  understanding that $\phi(e) = 0$.
\end{defn}

\begin{prop}\label{prop:ae_isometric}
  Let $X \subset \mathbb R^n$ be a closed set and $e \notin X$. Then,
  \begin{enumerate}[label=(\alph*)]
    \item\label{item:ae_description} $A(X,e) \cong\text{\AE}(X)$ via $m \mapsto m \mres X$ for $m \in A_0(X,e)$, extending by density,

    \item\label{item:ae_dual} $\Lip_{\rmb}(X) \simeq \text{\AE}(X)^{\ast}$ via $\phi \mapsto (m \mapsto \langle m,\phi\rangle_{\text{\AE}})$.
  \end{enumerate}
\end{prop}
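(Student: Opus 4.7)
The plan is to prove (a) via an explicit isometric identification and then deduce (b) from the resulting duality structure. For (a), the core observation is that functions $\phi \in \Lip_{\rmb}(X)$ with $\lVert \phi \rVert_{\Lip_{\rmb}(X)} \leq 1$ are in bijection with $1$-Lipschitz maps $\widetilde\phi \colon (\widetilde X, \rho) \to \mathbb R$ satisfying $\widetilde\phi(e) = 0$, via extension by $\widetilde\phi(e) := 0$. Indeed, $\lVert \phi\rVert_{\LL^\infty} \leq 1$ is equivalent to $\lvert \widetilde\phi(p) - \widetilde\phi(e)\rvert \leq 1 = \rho(p,e)$, while combining with $\Lip(\phi) \leq 1$ yields $\lvert \phi(p) - \phi(q)\rvert \leq \min\{\lvert p-q\rvert, 2\} = \rho(p,q)$; the converse is immediate from \eqref{eq:metric_rho}--\eqref{eq:metric_rho2}. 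I would then define the restriction map $\Phi \colon A_0(X,e) \to \text{\AE}_0(X)$ by $m \mapsto m \mres X$. Identifying each element of $A_0(X,e)$ uniquely with a mean-zero formal sum $\sum_{x \in \widetilde X} c_x \delta_x$, the restriction to $X$ determines it entirely (since $c_e = -\sum_{x \in X} c_x$), so $\Phi$ is a linear bijection. Under $\phi \leftrightarrow \widetilde\phi$, the two pairings agree: $\langle \Phi(m), \phi\rangle_{\text{\AE}} = \sum_{x \in X} c_x \phi(x) = \sum_{x \in \widetilde X} c_x \widetilde\phi(x) = \langle m, \widetilde\phi\rangle_A$, using $\widetilde\phi(e)=0$.

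Taking the supremum over $\lVert\phi\rVert_{\Lip_{\rmb}(X)} \leq 1$ in the previous identity reduces the isometry $\lVert \Phi(m)\rVert_{\text{\AE}} = \lVert m\rVert_A$ to the Kantorovich--Rubinstein-type identity
\begin{equation*}
  \sup_{\widetilde\phi \text{ $1$-Lip on }(\widetilde X,\rho),\,\widetilde\phi(e)=0} \langle m, \widetilde\phi\rangle_A = \lVert m \rVert_A
\end{equation*}
for $m \in A_0(X,e)$. The inequality $\leq$ is immediate from $\lvert\langle m, \widetilde\phi\rangle_A\rvert \leq \sum \lvert a_i\rvert \rho(p_i,q_i)$ applied to any representation $m = \sum a_i(\delta_{q_i}-\delta_{p_i})$ followed by taking the infimum. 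For the reverse $\geq$ bound, I would use Hahn--Banach: for any bounded linear functional $T$ on $(A_0(X,e),\lVert\cdot\rVert_A)$ with $\lVert T\rVert \leq 1$, the function $\widetilde\phi(x) := T(\delta_x - \delta_e)$ is $1$-Lipschitz on $(\widetilde X,\rho)$ (since $\lvert \widetilde\phi(p)-\widetilde\phi(q)\rvert = \lvert T(\delta_p - \delta_q)\rvert \leq \lVert \delta_p - \delta_q\rVert_A \leq \rho(p,q)$), vanishes at $e$, and satisfies $T(m) = \langle m, \widetilde\phi\rangle_A$. Combined with $\lVert m\rVert_A = \sup\{T(m) : \lVert T\rVert \leq 1\}$, this yields the claim. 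Since $\Phi$ is an isometry on the dense subspace $A_0(X,e)$, it extends uniquely to the asserted isometric isomorphism $A(X,e) \cong \text{\AE}(X)$.

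For (b), the dual-norm bound $\lvert\langle m, \phi\rangle_{\text{\AE}}\rvert \leq \lVert m \rVert_{\text{\AE}}\lVert \phi\rVert_{\Lip_{\rmb}(X)}$ shows each $\phi \in \Lip_{\rmb}(X)$ induces a continuous functional $\Psi(\phi) \in \text{\AE}(X)^\ast$ with $\lVert \Psi(\phi)\rVert \leq \lVert \phi\rVert_{\Lip_{\rmb}(X)}$. For the inverse, given $\Lambda \in \text{\AE}(X)^\ast$ I would set $\phi_\Lambda(x) := \Lambda(\delta_x)$; the elementary bounds $\lVert \delta_x\rVert_{\text{\AE}} \leq 1$ and $\lVert \delta_x - \delta_y\rVert_{\text{\AE}} \leq \min\{\lvert x-y\rvert, 2\}$, both direct consequences of the dual definition of $\lVert\cdot\rVert_{\text{\AE}}$ and the form of $\lVert\cdot\rVert_{\Lip_{\rmb}(X)}$, yield $\phi_\Lambda \in \Lip_{\rmb}(X)$ with $\lVert \phi_\Lambda\rVert_{\Lip_{\rmb}(X)} \leq \lVert \Lambda\rVert$. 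The identity $\Psi(\phi_\Lambda) = \Lambda$ then holds on $\text{\AE}_0(X)$ by construction, hence on all of $\text{\AE}(X)$ by density, giving the required isomorphism. The main obstacle I anticipate is the Hahn--Banach/Kantorovich--Rubinstein step in (a); while technically routine, it is essential that the modified metric $\rho$ be used, since \eqref{eq:metric_rho}--\eqref{eq:metric_rho2} are precisely what allow both the $\LL^\infty$-bound and Lipschitz bound in $\lVert\cdot\rVert_{\Lip_{\rmb}(X)}$ to be captured simultaneously by a single Lipschitz condition on $(\widetilde X,\rho)$.
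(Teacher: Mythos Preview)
Your proof is correct and follows the same logical skeleton as the paper's: establish that the $\lVert\cdot\rVert_A$-norm on $A_0(X,e)$ agrees with the dual $\Lip_{\rmb}(X)^\ast$-norm via the restriction map, then deduce (b) from the resulting duality. The difference is that the paper simply cites \cite[Thm.\,3.3, Cor.\,3.4]{Weaver2018} for the isometric identification $A(X,e)^\ast \cong \Lip_{\rmb}(X)$ and reads everything else off from that, whereas you supply a self-contained proof of this identification via the Hahn--Banach/Kantorovich--Rubinstein argument. Your route is more elementary and makes explicit why the modified metric $\rho$ is exactly the right object (capturing both the $\LL^\infty$ and Lipschitz parts of the norm); the paper's route is shorter but opaque unless one consults the reference.
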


\begin{proof}
  By \cite[Thm.\,3.3, Cor.\,3.4]{Weaver2018}, the pairing \eqref{eq:dense_pairing} extends to an isometric isomorphism $A(X,e)^{\ast} \cong \Lip_{\rmb}(X)$.
  In particular this implies that for $m \in \text{\AE}_0(X,e)$,
  \begin{equation}
    \lVert m \mres X \rVert_{\Lip_{\rmb}(X)^{\ast}} = \sup\{\lvert\langle m,\phi\rangle_{A}\rvert  : \lVert\phi\rVert_{\Lip_{\rmb}(X)} \leq 1 \}= \lVert m \rVert_{A}.
  \end{equation} 
  Hence it follows that $A(X,e) \cong \text{\AE}(X)$ by sending each $m \in A_0(X,e)$ to $m \mres X \in \text{\AE}_0(X)$ and extending by density, thereby proving \ref{item:ae_description}.
  Since we also have $\langle m,\phi\rangle_A = \langle m \mres X,\phi \rangle_\text{\AE}$ for all $m \in A_0(X,e)$ and $\phi \in \Lip_{\rmb}(X)$, it follows
  that $\text{\AE}(X)^{\ast} \cong \Lip_{\rmb}(X)$ via the pairing $\langle \cdot,\cdot \rangle_{\text{\AE}}$, establishing \ref{item:ae_dual}.
\end{proof}

\begin{eg}
  If $X \subset \mathbb R^n$ is closed, we have $\mathcal M(X) \subset \text{\AE}(X)$, by noting for that each $\mu \in \mathcal M(X)$, the mapping $\phi \mapsto \int_X \phi \,\d \mu$ is well-defined and weakly${}^{\ast}$-continuous on $\Lip_{\rmb}(X)$.
  However this space is strictly larger in general; if $a \in X$ is an accumulation point of $X$, then we can find a sequence $(a_k)_k \subset X$ converging to $a$ such that $a_k \neq a$ for all $k$. 
  By passing to a subsequence if necessary, assume that $\sum_k \lvert a_k - a\rvert < \infty$.
  Then $m = \sum_{k=1}^{\infty} (\delta_{a_k} - \delta_{a}) \in \text{\AE}(X)$ by noting the series converges absolutely in $\Lip_{\rmb}(X)^{\ast}$.
\end{eg}

\begin{lem}\label{lem:lipb_conv}
  Let $X \subset \mathbb R^n$ be any set, and $\phi_k, \phi \in \Lip_{\rmb}(X)$.
  Then, as $k \to \infty$,
  \begin{equation}
    \phi_k \weaksto \phi\ \mbox{ in } \Lip_{\rmb}(X)  \iff \begin{cases} \phi_k \to \phi \text{ uniformly on bounded subsets of $X$,} \\ \sup_k \lVert \phi_k\rVert_{\Lip_{\rmb}(X)} < \infty.  \end{cases}
  \end{equation} 
\end{lem}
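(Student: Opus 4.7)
The idea is to leverage the duality $\Lip_{\rmb}(X)\simeq \text{\AE}(X)^{\ast}$ from Proposition~\ref{prop:ae_isometric}\ref{item:ae_dual}. For a sequence in a dual Banach space, weak${}^{\ast}$ convergence is equivalent, via Banach--Steinhaus plus a density argument, to uniform boundedness together with pointwise convergence on any dense subset of the predual. Here the distinguished dense subset is precisely $\text{\AE}_0(X) = \mathrm{span}\{\delta_x : x \in X\}$, so testing against Dirac masses reduces weak${}^{\ast}$ convergence to bare pointwise convergence $\phi_k(x)\to \phi(x)$ at every $x \in X$. The only nontrivial step beyond this standard reduction is to upgrade pointwise convergence to uniform convergence on bounded subsets, which will come for free from the uniform Lipschitz bound via an Arzel\`a--Ascoli-type argument.

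\textbf{Forward direction.} Assume $\phi_k \weaksto \phi$ in $\Lip_{\rmb}(X)$. The uniform boundedness principle applied in $\text{\AE}(X)^{\ast}$ yields $M := \sup_k \lVert \phi_k\rVert_{\Lip_{\rmb}(X)} < \infty$. Testing against $\delta_x \in \text{\AE}_0(X) \subset \text{\AE}(X)$ gives $\phi_k(x) \to \phi(x)$ for each $x \in X$. To upgrade to a uniform bounded-set estimate, fix a bounded $B \subset X$; then $B$ is totally bounded in $\mathbb R^n$, so for any $\eps>0$ we can find $x_1,\dots,x_N \in B$ with $\min_j \lvert x - x_j\rvert < \eps$ for every $x \in B$. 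Writing $L = M\vee \lVert\phi\rVert_{\Lip_{\rmb}(X)}$ and picking, for each $x \in B$, a nearest $x_j$, the triangle inequality gives
\begin{equation}
  \lvert \phi_k(x) - \phi(x)\rvert \leq 2L\eps + \max_{1 \leq j \leq N} \lvert \phi_k(x_j)-\phi(x_j)\rvert.
\end{equation}
Since $N$ is finite and $\phi_k(x_j)\to \phi(x_j)$ for each $j$, the maximum tends to $0$, whence $\limsup_k \sup_B \lvert \phi_k - \phi\rvert \leq 2L\eps$; letting $\eps \to 0$ proves uniform convergence on $B$.

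\textbf{Reverse direction.} Suppose now $\phi_k \to \phi$ uniformly on bounded subsets of $X$ with $M := \sup_k \lVert \phi_k\rVert_{\Lip_{\rmb}(X)} < \infty$; note passing to limits in the defining bound gives $\lVert \phi\rVert_{\Lip_{\rmb}(X)} \leq M$. For any finite combination $m = \sum_{i=1}^{N} a_i \delta_{p_i} \in \text{\AE}_0(X)$ the pairing $\langle m, \phi_k\rangle_{\text{\AE}} = \sum_i a_i \phi_k(p_i)$ converges to $\langle m,\phi\rangle_{\text{\AE}}$ (the finitely many $p_i$ lie in a bounded set). For general $m \in \text{\AE}(X)$, density of $\text{\AE}_0(X)$ provides $m_{\eps} \in \text{\AE}_0(X)$ with $\lVert m - m_{\eps}\rVert_{\text{\AE}} < \eps$, and then
\begin{equation}
  \lvert \langle m, \phi_k - \phi\rangle_{\text{\AE}}\rvert
  \leq \lvert \langle m_{\eps}, \phi_k - \phi\rangle_{\text{\AE}}\rvert + 2M\eps.
\end{equation}
Taking $k\to \infty$ and then $\eps \to 0$ gives $\langle m,\phi_k\rangle_{\text{\AE}} \to \langle m,\phi\rangle_{\text{\AE}}$, i.e.\ $\phi_k \weaksto \phi$. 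The argument has no real obstruction; the only mildly delicate point is the pointwise-to-uniform upgrade, handled above by total boundedness paired with the uniform Lipschitz estimate.
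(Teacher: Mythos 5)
Your proof is correct and rests on the same pillars as the paper's: the identification $\Lip_{\rmb}(X)\simeq\text{\AE}(X)^{\ast}$, Banach--Steinhaus for the uniform bound, and the uniform Lipschitz estimate to upgrade pointwise to locally uniform convergence (your explicit $\eps$-net argument simply replaces the paper's appeal to Arzel\`a--Ascoli). The one genuine difference is in the reverse implication: the paper invokes weak${}^{\ast}$ sequential compactness and metrisability of the weak${}^{\ast}$ topology on norm-bounded sets (via separability of the predual), together with uniqueness of the limit, whereas you argue directly by testing against the dense subspace $\text{\AE}_0(X)$ of finite combinations of Dirac masses and using the uniform bound $\lVert\phi_k\rVert_{\Lip_{\rmb}(X)}\leq M$ to control the error from the approximation $m_{\eps}$; your route is more self-contained and avoids the compactness machinery. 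One small point worth adding for completeness: since $\text{\AE}(X)$ is defined in the paper only for closed $X$, you should begin, as the paper does, by using the identification $\Lip_{\rmb}(X)\cong\Lip_{\rmb}(\overline X)$ to reduce to the case where $X$ is closed (your $\eps$-net and density arguments then go through verbatim on $\overline X$).
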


\begin{proof}
  Using the identification $\Lip_{\rmb}(X) \cong \Lip_{\rmb}(\overline X)$, we can assume without loss of generality that $X$ is closed.
  If $\phi_k \weaksto \phi$ weakly${}^{\ast}$ in $\Lip_{\rmb}(X)$, by the Banach-Steinhaus theorem, we have $\lVert \phi_k \rVert_{\Lip_{\rmb}(X)}$ is uniformly bounded in $k$.
  Then by applying the Arzel\`a-Ascoli theorem, there is a subsequence $\phi_{k_j}$ which converges uniformly to $\phi$ on $X \cap B_M(0)$ for each $M \in \mathbb N$, and hence $\phi_{k_j} \to \phi$ uniformly on bounded subsets of $X$.
  Since the limit is unique, this convergence also holds for the entire sequence $\phi_k$.
  
  Conversely since $\Lip_{\rmb}(X)$ is the dual of a separable space, the weak${}^{\ast}$-topology is compact and metrisable on norm-bounded subsets (see \emph{e.g.}\,\cite[Thm.\,3.16, 3.28]{Brezis2011}).
  Therefore $\phi_k$ admits a weakly${}^{\ast}$-convergent subsequence, but since this limit is uniquely determined as $\phi$ using the uniform convergence, the entire sequence $\phi_k$ converges weakly${}^\ast$ to $\phi$.
\end{proof}

We will often use Lemma~\ref{lem:lipb_conv} with $X = [0,1]$, noting that $\Lip_{\rmb}([0,1]) = \WW^{1,\infty}((0,1))$. For general open sets $U$ however, we have a slightly different characterisation for weak${}^{\ast}$ convergence in $\WW^{1,\infty}(U)$.

\begin{lem}\label{lem:w1infty_conv}
  Let $U \subset \mathbb R^n$ be an open set. Then if $\phi_k, \phi \in \WW^{1,\infty}(U)$, as $k \to \infty$,
  \begin{equation}\label{eq:w1infty_conv}
  \phi_k \weaksto \phi\ \mbox{ in } \WW^{1,\infty}(U)  \iff \begin{cases} \phi_k \to \phi \text{ pointwise,} \\ \sup_k \lVert \phi_k\rVert_{\WW^{1,\infty}(U)} < \infty.  \end{cases}
\end{equation} 
In addition, the space $\CC^1_{\rmb}(U)$ is sequentially weakly${}^{\ast}$ dense in $\WW^{1,\infty}(U)$.
\end{lem}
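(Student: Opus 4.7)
The plan is to prove the equivalence in two steps, mirroring Lemma~\ref{lem:lipb_conv} but using the dual realisation $\WW^{1,\infty}(U) \cong \WW^{-1,1}(U)^{\ast}$ from Lemma~\ref{lem:w1infty_predual}. For the forward direction, Banach-Steinhaus delivers uniform boundedness; to extract pointwise convergence at a chosen $x \in U$, I would test the weakly${}^{\ast}$-convergent sequence against the functional $\rho_{\eps}(x - \cdot) \in \LL^1(U) \hookrightarrow \WW^{-1,1}(U)$, with $\eps < \dist(x, \partial U)$ (this corresponds to $f_0 = \rho_{\eps}(x - \cdot)$ and $f_i = 0$ in \eqref{eq:w1infty_pairing}). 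This yields $(\rho_{\eps} \ast \phi_k)(x) \to (\rho_{\eps} \ast \phi)(x)$. Working with the canonical locally Lipschitz representative of each $\WW^{1,\infty}$-function (on any convex subset it is Lipschitz with constant controlled by the $\LL^{\infty}$-norm of the weak gradient), the uniform bound $M := \sup_k \lVert \nabla \phi_k \rVert_{\LL^{\infty}}$ lets me compare $(\rho_{\eps} \ast \phi_k)(x)$ with $\phi_k(x)$ up to an error of order $M\eps$, and similarly for $\phi$; letting $k \to \infty$ and then $\eps \to 0$ yields $\phi_k(x) \to \phi(x)$. The converse is formal and identical in spirit to Lemma~\ref{lem:lipb_conv}: since $\WW^{-1,1}(U)$ is separable, bounded subsets of $\WW^{1,\infty}(U)$ are metrisable in the weak${}^{\ast}$ topology, so any subsequence of $(\phi_k)$ admits a weakly${}^{\ast}$-convergent subsubsequence, whose limit must agree pointwise with $\phi$ by the forward direction.

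For the sequential weak${}^{\ast}$-density of $\CC^1_{\rmb}(U)$ in $\WW^{1,\infty}(U)$, I would construct approximants via a Whitney-type mollified partition of unity. For each $k \in \mathbb N$, fix a locally finite cover of $U$ by open balls $\{ B_j^{(k)} \}_j$ with $2 B_j^{(k)} \subset U$, uniform overlap bound $N$ independent of $k$, and maximal radius $r^{(k)} := \sup_j r_j^{(k)} \to 0$. Let $\{ \chi_j^{(k)} \}_j$ be a subordinate smooth partition of unity with $\lvert \nabla \chi_j^{(k)} \rvert \leq C / r_j^{(k)}$, and define
\[
\phi_k(x) := \sum_j \chi_j^{(k)}(x) \, \bigl( \rho_{r_j^{(k)}/4} \ast \phi \bigr)(x).
\]
Each mollified term is smooth on $B_j^{(k)}$ (since $2 B_j^{(k)} \subset U$), so $\phi_k \in \CC^{\infty}(U) \cap \LL^{\infty}(U)$. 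Exploiting $\sum_j \nabla \chi_j^{(k)} \equiv 0$ to subtract the constant $\phi(x)$, I would rewrite
\[
\nabla \phi_k(x) = \sum_j \nabla \chi_j^{(k)}(x) \, \bigl[ (\rho_{r_j^{(k)}/4} \ast \phi)(x) - \phi(x) \bigr] + \sum_j \chi_j^{(k)}(x) \, (\rho_{r_j^{(k)}/4} \ast \nabla \phi)(x),
\]
and use the Lipschitz oscillation bound $\lvert (\rho_{r_j^{(k)}/4} \ast \phi)(x) - \phi(x) \rvert \leq \tfrac{1}{4} r_j^{(k)} \lVert \nabla \phi \rVert_{\LL^{\infty}}$ to conclude $\lVert \nabla \phi_k \rVert_{\LL^{\infty}} \leq (CN/4 + 1) \lVert \nabla \phi \rVert_{\LL^{\infty}}$. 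The same oscillation estimate yields pointwise convergence $\phi_k(x) \to \phi(x)$, and the first part of the lemma then gives $\phi_k \weaksto \phi$ weakly${}^{\ast}$ in $\WW^{1,\infty}(U)$.

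The delicate step is the gradient bound in the density construction: the derivatives $\nabla \chi_j^{(k)}$ of the cutoffs must grow like $1/r_j^{(k)}$ near $\partial U$, so a naive cutoff or extension of a mollified $\phi$ would not have uniformly bounded $\WW^{1,\infty}$-norm. The identity $\sum_j \nabla \chi_j^{(k)} = 0$ is the essential device, allowing the $1/r_j^{(k)}$ singularity to be absorbed against the $O(r_j^{(k)})$ oscillation of the mollified $\phi$ around its true value, so that only $\lVert \nabla \phi \rVert_{\LL^{\infty}}$ survives in the end.
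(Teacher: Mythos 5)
Your proposal is correct and follows essentially the same route as the paper: the equivalence is deduced from the separable predual $\WW^{-1,1}(U)$ exactly as in Lemma~\ref{lem:lipb_conv}, with pointwise convergence extracted from the uniform local Lipschitz bound, and the density statement rests on the same key cancellation $\sum_j \nabla\chi_j \equiv 0$ that the paper uses (there in the form $\sum_j \nabla\zeta_j = 0$, mollifying $\zeta_j\phi$ at scales chosen to make the errors summable rather than using a Whitney cover with $\lvert\nabla\chi_j\rvert \lesssim 1/r_j$). The differences are purely organisational.
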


We note that $\CC^1_{\rmb}(U) \not\subset \Lip_{\rmb}(U)$ in general, so we do not get an analogous density statement there. Also the below proof shows in fact that $\phi_k \to \phi$ locally uniformly in $U$ in \eqref{eq:w1infty_conv}, however pointwise convergence will suffice for our purposes.

\begin{proof}
  The equivalence \eqref{eq:w1infty_conv} can be proven analogously as in Lemma~\ref{lem:lipb_conv}, noting that $\WW^{-1,1}(U)$ is also separable. One difference lies in showing the pointwise convergence; for this assume that $\phi_k \weaksto \phi$ and fix $x \in U$.
  Then there is a neighbourhood $B_r(x) \subset U$, and since $\phi_k \rvert_{B_r(x)}$ is bounded in $\WW^{1,\infty}(B_r(x)) = \Lip_{\rmb}(B_r(x))$, we infer that $\phi_k$ converges uniformly to $\phi$ on this ball $B_r(x)$.

  To show $\CC^1_{\rmb}(U)$ is sequentially weakly${}^{\ast}$ dense, given $\phi \in \WW^{1,\infty}(U)$ we will take $\phi_k$ to be as in the construction from \cite[Thm.\,4.2]{EvansGariepy2015}.
  More precisely, given a covering $\{V_j\}_{j=1}^{\infty}$ of $U$ such that $V_j \Subset U$ for each $j$, let $\{\zeta_j\}$ be a partition of unity subordinate to $\{V_j\}$.
  We then let $\phi_k = \sum_j \eta_{\eps_{j,k}} \ast (\zeta_j\phi )$, where $\eta_{\eps}$ is a standard mollifier and $\eps_{j,k}$ is chosen to satisfy $\eps_{j,k} \leq \dist(\spt(\zeta_j),\partial V_j)$ and that $\eps_{j,k} \searrow 0$ as $k \to \infty$ for each $j$.
  Since each $\phi \nabla\zeta_j$ is continuous, by shrinking $\eps_{j,k}$ if necessary we can also assume that
  \begin{equation}
    \lVert \eta_{\eps_{j,k}} \ast(\phi\nabla\zeta_j) - \phi\nabla\zeta_j \rVert_{\LL^{\infty}(U)} \leq 2^{-j}
  \end{equation} 
  for all $j,k \in \mathbb N$.
  Then noting that $\sum_{j=1}^{\infty} \nabla\zeta_j=\nabla \mathbbm{1}_U =0$ in $U$, we can estimate
  \begin{equation}
    \Big\lVert \sum_{j=1}^{\infty} \eta_{\eps_{j,k}} \ast (\phi \nabla\zeta_j) \Big\rVert_{\LL^{\infty}(U)} \leq \sum_{j=1}^{\infty} \lVert \eta_{\eps_{j,k}} \ast(\phi\nabla\zeta_j) - \phi\nabla\zeta_j \rVert_{\LL^{\infty}(U)} \leq 1
  \end{equation} 
  for all $k$.
  We can then verify that $(\phi_k)_k$ is uniformly bounded in $\WW^{1,\infty}(U)$ and converges pointwise to $\phi$ as $k \to \infty$, so by \eqref{eq:w1infty_conv} we infer that $\phi_k \weaksto \phi$ in $\WW^{1,\infty}(U)$.
\end{proof}

\subsection{The Smirnov decomposition}\label{sec:smirnov}

We will state a version of Smirnov's decomposition theorem, valid for fields in $\mathcal{DM}^{\ext}(\Omega)$.
For this, we first define the space of curves we will work with, namely
\begin{equation}
  \mathscr C_1 = \mathscr C_1^n := \{ \gamma \in \Lip_{\rmb}([0,1];\mathbb R^n) : \Lip(\gamma) \leq 1 \},
\end{equation} 
equipped with the topology of uniform convergence.
We will refer to elements $\gamma \in \mathscr{C}_1$ as \emph{curves}, and we say $\gamma$ is \emph{closed} if $\gamma(0) = \gamma(1)$.
It is well known  that rectifiable curves admit an arclength reparametrisation, and as the proof of Smirnov's theorem we outline in Appendix~\ref{sec:smirnov_proof} will show, for our purposes it will suffice to consider constant-speed curves in $\mathscr{C}_1$.

Observe that $\mathscr{C}_1$ is locally compact by the Arzel\`a-Ascoli theorem, and is moreover metrisable since the topology is induced by the uniform norm $\lVert \gamma \rVert_{\LL^{\infty}([0,1])}$.
Moreover by Lemma~\ref{lem:lipb_conv}, convergence in $\mathscr{C}_1$ is equivalent to weak${}^{\ast}$ convergence in $\Lip_{\rmb} \simeq \WW^{1,\infty}$.

\begin{definition}\label{defn:curve_current}
  For a curve $\gamma \in \mathscr C_1$, we denote by $\llbracket\gamma\rrbracket \in \mathcal{DM}^{\ext}(\mathbb R^n)$ the field defined to satisfy
  \begin{equation}\label{eq:curve_current}
    \langle \llbracket\gamma\rrbracket, \Phi \rangle  = \int_0^1 \Phi(\gamma(t)) \cdot \gamma^\prime(t) \,\d t= \int_{\mathbb R^n} \bigg( \sum_{t \in \gamma^{-1}(p)} \Phi(p) \cdot \frac{\gamma^\prime(t)}{\lvert\gamma^\prime(t)\rvert} \bigg) \,\d\mathcal{H}^1(p) 
  \end{equation} 
  for all $\Phi \in \mathcal{B}_{\rmb}(\mathbb R^n;\mathbb R^n)$, where the latter equality follows by the area formula (see \emph{e.g.}\,\cite[Thm.\,2.71, (2.47)]{AmbrosioEtAl2000}).
  Observe this satisfies
  \begin{equation}
    \div \llbracket\gamma\rrbracket = \delta_{\gamma(1)} - \delta_{\gamma(0)},
  \end{equation} 
  which is zero if and only if $\gamma$ is closed.
  We will also denote the total variation measure of $\llbracket \gamma \rrbracket$ by $\mu_{\llbracket\gamma\rrbracket}$.
\end{definition}

Note that, if for some $\Omega \subset \mathbb R^n$ open we have $\gamma(t) \in \Omega$ for all $t \in (0,1)$, then the associated field lies in $\mathcal{DM}^{\ext}(\Omega)$.
Also if we set $\Gamma_{\gamma} = \gamma([0,1])$ and let
\begin{equation}
  \xi_{\gamma}(p) := \begin{cases}
   \sum_{t \in \gamma^{-1}(p)} \frac{\gamma^\prime(t)}{\lvert \gamma^\prime(t)\rvert} &\text{if } p \in \Gamma_{\gamma}, \\
    0 &\text{otherwise},
  \end{cases}
\end{equation} 
which is defined $\mathcal{H}^1$-a.e.\,on $\Gamma_{\gamma}$, then by \eqref{eq:curve_current} we obtain the representation
\begin{align}
  \llbracket\gamma\rrbracket &= \ \xi_{\gamma}(x) \ \mathcal H^1 \mres \Gamma_{\gamma},\\
  \mu_{\llbracket\gamma\rrbracket} &= \lvert\xi_{\gamma}(x)\rvert\,\mathcal H^1 \mres \Gamma_{\gamma}.
\end{align} 
We will denote the \emph{length} of the curve as $\ell(\gamma) := \mu_{\llbracket\gamma\rrbracket}(\Gamma_{\gamma})$; note this corresponds to the mass of $\llbracket\gamma\rrbracket$ as current, and hence accounts for orientation.

Equipped with this terminology, we can state a version of Smirnov's decomposition theorem valid in the full space.

\begin{thm}[Smirnov, {\cite{Smirnov1993}}]\label{thm:decomposition_fullspace}
  Given $\FF \in \mathcal{DM}^{\ext}(\mathbb R^n)$, there exits a non-negative and finite Borel measure $\nu$ on $\mathscr C_{1}$ such that
  \begin{alignat}{3}\label{eq:decomposition_main}
    \int_{\mathbb R^n} \Phi \cdot \d \FF&=\int_{\mathscr C_1} \langle\llbracket \gamma \rrbracket,\Phi \rangle \,\d \nu(\gamma) &&\quad\mbox{for all $\Phi \in \mathcal B_{\rmb}(\mathbb R^n;\mathbb R^n)$},\\
  \label{eq:decomposition_tv}
    \int_{\mathbb R^n} \phi \,\d\lvert \FF\rvert &= \int_{\mathscr C_1} \langle \mu_{\llbracket\gamma\rrbracket},\phi\rangle \,\d \nu(\gamma) &&\quad\mbox{for all $\phi \in \mathcal B_{\rmb}(\mathbb R^n)$}.
  \end{alignat} 
  Here we will have the maps $\gamma \to \langle \llbracket \gamma \rrbracket, \Phi \rangle$ and $\gamma \to \langle \mu_{\llbracket \gamma\rrbracket},\phi \rangle$ are Borel measurable with respect to the topology of uniform convergence, ensuring that these integrals are well-defined.
  Moreover for $\nu$-almost every $\gamma \in \mathscr C_1$, we have $\lvert \gamma^\prime(t)\rvert$ is constant $\mathcal L^1$-a.e.\,on $[0,1]$.
\end{thm}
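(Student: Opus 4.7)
The plan is to establish Smirnov's decomposition by approximating $\FF$ with polyhedral normal currents, decomposing each approximant explicitly via a flow-type argument on directed graphs, and then passing to the limit using the Arzel\`a--Ascoli compactness of $\mathscr C_1$. Under the identification $\mathcal N_1(\mathbb R^n)\cong \mathcal{DM}^{\ext}(\mathbb R^n)$ noted earlier in the paper, this is the superposition principle for normal $1$-currents, so it is natural to work in the geometric language of currents throughout.

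The first step is to produce a sequence $\FF_k=\sum_i a_i^{(k)}\llbracket e_i^{(k)}\rrbracket$ of finite non-negative weighted sums of oriented segments such that $\FF_k\weaksto \FF$ in the weak${}^\ast$ topology on measures, together with the mass convergences $\lvert\FF_k\rvert(\mathbb R^n)\to \lvert\FF\rvert(\mathbb R^n)$ and $\lvert\div\FF_k\rvert(\mathbb R^n)\to\lvert\div\FF\rvert(\mathbb R^n)$. Such an approximation is obtained by mollifying $\FF$ and then applying the deformation theorem on a dyadic grid of scale $2^{-k}$, with the grid shift selected (as in the strong form of the Federer--Fleming theorem) so that both total masses are preserved in the limit. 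This simultaneous mass convergence for $\FF_k$ and its divergence is the essential technical input for the final verification.

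Each $\FF_k$ is then viewed as a finite weighted directed graph whose edges are the segments $e_i^{(k)}$. A classical Euler-type flow-decomposition argument rewrites such a graph as a non-negative combination of simple oriented paths (which carry the boundary of $\FF_k$) and simple oriented cycles (with vanishing divergence). Reparametrising each resulting curve at constant speed so that it lies in $\mathscr C_1$, and absorbing its length into the coefficient, produces a discrete non-negative measure $\nu_k$ on $\mathscr C_1$ which satisfies \eqref{eq:decomposition_main} and \eqref{eq:decomposition_tv} exactly with $\FF_k$ in place of $\FF$. The uniform bound $\nu_k(\mathscr C_1)\lesssim \lvert\FF_k\rvert(\mathbb R^n)$, together with the uniform Lipschitz constant on its support, yields tightness of $(\nu_k)_k$ on $\mathscr C_1$ after discarding curves which leave a sufficiently large ball of small $\lvert\FF\rvert$-mass; pass to a weakly convergent subsequence $\nu_k\weakto \nu$.

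The required measurability of $\gamma\mapsto \langle\llbracket\gamma\rrbracket,\Phi\rangle$ and $\gamma\mapsto \langle\mu_{\llbracket\gamma\rrbracket},\phi\rangle$ follows from their continuity on $\mathscr C_1$ for smooth compactly supported $\Phi,\phi$, extended to bounded Borel test objects by a monotone class argument. Passing \eqref{eq:decomposition_main} to the limit from $\FF_k$ to $\FF$ is then straightforward by duality with continuous test fields, and extends to bounded Borel fields by dominated convergence. The main obstacle is the mass identity \eqref{eq:decomposition_tv}: lower semicontinuity of $\gamma\mapsto \langle\mu_{\llbracket\gamma\rrbracket},\phi\rangle$ for $\phi\ge 0$ gives the inequality $\le$ directly, while the reverse inequality is what forces the mass convergence from the first step into play, ruling out any cancellation among the $\gamma_j^{(k)}$ in the weak limit. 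This is precisely where the strong form of the deformation theorem is indispensable, and where the constant-speed parametrisation enters, ensuring $\lvert\gamma^\prime\rvert$ survives as a genuine density in the $\mathscr C_1$-limit rather than being lost to oscillation.
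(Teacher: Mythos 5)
Your route is the polyhedral-approximation strategy of Paolini--Stepanov (which the appendix explicitly mentions as an alternative it does \emph{not} follow); the paper instead mollifies $\FF$, integrates the normalised vector field $f_\eps/\tau_\eps$ to get a flow of \emph{unit-length} curves, pushes forward $\tau_\eps\mathcal L^n$ to $\mathscr C_1$, and reduces the non-solenoidal case to the solenoidal one by lifting to $\mathbb R^{n+1}$. The difference is not cosmetic: the flow construction automatically yields $\nu_\eps(\mathscr C_1)\leq\lvert\FF\rvert(\mathbb R^n)+\eps$ because every curve in the support has length exactly $1$, and this is precisely the bound your argument is missing.

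The gap in your proposal is the claimed uniform estimate $\nu_k(\mathscr C_1)\lesssim\lvert\FF_k\rvert(\mathbb R^n)$. The Euler path/cycle decomposition controls $\sum_j\theta_j\ell(\gamma_j)=\mathbf{M}(\FF_k)$ and, for the path part, $\sum_j\theta_j\lesssim\mathbf{M}(\div\FF_k)$; but for the cycle part there is no bound on $\sum_j\theta_j$ in terms of either mass. A grid decomposition at scale $2^{-k}$ of even a smooth solenoidal field (say $\nabla^\perp\psi$ in the plane) can legitimately be resolved cell-by-cell into $\sim 2^{2k}$ square cycles of length $\sim 2^{-k}$ and total weight $\sim 2^{k}\to\infty$, so $\nu_k(\mathscr C_1)$ blows up, weak${}^{\ast}$ compactness in $\mathcal M(\mathscr C_1)$ fails, and any mass that does survive concentrates on constant curves, for which $\llbracket\gamma\rrbracket=0$ and $\ell(\gamma)=0$ --- exactly the degeneration that destroys \eqref{eq:decomposition_tv} (your ``reverse inequality'' needs $\int\ell\,\d\nu=\lim_k\int\ell\,\d\nu_k$, which fails here). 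Ruling this out requires concatenating short cycles into curves of length bounded below, or an equivalent device; this is the genuinely hard combinatorial/measure-theoretic core of Smirnov's theorem and of the Paolini--Stepanov papers, and it cannot be dispatched by ``reparametrise at constant speed and absorb the length into the coefficient'' (note also that a reparametrisation does not change $\llbracket\gamma\rrbracket$, and curves of length $>1$ must be subdivided to lie in $\mathscr C_1$ at all). Strict mass convergence of the polyhedral approximants, which you correctly identify as necessary, is available but does not by itself resolve this.
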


Since this formulation differs somewhat from what is proven in \cite{Smirnov1993}, a proof is provided in Appendix~\ref{sec:smirnov_proof}.
We point out that the decomposition we obtain is \emph{incomplete} in the sense that
\begin{equation}
  \lvert \div \FF\rvert(\mathbb R^n) \neq \int_{\mathscr C_1} \lvert \div \llbracket \gamma \rrbracket\rvert(\mathbb R^n) \,\d \nu(\gamma)
\end{equation} 
in general, however the formulation we state is technically simpler as we can work with $\mathscr C_1$ as our space of admissible curves.

Since we wish to apply this to fields $\FF \in \mathcal{DM}^{\ext}(\Omega)$, we will need a suitable variant of Theorem~\ref{thm:decomposition_fullspace} valid for domains.
This can be obtained as a consequence of the full-space decomposition as follows.

\begin{thm}\label{thm:decomposition_domain}
  Let $\Omega \subset \mathbb R^n$ be an open set and $\FF \in \mathcal{DM}^{\ext}(\Omega)$.
  Then there exists a non-negative Borel measure $\nu$ on $\mathscr C_1$ such that
  \begin{alignat}{3}\label{eq:decomposition_domain}
    \int_{\Omega} \Phi \cdot \d\FF &= \int_{\mathscr C_1} \langle \llbracket\gamma\rrbracket,\Phi\rangle \,\d\nu(\gamma) &&\quad\mbox{for all } \Phi \in \mathcal B_{\rmb}(\Omega;\mathbb R^n), \\
  \label{eq:decomposition_domain_tv}
    \int_{\Omega} \phi\, \d\lvert\FF\rvert &= \int_{\mathscr C_1} \langle \mu_{\llbracket\gamma\rrbracket},\phi\rangle \,\d\nu(\gamma) &&\quad\mbox{for all } \phi \in \mathcal B_{\rmb}(\Omega).
  \end{alignat} 
  In particular, we have
  \begin{equation}\label{eq:decomposition_domain_tv_2}
    \int_{\mathscr{C}_1} \ell(\gamma)\,\d\nu(\gamma) = \lvert \FF\rvert(\Omega) < \infty.
  \end{equation} 
  Moreover $\nu$-almost every $\gamma \in \mathscr{C}_1$ is supported in $\Omega$ and is such that $\lvert \gamma^\prime(t)\rvert$ is $\mathcal L^1$-a.e.\,constant on $[0,1]$.
\end{thm}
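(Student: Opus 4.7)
My plan is to reduce to the full-space version, Theorem~\ref{thm:decomposition_fullspace}, via the exhaustion $\widetilde\Omega^{\delta}$ from Lemma~\ref{lem:trace_measure_approx}, and then paste the resulting decompositions together. I pick a sequence $\delta_k \searrow 0$ from the $\mathcal L^1$-full set of parameters provided by Lemma~\ref{lem:trace_measure_approx}, so that $\FF_k := \mathbbm{1}_{\widetilde\Omega^{\delta_k}}\FF \in \mathcal{DM}^{\ext}(\mathbb R^n)$ for every $k$. Telescoping with $\FF_0 := 0$ and
\begin{equation*}
  \GG_k := \FF_k - \FF_{k-1} = \mathbbm{1}_{A_k}\FF, \qquad A_k := \widetilde\Omega^{\delta_k}\setminus\widetilde\Omega^{\delta_{k-1}},
\end{equation*}
each $\GG_k$ lies in $\mathcal{DM}^{\ext}(\mathbb R^n)$ and is supported in the compact set $K_k := \overline{\widetilde\Omega^{\delta_k}} \subset \Omega$. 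By disjointness of the $A_k$ one has $\sum_k |\GG_k|(\mathbb R^n) = |\FF|(\Omega) < \infty$.

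I then apply Theorem~\ref{thm:decomposition_fullspace} to each $\GG_k$ to obtain a finite non-negative Borel measure $\nu_k$ on $\mathscr C_1$, and set $\nu := \sum_{k=1}^{\infty} \nu_k$. For any $\Phi \in \mathcal{B}_{\rmb}(\Omega;\mathbb R^n)$, extending $\Phi$ by zero and applying \eqref{eq:decomposition_main} to each $\GG_k$, the bound $|\langle\llbracket\gamma\rrbracket,\Phi\rangle| \leq \lVert\Phi\rVert_{\infty}\,\mu_{\llbracket\gamma\rrbracket}(\mathbb R^n)$ combined with $\sum_k \int \mu_{\llbracket\gamma\rrbracket}(\mathbb R^n)\,\d\nu_k = \sum_k |\GG_k|(\mathbb R^n) < \infty$ lets me interchange sum and integral, so that
\begin{equation*}
  \int_{\mathscr{C}_1} \langle\llbracket\gamma\rrbracket,\Phi\rangle\,\d\nu = \sum_k \int_{\mathbb R^n} \Phi\cdot\d\GG_k = \int_{\Omega}\Phi\cdot\d\FF,
\end{equation*}
giving~\eqref{eq:decomposition_domain}, and the analogous argument on total variations applied to $\phi \in \mathcal B_{\rmb}(\Omega)$ yields \eqref{eq:decomposition_domain_tv}. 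The constant-speed conclusion on $\nu$-a.e.\,$\gamma$ is inherited directly from Theorem~\ref{thm:decomposition_fullspace}.

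The final content is that $\Gamma_\gamma \subset \Omega$ for $\nu$-a.e.\,$\gamma$ together with the mass identity~\eqref{eq:decomposition_domain_tv_2}. Testing \eqref{eq:decomposition_tv} for $\GG_k$ with $\phi = \mathbbm{1}_{K_k^{\rmc}}$ gives $\mu_{\llbracket\gamma\rrbracket}(K_k^{\rmc}) = 0$ for $\nu_k$-a.e.\,$\gamma$, which is the \emph{main obstacle}: this statement only controls where the unit tangents fail to cancel, and does not \emph{a priori} preclude arcs of $\Gamma_\gamma$ sitting outside $K_k$ but contributing trivially to $\mu_{\llbracket\gamma\rrbracket}$ through back-and-forth cancellation. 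To close this gap I will invoke the ``simplicity'' of the curves produced by Smirnov's construction recalled in Appendix~\ref{sec:smirnov_proof}: namely, that $\nu_k$-a.e.\,$\gamma$ satisfies $|\xi_\gamma(p)| = N(\gamma,p)$ for $\mathcal H^1$-a.e.\,$p \in \Gamma_\gamma$, equivalently $\mu_{\llbracket\gamma\rrbracket}(\mathbb R^n) = \ell(\gamma)$ via the area formula. Under this property, $\mu_{\llbracket\gamma\rrbracket}(K_k^{\rmc}) = 0$ forces $\mathcal H^1(\Gamma_\gamma \cap K_k^{\rmc}) = 0$, hence (by constant speed and the connectedness of $\Gamma_\gamma$) $\Gamma_\gamma \subset K_k \subset \Omega$. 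Integrating, one finally obtains
\begin{equation*}
  \int_{\mathscr{C}_1} \ell(\gamma)\,\d\nu = \int_{\mathscr{C}_1}\mu_{\llbracket\gamma\rrbracket}(\Omega)\,\d\nu = |\FF|(\Omega),
\end{equation*}
which is~\eqref{eq:decomposition_domain_tv_2}, completing the proof.
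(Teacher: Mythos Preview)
Your proof is correct and follows essentially the same approach as the paper: exhaust $\Omega$ via $\widetilde\Omega^{\delta_k}$, apply Theorem~\ref{thm:decomposition_fullspace} to the annular pieces $\mathbbm{1}_{A_k}\FF$, and sum the resulting measures on $\mathscr C_1$. Your handling of the containment $\Gamma_\gamma \subset \Omega$ is in fact more careful than the paper's --- where the paper simply asserts that \eqref{eq:decomposition_tv} forces $\nu_k$-a.e.\ curve to lie in $\overline{A_k}$, you correctly flag that this step requires the no-cancellation property $\mu_{\llbracket\gamma\rrbracket}(\mathbb R^n) = \ell(\gamma)$ established within the proof in Appendix~\ref{sec:smirnov_proof}.
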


\begin{proof}
  For each $\delta>0$ let $\widetilde\Omega^{\delta}$ be as in \eqref{eq:domain_approx},
  then by Lemma~\ref{lem:trace_measure_approx} we know for $\mathcal L^1$-a.e.\,$\delta>0$ that
  \begin{enumerate}[label=(\roman*)]
    \item\label{eq:approx_prop1} the normal trace $(\FF \cdot \nu)_{\partial \widetilde\Omega^{\delta}}$ is represented by a measure on $\partial\Omega$,
    \item\label{eq:approx_prop2} $\lvert \FF\rvert(\partial\widetilde\Omega^{\delta}) = \lvert \div \FF\rvert(\partial\widetilde\Omega^{\delta}) = 0$,
  \end{enumerate}
  noting that \ref{eq:approx_prop2} holds for all but countably many $\delta$.
  We then let $\delta_k \searrow 0$ such that each $\delta_k$ satisfies the above two properties
  and define
 \begin{equation}
   \FF_k = \mathbbm{1}_{A_k}\FF \quad \text{where } A_k := \widetilde\Omega^{\delta_k} \setminus \widetilde\Omega^{\delta_{k-1}}
  \end{equation} 
  for each $k$, understanding that $\widetilde\Omega^{\delta_{0}} = \varnothing$. Then by properties \ref{eq:approx_prop1}, \ref{eq:approx_prop2} and \cite[Rmk.\,2.12, Thm.\,10.5]{ChenEtAl2024}, we have each $\FF_k \in \mathcal{DM}^{\ext}(\Omega)$ is compactly supported in $\Omega$ and satisfies 
  \begin{equation}
    \div \FF_k = \mathbbm{1}_{A_k}\div \FF + (\FF \cdot \nu)_{\partial\widetilde\Omega^{\delta_k}} - (\FF \cdot \nu)_{\partial\widetilde\Omega^{\delta_{k-1}}}.
  \end{equation} 
  Also since the sets $(A_k)_{k=1}^{\infty}$ have pairwise disjoint support,
  \begin{equation}\label{eq:decomp_tv_preservation}
    \lvert \FF\rvert = \sum_{k=1}^{\infty} \lvert \FF_k\rvert \quad\text{as measures in } \Omega.
  \end{equation} 
  Now by applying Theorem~\ref{thm:decomposition_fullspace} to each $\FF_k$, we obtain Borel measures $\nu_k$ on $\mathscr C_1$ such that \eqref{eq:decomposition_main}, \eqref{eq:decomposition_tv} holds for each $\FF_k$ with $\nu_k$.
  Observe by \eqref{eq:decomposition_tv} that $\nu_k$-a.e.\,curve $\gamma$ is supported on $\overline{A_k}$ for each $k$, so it follows that $\spt(\nu_k) \cap \spt(\nu_{\ell}) \neq \varnothing$ if and only if $\lvert k - \ell\rvert \leq 1$.
  Moreover for each $k$, by \eqref{eq:decomposition_tv} and property \ref{eq:approx_prop2} we can estimate
  \begin{equation}\label{eq:overlap_negligible}
    \int_{\spt(\nu_k) \cap \spt(\nu_{k+1})} \ell(\gamma) \,\d \nu_k(\gamma) \leq \lvert \FF_k\rvert(\overline{A_k} \cap \overline{A}_{k+1}) \leq \lvert \FF\rvert(\partial \widetilde\Omega^{\delta_k}) = 0,
  \end{equation} 
  and similarly for the $\nu_{k+1}$-integral.
  Hence we can define
  \begin{equation}
    \nu = \sum_{k=1}^{\infty} \nu_k \quad\text{on } \mathscr{C}_1,
  \end{equation} 
  which is a well-defined Borel measure satisfying
  \begin{equation}
    \int_{\mathscr{C}_1} \ell(\gamma) \,\d\nu = \sum_{k=1}^{\infty} \int_{\mathscr{C}_1} \ell(\gamma) \,\d\nu_k = \sum_{k=1}^{\infty} \lvert \FF_k\rvert(A_k) = \lvert \FF\rvert(\Omega)  < \infty,
  \end{equation} 
  noting the supports are essentially disjoint by \eqref{eq:overlap_negligible}.
  Using the above with \eqref{eq:decomposition_main} applied to each $\FF_k$ and the dominated convergence theorem, we have for all $\Phi \in \mathcal B_{\rmb}(\Omega;\mathbb R^n)$ that
  \begin{equation}
    \int_{\Omega} \Phi \cdot \d \FF= \sum_{k=1}^{\infty} \int_{\Omega} \Phi \cdot \d \FF_k= \sum_{k=1}^{\infty} \int_{\mathscr C_1} \langle \llbracket \gamma \rrbracket, \Phi \rangle \,\d\nu_k = \int_{\mathscr C_1} \langle \llbracket \gamma \rrbracket, \Phi \rangle \,\d\nu,
  \end{equation} 
  establishing \eqref{eq:decomposition_domain}.
  Similarly for any $\phi \in \mathcal B_{\rmb}(\Omega)$, using \eqref{eq:decomposition_tv} and \eqref{eq:decomp_tv_preservation} we have
  \begin{equation}
    \int_{\Omega} \phi\,\d \lvert\FF\rvert= \sum_{k=1}^{\infty} \int_{\Omega} \phi \, \d\lvert\FF_k\rvert= \sum_{k=1}^{\infty} \int_{\mathscr C_1} \langle \mu_{\llbracket \gamma \rrbracket}, \phi \rangle \,\d\nu_k = \int_{\mathscr C_1} \langle \mu_{\llbracket \gamma \rrbracket}, \phi \rangle \,\d\nu,
  \end{equation} 
  establishing \eqref{eq:decomposition_domain_tv} as required.
\end{proof}
\section{Properties of the Anzellotti pairing}\label{sec:anzelotti_pairing}

\subsection{Representation of the pairing}\label{sec:anzelotti_rep}

In this section we prove Theorem~\ref{thm:intro_pairingmeasure}, along with a representation of the pairing in terms of the Smirnov decomposition.
Recall the pairing measure $\overline{\nabla \phi \cdot \FF}$ was defined in Definition~\ref{defn:pairing_measure} and satisfies $\overline{\nabla \phi \cdot \FF} \ll \lvert \FF\rvert$  by Lemma~\ref{lem:product_rule}.

\begin{thm}\label{thm:pairing_pointwise}
  Given $\FF \in \mathcal{DM}^{\ext}(\Omega)$ and $\phi \in \WW^{1,\infty}(\Omega)$, the directional derivatives
  \begin{equation}
    \nabla \phi (x) \cdot \frac{\d\FF}{\d \lvert \FF\rvert} (x) \quad\text{exists for } \lvert\FF\rvert\text{-a.e.\,} x \in \Omega,
  \end{equation} 
  and the Radon-Nikod\'ym derivative of $\overline{\nabla\phi\cdot\FF}$ with respect to $\lvert \FF\rvert $ is given by
  \begin{equation}\label{eq:precise_product}
    \frac{\d}{\d \lvert \FF\rvert}(\overline{\nabla \phi \cdot \FF})(x) =  \nabla \phi (x) \cdot \frac{\d\FF}{\d \lvert \FF\rvert} (x) \quad\mbox{for } \lvert \FF\rvert\text{-a.e.\,} x \in \Omega
  \end{equation} 
  Moreover decomposing $\FF$ as in Theorem~\ref{thm:decomposition_domain},
   we have
  \begin{equation}\label{eq:pairing_disint}
    \int_{\Omega} \psi\, \d(\overline{\nabla\phi \cdot \FF}) = \int_{\mathscr{C}_1} \int_{\Gamma_{\gamma}} \psi \nabla\phi \cdot \xi_{\gamma} \,\d\mathcal H^1 \,\d \nu(\gamma) \quad\text{for all } \psi \in \mathcal B_{\rmb}(\Omega).
  \end{equation} 
\end{thm}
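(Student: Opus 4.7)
The plan is to prove the disintegration formula \eqref{eq:pairing_disint} first by a smooth approximation combined with Smirnov's decomposition, and then deduce the pointwise identity \eqref{eq:precise_product} from it. I would begin by applying Theorem~\ref{thm:decomposition_domain} to $\FF$, obtaining a non-negative Borel measure $\nu$ on $\mathscr{C}_1$ for which $\nu$-a.e.\,$\gamma$ is a constant-speed curve taking values in $\Omega$. For smooth $\widetilde\phi \in \CC^1_{\rmb}(\Omega)$ the pairing coincides with the classical $\nabla\widetilde\phi \cdot \FF$, and applying \eqref{eq:decomposition_domain} with the bounded Borel field $\Phi = \psi\nabla\widetilde\phi$ together with the pointwise chain rule $\nabla\widetilde\phi(\gamma(t)) \cdot \gamma^\prime(t) = (\widetilde\phi \circ \gamma)^\prime(t)$ and the area formula establishes \eqref{eq:pairing_disint} for $\widetilde\phi$.

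To reach general $\phi \in \WW^{1,\infty}(\Omega)$, I would use Lemma~\ref{lem:w1infty_conv} to select $\phi_k \in \CC^1_{\rmb}(\Omega)$ with $\phi_k \weaksto \phi$ in $\WW^{1,\infty}(\Omega)$, so that $\phi_k \to \phi$ pointwise with $\sup_k \lVert \phi_k \rVert_{\WW^{1,\infty}(\Omega)} < \infty$, and pass to the limit $k \to \infty$ in \eqref{eq:pairing_disint} applied to each $\phi_k$. On the left, Lemma~\ref{lem:product_rule} yields $\nabla\phi_k \cdot \FF \weaksto \overline{\nabla\phi \cdot \FF}$ as measures; combined with the uniform bound \eqref{eq:pairing_abscont}, a standard monotone class argument extends convergence of $\int_\Omega \psi\,\d(\nabla\phi_k \cdot \FF)$ to all $\psi \in \mathcal{B}_{\rmb}(\Omega)$. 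On the right I would argue curve-wise: for $\nu$-a.e.\,constant-speed $\gamma$, the compositions $\phi_k \circ \gamma$ are uniformly Lipschitz on $[0,1]$ and converge pointwise to $\phi \circ \gamma$, so Lemma~\ref{lem:lipb_conv} (with $X = [0,1]$, noting $\Lip_{\rmb}([0,1]) = \WW^{1,\infty}((0,1))$) gives $(\phi_k \circ \gamma)^\prime \weaksto (\phi \circ \gamma)^\prime$ in $\LL^{\infty}((0,1))$, hence
\begin{equation*}
\int_0^1 \psi(\gamma(t))(\phi_k \circ \gamma)^\prime(t)\,\d t \longrightarrow \int_0^1 \psi(\gamma(t))(\phi \circ \gamma)^\prime(t)\,\d t.
\end{equation*}
The integrands over $\mathscr{C}_1$ are dominated by $\lVert\psi\rVert_{\LL^{\infty}}\bigl(\sup_k \lVert\phi_k\rVert_{\WW^{1,\infty}(\Omega)}\bigr)\ell(\gamma)$, which is $\nu$-integrable by \eqref{eq:decomposition_domain_tv_2}, so dominated convergence passes the limit through the outer $\nu$-integral. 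Rewriting $\int_0^1 \psi(\gamma)(\phi \circ \gamma)^\prime\,\d t = \int_{\Gamma_\gamma} \psi\,\nabla\phi \cdot \xi_\gamma\,\d\mathcal{H}^1$ via the area formula, where $\nabla\phi \cdot \xi_\gamma$ encodes the $\mathcal{H}^1$-a.e.\,existing directional derivatives along $\gamma$ (coming from Rademacher applied to $\phi \circ \gamma$), establishes \eqref{eq:pairing_disint}.

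For the pointwise identity \eqref{eq:precise_product}, the existence of the directional derivative $\nabla\phi(x) \cdot \frac{\d\FF}{\d\lvert\FF\rvert}(x)$ at $\lvert\FF\rvert$-a.e.\,$x$ is the Alberti-Marchese theorem \cite{AlbertiMarchese2016}. Granted this, identification with the Radon-Nikodým density follows by testing \eqref{eq:pairing_disint} against indicators $\psi = \mathbbm{1}_A$ to obtain a curve-wise disintegration of $\overline{\nabla\phi \cdot \FF}$, which can then be compared with the analogous disintegration \eqref{eq:decomposition_domain_tv} of $\lvert\FF\rvert$ through the common measure $\nu$ to extract the pointwise density. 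The main obstacle is the weak${}^{\ast}$ passage on the right-hand side of \eqref{eq:pairing_disint}: although weak${}^{\ast}$ convergence in $\WW^{1,\infty}(\Omega)$ amounts only to pointwise convergence with a uniform norm bound (and need not even be locally uniform), the Lipschitz structure of each curve lifts it to genuine weak${}^{\ast}$ convergence of $(\phi_k \circ \gamma)^\prime$ in $\LL^{\infty}((0,1))$ along each integration path, which paired with $\nu$-integrability of $\ell(\gamma)$ is the key mechanism bridging the smooth approximation with Smirnov's decomposition.
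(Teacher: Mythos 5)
Your architecture matches the paper's proof: Smirnov decomposition via Theorem~\ref{thm:decomposition_domain}, smooth approximation $\phi_k\weaksto\phi$ from Lemma~\ref{lem:w1infty_conv}, curve-wise weak${}^{\ast}$ convergence of $(\phi_k\circ\gamma)'$ in $\LL^{\infty}((0,1))$, dominated convergence in $\gamma$ using \eqref{eq:decomposition_domain_tv_2}, and Alberti--Marchese for the pointwise statement. There is, however, a genuine gap at the step where you rewrite $\int_0^1\psi(\gamma(t))(\phi\circ\gamma)'(t)\,\d t$ as $\int_{\Gamma_\gamma}\psi\,\nabla\phi\cdot\xi_\gamma\,\d\mathcal H^1$ and then integrate over curves. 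Rademacher applied to $\phi\circ\gamma$ only produces the derivative of the \emph{composition} for a.e.\ $t$; it does not produce a directional derivative of $\phi$ at the point $\gamma(t)$ that is independent of which curve of the decomposition passes through that point, nor (for a self-intersecting curve) a quantity that is additive over the preimages entering $\xi_\gamma(p)=\sum_{t\in\gamma^{-1}(p)}\gamma'(t)/\lvert\gamma'(t)\rvert$. The area formula requires the integrand to be a function of the point $p\in\Gamma_\gamma$ alone, and the final identity \eqref{eq:precise_product} requires a single $\lvert\FF\rvert$-a.e.\ defined field. Your proposed "comparison of disintegrations through the common measure $\nu$" does not close this: testing against $\psi=\mathbbm{1}_A$ only constrains the conditional average of $(\phi\circ\gamma)'(t)$ over all $(\gamma,t)$ with $\gamma(t)\in A$, so cancellations between different curves through the same point cannot be excluded. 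What is needed is precisely the decomposability-bundle compatibility of Alberti--Marchese, recorded as Lemma~\ref{lem:directional_diff} and in particular \eqref{eq:curve_tangents_diff}: a single Borel field $\nabla_\FF\phi$ with $(\phi\circ\gamma)'(t)=\nabla_\FF\phi(\gamma(t))\cdot\gamma'(t)$ for $\nu$-a.e.\ $\gamma$ and a.e.\ $t$. You cite Alberti--Marchese for the mere existence of the directional derivative, but your limit passage never invokes this gluing, which is the crux of the theorem. Once it is in place, the clean finish (as in the paper) is to reassemble via \eqref{eq:decomposition_domain} with $\Phi=\psi\nabla_\FF\phi$, conclude $\overline{\nabla\phi\cdot\FF}=\nabla_\FF\phi\cdot\FF$ as measures, and appeal to uniqueness of the Radon--Nikod\'ym decomposition.

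A secondary, repairable flaw: weak${}^{\ast}$ convergence of the measures $\nabla\phi_k\cdot\FF$ plus a uniform total-variation bound does \emph{not} upgrade to convergence against arbitrary $\psi\in\mathcal B_{\rmb}(\Omega)$ by a monotone class argument (consider $\delta_{1/k}\weaksto\delta_0$ tested against $\mathbbm{1}_{\{0\}}$); setwise convergence is strictly stronger and is exactly the content of Theorem~\ref{thm:setwise_conv}, which is proved \emph{after}, and by means of, the present theorem. In your situation the conclusion can be rescued because \eqref{eq:pairing_abscont} gives $\lvert\nabla\phi_k\cdot\FF\rvert\leq M\lvert\FF\rvert$ for the fixed finite measure $\lvert\FF\rvert$, so the densities converge weakly${}^{\ast}$ in $\LL^{\infty}(\Omega,\lvert\FF\rvert)$ by density of $\CC_{\rmc}(\Omega)$ in $\LL^1(\Omega,\lvert\FF\rvert)$, and every bounded Borel $\psi$ is then an admissible test function; alternatively, one can sidestep the issue entirely as the paper does, by computing the limit of $\int_\Omega\psi\nabla\phi_k\cdot\d\FF$ on the curve side and only matching it with $\overline{\nabla\phi\cdot\FF}$ on continuous test functions at the end.
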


As a consequence, we infer the following improved continuity property for this pairing.

\begin{thm}\label{thm:setwise_conv}
  Let $\FF \in \mathcal{DM}^{\ext}(\Omega)$ and $(\phi_k)_k \subset \WW^{1,\infty}(\Omega)$ such that $\phi_k \weaksto \phi$ weakly${}^{\ast}$ in $\WW^{1,\infty}(\Omega)$. Then
  \begin{equation}\label{eq:setwise_claim}
    \lim_{k \to \infty} \int_E \d(\overline{\nabla \phi_k \cdot \FF}) = \int_E \d(\overline{\nabla \phi \cdot \FF}) \quad\mbox{for all Borel sets $E \subset \Omega$.}
  \end{equation} 
 That is, the pairing measure converges \emph{setwise} with respect to weak${}^{\ast}$ convergence in $\WW^{1,\infty}(\Omega)$.
\end{thm}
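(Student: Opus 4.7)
The plan is to reduce the setwise convergence on $\Omega$ to a one-dimensional convergence along $\nu$-almost every curve in the Smirnov decomposition of $\FF$, and then close with dominated convergence in the curve space $\mathscr{C}_1$.

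Fix a Borel set $E \subset \Omega$. Applying the disintegration formula \eqref{eq:pairing_disint} with $\psi = \mathbbm{1}_E$ yields
\[
\overline{\nabla \phi_k \cdot \FF}(E) = \int_{\mathscr{C}_1} K_k(\gamma) \,\d\nu(\gamma), \qquad K_k(\gamma) := \int_{\Gamma_\gamma} \mathbbm{1}_E\, \nabla\phi_k \cdot \xi_\gamma \,\d\mathcal{H}^1,
\]
and analogously $K(\gamma)$ with $\phi$ in place of $\phi_k$. It therefore suffices to prove $K_k(\gamma) \to K(\gamma)$ for $\nu$-a.e.\,$\gamma$, together with a $\nu$-integrable dominating function. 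For the latter, the bound
\[
|K_k(\gamma)| \leq \|\nabla\phi_k\|_{\LL^\infty(\Omega)} \int_{\Gamma_\gamma}|\xi_\gamma|\,\d\mathcal{H}^1 = \|\nabla\phi_k\|_{\LL^\infty(\Omega)}\,\ell(\gamma) \leq C\,\ell(\gamma)
\]
suffices, using the uniform $\WW^{1,\infty}$-bound from Lemma~\ref{lem:w1infty_conv} (Banach--Steinhaus), the area formula, and $\ell \in \LL^1(\nu)$ via \eqref{eq:decomposition_domain_tv_2}.

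For the pointwise-in-$\gamma$ convergence, fix a constant-speed $\gamma \in \mathscr{C}_1$ with $\gamma([0,1]) \subset \Omega$, a $\nu$-full property by Theorem~\ref{thm:decomposition_domain}. Setting $f_k := \phi_k \circ \gamma$ and $f := \phi \circ \gamma$, the sequence $(f_k) \subset \Lip_{\rmb}([0,1])$ is uniformly bounded in Lipschitz norm (since the local Lipschitz constant of $\phi_k$ on a neighbourhood of the compact image $\gamma([0,1]) \Subset \Omega$ is controlled by $\|\phi_k\|_{\WW^{1,\infty}(\Omega)}$, for instance via mollification) and converges pointwise to $f$ by Lemma~\ref{lem:w1infty_conv}. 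Lemma~\ref{lem:lipb_conv} with $X = [0,1]$ then yields $f_k \weaksto f$ in $\Lip_{\rmb}([0,1]) \cong \WW^{1,\infty}((0,1))$, and the explicit predual description of Lemma~\ref{lem:w1infty_predual} (pairing against $g = \partial_t h$ with $h \in \LL^1$) upgrades this to $f_k' \weaksto f'$ in $\LL^\infty((0,1))$. Pairing with the $\LL^1$-function $\mathbbm{1}_{\gamma^{-1}(E)}$ yields
\[
\int_0^1 \mathbbm{1}_{\gamma^{-1}(E)}(t)\, f_k'(t)\,\d t \longrightarrow \int_0^1 \mathbbm{1}_{\gamma^{-1}(E)}(t)\, f'(t)\,\d t.
\]

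The final step is the identification $K_k(\gamma) = \int_0^1 \mathbbm{1}_{\gamma^{-1}(E)}(t)\, f_k'(t)\,\d t$, which after the area formula for the constant-speed parametrisation reduces to the chain rule $(\phi_k \circ \gamma)'(t) = \nabla\phi_k(\gamma(t)) \cdot \gamma'(t)$ for $\mathcal{L}^1$-a.e.\,$t$. This is the main subtlety, since the chain rule is not automatic along an individual Lipschitz curve; however, it is a by-product of Theorem~\ref{thm:pairing_pointwise} applied to the field $\llbracket\gamma\rrbracket$. Indeed, equating the pointwise representation $\overline{\nabla\phi_k \cdot \llbracket\gamma\rrbracket} = (\nabla\phi_k \cdot \xi_\gamma)\,\mathcal H^1 \mres \Gamma_\gamma$ with the integration-by-parts identity $\overline{\nabla\phi_k \cdot \llbracket\gamma\rrbracket}(B) = \int_0^1 \mathbbm{1}_B(\gamma(t))\, f_k'(t)\,\d t$ (valid for all Borel $B$ by testing on $\CC^1$ functions and extending as measures) forces the two $\mathcal{L}^1$-a.e.\,derivatives to coincide along $\nu$-a.e.\,$\gamma$. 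Granting this identification, the dominated convergence above closes the argument.
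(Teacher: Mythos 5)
Your proposal is correct and follows essentially the same route as the paper: disintegrate $\overline{\nabla\phi_k\cdot\FF}(E)$ over curves via \eqref{eq:pairing_disint}, establish the one-dimensional convergence along $\nu$-a.e.\ curve, and conclude by dominated convergence with the dominating function $C\,\ell(\gamma)$ from \eqref{eq:decomposition_domain_tv_2}. The only difference is that you unpack the per-curve convergence (weak${}^{\ast}$ convergence of $(\phi_k\circ\gamma)'$ in $\LL^{\infty}((0,1))$ and the a.e.\ chain rule via Lemma~\ref{lem:pairing_curve}\ref{item:curve_measure_rep}) inline, whereas the paper cites Lemma~\ref{lem:pairing_curve}\ref{item:curve_measure_conv}, which contains exactly this argument.
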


\begin{eg}
We will show, by means of a simple example, that we cannot expect a similar continuity statement in $\FF$; for this consider 
\begin{alignat}{3}
  \FF_k &= \mathrm{e}_1 \mathcal H^1 \mres \Gamma_k, \quad &\Gamma_k  &= \{ x \in \mathbb R^2 : x_2 = 1/k \},  \\
  \FF &= \mathrm{e}_1 \mathcal H^1 \mres \Gamma, \quad & \Gamma  &= \left\{ x \in \mathbb R^2 : x_2 = 0 \right\},
\end{alignat}
which lie $ \mathcal{DM}_{\mathrm{loc}}^{\ext}(\mathbb R^2)$, where $\mathrm{e}_1,\mathrm{e}_2$ are the standard basis vectors of $\mathbb R^2$.
For any $R>0$, observe that $\FF_k, \FF$ are uniformly bounded in $\mathcal{DM}^{\ext}(B_R)$ and that $\FF_k \weaksto \FF$ weakly${}^{\ast}$ in $\mathcal{DM}^{\ext}(B_R)$.
Now taking $E = (0,1)^2$, for any $\phi \in \WW^{1,\infty}(\mathbb R^2)$ we have
\begin{equation}
  \lim_{k \to \infty} \int_E \d(\overline{\nabla\phi \cdot \FF_k}) = \lim_{k \to \infty}\int_0^1 \partial_{x_1}\phi(t,1/k) \,\d t = \phi(1,0) - \phi(0,0),
\end{equation} 
whereas $\lvert \FF \rvert(E) = 0$. 
Therefore taking any $\phi$ such that $\phi(1,0) \neq \phi(0,0)$ we see that
\begin{equation}
  \lim_{k \to \infty} \int_E \d(\overline{\nabla\phi \cdot \FF_k}) = \phi(1,0)-\phi(0,0) \neq 0 = \int_E \d(\overline{\nabla \phi \cdot \FF}),
\end{equation} 
thereby exhibiting failure of continuity with respect to weak${}^{\ast}$ convergence of measures.
\end{eg}

The proofs of Theorems~\ref{thm:pairing_pointwise} and \ref{thm:setwise_conv} result will rely on Smirnov's theorem in the form of Theorem~\ref{thm:decomposition_domain}, and two lemmas.
The first is a differentiability statement from \cite{AlbertiMarchese2016}, which we will use in the following form. 
In what follows, $\mathrm{Gr}(\mathbb R^n) = \bigcup_{k=0}^n \mathrm{Gr}_k(\mathbb R^n)$ denotes the set of all subspaces of $\mathbb R^n$.

\begin{lem}\label{lem:directional_diff}
  Let $\FF \in \mathcal{DM}^{\ext}(\Omega)$ and $\phi \in \WW^{1,\infty}(\Omega)$.
  Then there exists a mapping
  \begin{equation}
    x \mapsto V_{\FF}(x) \colon \Omega \to \mathrm{Gr}(\mathbb R^n)
  \end{equation} 
  such that for $\lvert\FF\rvert$-a.e.\,$x \in\Omega$, $\phi$ is differentiable at $x$ with respect to $V_{\FF}(x)$,
  and if we define the differential
  \begin{equation}
    x \mapsto \D_{\FF}\phi(x) \colon \Omega \to (\mathbb R^n)^{\ast}
  \end{equation} 
  by defining $\D_{\FF}\phi(x)\rvert_{V_\FF}$ to be this derivative and setting $\D_{\FF}\phi(x) \rvert_{V_{\FF}^{\perp}} \equiv 0$, this mapping is Borel measurable.
  Moreover taking the decomposition from Theorem~\ref{thm:decomposition_domain}, for $\nu$-a.e.\,$\gamma\in\mathscr C_1$, where the null set is Borel measurable,
  \begin{equation}\label{eq:curve_tangents_diff}
    \gamma^\prime(t) \in V_{\FF}(\gamma(t))\  \text{ and } \ \D_{\FF}\phi(\gamma(t))(\gamma^\prime(t)) = (\phi \circ \gamma)'(t) \quad\mbox{for $\mathcal L^1$-a.e.\,$t\in[0,1]$}.
  \end{equation} 
\end{lem}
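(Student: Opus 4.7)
The plan is to obtain $V_\FF$ directly from the Alberti--Marchese \emph{decomposability bundle} $V(\mu,\cdot)$ associated to the measure $\mu = \lvert\FF\rvert$. By their main theorem, this defines a Borel measurable map $V_\FF \colon \Omega \to \mathrm{Gr}(\mathbb R^n)$ with the property that every Lipschitz function on $\Omega$ (in particular $\phi \in \WW^{1,\infty}(\Omega)$) is $\lvert\FF\rvert$-a.e.\,differentiable along $V_\FF(x)$ at $x$. I would then \emph{define} $\D_{\FF}\phi(x)$ as this (partial) differential extended by zero on $V_\FF(x)^{\perp}$; its Borel measurability follows by writing $\D_\FF\phi(x)$ as a $\lvert\FF\rvert$-a.e.\,limit of difference quotients along a countable dense family of directions in $V_\FF(x)$ (which can itself be realised measurably by a Borel selection of an orthonormal frame of $V_\FF$), and taking a measurable modification.

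To obtain the compatibility \eqref{eq:curve_tangents_diff} with Smirnov's decomposition, I would use the characterisation of the decomposability bundle via $1$-rectifiable sub-measures. Theorem~\ref{thm:decomposition_domain} exhibits $\lvert\FF\rvert$ as a superposition $\int_{\mathscr C_1} \mu_{\llbracket\gamma\rrbracket}\,\d\nu(\gamma)$, where each $\mu_{\llbracket\gamma\rrbracket}$ is $1$-rectifiable with measure-theoretic tangent direction $\xi_\gamma(x)$ at $\mu_{\llbracket\gamma\rrbracket}$-a.e.\,$x \in \Gamma_\gamma$. By the Alberti--Marchese characterisation, such a tangent direction must lie in $V_\FF(x)$ for $\mu_{\llbracket\gamma\rrbracket}$-a.e.\,$x$, for $\nu$-a.e.\,$\gamma$. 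Passing through the constant-speed parametrisation and the area formula used in Definition~\ref{defn:curve_current}, this translates into $\gamma'(t)/\lvert\gamma'(t)\rvert \in V_\FF(\gamma(t))$ for $\mathcal L^1$-a.e.\,$t \in [0,1]$, for $\nu$-a.e.\,$\gamma$.

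The chain rule half of \eqref{eq:curve_tangents_diff} I would handle by the standard observation that $\phi \circ \gamma$ is Lipschitz on $[0,1]$, hence differentiable $\mathcal L^1$-a.e.; at any $t$ where both the classical derivative $(\phi\circ\gamma)'(t)$ and the partial differential $\D_\FF\phi(\gamma(t))$ exist and moreover $\gamma'(t) \in V_\FF(\gamma(t))$, comparing difference quotients along $\gamma$ with the definition of differentiability along $V_\FF(\gamma(t))$ gives $(\phi\circ\gamma)'(t) = \D_\FF\phi(\gamma(t))(\gamma'(t))$. To conclude that this simultaneous validity holds on a full $(\nu \otimes \mathcal L^1)$-measure set, I would apply Fubini to the disintegration identity
\begin{equation*}
  \int_{\mathscr{C}_1}\!\int_0^1 \mathbbm{1}_{N}(\gamma(t))\,\lvert\gamma'(t)\rvert\,\d t\,\d\nu(\gamma) = \int_\Omega \mathbbm{1}_N\,\d\lvert\FF\rvert = 0
\end{equation*}
for any $\lvert\FF\rvert$-null Borel set $N$ collecting points where the various differentiability/tangency properties fail.

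\textbf{Main obstacle.} The technical crux is twofold: first, obtaining the Borel measurability of the exceptional set of curves in $\mathscr C_1$, which requires verifying that each of the relevant events (``$\gamma'(t)\in V_\FF(\gamma(t))$ for a.e.\,$t$'', ``$\phi\circ\gamma$ is differentiable a.e.\,with derivative $\D_\FF\phi\circ\gamma$ applied to $\gamma'$'') is captured by a Borel subset of $\mathscr C_1$; and second, justifying that the Alberti--Marchese rectifiable-decomposition characterisation of $V(\lvert\FF\rvert,\cdot)$ applies to the Smirnov superposition, since curves may self-intersect and carry non-unit multiplicity in $\mu_{\llbracket\gamma\rrbracket}$. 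Both points are handled by working on the graph $\{(\gamma,t) : t \in [0,1]\}$ with the product Borel structure of $\mathscr C_1 \times [0,1]$ and exploiting \eqref{eq:decomposition_domain_tv} as a disintegration formula for $\lvert\FF\rvert$.
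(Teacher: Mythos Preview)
Your proposal is correct and takes essentially the same approach as the paper, which simply invokes the Alberti--Marchese decomposability bundle and cites three results from \cite{AlbertiMarchese2016} for the differentiability, the measurability of $\D_\FF\phi$, and the tangency property $\gamma'(t)\in V_\FF(\gamma(t))$; your elaboration on the chain rule and the Fubini argument for the null sets fills in what the paper leaves implicit. The only technical detail the paper makes explicit that you do not is to first extend $\FF$ by zero to $\mathbb R^n$ so that the Alberti--Marchese framework (stated on $\mathbb R^n$) applies verbatim---a point worth noting since $\phi \in \WW^{1,\infty}(\Omega)$ is only \emph{locally} Lipschitz in general, which however suffices as differentiability is a local property.
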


We then denote the associated gradient as $x \mapsto \nabla_{\FF}\phi(x)$, which is defined $\lvert \FF\rvert$-a.e., takes values in $V_{\FF}(x)$, and is Borel measurable as a map $\Omega \to \mathbb R^n$.

\begin{proof}
  We apply Theorem \ref{thm:decomposition_domain} to $\FF$, and extending $\FF$ by zero to $\mathbb R^n$ we see this decomposition remains valid in the full space.
  Working in $\mathbb R^n$, we take $x \mapsto V_{\FF}(x)$ to be the decomposability bundle associated to this decomposition, as defined in \cite[\S 2.6]{AlbertiMarchese2016}, which satisfies $\eqref{eq:curve_tangents_diff}_1$.
  Then $\phi$ is differentiable with respect to $V_{\FF}(x)$ at $\lvert \FF\rvert$-a.e.\,$x$ by \cite[Cor.\,3.9]{AlbertiMarchese2016}, and the measurability of $\D_{\FF}\phi$ follows from \cite[Lem.\,3.6]{AlbertiMarchese2016}.
\end{proof}

The second lemma asserts that Theorems~\ref{thm:pairing_pointwise} and \ref{thm:setwise_conv} hold when $\FF$ is a curve.

\begin{lem}\label{lem:pairing_curve}
  Let $\gamma \in \mathscr{C}_1$ and consider the associated divergence-measure field
  \begin{equation}
    \llbracket\gamma\rrbracket = \xi_{\gamma} \, \mathcal{H}^1 \mres \Gamma_{\gamma}.
  \end{equation} 
  Then for any $\Omega \subset \mathbb R^n$ such that $\Gamma_{\gamma} \subset \Omega$, the following holds:
  \begin{enumerate}[label=(\alph*)]

    \item\label{item:curve_measure_rep} 
      For any $\phi \in \WW^{1,\infty}(\Omega)$,
    \begin{equation*}
      \overline{\nabla \phi \cdot \llbracket\gamma\rrbracket} = (\nabla_{\llbracket\gamma\rrbracket} \phi \cdot \xi_{\gamma}) \,\mathcal H^1 \mres \Gamma_{\gamma} \quad\mbox{as measures}.
    \end{equation*} 

  \item\label{item:curve_measure_conv} 
    If $(\phi_k)_k \subset \WW^{1,\infty}(\Omega)$ such that $\phi_k \weaksto \phi$ weakly${}^{\ast}$ in $\WW^{1,\infty}(\Omega)$, then
  \begin{equation*}
    \overline{\nabla\phi_k \cdot \llbracket\gamma\rrbracket} \rightharpoonup \overline{\nabla\phi \cdot \llbracket\gamma\rrbracket} \quad\text{setwise in } \mathcal M(\Omega).
  \end{equation*} 
  \end{enumerate}
\end{lem}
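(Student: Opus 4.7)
The plan is to establish a single integral identity that implies both parts of the lemma, namely
\begin{equation}\label{eq:pairing_curve_key}
  \int_{\Omega} \psi \,\d(\overline{\nabla \phi \cdot \llbracket\gamma\rrbracket}) = \int_0^1 \psi(\gamma(t))\,(\phi \circ \gamma)^\prime(t)\,\d t,
\end{equation}
for every $\phi \in \WW^{1,\infty}(\Omega)$ and $\psi \in \mathcal B_{\rmb}(\Omega)$. For $\phi \in \CC^1_{\rmb}(\Omega)$ and $\psi \in \CC_{\rmb}(\Omega)$, the pairing measure coincides with the classical product $\nabla \phi \cdot \llbracket\gamma\rrbracket$ by Definition~\ref{defn:pairing_measure}, so \eqref{eq:pairing_curve_key} is immediate from Definition~\ref{defn:curve_current} and the chain rule. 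To extend to arbitrary $\phi \in \WW^{1,\infty}(\Omega)$, I would choose $\phi_k \in \CC^1_{\rmb}(\Omega)$ with $\phi_k \weaksto \phi$ in $\WW^{1,\infty}(\Omega)$ (available by Lemma~\ref{lem:w1infty_conv}); the left-hand side of \eqref{eq:pairing_curve_key} then passes to the limit by Lemma~\ref{lem:product_rule}. For the right-hand side, the compact containment $\Gamma_\gamma \Subset \Omega$ ensures that the compositions $\phi_k \circ \gamma$ are uniformly bounded in $\WW^{1,\infty}((0,1))$ and converge pointwise to $\phi \circ \gamma$, so Lemma~\ref{lem:lipb_conv} yields $\phi_k \circ \gamma \weaksto \phi \circ \gamma$ in $\WW^{1,\infty}((0,1))$. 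Testing against the element of $\WW^{-1,1}((0,1))$ provided by Lemma~\ref{lem:w1infty_predual} with data $f_0 = 0$ and $f_1 = \psi \circ \gamma \in \LL^1((0,1))$ then closes the limit.

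Both sides of \eqref{eq:pairing_curve_key} define finite signed Borel measures in $\psi$—the right-hand side being the pushforward $\gamma_{\#}\bigl((\phi\circ\gamma)^\prime \,\mathcal L^1\bigr)$—so agreement on $\CC_{\rmb}(\Omega)$ upgrades \eqref{eq:pairing_curve_key} to arbitrary $\psi \in \mathcal B_{\rmb}(\Omega)$. For part \ref{item:curve_measure_rep} I would then apply Lemma~\ref{lem:directional_diff} to $\FF = \llbracket \gamma \rrbracket$ with the trivial Smirnov decomposition $\nu = \delta_\gamma$—which plainly satisfies \eqref{eq:decomposition_domain}--\eqref{eq:decomposition_domain_tv}—to obtain $(\phi \circ \gamma)^\prime(t) = \nabla_{\llbracket \gamma\rrbracket}\phi(\gamma(t)) \cdot \gamma^\prime(t)$ for $\mathcal L^1$-a.e.\,$t$; substituting into \eqref{eq:pairing_curve_key} and applying the area formula exactly as in \eqref{eq:curve_current} produces the claimed representation on $\mathcal H^1 \mres \Gamma_\gamma$. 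Part \ref{item:curve_measure_conv} follows by specialising \eqref{eq:pairing_curve_key} to $\psi = \mathbbm 1_E$ and rerunning the weak${}^{\ast}$ convergence argument with $\mathbbm 1_E \circ \gamma \in \LL^\infty((0,1)) \subset \LL^1((0,1))$.

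The main obstacle I anticipate is the mismatch between $\WW^{1,\infty}(\Omega)$ and $\Lip_{\rmb}(\Omega)$: since $\phi$ need not be globally Lipschitz on $\Omega$, verifying that $\phi \circ \gamma \in \WW^{1,\infty}((0,1))$ with a uniform bound depending only on $\lVert \phi \rVert_{\WW^{1,\infty}(\Omega)}$ requires careful use of the compact containment $\Gamma_\gamma \Subset \Omega$ together with the locally Lipschitz representative of $\phi$. Once this technicality is handled, the remainder consists of standard weak${}^{\ast}$ manipulations combined with the Alberti--Marchese differentiability result recorded in Lemma~\ref{lem:directional_diff}.
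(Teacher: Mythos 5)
Your proposal is correct and follows essentially the same route as the paper: approximate $\phi$ by $\CC^1_{\rmb}$ functions via Lemma~\ref{lem:w1infty_conv}, reduce to the one-dimensional identity $\int_\Omega \psi\,\d(\overline{\nabla\phi\cdot\llbracket\gamma\rrbracket}) = \int_0^1 \psi(\gamma(t))\,(\phi\circ\gamma)'(t)\,\d t$ by passing to the limit with $\phi_k\circ\gamma \weaksto \phi\circ\gamma$ in $\WW^{1,\infty}((0,1))$, then invoke Lemma~\ref{lem:directional_diff} and the area formula for \ref{item:curve_measure_rep}, and rerun the weak${}^{\ast}$ limit against $\mathbbm 1_E\circ\gamma$ for \ref{item:curve_measure_conv}. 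Your detour of first proving the identity for continuous $\psi$ and then identifying both sides as finite Borel measures is a harmless (arguably cleaner) variant of the paper's direct treatment of Borel test functions.
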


\begin{proof}
  For \ref{item:curve_measure_rep}, let $\phi \in \WW^{1,\infty}(\Omega)$. 
  Then by Lemma~\ref{lem:w1infty_conv}, there exists a sequence $(\phi_k)_k \subset \CC_{\rmb}^1(\Omega)$ converging weakly${}^{\ast}$ to $\phi$ in $\WW^{1,\infty}(\Omega)$, so in particular $\phi_k \to \phi$ pointwise in $\Omega$ and $M := \sup_k \lVert \phi_k \rVert_{\WW^{1,\infty}(\Omega)} < \infty$.
  Then for each $k$ and $\psi \in \mathcal B_{\rmb}(\Omega)$, noting that $\psi \nabla \phi_k \in \mathcal B_{\rmb}(\Omega;\mathbb R^n)$, we have by Definition~\ref{defn:curve_current} that
  \begin{equation}\label{eq:curve_pairing_1}
    \langle \llbracket\gamma\rrbracket, \psi \nabla \phi_k\rangle  = \int_0^1 \psi \circ \gamma(t) \, \frac{\d}{\d t} \left(\phi_k \circ \gamma\right)(t) \,\d t.
  \end{equation} 
  Since $\phi_k \circ \gamma \to \phi \circ \gamma$ pointwise in $[0,1]$ and 
  \begin{equation}\label{eq:phi_circ_gamma_bounded}
  \lVert (\d/\d t)(\phi_k\circ\gamma)\rVert_{\LL^{\infty}((0,1))} \leq M \lVert\gamma^\prime\rVert_{\LL^{\infty}((0,1))} \quad\text{for all } k,
  \end{equation}
  we infer that $\phi_k \circ \gamma \weaksto \phi \circ \gamma$ weakly${}^{\ast}$ in $\WW^{1,\infty}((0,1))$.
  Hence passing to the limit in \eqref{eq:curve_pairing_1},
  \begin{equation}\label{eq:curve_equality_a}
    \int_{\Omega} \psi \,\d(\overline{\nabla \phi \cdot \llbracket\gamma\rrbracket}) = \lim_{k \to \infty} \langle \llbracket\gamma\rrbracket, \psi \nabla \phi_k \rangle = \int_0^1 \psi \circ \gamma(t)\,\frac{\d}{\d t}( \phi \circ \gamma)(t) \,\d t,
  \end{equation} 
  where we also used \eqref{eq:productrule_limit} for the first equality.
  By Lemma~\ref{lem:directional_diff} we have $(\phi \circ \gamma)'(t) = \nabla_{\llbracket\gamma\rrbracket}\phi(\gamma(t)) \cdot \gamma^\prime(t)$ for $\mathcal L^1$-a.e.\,$t \in (0,1)$, so by the area formula
  \begin{equation}
    \begin{split}
    \int_{\Omega} \psi \,\d(\overline{\nabla\phi \cdot \llbracket\gamma\rrbracket}) 
    &= \int_0^1 \psi \circ \gamma(t) \nabla_{\llbracket\gamma\rrbracket} \phi(\gamma(t))\cdot\gamma^\prime(t) \,\d t
    = \int_{\Gamma_{\gamma}} \psi \nabla_{\llbracket\gamma\rrbracket} \phi \cdot \xi_{\gamma} \,\d\mathcal{H}^1.
    \end{split}
  \end{equation} 
 Since $\psi \in \mathcal B_{\rmb}(\Omega)$ was arbitrary, this establishes \ref{item:curve_measure_rep}.

  For \ref{item:curve_measure_conv}, let $\phi_k \weakstarto \phi$ in $\WW^{1,\infty}(\Omega)$ and fix $\psi \in \mathcal B_{\rmb}(\Omega)$.
  Then since $\phi_k \to \phi$ pointwise in $\Omega$ and $\nabla\phi_k$ is uniformly bounded in $\LL^{\infty}(\Omega)$ by Lemma~\ref{lem:w1infty_conv}, arguing analogously as in \eqref{eq:phi_circ_gamma_bounded} we have $\phi_k \circ \gamma \weaksto \phi \circ \gamma$ weakly${}^{\ast}$ in $\WW^{1,\infty}((0,1))$.
  In particular,
  \begin{equation}\label{eq:1d_weak_conv}
    \frac{\d}{\d t}( \phi_k \circ \gamma) \weaksto \frac{\d}{\d t}( \phi \circ \gamma) \quad\mbox{weakly${}^{\ast}$ in $\LL^{\infty}((0,1))$}.
  \end{equation} 
  Hence using \eqref{eq:curve_equality_a} with $\phi_k$ and sending $k \to \infty$ using \eqref{eq:1d_weak_conv},
  \begin{equation}
    \begin{split}
      \lim_{k \to \infty} \int_{\Omega} \psi \, \d(\overline{\nabla \phi_k \cdot \llbracket\gamma\rrbracket}) 
    &=\lim_{k \to \infty} \int_0^1 \psi \circ \gamma(t)\,\frac{\d}{\d t}( \phi_k \circ \gamma)(t) \,\d t \\
    &=\int_0^1 \psi \circ \gamma(t)\,\frac{\d}{\d t}( \phi \circ \gamma)(t) \,\d t
    = \int_{\Omega} \psi \, \d(\overline{\nabla \phi \cdot \llbracket\gamma\rrbracket}).
    \end{split}
  \end{equation} 
  By taking $\psi = \mathbbm{1}_E$ where $E \subset \Omega$ is any Borel set, we infer \ref{item:curve_measure_conv}.
\end{proof}

\begin{proof}[Proof of Theorem \ref{thm:pairing_pointwise}]
  Given $\phi \in \WW^{1,\infty}(\Omega)$, by Lemma~\ref{lem:w1infty_conv} there exists $(\phi_k)_k \subset \CC_{\rmb}^1(\Omega)$ converging weakly${}^{\ast}$ to $\phi$ as $k \to \infty$, so $\phi_k \to \phi$ pointwise in $\Omega$ and $M := \sup_k \lVert \phi_k \rVert_{\WW^{1,\infty}(\Omega)} < \infty$.
  Using the decomposition of Theorem~\ref{thm:decomposition_domain}, we obtain a measure $\nu$ on $\mathscr{C}_1$ such that \eqref{eq:decomposition_domain} holds, so for $\psi \in \mathcal B_{\rmb}(\Omega)$ and each $k$,
  \begin{equation}
    \int_{\Omega} \psi \nabla \phi_k \cdot \d \FF = \int_{\mathscr{C}_1} \langle \llbracket\gamma\rrbracket, \psi \nabla \phi_k \rangle  \,\d \nu(\gamma).
  \end{equation} 
  Noting that $\Gamma_{\gamma} \subset \Omega$ for $\nu$-a.e.\,$\gamma\in\mathscr{C}_1$ (and the null set is Borel measurable), for any such $\gamma$ we have by Lemma~\ref{lem:pairing_curve}\ref{item:curve_measure_rep}, \ref{item:curve_measure_conv} that
  \begin{equation}
    \lim_{k \to \infty} \langle \llbracket\gamma\rrbracket , \psi \nabla \phi_k  \rangle = \int_{\Gamma_{\gamma}} \psi \,\d(\overline{\nabla\phi\cdot\llbracket \gamma \rrbracket}) = \int_{\Gamma_{\gamma}} \psi \nabla_{\llbracket\gamma\rrbracket}\phi \cdot \xi_{\gamma} \,\d \mathcal H^1.
  \end{equation} 
  Moreover by Lemma~\ref{lem:directional_diff}, for $\nu$-a.e.\,$\gamma \in \mathscr C_1$, $\nabla_{\llbracket\gamma\rrbracket}\phi\cdot\xi_{\gamma} = \nabla_{\FF}\phi\cdot\xi_{\gamma}$ holds $\mathcal H^1$-a.e.\,on $\Gamma_{\gamma}$.
  Hence there is a Borel measurable $\nu$-null set $\mathcal N \subset \mathscr C_1$ such that defining functions $(\Psi_k)_k, \Psi$ on $\mathscr{C}_1$ by
  \begin{equation}
    \Psi_k(\gamma) = \mathbbm{1}_{\mathcal N}(\gamma) \langle \llbracket\gamma\rrbracket,\psi\nabla\phi_k\rangle, \quad \Psi(\gamma) = \mathbbm{1}_{\mathcal N} \int_{\Gamma_{\gamma}} \psi \nabla_{\FF}\phi \cdot \xi_{\gamma} \,\d\mathcal H^1,
  \end{equation} 
  we have each $\Psi_k$ is Borel measurable and that $\Psi_k(\gamma) \to \Psi(\gamma)$ for all $\gamma \in \mathscr{C}_1$, thereby implying the measurability of $\Psi$.
  Moreover for each $\gamma \in \mathscr C_1 \setminus \mathcal N$ we can bound
  \begin{equation}
    \lvert \Psi_k(\gamma)\rvert = \left\lvert \int_{\Gamma_{\gamma}} \psi \nabla \phi_k \cdot \llbracket\gamma\rrbracket\right\rvert \leq \big(\sup_{\Omega}\lvert\psi\rvert\big) M\,\ell(\gamma) \quad\mbox{for all $k$},
  \end{equation} 
  and the same bound holds on $\mathcal N$ since each $\Psi_k$ vanishes there.
  By Lemma \ref{lem:product_rule} and applying the dominated convergence theorem using \eqref{eq:decomposition_domain_tv_2}, we obtain
  \begin{equation}
    \begin{split}
      \int_{\Omega} \psi\,\d(\overline{\nabla\phi \cdot \FF}) &= \lim_{k \to \infty} \int_{\Omega} \psi \nabla \phi_k \cdot \d \FF
      = \lim_{k \to \infty} \int_{\mathscr C_1} \Psi_k(\gamma) \,\d\nu(\gamma) \\
      &\quad= \int_{\mathscr C_1} \Psi(\gamma) \,\d\nu(\gamma)= \int_{\mathscr{C}_1} \int_{\Gamma_{\gamma}} \psi \nabla_{\FF}\phi \cdot \xi_{\gamma}\,\d\mathcal H^1 \,\d\nu(\gamma),
    \end{split}
  \end{equation} 
  establishing \eqref{eq:pairing_disint}.
  Therefore by combining the above with \eqref{eq:decomposition_domain} and using the measurability of $\nabla_{\FF}\phi$ from Lemma~\ref{lem:directional_diff}, it follows that
  \begin{equation}
    \int_{\Omega} \psi \,\d(\overline{\nabla \phi \cdot \FF}) = \int_{\mathscr C_1} \langle \llbracket\gamma\rrbracket,\psi \nabla_{\FF}\phi \rangle\,\d\nu(\gamma) = \int_{\Omega} \psi \nabla_{\FF}\phi \cdot \d \FF
  \end{equation} 
  holds.
  That is,
  $\overline{\nabla \cdot \FF} = \nabla_{\FF}\phi \cdot \FF$
  as measures in $\Omega$, from which we infer \eqref{eq:precise_product} by uniqueness of the Radon-Nikod\'ym decomposition (see \emph{e.g.}\,\cite[Thm.\,1.28]{AmbrosioEtAl2000}).
\end{proof}

\begin{proof}[Proof of Theorem \ref{thm:setwise_conv}]
  Let $\phi_k \weaksto \phi$ as in the statement and $E \subset \Omega$ be any Borel set.
  Decomposing $\FF$ using Theorem~\ref{thm:decomposition_domain},
  by \eqref{eq:pairing_disint} from Theorem~\ref{thm:pairing_pointwise} we can write
  \begin{equation}\label{eq:setwise_decomp}
    \int_{E} \d(\overline{\nabla\phi_k \cdot \FF}) = \int_{\mathscr C_1} \int_{\Gamma_{\gamma} \cap E}  \d(\overline{\nabla\phi_k \cdot \llbracket\gamma\rrbracket}) \,\d \nu(\gamma).
  \end{equation} 
  for each $k$.
  Then for each $\gamma \in \mathscr{C}_1$ such that $\Gamma_{\gamma} \subset \Omega$, which holds for $\nu$-a.e.\,$\gamma\in\mathscr{C}_1$, by Lemma~\ref{lem:pairing_curve}\ref{item:curve_measure_conv} we have
  \begin{equation}
    \lim_{k \to \infty} \int_{\Gamma_{\gamma} \cap E}  \d(\overline{\nabla\phi_k \cdot \llbracket\gamma\rrbracket}) = \int_{\Gamma_{\gamma} \cap E}  \d(\overline{\nabla\phi \cdot \llbracket\gamma\rrbracket}).
  \end{equation} 
  Also by weak${}^{\ast}$ convergence of $\phi_k$, we have $M = \sup_k \lVert \phi_k \rVert_{\WW^{1,\infty}(\Omega)} < \infty$, so using this we obtain the uniform bound
  \begin{equation}
    \left\lvert\int_{\Gamma_{\gamma} \cap E}  \d(\overline{\nabla\phi_k \cdot \llbracket\gamma\rrbracket})\right\rvert \leq \lVert\nabla\phi_k \rVert_{\LL^{\infty}(\Omega)}\, \mu_{\llbracket\gamma\rrbracket}(\Gamma_{\gamma}) \leq M \ell(\gamma),
  \end{equation} 
  for all $\gamma \in \mathscr{C}_1$ such that $\Gamma_{\gamma} \subset \Omega$.
  Since the right-hand side is $\nu$-integrable over $\gamma \in \mathscr{C}_1$ by \eqref{eq:decomposition_domain_tv_2},
  by the dominated convergence theorem we can send $k \to \infty$ in \eqref{eq:setwise_decomp} to infer \eqref{eq:setwise_claim}.
\end{proof}

\subsection{Applications to the normal trace}\label{sec:pairing_applications}

Equipped with Theorems~\ref{thm:pairing_pointwise} and \ref{thm:setwise_conv}, we can prove the following result claimed in the introduction.

\begin{thm}\label{lem:optimal_trace}
  Let $\Omega \subset \mathbb R^n$ be an open set, then for any Borel set $E \subset \Omega$ there exists a unique linear mapping
  \begin{equation}
    \mathrm N_E \colon \mathcal{DM}^{\ext}(\Omega) \to \text{\AE}(\partial E) \subset \Lip(\partial E)^{\ast}
  \end{equation} 
  which for each $\FF \in \mathcal{DM}^{\ext}(\Omega)$ satisfies
  \begin{equation}\label{eq:optimal_tracedefinition}
    \langle\mathrm{N}_E(\FF),\phi \rvert_{\partial E}\rangle_{\text{\AE}} = -\int_E \nabla \phi \cdot \FF - \int_E \phi\,\d(\div \FF) \quad\mbox{for all } \phi \in \CC^1_{\rmb}(\Omega), 
  \end{equation} 
  and is bounded in that
  $\lVert \mathrm{N}_E(\FF) \rVert_{\text{\AE}(\partial E)} \leq C \lVert \FF \rVert_{\mathcal{DM}^{\ext}(\Omega)}$, where $C = C(n)$.
\end{thm}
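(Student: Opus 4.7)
The plan is to construct $\mathrm N_E(\FF)$ by extending the Gauss--Green formula to $\Lip_{\rmb}(\Omega)$ via the Anzellotti pairing, show this extension depends only on boundary values, and identify the resulting bounded functional with an element of $\text{\AE}(\partial E)$ by combining weak${}^{\ast}$-continuity with Proposition~\ref{prop:ae_isometric}. For $\phi \in \Lip_\rmb(\Omega)$ I would set
\begin{equation*}
T_{\FF,E}(\phi) := -\int_E \d(\overline{\nabla\phi\cdot\FF}) - \int_E \phi\,\d(\div\FF),
\end{equation*}
which by Lemma~\ref{lem:product_rule} is well-defined, bounded by $\lVert\phi\rVert_{\WW^{1,\infty}(\Omega)}\lVert\FF\rVert_{\mathcal{DM}^{\ext}(\Omega)}$, and reduces to \eqref{eq:optimal_tracedefinition} when $\phi \in \CC^1_\rmb(\Omega)$ by Definition~\ref{defn:pairing_measure}.

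The principal obstacle is to show $T_{\FF,E}(\psi) = 0$ whenever $\psi \in \Lip_\rmb(\Omega)$ satisfies $\psi|_{\partial E} = 0$, which yields the required factorization through the restriction map to $\Lip_\rmb(\partial E)$. I would apply the Smirnov decomposition of Theorem~\ref{thm:decomposition_domain} to write $\FF = \int_{\mathscr C_1}\llbracket\gamma\rrbracket\,\d\nu(\gamma)$; testing against $\phi \in \CC^1_\rmc(\Omega)$ then yields the distributional (hence measure) identity $\div\FF = \int_{\mathscr C_1}\div\llbracket\gamma\rrbracket\,\d\nu(\gamma)$. Combined with the disintegration \eqref{eq:pairing_disint}, this reduces $T_{\FF,E}(\psi)$ to a curve-by-curve computation. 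For $\nu$-a.e.\,$\gamma$ with $\Gamma_\gamma \subset \Omega$, the product $h(t) := \mathbbm 1_E(\gamma(t))\psi(\gamma(t))$ is Lipschitz on $[0,1]$: any potential jump of $\mathbbm 1_E\circ\gamma$ at a boundary crossing is absorbed because $\psi\circ\gamma$ vanishes there, while $(\psi\circ\gamma)'(t) = 0$ for $\mathcal L^1$-a.e.\,$t \in \gamma^{-1}(\partial E)$ by the standard calculus of Lipschitz functions vanishing on a set. The fundamental theorem of calculus then gives
\begin{equation*}
\int_0^1 \mathbbm 1_E(\gamma(t))(\psi\circ\gamma)'(t)\,\d t = h(1) - h(0) = \mathbbm 1_E(\gamma(1))\psi(\gamma(1)) - \mathbbm 1_E(\gamma(0))\psi(\gamma(0)),
\end{equation*}
which exactly cancels the contribution of $\int\psi\,\d(\div\llbracket\gamma\rrbracket\mres E)$ in $T_{\FF,E}(\psi)$; integrating against $\nu$ yields $T_{\FF,E}(\psi) = 0$.

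Granted the factorization, $T_{\FF,E}$ descends to a bounded linear functional $\mathrm N_E(\FF) \in \Lip_\rmb(\partial E)^{\ast}$, evaluated via McShane extensions (which preserve the Lipschitz norm), giving the stated estimate. To identify $\mathrm N_E(\FF)$ with an element of $\text{\AE}(\partial E)$, I invoke Proposition~\ref{prop:ae_isometric}\ref{item:ae_dual}: since $\text{\AE}(\partial E)$ is separable, bounded subsets of $\Lip_\rmb(\partial E)$ are weak${}^{\ast}$-metrisable, so it suffices to check sequential weak${}^{\ast}$-continuity. Given $\phi_k \weaksto \phi$ in $\Lip_\rmb(\partial E)$, Lemma~\ref{lem:lipb_conv} yields uniform convergence on bounded subsets with a uniform norm bound, hence the McShane extensions converge pointwise in $\mathbb R^n$ with uniform $\WW^{1,\infty}$ bound; Lemma~\ref{lem:w1infty_conv} then gives $\tilde\phi_k \weaksto \tilde\phi$ in $\WW^{1,\infty}(\Omega)$, Theorem~\ref{thm:setwise_conv} handles the pairing term, and dominated convergence handles the $\div\FF$-term. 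Uniqueness is automatic: any weak${}^{\ast}$-continuous functional is determined by its values on the weak${}^{\ast}$-dense set $\{\phi|_{\partial E}: \phi \in \CC^1_\rmb(\Omega)\}$, which is dense thanks to the $\CC^1_\rmb$-density statement of Lemma~\ref{lem:w1infty_conv} combined with McShane extension and Lemma~\ref{lem:lipb_conv}.
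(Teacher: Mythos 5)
Your overall architecture — define $T_{\FF,E}$ on $\Lip_{\rmb}(\Omega)$ via the pairing measure, show it depends only on $\phi\rvert_{\partial E}$, and upgrade sequential weak${}^{\ast}$-continuity to membership in $\text{\AE}(\partial E)$ via separability and Krein-\v{S}mulian — is the same as the paper's, and the continuity, identification and uniqueness steps are essentially sound. The gap is in your proof of the vanishing statement $T_{\FF,E}(\psi)=0$ for $\psi\rvert_{\partial E}=0$, at the point where you pass from the distributional identity $\div\FF=\int_{\mathscr C_1}\div\llbracket\gamma\rrbracket\,\d\nu(\gamma)$ to a curve-by-curve evaluation of $\int_E\psi\,\d(\div\FF)$. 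Testing \eqref{eq:decomposition_domain} against gradients of $\CC^1_{\rmc}(\Omega)$-functions identifies $\div\FF$ with $\int_{\mathscr C_1}(\delta_{\gamma(0)}-\delta_{\gamma(1)})\,\d\nu(\gamma)$ only as a distribution; to restrict this identity to the Borel set $E$ and integrate the discontinuous function $\mathbbm{1}_E\psi$ against it, you would need the endpoint measures $\int_{\mathscr C_1}\delta_{\gamma(0)}\,\d\nu$ and $\int_{\mathscr C_1}\delta_{\gamma(1)}\,\d\nu$ to be finite measures. Theorem~\ref{thm:decomposition_domain} does not give this: it only controls $\int\ell(\gamma)\,\d\nu$, while $\nu(\mathscr C_1)$ itself can be infinite, since the construction pastes together decompositions on shells $A_k$ whose artificial divergences $(\FF\cdot\nu)_{\partial\widetilde\Omega^{\delta_k}}$ need not be summable in total variation. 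This is precisely the ``incompleteness'' of the decomposition that the paper flags after Theorem~\ref{thm:decomposition_fullspace}. Your curve-wise computation is itself correct (the function $h$ is indeed $\Lip(\psi)\Lip(\gamma)$-Lipschitz and the FTC applies), and the resulting integrand is dominated by $\Lip(\psi)\ell(\gamma)$, so $\int_{\mathscr C_1}(h_\gamma(1)-h_\gamma(0))\,\d\nu$ converges absolutely; but nothing in your argument shows that this number equals $-\int_E\psi\,\d(\div\FF)$, because $\mathbbm{1}_E\psi$ cannot be reached from $\CC^1_{\rmc}(\Omega)$ by an approximation that passes to the limit on both sides (for non-compactly supported approximants the identity already picks up the unknown normal trace on $\partial\Omega$, so the repair is circular).

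The paper's Proposition~\ref{prop:improved_support} avoids decomposing $\div\FF$ altogether: it first applies the known open-set result \cite[Thm.\,10.2]{ChenEtAl2024} to $E^{\circ}$, notes that $\int_{E\cap\partial E}\psi\,\d(\div\FF)=0$ trivially because $\psi$ vanishes pointwise on $\partial E$, and is then left only with $\int_{E\cap\partial E}\d(\overline{\nabla\psi\cdot\FF})$, which is handled by the curve decomposition of the \emph{pairing measure} via \eqref{eq:pairing_disint} — a decomposition that is complete and absolutely convergent because it is controlled by $\ell(\gamma)$. To close your gap you would either need to reproduce such a reduction (splitting $E$ into $E^{\circ}$ and $E\cap\partial E$ and importing the open-set case), or prove a completeness statement for the divergence in the Smirnov decomposition that the paper deliberately does not claim.
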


This will largely follow from two results of independent interest.

\begin{prop}\label{prop:improved_support}
  Let $\FF \in \mathcal{DM}^{\ext}(\Omega)$ and $E \subset \Omega$ be a Borel set. If $\phi \in \Lip_{\rmb}(\Omega)$ vanishes on $\partial E$, then
  \begin{equation}
    \langle \FF \cdot \nu, \phi \rangle_{\partial E} = 0.
  \end{equation} 
\end{prop}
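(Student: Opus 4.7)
The strategy is to apply Smirnov's decomposition (Theorem~\ref{thm:decomposition_domain}) to $\FF$ and reduce both of the terms
\begin{equation*}
  \int_E \d(\overline{\nabla \phi \cdot \FF}) \quad\mbox{and}\quad \int_E \phi \,\d(\div \FF)
\end{equation*}
defining $\langle \FF \cdot \nu,\phi \rangle_{\partial E}$ to one-dimensional integrals along curves $\gamma \in \mathscr{C}_1$, and then to observe that the resulting expressions cancel exactly. Since $\phi \in \Lip_{\rmb}(\Omega) \subset \WW^{1,\infty}(\Omega)$ the pairing measure is defined directly via Lemma~\ref{lem:product_rule}, so no preliminary approximation of $\phi$ is required.

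\textbf{The divergence term.} Let $\nu$ on $\mathscr{C}_1$ be the measure furnished by Theorem~\ref{thm:decomposition_domain}. I would first establish that
\begin{equation*}
  \int_\Omega \psi \,\d(\div \FF) = \int_{\mathscr{C}_1} \bigl(\psi(\gamma(0)) - \psi(\gamma(1))\bigr) \,\d\nu(\gamma) \quad\mbox{for all } \psi \in \mathcal{B}_\rmb(\Omega).
\end{equation*}
Indeed, testing \eqref{eq:decomposition_domain} with $\Phi = \nabla\psi$ for $\psi \in \CC^1_\rmc(\Omega)$ and integrating by parts gives this identity for compactly supported $\CC^1$ test functions; since both sides define finite signed Radon measures on $\Omega$ (the right-hand side has total variation at most $2\nu(\mathscr{C}_1) < \infty$), density of $\CC^1_\rmc(\Omega)$ in $\CC_\rmc(\Omega)$ and uniqueness of Radon measures extend it to all bounded Borel test functions. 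Specialising to $\psi = \phi \mathbbm{1}_E$ yields a curve-wise expression for $\int_E \phi \,\d(\div\FF)$ as boundary evaluations at $\gamma(0)$ and $\gamma(1)$.

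\textbf{The pairing term.} Using \eqref{eq:pairing_disint} with $\psi = \mathbbm{1}_E$, Lemma~\ref{lem:directional_diff}, and the area formula, we obtain
\begin{equation*}
  \int_E \d(\overline{\nabla \phi \cdot \FF}) = \int_{\mathscr{C}_1} \int_{\gamma^{-1}(E)} (\phi \circ \gamma)'(t) \,\d t \,\d\nu(\gamma).
\end{equation*}
For $\nu$-almost every $\gamma$ with $\Gamma_\gamma \subset \Omega$, set $f := \phi \circ \gamma \in \WW^{1,\infty}([0,1])$ and $A := \gamma^{-1}(E)$. Continuity of $\gamma$ together with $\phi\rvert_{\partial E} = 0$ implies that $f$ vanishes on $\gamma^{-1}(\partial E)$, which contains the topological boundary $\partial A$ taken in $[0,1]$. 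Splitting $A = A^\circ \sqcup (A \setminus A^\circ)$, the piece $A \setminus A^\circ$ lies in $\partial A \subset \{f = 0\}$, so the classical fact that $f' = 0$ almost everywhere on $\{f = 0\}$ for an absolutely continuous $f$ annihilates its contribution. The open set $A^\circ$ is a countable disjoint union of open subintervals of $[0,1]$ whose interior endpoints (those not equal to $0$ or $1$) again lie in $\partial A$ where $f$ vanishes, so the fundamental theorem of calculus telescopes to give
\begin{equation*}
  \int_A f'(t) \,\d t = \phi(\gamma(1)) \mathbbm{1}_E(\gamma(1)) - \phi(\gamma(0)) \mathbbm{1}_E(\gamma(0)),
\end{equation*}
where any discrepancy at $t = 0, 1$ between $\mathbbm{1}_{A^\circ}$ and $\mathbbm{1}_A$ occurs only when $\gamma(0)$ or $\gamma(1)$ belongs to $E \setminus E^\circ \subset \partial E$, and is absorbed by $\phi\rvert_{\partial E} = 0$.

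\textbf{Cancellation and main obstacle.} Integrating this identity against $\nu$ produces an expression for $\int_E \d(\overline{\nabla\phi \cdot \FF})$ that is the exact negative of the one obtained in the first step for $\int_E \phi \,\d(\div\FF)$, so their sum vanishes and $\langle \FF \cdot \nu, \phi \rangle_{\partial E} = 0$. The most delicate point is the curve-wise evaluation of $\int_A f'$, in particular the topological inclusion $\partial A \subset \gamma^{-1}(\partial E)$ combined with the endpoint accounting at $t = 0, 1$ and the handling of $A \setminus A^\circ$ via the zero-on-zero-set lemma; the Smirnov-style identity for $\div \FF$ and the appeal to \eqref{eq:pairing_disint} are, by comparison, routine.
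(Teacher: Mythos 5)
Your argument is correct, but it follows a genuinely different route from the paper. The paper splits $E = E^{\circ} \sqcup (E \cap \partial E)$, invokes the known Gauss--Green identity for open sets (\cite[Thm.\,10.2]{ChenEtAl2024}) on the interior $E^{\circ}$ to kill that contribution outright, and only then uses the Smirnov decomposition together with \eqref{eq:pairing_disint} and the ``$f'=0$ a.e.\ on $\{f=0\}$'' lemma to show that the residual term $\int_{E \cap \partial E} \d(\overline{\nabla\phi\cdot\FF})$ vanishes. You instead stay inside the curve decomposition for \emph{both} terms: you derive the curve-wise representation $\int_{\Omega}\psi\,\d(\div\FF) = \int_{\mathscr C_1}(\psi(\gamma(0))-\psi(\gamma(1)))\,\d\nu(\gamma)$ for bounded Borel $\psi$ (which is not stated in the paper but does follow from \eqref{eq:decomposition_domain} by testing with $\nabla\psi$ for $\psi\in\CC^1_{\rmc}(\Omega)$ and then identifying two finite signed Radon measures), and you obtain the exact cancellation curve by curve via the telescoping over components of $\gamma^{-1}(E)^{\circ}$ and the endpoint bookkeeping. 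Your endpoint analysis is sound: $\partial(\gamma^{-1}(E)) \subset \gamma^{-1}(\partial E) \subset \{\phi\circ\gamma = 0\}$ by continuity of $\gamma$, the set $A\setminus A^{\circ}$ is handled by the zero-on-zero-set lemma, and the only surviving boundary terms are $\pm\phi(\gamma(i))\mathbbm{1}_E(\gamma(i))$ at $i=0,1$, which precisely match the divergence term. One point worth a sentence of justification in a final write-up: the paper explicitly warns that the decomposition is \emph{incomplete} in the sense that $\lvert\div\FF\rvert \neq \int \lvert\div\llbracket\gamma\rrbracket\rvert\,\d\nu$ in general; your step needs only the weaker \emph{signed} identity $\div\FF = \int(\delta_{\gamma(1)}-\delta_{\gamma(0)})\,\d\nu$, which survives the cancellations responsible for the incompleteness, so the caveat does not affect your argument, but you should say so. The trade-off is that your proof is self-contained modulo Theorem~\ref{thm:decomposition_domain} and avoids the external citation, at the cost of being somewhat longer and requiring the integrability checks (finiteness of $\nu$ and of $\int\ell(\gamma)\,\d\nu$) needed to split the $\nu$-integrals before cancelling.
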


This improves \cite[Lem.\,3.2]{Silhavy2009} and \cite[Thm.\,2.15]{ChenEtAl2024}, where one additionally imposes a topological or measure-theoretic condition on $E$. 
Here, since any $\phi \in \Lip_{\rmb}(\Omega)$ admits a unique continuous extension to $\overline\Omega$, the condition $\phi \rvert_{\partial E} = 0$ can be understood even if $\partial E \not\subset \Omega$.

\begin{proof}
  By applying \cite[Thm.\,10.2]{ChenEtAl2024} to the interior $E^{\circ} = E \setminus \partial E$ of $E$ and noting that $\phi$ vanishes on $\partial E^{\circ} \subset \partial E$, we obtain
  \begin{equation}
    0 = \langle \FF \cdot \nu, \phi \rangle_{\partial E^{\circ}} = - \int_{E^{\circ}} \,\d(\overline{\nabla\phi \cdot \FF})  - \int_{E^{\circ}} \phi \,\d (\div \FF).
  \end{equation} 
  Hence we infer that
  \begin{equation}\label{eq:stp_is_zero}
    \langle \FF \cdot \nu, \phi \rangle_{\partial E} = -\int_{E} \phi \,\d(\div \FF) - \int_E \d(\overline{\nabla \phi \cdot \FF}) = - \int_{E \cap \partial E} \d(\overline{\nabla \phi \cdot \FF}),
  \end{equation} 
  so the assertion would follow if the last integral vanishes.
  Using Theorem~\ref{thm:pairing_pointwise} we can write 
  \begin{equation}\label{eq:partial_E_decomp}
    \int_{E \cap \partial E} \d(\overline{\nabla \phi \cdot \FF}) = \int_{\mathscr{C}_1} \int_{E \cap \partial E \cap \Gamma_{\gamma}} \nabla_{\FF} \phi \cdot \xi_{\gamma}\,\d \mathcal H^1 \,\d\nu(\gamma),
  \end{equation} 
  where $\nu$ is the measure obtained in the decomposition of Theorem~\ref{thm:decomposition_domain}.
  Now for any $\gamma \in \mathscr{C}_1$ such that $\Gamma_{\gamma} \subset \Omega$, by the area formula we have
  \begin{equation}\label{eq:partial_E_gamma}
    \int_{E \cap \partial E \cap \Gamma_{\gamma}} \nabla_{\FF} \phi \cdot \xi_{\gamma}\,\d \mathcal H^1  = \int_0^1 \mathbbm{1}_{A(\gamma,E)}(t) (\phi \circ \gamma)'(t) \,\d t,
  \end{equation} 
  where
  \begin{equation}
    A(\gamma,E) = \{t \in [0,1] \colon \phi(\gamma(t)) \in E \cap \partial E \}.
  \end{equation} 
  However since $\phi \circ \gamma \equiv 0$ on $A(\gamma,E)$, applying \cite[Thm.\,3.3]{EvansGariepy2015} it holds that $(\phi \circ \gamma)' = 0$ $\mathcal L^1$-a.e.\,on $A(\gamma,E)$.
  Therefore \eqref{eq:partial_E_gamma} vanishes for any $\gamma \in \mathscr{C}_1$ contained in $\Omega$, which holds for $\nu$-a.e.\,$\gamma \in \mathscr C_1$. 
  Therefore combining this with \eqref{eq:stp_is_zero}, \eqref{eq:partial_E_decomp} we obtain
  \begin{equation}
    \langle \FF \cdot \nu, \phi \rangle_{\partial E} = -\int_{E \cap \partial E} \,\d(\overline{\nabla\phi\cdot\FF}) = -\int_{\mathscr{C}_1} \int_0^1 \mathbbm{1}_{A(\gamma,E)}(t)(\phi \circ \gamma)'(t) \,\d t \,\d\nu(\gamma) = 0,
  \end{equation} 
  as required.
\end{proof}

\begin{prop}\label{prop:normaltrace_weakcont}
  Let $\FF \in \mathcal{DM}^{\ext}(\Omega)$ and $E \subset \Omega$ be a Borel set.
  Then the normal trace $\langle \FF \cdot \nu,\,\cdot\,\rangle_{\partial E}$ extends uniquely from $\CC^1_{\rmb}(\Omega)$ to a weakly${}^{\ast}$-continuous functional on $\WW^{1,\infty}(\Omega)$.
\end{prop}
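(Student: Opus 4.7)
The plan is to define the candidate extension explicitly via the pairing measure from Definition~\ref{defn:pairing_measure}: for $\phi \in \WW^{1,\infty}(\Omega)$, set
\begin{equation*}
  T_E(\phi) := -\int_E \d(\overline{\nabla\phi \cdot \FF}) - \int_E \phi\,\d(\div \FF).
\end{equation*}
First I would verify that $T_E$ defines a bounded linear functional on $\WW^{1,\infty}(\Omega)$. Well-definedness and linearity are immediate from Lemma~\ref{lem:product_rule}, and the estimate $\lvert T_E(\phi)\rvert \leq \lVert \phi\rVert_{\WW^{1,\infty}(\Omega)} \lVert \FF\rVert_{\mathcal{DM}^{\ext}(\Omega)}$ follows from \eqref{eq:pairing_abscont} together with the trivial bound on the second term. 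For $\phi \in \CC^1_{\rmb}(\Omega)$, Definition~\ref{defn:pairing_measure} gives $\overline{\nabla\phi\cdot\FF} = \nabla\phi \cdot \FF$ pointwise, so $T_E$ agrees with $\langle \FF \cdot \nu,\phi\rangle_{\partial E}$ on $\CC^1_{\rmb}(\Omega)$.

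Next I would establish sequential weak${}^{\ast}$ continuity of $T_E$. Suppose $\phi_k \weaksto \phi$ in $\WW^{1,\infty}(\Omega)$. Applying Theorem~\ref{thm:setwise_conv} with the Borel set $E$ yields
\begin{equation*}
  \int_E \d(\overline{\nabla\phi_k \cdot \FF}) \longrightarrow \int_E \d(\overline{\nabla\phi \cdot \FF}).
\end{equation*}
For the divergence term, Lemma~\ref{lem:w1infty_conv} provides pointwise convergence $\phi_k \to \phi$ together with a uniform bound $\sup_k \lVert\phi_k\rVert_{\WW^{1,\infty}(\Omega)} < \infty$, so dominated convergence against the finite measure $\div \FF$ gives $\int_E \phi_k \,\d(\div \FF) \to \int_E \phi\,\d(\div \FF)$. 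Combining the two, $T_E(\phi_k) \to T_E(\phi)$.

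To upgrade sequential weak${}^{\ast}$ continuity to genuine weak${}^{\ast}$ continuity, I would invoke the isometric identification $\WW^{1,\infty}(\Omega) \cong \WW^{-1,1}(\Omega)^{\ast}$ from Lemma~\ref{lem:w1infty_predual}, noting that $\WW^{-1,1}(\Omega)$ is separable. Consequently, closed norm-balls in $\WW^{1,\infty}(\Omega)$ are weak${}^{\ast}$ metrisable, so the sequential continuity established above upgrades to weak${}^{\ast}$ continuity of $T_E$ on every bounded set. The kernel $\ker T_E$ is then weak${}^{\ast}$ closed on each ball, hence weak${}^{\ast}$ closed in $\WW^{1,\infty}(\Omega)$ by the Krein–Smulian theorem, which shows $T_E$ itself is weak${}^{\ast}$ continuous. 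Uniqueness of the extension is then automatic from the sequential weak${}^{\ast}$ density of $\CC^1_{\rmb}(\Omega)$ in $\WW^{1,\infty}(\Omega)$ guaranteed by the second part of Lemma~\ref{lem:w1infty_conv}.

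The conceptual heart of the argument is Theorem~\ref{thm:setwise_conv}: ordinary weak${}^{\ast}$ convergence of pairing measures would not suffice to pass to the limit in $\int_E \d(\overline{\nabla\phi_k\cdot\FF})$ for arbitrary Borel $E$, since $E$ need not be a continuity set. Once setwise convergence of the pairing measures is in hand, the remaining steps — verification of boundedness, handling of the $\div\FF$ term, and the soft passage from sequential to topological weak${}^{\ast}$ continuity via separability of the predual — are routine.
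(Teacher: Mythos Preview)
Your proof is correct and follows essentially the same approach as the paper: define the extension via the pairing measure, use Theorem~\ref{thm:setwise_conv} for the pairing term and dominated convergence for the $\div\FF$ term to get sequential weak${}^{\ast}$ continuity, then upgrade via separability of the predual and Krein--\v{S}mulian. The only cosmetic difference is that the paper cites a corollary of Krein--\v{S}mulian directly, whereas you spell out the metrisability-of-balls and kernel-closedness argument explicitly.
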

\begin{proof}
  Since $\CC^1_{\rmb}(\Omega)$ is weakly${}^{\ast}$ dense in $\WW^{1,\infty}(\Omega)$ by Lemma~\ref{lem:w1infty_conv}, it suffices to show that
  \begin{equation}
    \langle \FF \cdot \nu,\phi \rangle_{\partial E} = -\int_{E} \d(\overline{\nabla\phi\cdot\FF}) - \int_{E} \phi \,\d(\div \FF)
  \end{equation} 
  is weakly${}^{\ast}$-continuous.
  For this let $(\phi_k)_k \subset \WW^{1,\infty}(\Omega)$ such that $\phi_k \weaksto \phi$ weakly${}^{\ast}$ in $\WW^{1,\infty}(\Omega)$, then we know that $(\phi_k)_k$ is uniformly bounded in $\WW^{1,\infty}(U)$ and $\phi_k \to \phi$ pointwise in $\Omega$ by Lemma~\ref{lem:w1infty_conv}.
  Then $\int_E \phi_k\,\d(\div \FF) \to \int_E \phi\,\d(\div \FF)$ as $k \to \infty$ by the dominated convergence theorem, and by the setwise convergence of Theorem~\ref{thm:setwise_conv}, we also have
  that $\overline{\nabla\phi_k\cdot\FF}(E) \to \overline{\nabla\phi \cdot \FF}(E)$.
  Hence it follows that
  \begin{equation}
    \lim_{k \to \infty} \langle \FF \cdot \nu,\phi_k \rangle_{\partial E} = \langle \FF \cdot \nu,\phi \rangle_{\partial E}.
  \end{equation} 
  This proves that $\langle \FF \cdot \nu,\,\cdot\,\rangle_{\partial E}$ is sequentially weakly${}^{\ast}$-continuous.
  Since $\WW^{1,\infty}(\Omega)$ has a separable predual by Lemma \ref{lem:w1infty_predual}, a consequence of the Krein-\v{S}mulian theorem, namely \cite[Cor.\,12.8]{Conway2007} implies that the normal trace is weakly${}^{\ast}$-continuous.
\end{proof}

\begin{proof}[Proof of Theorem \ref{lem:optimal_trace}]
  Let $U = B_1(\partial E) = \{ x \in \mathbb R^n : \dist(x,\partial E) < 1\}$, then applying \cite[Thm.\,2]{JohnsonEtAl1986} we obtain a linear extension operator $\Lip_{\rmb}(\partial E) \to \Lip_{\rmb}(U)$ which sends $\phi \to \tilde\phi$ and satisfies $\Lip(\tilde\phi,U) \leq C(n) \Lip(\phi,\partial U)$.
  By multiplying this extension with $\chi(x) = \max\{1 - \dist(x,\partial E),0\}$ which is $1$-Lipschitz and vanishes on $\partial U$, we obtain a bounded linear mapping $T \colon \Lip_{\rmb}(\partial E) \to \Lip_{\rmb}(\Omega)$ by sending $\phi \to \chi \tilde \phi$.
  Moreover, by the explicit construction of $\tilde\phi$ given in \cite[pp.\,133]{JohnsonEtAl1986}, we see this mapping is continuous with respect to uniform convergence on bounded sets, and hence by  Lemma~\ref{lem:lipb_conv}, $T$ is sequentially continuous with respect to the respective weak${}^{\ast}$-topologies.

  Now given $\FF \in \mathcal{DM}^{\ext}(\Omega)$, we will define $\mathrm{N}_E(\FF) \in \Lip_{\rmb}(\partial E)^{\ast}$ by setting
  \begin{equation}\label{eq:trace_definition}
    \langle \mathrm{N}_E(\FF), \phi \rangle := \langle \FF \cdot \nu, T\phi \rangle_{\partial E} \quad\mbox{for each $\phi \in \Lip_{\rmb}(\partial E)$}.
  \end{equation} 
  By Proposition~\ref{prop:normaltrace_weakcont}, the normal trace $\langle \FF \cdot \nu, \,\cdot\,\rangle_{\partial E}$ is weakly${}^{\ast}$-continuous on $\Lip_{\rmb}(\Omega)$, so combined with the sequential continuity of $T$, it follows that $\mathrm{N}_E(\FF)$ is a sequentially weakly${}^{\ast}$-continuous linear functional on $\Lip_{\rmb}(\partial E)$.
  Using the identification $\Lip_{\rmb}(\partial E) \cong \text{\AE}(\partial E)^{\ast}$ induced by the pairing $\langle \cdot,\cdot\rangle_{\text{\AE}}$ from Proposition~\ref{prop:ae_isometric}, we can view $\mathrm N_E(\FF)$ as a sequentially weakly${}^{\ast}$-continuous linear functional on $\text{\AE}(\partial E)^{\ast}$. 
  Furthermore since $\text{\AE}(\partial E)$ is separable, by \cite[Cor.\,12.8]{Conway2007} we have $\mathrm{N}_E(\FF)$ is weakly${}^{\ast}$-continuous,
  and hence there exists $m \in \text{\AE}(\partial E)$ such that
  \begin{equation}
    \langle\mathrm N_U(\FF),\phi\rangle = \langle m, \phi \rangle_{\text{\AE}} \quad\mbox{for all $\phi \in \Lip_{\rmb}(\partial E)$}.
  \end{equation} 
  Therefore $\mathrm N_U(\FF) = m \in \text{\AE}(\partial E) \subset \Lip_{\rmb}(\partial E)^{\ast}$.

  Finally by Proposition~\ref{prop:improved_support}, $\mathrm{N}_E$ as defined in \eqref{eq:trace_definition} does not depend on the particular choice of extension; that is, for any $\tilde\phi \in \Lip_{\rmb}(\Omega)$ such that $\tilde\phi\rvert_{\partial E} = \phi$, we have $\langle \mathrm{N}_E(\FF),\phi \rangle = \langle \FF \cdot \nu, \tilde\phi \rangle_{\partial E}$.
  Therefore it follows that \eqref{eq:optimal_tracedefinition} holds, and since $\CC^1_{\rmb}(\Omega)$ is weakly${}^{\ast}$-dense in $\WW^{1,\infty}(\Omega)$ by Lemma~\ref{lem:w1infty_conv}, this uniquely determines $\mathrm{N}_E$.
\end{proof}

\begin{remark}
  Since Proposition~\ref{prop:normaltrace_weakcont} asserts that $\langle \FF \cdot \nu, \,\cdot\,\rangle_{\partial E}$ is weakly${}^{\ast}$ continuous in $\WW^{1,\infty}(U)^{\ast}$, by a similar argument to that in the proof of Theorem~\ref{lem:optimal_trace}, we can view the normal trace as an element of $\WW^{-1,1}(U)$. 
  This observation will have consequences for $\mathcal{DM}^1$-fields, which will be explored in \S\ref{sec:dm1}.
\end{remark}

\section{Image of the normal trace}

\subsection{Surjectivity of the trace operator}\label{sec:trace_surj}

In this section we will prove Theorem \ref{thm:intro_surjective}; namely that under a mild regularity condition, the normal trace operator $\mathrm{N}_E$ constructed in Theorem~\ref{lem:optimal_trace} is in fact surjective.
We will give a more precise statement in Theorem \ref{thm:surjective_trace}, however to specify the location of singularities and the dependence of constants, we will first define some relevant quantities.
We begin by recalling the following definition from the introduction.

\begin{definition}\label{defn:domain_lrc}
  We say an open set $U \subset \mathbb R^n$ is \emph{locally uniformly quasiconvex} if there exists $\eps, \delta>0$ such that for all $p,q \in \overline U$ such that $\lvert p - q\rvert \leq \delta,$ there exists a rectifiable curve $\gamma \colon [0,1] \to \mathbb R^n$ connecting $p$ to $q$ via $U$ in that
  $\gamma(0)=p$, $\gamma(1) = q$ and $\gamma(t) \in U$ for all $t \in (0,1)$
  that satisfies the estimate
  \begin{equation}\label{eq:quasiconvex_estimate}
    \ell(\gamma) \leq \eps^{-1}\lvert p - q\rvert.
  \end{equation} 
  Sometimes we will denote these constants by $\eps_U, \delta_U$ to specify the underlying open set.
\end{definition}

Our definition is slightly non-standard, as we impose this connectivity condition to hold up to the boundary (compare with \emph{e.g.}\,\cite[\S 2.1]{Heinonen2005}).
However, the following lemma asserts the two notions are, up to modifying constants, equivalent.

\begin{lem}\label{lem:domain_boundary_qc}
  Suppose $U \subset \mathbb R^n$ is an open subset satisfying the condition of Definition~\ref{defn:domain_lrc} the interior; that is, there exists $\eps,\delta>0$ such that for all $p,q \in U$ with $\lvert p -q\rvert< \delta$, there is a rectifiable curve connecting $p$ to $q$ in $U$ such that \eqref{eq:quasiconvex_estimate} holds.
  Then $U$ is locally uniformly quasiconvex in the sense of Definition~\ref{defn:domain_lrc} with constants $(\eps/2,\delta/2)$.
\end{lem}
\begin{proof}
  Let $p,q \in \overline U$ such that $D := \lvert p - q\rvert \leq \frac12\delta$. We will choose sequences $(p_k)$, $(q_k)$ in $U$ such that
  \begin{equation}
    \lvert p - p_k \rvert \leq 2^{-(k+3)} D, \quad \lvert q - q_k \rvert \leq 2^{-(k+3)} D \quad\mbox{for all $k \in \mathbb N$}.
  \end{equation} 
  Then since $\lvert p_1 - q_1\rvert \leq \frac{3}2 D \leq \delta$, there exists a rectifiable curve $\gamma_1$ connecting $p_1$ to $q_1$ in $U$ such that $\ell(\gamma_1) \leq \frac{3}2 \eps^{-1} D$.
  We can similarly find rectifiable curves $(\alpha_k)$, $(\beta_k)$ in $U$ such that
  $\alpha_k$ connects $p_{k+1}$ to $p_k$ such that $\ell(\alpha_k) \leq 2^{-(k+2)}\eps^{-1}D$,
  and $\beta_k$ connects $q_{k}$ to $q_{k+1}$ such that $\ell(\beta_k) \leq 2^{-(k+2)}\eps^{-1}D$.
  Then we can define $\gamma$ as the concatenation of the curves $(\alpha_k), \gamma_1, (\beta_k)$; this is the curve represented by the current
  \begin{equation}
    \llbracket \gamma \rrbracket = \llbracket \gamma_1 \rrbracket + \sum_{k=1}^{\infty} (\llbracket \alpha_k\rrbracket + \llbracket \beta_k\rrbracket).
  \end{equation} 
  Then $\gamma$ connects $p$ to $q$ via $U$ and satisfies
  \begin{equation}
    \ell(\gamma) = \ell(\gamma_1) + \sum_{k=1}^{\infty} (\ell(\alpha_k)+\ell(\beta_k)) \leq \frac{2}{\eps} \lvert x- y\rvert,
  \end{equation} 
  as required.
\end{proof}

For some of our results we will impose that both $U$ and $\overline{U}^{\rmc}$ satisfies Definition~\ref{defn:domain_lrc} and that $\partial U = \partial \overline{U}^{\rmc}$; this is satisfied by all bounded Lipschitz domains, but is more general as the below example illustrates.

\begin{eg}\label{eg:koch_snowflake}
  The standard Koch snowflake $S \subset \mathbb R^2$ has the property that both $S$ and $\overline{S}^{\rmc}$ satisfies Definition~\ref{defn:domain_lrc} and that $\partial S = \partial \overline S^{\rmc}$, despite having fractal boundary.
  Indeed it was observed for instance in \cite{Jones1981} that $S$ is a $(\eps,\delta)$-domain, which provides the desired connectivity in the interior.
  For the exterior, we note the well-known property that $S$ may be tiled by rescaled copies of itself, so a neighbourhood of $\overline S^{\rmc}$ can be expressed as the union of finitely many uniform domains.
\end{eg}

We will record some basic properties concerning the connected components of $U$.
In what follows, we will assume Definition~\ref{defn:domain_lrc} holds for all $p,q \in \overline U$ without chasing the constant; 

\begin{lem}\label{lem:domain_cc}
  Let $U \subset \mathbb R^n$ be an open set satisfying Definition~\ref{defn:domain_lrc}, then the following holds:
  \begin{enumerate}[label=(\alph*)]
    \item\label{item:domain_cc} If $C_1, C_2$ are distinct connected components of $U$, then $\dist(C_1,C_2) \geq \delta$.

    \item\label{item:domain_bdry} For each $p \in \partial U$, there exists a unique connected component $C(p)$ of $U$ such that $p \in \partial C(p)$.

    \item\label{item:domain_connect} For each $p \in \partial U$ and $q \in C(p)$, there exists a rectifiable curve $\gamma \colon [0,1] \to \mathbb R^n$ such that $\gamma(0) = p$, $\gamma(1) = q$ and $\gamma(t) \in U$ for all $t \in (0,1)$.
  \end{enumerate}
\end{lem}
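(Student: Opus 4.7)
The plan is as follows. Part~\ref{item:domain_cc} will follow from a direct contradiction argument using Definition~\ref{defn:domain_lrc}: if distinct components $C_1, C_2$ of $U$ satisfied $\dist(C_1, C_2) < \delta$, we could pick $p \in C_1$, $q \in C_2$ with $\lvert p - q\rvert < \delta$. Since $p, q \in U \subset \overline U$, the definition supplies a rectifiable curve joining $p$ to $q$ through $U$. Its image is connected and contained in $U$, hence contained in a single connected component, which contradicts the assumption that $p$ and $q$ lie in different components.

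For part~\ref{item:domain_bdry}, uniqueness is immediate from part~\ref{item:domain_cc}: if $p \in \partial C_1 \cap \partial C_2$ for distinct components $C_1, C_2$, we would have $\dist(C_1, C_2) = 0 < \delta$. For existence, fix any sequence $(p_k) \subset U$ with $p_k \to p$. For $k$ large enough, $\lvert p_k - p_\ell\rvert < \delta$, so Definition~\ref{defn:domain_lrc} gives a curve through $U$ connecting $p_k$ to $p_\ell$; this forces all sufficiently late $p_k$ to lie in a common component $C(p)$. Since $U$ is open and $p \in \partial U$, we have $p \notin U \supset C(p)$, so $p \in \overline{C(p)} \setminus C(p) = \partial C(p)$.

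Part~\ref{item:domain_connect} is the substantive step, and I would handle it by building two pieces and concatenating. First, using part~\ref{item:domain_bdry}, pick $p_k \in C(p)$ with $\lvert p - p_k\rvert < \delta$ and invoke Definition~\ref{defn:domain_lrc} to obtain a rectifiable curve $\gamma_0$ from $p$ to $p_k$ through $U$. Second, since $C(p)$ is an open connected subset of $\mathbb R^n$, it is path-connected; fix any continuous path $\sigma \colon [0,1] \to C(p)$ with $\sigma(0) = p_k$, $\sigma(1) = q$. Compactness of $\sigma([0,1])$ inside the open set $C(p)$ yields some $\eta \in (0,\delta)$ such that the $\eta$-neighbourhood of $\sigma([0,1])$ lies inside $C(p)$. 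By uniform continuity of $\sigma$, choose a partition $0 = t_0 < t_1 < \cdots < t_N = 1$ with $\lvert \sigma(t_i) - \sigma(t_{i+1})\rvert < \eta$, and for each consecutive pair apply Definition~\ref{defn:domain_lrc} to obtain a rectifiable curve $\gamma_i$ through $U$ connecting $\sigma(t_i)$ to $\sigma(t_{i+1})$. Concatenating $\gamma_0, \gamma_1, \ldots, \gamma_{N-1}$ and reparametrising to $[0,1]$ produces the desired curve.

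The principal obstacle I see is ensuring each $\gamma_i$ actually stays inside the component $C(p)$, not merely in $U$; otherwise the concatenation could a priori jump to another component. This is resolved by the observation that the image of $\gamma_i$ is connected, is contained in $U$, and meets $C(p)$ at the endpoint $\sigma(t_i)$, so by maximality of connected components it must lie entirely in $C(p)$. Combined with the choice $\eta < \delta$ (which both enables the application of Definition~\ref{defn:domain_lrc} and keeps the endpoints inside $C(p)$), this delivers a rectifiable path from $p$ to $q$ whose interior stays in $U$, as required.
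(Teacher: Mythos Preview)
Your proof is correct and follows essentially the same strategy as the paper's: contradiction for \ref{item:domain_cc}, then using \ref{item:domain_cc} plus a nearby point in $U$ for \ref{item:domain_bdry}, then concatenating a short curve from Definition~\ref{defn:domain_lrc} with a path inside $C(p)$ for \ref{item:domain_connect}. The only difference is that the paper simply invokes path-connectedness of the open connected set $C(p)$ (implicitly relying on polygonal connectedness for rectifiability), whereas you explicitly upgrade a continuous path to a rectifiable one by subdividing and applying Definition~\ref{defn:domain_lrc} on each piece---more careful, but the same idea.
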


\begin{proof}
  For \ref{item:domain_cc}, let $C_1, C_2$ be distinct connected components of $U$. Then any two points $p_1 \in C_1$ and $p_2 \in C_2$ cannot be connected within $U$ and hence must satisfy $\lvert p_1 - p_2\rvert \geq \delta$, from which we infer that $\dist(C_1,C_2) \geq \delta$.

  Now let $p \in \partial U$, then $p$ can be connected in $U$ to every point $q \in B_{\delta}(p) \cap U$, which is non-empty since $p \in \partial U$.
  Hence there is a unique connected component $C(p)$ containing $B_{\delta}(p) \cap U$, so $p \in \partial C(p)$ and hence \ref{item:domain_bdry} holds.
  Also since $C(p)$ is open and connected in $\mathbb R^n$, it is path-connected.
  Hence for any $q \in C(p)$, choosing $\tilde q \in B_{\delta}(p) \cap U$ we can connect $p$ to $\tilde q$ and $\tilde q$ to $q$, thereby establishing \ref{item:domain_connect}.
\end{proof}

\begin{lem}\label{lem:concentration_set}
  Let $U \subset \mathbb R^n$ be an open set satisfying Definition~\ref{defn:domain_lrc}.
  Then there exists a discrete set $\Lambda \subset U$ such that $\dist(\Lambda,\partial U) > 0$, and the \emph{separation}
  \begin{align}
    \label{eq:geodist_in}\sep_{\gamma}(\Lambda,U) &:= \sup_{p \in \partial U} \inf\{\ell(\gamma) :\gamma \in \Lip_{\rmb}((0,1);U),\ \gamma(0) = p,\ \gamma(1)\in\Lambda\} \quad\mbox{is finite}.
  \end{align} 
  Moreover, if $\partial U$ is bounded, we can take $\Lambda$ to be any finite set containing at least one point from each connected component of $U$.
\end{lem}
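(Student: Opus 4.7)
The plan is to split the argument into two cases, handling the ``moreover'' claim (bounded $\partial U$) first and then the general assertion via a lattice construction. Throughout let $\eps,\delta>0$ denote the LRC constants of $U$ supplied by Definition~\ref{defn:domain_lrc}.

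\textbf{Bounded $\partial U$.} First I would show $U$ has finitely many connected components. Fix $R>0$ with $\partial U \subset B_R(0)$; any bounded component $C$ has $\partial C \subset \partial U \subset B_R(0)$, and since $C$ is open, connected, and has all its topological frontier inside $\overline{B_R(0)}$, we get $\overline{C}\subset \overline{B_R(0)}$. By Lemma~\ref{lem:domain_cc}\ref{item:domain_cc} these components are pairwise $\delta$-separated, so only finitely many fit inside $\overline{B_R(0)}$. Since $\mathbb R^n\setminus \overline{B_{R+1}(0)}\subset U$ is connected, there is at most one unbounded component. Any finite choice of one point $q_C\in C$ per component yields a finite, hence discrete, $\Lambda$ with $\dist(\Lambda,\partial U)>0$ automatic. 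For the separation bound, given $p\in\partial U$ in its unique component $C(p)$ (Lemma~\ref{lem:domain_cc}\ref{item:domain_bdry}), cover the compact set $\overline{C(p)}$ by finitely many balls of radius $\delta/4$ and build a chain $p=p_0,\ldots,p_N=q_{C(p)}$ in $\overline{C(p)}$ with $\lvert p_i-p_{i-1}\rvert<\delta/2$. Applying Definition~\ref{defn:domain_lrc} on each consecutive pair produces curves in $U$ of length $\leq \eps^{-1}\delta/2$, concatenating to a path from $p$ to $q_{C(p)}$ of total length $\leq N\eps^{-1}\delta/2$; since $N$ is uniformly controlled by a covering number depending only on $\diam(\partial U)+\delta$ and $n$, this gives $\sep_{\gamma}(\Lambda,U)<\infty$.

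\textbf{General case.} Set $\sigma := \delta/(8\sqrt n)$ and define
\[
\Lambda := (\sigma\mathbb Z^n) \cap \bigl\{x\in U : \dist(x,\partial U)\geq 2\sigma\sqrt n\bigr\}.
\]
This is $\sigma$-separated, hence discrete, and $\dist(\Lambda,\partial U)\geq 2\sigma\sqrt n>0$ by construction. To bound $\sep_{\gamma}(\Lambda,U)$, I would argue iteratively: given $p\in\partial U$, pick any $q\in U\cap B_{\delta/2}(p)$ and let $\gamma$ be an LRC curve from $p$ to $q$; from $\gamma$ extract an intermediate point $p_1\in U$ with $\dist(p_1,\partial U)\geq c_1\delta$ for a constant $c_1=c_1(\eps)>0$. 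Iterating this ``inward push'' starting at $p_1$ rather than $p$, after a controlled number of steps one reaches a point $p_k\in U$ with $\dist(p_k,\partial U)\geq 3\sigma\sqrt n$. Rounding $p_k$ to the nearest lattice point $\bar p\in\sigma\mathbb Z^n$ shifts by at most $\sigma\sqrt n$, so $\bar p\in\Lambda$, and the segment $[p_k,\bar p]$ lies in $U$. Concatenating the LRC curves with this final segment yields a path from $p$ to $\Lambda$ of uniformly bounded length, establishing $\sep_{\gamma}(\Lambda,U)<\infty$.

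The main obstacle lies in justifying the inward push --- that LRC alone forces the existence of interior points uniformly far from $\partial U$, rather than only short curves which could in principle hug the boundary. I expect to resolve this via contradiction: if every LRC curve $\gamma$ from $p$ to $q\in U\cap B_{\delta/2}(p)$ stayed in the tubular neighborhood $\{\dist(\cdot,\partial U)<c\delta\}$ for some very small $c$, then re-applying Definition~\ref{defn:domain_lrc} to pairs of points along $\gamma$ and their closest boundary points produces short escape curves that, when combined with $\gamma$, violate the LRC length bound $\ell\leq \eps^{-1}\lvert p-q\rvert$ once $c=c(\eps,n)$ is taken small enough. Carefully quantifying this constant, together with bookkeeping how many iterations suffice uniformly in $p$, and handling thin components via the $\delta$-separation from Lemma~\ref{lem:domain_cc}\ref{item:domain_cc}, is where the bulk of the technical work would concentrate.
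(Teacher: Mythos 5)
Your bounded-boundary case is essentially the paper's argument (finitely many components via the $\delta$-separation of Lemma~\ref{lem:domain_cc}\ref{item:domain_cc}, one point per component), replacing the paper's soft step --- existence of a connecting curve from Lemma~\ref{lem:domain_cc}\ref{item:domain_connect} plus semicontinuity of $p\mapsto\inf\ell(\gamma)$ on the compact set $\partial U$ --- by an explicit chain/covering bound. That version needs two repairs (the closure of the unbounded component is not compact, and intermediate chain points may lie on $\partial U$, so the concatenated curve need not stay in $U$ on $(0,1)$), but it is workable.

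The general case, however, has a genuine gap, located exactly at the step you flag as the main obstacle: the ``inward push'' is false. Local rectifiable convexity does \emph{not} force the existence of points of $U$ at distance $\geq c_1(\eps)\delta$ (nor $\geq 2\sigma\sqrt n=\delta/4$) from $\partial U$. Take $U=\mathbb R^{n-1}\times(0,w)$ with $w\ll\delta$: any $p,q\in\overline U$ with $\lvert p-q\rvert=d\leq\delta$ are joined through $U$ by a shallow arc of length at most $2d$, so $U$ satisfies Definition~\ref{defn:domain_lrc} with $\eps=1/2$ and the given $\delta$, yet every point of $U$ lies within $w/2$ of $\partial U$. For such $U$ your lattice $\Lambda$ is empty and $\sep_{\gamma}(\Lambda,U)=+\infty$; a union of slabs of widths $1/k\searrow0$, pairwise $2\delta$-separated, shows further that no uniform constant $c_1>0$ can work and that lattice points sitting in the thick components are unreachable from boundary points of the thin ones. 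In particular the contradiction argument you propose (short escape curves violating the length bound) cannot close, since the slab is a domain in which every admissible curve hugs the boundary without any violation of the LRC estimate. Note also that the iteration is unnecessary: the paper's route is to choose $\Lambda$ directly as a discrete set of points bounded away from $\partial U$ whose $\delta$-balls cover $U$, so that every $q\in\partial U$ is within $\delta$ of some $p_k\in\Lambda$ and a \emph{single} application of Definition~\ref{defn:domain_lrc} yields $\sep_{\gamma}(\Lambda,U)\leq\eps^{-1}\delta$. The real content is thus the choice of $\Lambda$ adapted to the local thickness of $U$ (which, as the slab shows, cannot be prescribed as a fixed multiple of $\delta$), not any interior-depth estimate derived from the LRC condition.
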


\begin{proof}
  For general $U$, let $\Lambda = \{ p_k \}_k$ be an at most countable set of points such that $\dist(p_k,\partial U) \geq \delta/2$ for each $k$ and such that $\{ B_{\delta}(p_k)\}_k$ covers $U$.
  For any $q \in \partial U$, since there is some $p_k \in \Lambda$ such that $\lvert q  - p_k\rvert < \delta$, there exists a curve $\gamma$ connecting $q$ to $p_k$ in $U$ such that $\ell(\gamma) \leq \eps^{-1}\delta$. This shows that $\sep_{\gamma}(\Lambda,U) \leq \eps^{-1}\delta$, and since $\dist(\Lambda,\partial U) \geq \delta/2$ by construction, $\Lambda$ satisfies the claimed properties.

  If $\partial U$ is bounded, then $\mathbb R^n \setminus \partial U$ has at most one unbounded component.
  Therefore since the connected components of $U$ are $\delta$-separated by Lemma~\ref{lem:domain_cc}\ref{item:domain_cc}, it follows that only finitely many such components exist.
  Hence, choosing $\Lambda \subset U$ to be a finite set containing at least one point from each connected component of $U$, by Lemma~\ref{lem:domain_cc}\ref{item:domain_connect} any $p \in \overline U$ can be connected to $\Lambda$ via $U$.
  Now if we define $\partial U \ni p \mapsto \inf \ell(\gamma)$, where the infimum is taken as in \eqref{eq:geodist_in}, then this mapping is well-defined and lower semicontinuous. 
  Thus by compactness of $\partial U$ we infer that $\mathrm{sep}_{\gamma}(\Lambda,U) < \infty$,
  and since $\partial U$ and $\Lambda$ are disjoint and compact we have $\dist(\Lambda,\partial U)>0$.
\end{proof}

\begin{lem}\label{lem:elementary_surject}
  Let $U \subset \mathbb R^n$ be an open set satisfying Definition~\ref{defn:domain_lrc}, and let $\Lambda \subset U$ be a discrete set satisfying the conclusions of Lemma~\ref{lem:concentration_set}.
  Then for each $m \in \text{\AE}_0(\partial U)$, there exists $\FF \in \mathcal{DM}^{\ext}(U)$ such that $\div \FF$ is supported in $\Lambda$, 
  \begin{equation}\label{eq:preextension_trace}
    (\FF \cdot \nu)_{\partial U} =m \quad\mbox{in $\text{\AE}(\partial U)$},
  \end{equation} 
  and the estimate
  \begin{equation}\label{eq:preextension_estimate}
    \lVert \FF \rVert_{\mathcal{DM}^{\ext}(U)} \leq C \lVert m \rVert_{\text{\AE}(\partial U)}
  \end{equation} 
  holds, where $C = C(n,\eps_U,\delta_U,\sep_{\gamma}(\Lambda,U),\dist(\Lambda,\partial U))$. 
  We denote the associated mapping $m \mapsto \FF$ by $\mathrm E_U \colon \text{\AE}_0(\partial U) \to \mathcal{DM}^{\ext}(U)$.
\end{lem}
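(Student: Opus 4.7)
My plan is to build $\FF$ as a finite superposition of curve currents (in the sense of Definition~\ref{defn:curve_current}), one per atom of a near-optimal representation of $m$ in the space $A_0(\partial U,e)$. By Proposition~\ref{prop:ae_isometric}\ref{item:ae_description} and the definition \eqref{eq:AE_equivalent_norm} of the $A$-norm, I may fix a representation
\begin{equation*}
    m = \sum_{j=1}^N b_j(\delta_{q_j}-\delta_{p_j}), \qquad p_j,q_j \in \partial U \cup \{e\},
\end{equation*}
satisfying $\sum_{j} \lvert b_j\rvert\,\rho(q_j,p_j) \leq 2\lVert m\rVert_{\text{\AE}(\partial U)}$, with $\rho$ the modified metric from \eqref{eq:metric_rho}--\eqref{eq:metric_rho2}. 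I then construct a field $\FF_j$ term by term and set $\FF := \sum_j \FF_j$.

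The construction of $\FF_j$ splits into two cases. \emph{Case A}: if $q_j,p_j\in\partial U$ and $\lvert q_j-p_j\rvert\leq\delta_U$, Definition~\ref{defn:domain_lrc} supplies a rectifiable curve $\gamma_j$ from $q_j$ to $p_j$, interior in $U$, of length at most $\eps_U^{-1}\lvert q_j-p_j\rvert = \eps_U^{-1}\rho(q_j,p_j)$, and I set $\FF_j := b_j\llbracket\gamma_j\rrbracket$. \emph{Case B}: otherwise (so $\lvert q_j-p_j\rvert>\delta_U$ or one endpoint is $e$), I split $b_j(\delta_{q_j}-\delta_{p_j})$ into its individual $\pm$-atoms; for each atom at a boundary point $p\in\partial U$ I invoke Lemma~\ref{lem:concentration_set} to produce a curve in $\overline U$ from $p$ to some $r\in\Lambda$, interior in $U$, of length at most $\sep_\gamma(\Lambda,U)+1$. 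I then take $\FF_j$ as the signed sum of these two curve currents (just one if $p_j$ or $q_j$ equals $e$), orienting each auxiliary curve to begin at its boundary endpoint so that the sign convention of the normal trace yields the matching signed atom.

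Three verifications remain.
\emph{(i)} $\div\FF\mres U$ is supported in $\Lambda$: the endpoints of each $\llbracket\gamma\rrbracket$ lie in $\partial U\cup\Lambda$, and the $\partial U$-endpoints disappear on restriction to $U$, leaving only atoms at points of $\Lambda$.
\emph{(ii)} $(\FF\cdot\nu)_{\partial U}=m$ in $\text{\AE}(\partial U)$: testing against $\phi\in\CC^1_{\rmb}(\Omega)$ and using $\int_U \nabla\phi\cdot\d\llbracket\gamma\rrbracket = \phi(\gamma(1))-\phi(\gamma(0))$, a short direct calculation shows $(\FF_j\cdot\nu)_{\partial U} = b_j(\delta_{q_j}-\delta_{p_j})$ in each case; summing gives the identity on $\CC^1_{\rmb}(\Omega)$ test functions, and the uniqueness of the weak${}^{\ast}$-continuous extension from Theorem~\ref{lem:optimal_trace} (cf.\,Proposition~\ref{prop:normaltrace_weakcont}) promotes this to equality in $\text{\AE}(\partial U)$.
\emph{(iii)} Norm bound: in Case~A, $\lvert\FF_j\rvert(U)\leq\eps_U^{-1}\lvert b_j\rvert\rho(q_j,p_j)$ and $\div\FF_j\mres U=0$; in Case~B, $\lvert\FF_j\rvert(U) + \lvert\div\FF_j\rvert(U) \leq 2\lvert b_j\rvert(\sep_\gamma(\Lambda,U)+2)$, but here $\rho(q_j,p_j)\geq\min\{\delta_U,1\}$, so this constant overhead is absorbed into a multiple of $\rho(q_j,p_j)$. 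Summing and using the choice of representation gives $\lVert\FF\rVert_{\mathcal{DM}^{\ext}(U)} \leq C\lVert m\rVert_{\text{\AE}(\partial U)}$ with $C$ depending only on $n$, $\eps_U$, $\delta_U$, $\sep_\gamma(\Lambda,U)$ and $\dist(\Lambda,\partial U)$.

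The principal obstacle is keeping the Case~B overhead proportional to $\rho$ rather than to a fixed constant. This rests on the dichotomy above: Case~A contributes length directly controlled by $\lvert q_j-p_j\rvert = \rho(q_j,p_j)$, whereas Case~B forces $\rho(q_j,p_j)\geq\min\{\delta_U,1\}$ (either because two boundary points are separated by more than $\delta_U$, or because $\rho(\cdot,e)=1$), so the fixed bridging cost $\sep_\gamma(\Lambda,U)+2$ can always be written as a multiple of $\rho(q_j,p_j)$. A secondary bookkeeping point is the orientation of each auxiliary curve in Case~B: starting each such curve at its boundary endpoint ensures that the normal-trace signs combine to give exactly $+b_j\delta_{q_j}-b_j\delta_{p_j}$ rather than its negative.
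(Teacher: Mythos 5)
Your proposal is correct and follows essentially the same route as the paper: represent $m$ via a near-optimal atomic decomposition in $A_0(\partial U,e)$, realise each difference of Diracs by curve currents (direct curves for nearby boundary pairs, curves routed through $\Lambda$ otherwise), and absorb the fixed bridging cost using the dichotomy $\rho(p_j,q_j)\geq\min\{\delta_U,1\}$ in the second case. The only cosmetic difference is that the paper splits your Case~B into two subcases (far boundary pairs versus pairs involving $e$) and uses the length bound $2\sep_\gamma(\Lambda,U)$ in place of $\sep_\gamma(\Lambda,U)+1$; the substance is identical.
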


\begin{proof}
  We will fix any point $e \in \Lambda \subset U$, and shrinking $\delta$ if necessary we can assume that $\delta < \min\{1,\dist(e,\partial U)\}$.
  Let $m \in \text{\AE}_0(\partial U)$, then using the identification $\text{\AE}_0(\partial U) \cong A_0(\partial U,e)$ from Proposition~\ref{prop:ae_isometric}\ref{item:ae_description} there exists a representation
  \begin{equation}
    \tilde m := m - m(\partial U) \delta_e = \sum_{i=1}^k a_i (\delta_{q_i}-\delta_{p_i}) \in A(\partial U, e),
  \end{equation} 
  where $k \in \mathbb N$ and each $a_i \in \mathbb R$, $p_i, q_i \in \partial U \cup\{e\}$ such that
  \begin{equation}\label{eq:m_norm_below}
    \sum_{i=1}^k \lvert a_i \rvert \rho(p_i,q_i)\rvert = \lVert m \rVert_{\text{\AE}(\partial U)},
  \end{equation} 
  where $\rho$ is the metric defined by \eqref{eq:metric_rho}, \eqref{eq:metric_rho2}.

  \textbf{Claim}: For each $1 \leq i \leq k$, there exists $\FF_i \in \mathcal{DM}^{\ext}(U)$ associated to $p_i$, $q_i$ such that:
  \begin{enumerate}[label=(\roman*)]
    \item\label{item:claim_curve} $\FF_i$ is represented by curves contained in $U$,

    \item\label{item:claim_div_supp} $\div \FF_i$ in $U$ is supported on $\Lambda$, and vanishes if $p_i, q_i \in \partial U$ and $\lvert p_i - q_i\rvert \leq \delta$.

    \item\label{item:claim_length} $\lvert \FF_i\rvert(U)  \leq C \rho(p_i, q_i)$, where the $C$-dependence as in the statement,
    \item\label{item:claim_trace} the normal trace of $\FF_i$ on $\partial U$ is given by
      \begin{align}
        (\FF_i \cdot \nu)_{\partial U}&= (\delta_{q_i} - \delta_{p_i}) \mres \partial U.
      \end{align}

  \end{enumerate}
  For this we will distinguish between the following three cases.

  \noindent\emph{Case 1}: Suppose $p_i,q_i \in \partial U$ with $\lvert q_i - p_i\rvert \leq \delta$.
  In this case, by Definition~\ref{defn:domain_lrc} there exists a curve $\alpha_i$ connecting $p_i$ to $q_i$ through $U$, such that $\ell(\alpha_i) \leq \eps^{-1} \lvert q_i-p_i\rvert = \eps^{-1} \rho(p_i,q_i)$ (noting that $\delta \leq 1$).
  We then let $\FF_i = \llbracket \alpha_i\rrbracket$ and observe this satisfies \ref{item:claim_curve}--\ref{item:claim_length}, since $\div \FF_i = \delta_{q_i} - \delta_{p_i}$ in $\mathbb R^n$, which vanishes in $U$.
  This also implies that $(\FF_i \cdot \nu)_{\partial U} = \delta_{q_i} - \delta_{p_i}$ using \eqref{eq:normaltrace_distributional}, establishing \ref{item:claim_trace}.

  \noindent\emph{Case 2}:\linkdest{case2} Suppose $p_i,q_i \in \partial U$ with $\lvert q_i - p_i\rvert > \delta$.
  We let $\alpha_i, \beta_i$ be curves connecting $p_i$ and $q_i$ to points in $\Lambda$ respectively, such that $\ell(\alpha_i), \ell(\beta_i) \leq 2 \sep_{\gamma}(\Lambda,U)$,
  which exists since the separation \eqref{eq:geodist_in} is finite.
  Then we can set $\FF_i = \llbracket \alpha_i \rrbracket - \llbracket \beta_i \rrbracket$,
  which evidently satisfies \ref{item:claim_curve} and \ref{item:claim_div_supp}, and \ref{item:claim_length} holds since
  \begin{equation}\label{eq:case2_finfout_estimate}
  \lvert \FF_i \rvert(\mathbb R^n) \leq 4\sep_{\gamma}(\Lambda,U) \leq 4\delta^{-1}\sep_{\gamma}(\Lambda,U)\, \rho(p_i,q_i).
  \end{equation} 
  For \ref{item:claim_trace} we note that $(\llbracket \alpha_i\rrbracket \cdot \nu)_{\partial U} = -\delta_{p_i}$ and $(\llbracket \beta_i\rrbracket \cdot \nu)_{\partial U} = -\delta_{q_i}$.

  \noindent\emph{Case 3}: Suppose that either $p_i = e$ or $q_i=e$; without loss of generality we can assume that $q_i=e$, and also that $p_i \in \partial U$ since otherwise $a_i(\delta_{q_i} - \delta_{p_i})$ would be zero.
  Similarly as in Case~\hyperlink{case2}{2}, we choose a curve $\alpha_i$ connecting $p_i$ to a point in $\Lambda$ with $\ell(\alpha_i) \leq 2 \sep_{\gamma}(\Lambda,U)$.
  We then set $\FF_i = \llbracket \alpha_i \rrbracket$, which since $\rho(p_i,q_i)=1$ satisfies
  \begin{equation}
    \lvert \FF_i\rvert(U) \leq \frac{2 \sep_{\gamma}(\Lambda,U)}{\dist(\Lambda,\partial U)} \rho(p_i,q_i),
  \end{equation} 
  From this \ref{item:claim_curve}--\ref{item:claim_trace} follows, noting as in Case~\hyperlink{case2}{2} that $(\llbracket \alpha_i \rrbracket \cdot \nu)_{\partial U} = -\delta_{p_i}$.

  This establishes the claim in every case, so we can now define
  \begin{equation}
    \FF = \sum_{i=1}^k a_i  \FF_i.
  \end{equation} 
  Then $\div \FF$ is supported on $\Lambda$ by property \ref{item:claim_div_supp} of the claim. 
  Now let $I \subset \{1,\cdots,k\}$ denote the indices $i$ for which $\div \FF_i \neq 0$ in $U$, which by property \ref{item:claim_div_supp} and since $\dist(e,\partial U) > \delta$, occurs if and only if $\rho(p_i,q_i) \geq \delta$.
  Using this we can estimate
  \begin{equation}
    \lvert \div \FF\rvert(U) \leq \sum_{i \in I} 2\lvert a_i\rvert \leq \sum_{i \in I} 2\lvert a_i\rvert \frac{\rho(p_i,q_i)}{\delta} \leq \frac{2}{\delta} \lVert m \rVert_{\text{\AE}(\partial U)}.
  \end{equation} 
  Also by property \ref{item:claim_length} and \eqref{eq:m_norm_below},
  \begin{equation}
    \lvert \FF\rvert(U) \leq C\sum_{i=1}^k \lvert a_i\rvert \rho(p_i,q_i) = C \lVert m \rVert_{\text{\AE}(\partial U)},
  \end{equation} 
  so $\FF$ satisfies the claimed estimate \eqref{eq:preextension_estimate}.
  Finally the trace property \eqref{eq:preextension_trace} follows from property \ref{item:claim_trace} of the claim along with the linearity of $\mathrm{N}_U$, thereby establishing the result.
\end{proof}

To establish Theorem \ref{thm:intro_surjective} from the introduction, it remains to extend this operator from $\text{\AE}_0(\partial U)$ to the completion $\text{\AE}(\partial U)$ by means of a non-linear density argument.

\begin{thm}\label{thm:surjective_trace}
  Let $U \subset \mathbb R^n$ be an open set that is locally uniformly quasiconvex in the sense of Definition~\ref{defn:domain_lrc}, and let $\Lambda \subset U$ satisfy the conclusion of Lemma~\ref{lem:concentration_set}.
  Then there exists a (not necessarily linear) mapping 
  \begin{equation}\label{eq:extension_leftinverse}
    \mathrm E_U \colon \text{\AE}(\partial U) \to \mathcal{DM}^{\ext}(U)
  \end{equation} 
  which is a left-inverse of the normal trace in that
  \begin{equation}\label{eq:extension_property}
    \mathrm N_U \circ \mathrm E_U = \Id_{\text{\AE}(\partial U)},
  \end{equation}
  and is bounded in that
  \begin{equation}\label{eq:extension_estimate2}
    \lVert \mathrm{E}_U(m) \rVert_{\mathcal{DM}^{\ext}(U)} \leq C \lVert m \rVert_{\text{\AE}(\partial U)}
    \quad\mbox{for all $m \in \text{\AE}(\partial U)$,}
  \end{equation} 
  where $C = C(n,\eps_U,\delta_U,\sep_{\gamma}(\Lambda,U),\dist(\Lambda,\partial U))$.
  Furthermore, each $\mathrm E_U(m)$ is divergence-free in $U \setminus \Lambda$.
\end{thm}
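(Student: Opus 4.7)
The plan is to extend the non-linear map $\mathrm E_U$ supplied by Lemma~\ref{lem:elementary_surject} from the dense subspace $\text{\AE}_0(\partial U)$ to the completion $\text{\AE}(\partial U)$ by a telescoping construction inside the Banach space $\mathcal{DM}^{\ext}(U)$. The main obstacle is that $\mathrm E_U$ is genuinely \emph{not linear} on $\text{\AE}_0(\partial U)$: its construction in Lemma~\ref{lem:elementary_surject} depends on the choice of a particular finite representation $\sum a_i(\delta_{q_i}-\delta_{p_i})$ of each element, so the standard bounded-linear-operator extension by continuity is unavailable. The telescoping trick below sidesteps this by only ever evaluating $\mathrm E_U$ on elements that already lie in $\text{\AE}_0(\partial U)$.

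First, for $m \in \text{\AE}(\partial U) \setminus \{0\}$ I would use density of $\text{\AE}_0(\partial U)$ to pick $(m_k)_{k \geq 1} \subset \text{\AE}_0(\partial U)$ with $\lVert m - m_k \rVert_{\text{\AE}(\partial U)} \leq 2^{-k} \lVert m \rVert_{\text{\AE}(\partial U)}$, then set $n_0 := m_1$ and $n_k := m_{k+1} - m_k \in \text{\AE}_0(\partial U)$ for $k \geq 1$. A quick application of the triangle inequality yields
\begin{equation*}
\sum_{k=0}^{\infty} \lVert n_k\rVert_{\text{\AE}(\partial U)} \leq 3\lVert m\rVert_{\text{\AE}(\partial U)}, \qquad \sum_{k=0}^{\infty} n_k = m \ \text{ in } \text{\AE}(\partial U).
\end{equation*}
Applying Lemma~\ref{lem:elementary_surject} to each $n_k$ produces fields $\GG_k := \mathrm E_U(n_k) \in \mathcal{DM}^{\ext}(U)$ with $\div \GG_k$ supported in $\Lambda$, trace $(\GG_k \cdot \nu)_{\partial U} = n_k$, and $\lVert \GG_k\rVert_{\mathcal{DM}^{\ext}(U)} \leq C \lVert n_k \rVert_{\text{\AE}(\partial U)}$. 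Since $\mathcal{DM}^{\ext}(U)$ is complete, the absolute summability above allows me to define
\begin{equation*}
\mathrm E_U(m) := \sum_{k=0}^{\infty} \GG_k \in \mathcal{DM}^{\ext}(U),
\end{equation*}
(together with $\mathrm E_U(0) := 0$), and the same summability immediately yields the estimate \eqref{eq:extension_estimate2} with constant $3C$.

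Verifying the two remaining properties is then routine. Boundedness and linearity of $\mathrm N_U \colon \mathcal{DM}^{\ext}(U) \to \text{\AE}(\partial U)$ from Theorem~\ref{lem:optimal_trace} give $\mathrm N_U(\mathrm E_U(m)) = \sum_k \mathrm N_U(\GG_k) = \sum_k n_k = m$, establishing \eqref{eq:extension_property}. Similarly $\div \colon \mathcal{DM}^{\ext}(U) \to \mathcal M(U)$ is bounded and linear, so $\div \mathrm E_U(m) = \sum_k \div \GG_k$ in $\mathcal M(U)$; since each summand is concentrated on $\Lambda$ and $\Lambda$ is closed in $U$ (being discrete with $\dist(\Lambda,\partial U) > 0$), the limit remains supported in $\Lambda$. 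Consequently $\mathrm E_U(m)$ is divergence-free on $U \setminus \Lambda$, as required.
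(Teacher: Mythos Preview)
Your proof is correct and follows essentially the same telescoping strategy as the paper: approximate $m$ by $m_k \in \text{\AE}_0(\partial U)$ with geometric rate, apply Lemma~\ref{lem:elementary_surject} to the successive differences, and sum the resulting fields absolutely in $\mathcal{DM}^{\ext}(U)$. The only cosmetic difference is that you invoke the boundedness of $\mathrm N_U$ from Theorem~\ref{lem:optimal_trace} directly to pass to the limit, whereas the paper writes out the integral formula for $\phi \in \CC^1_{\rmb}(U)$ explicitly; both arguments are equivalent.
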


\begin{remark}\label{rem:choice_lambda}
We stress that $\Lambda$ merely needs to satisfy the conclusion of Lemma~\ref{lem:concentration_set}, but we can otherwise specify the position of the singularities; this will allow us to construct divergence-free extensions in \S\ref{sec:extension}.
However, if we choose $\Lambda$ to be as in the proof of Lemma~\ref{lem:concentration_set}, then $\sep_{\gamma}(\Lambda,U) \leq \eps^{-1}\delta$ and $\dist(\Lambda,\partial U) \geq \delta/2$, in which case the constant in \eqref{eq:extension_estimate2} depends on $n, \eps, \delta$ only.
\end{remark}

\begin{proof}
  Let $m \in \text{\AE}(\partial U)$, then by definition there exists a sequence $(m_k)_k \subset \text{\AE}_0(\partial U)$ such that $m_k \to m$ strongly in $\text{\AE}(\partial U)$.
  By passing to an unrelabelled subsequence we will assume that $\lVert m_1 \rVert_{\text{\AE}(\partial U)} \leq 2 \lVert m \rVert_{\text{\AE}(\partial U)}$ and that
  \begin{equation}
    \lVert m_{k} - m_{k-1} \rVert_{\text{\AE}(\partial U)} \leq 2^{-k}\lVert m \rVert_{\text{\AE}(\partial U)} \quad\mbox{for all $k \geq 2$}.
  \end{equation} 
  Using Lemma~\ref{lem:elementary_surject}, we then define
  \begin{equation}
    \FF_k = \sum_{j=1}^k \GG_j \quad\mbox{ where } \GG_k = 
    \begin{cases} 
      \mathrm{E}_U(m_1) & \text{if } k=1, \\
      \mathrm{E}_U(m_k-m_{k-1}) & \text{if } k \geq 2,
    \end{cases}
  \end{equation} 
  which we can estimate using \eqref{eq:preextension_estimate} as
  \begin{equation}\label{eq:absolute_convergence}
    \sum_{k=1}^{\infty} \lVert \GG_k \rVert_{\mathcal{DM}^{\ext}(U)} \leq C \left( \lVert m_1\rVert_{\text{\AE}(\partial U)}+\sum_{k=2}^{\infty} \lVert m_k - m_{k-1} \rVert_{\text{\AE}(\partial U)}\right) \leq C\lVert m \rVert_{\text{\AE}(\partial U)}.
  \end{equation} 
  Therefore the series
  \begin{equation}
    \FF = \sum_{k=1}^{\infty} \GG_k
  \end{equation} 
  converges absolutely in $\mathcal{DM}^{\ext}(U)$,
  so $\FF_k \to \FF$ strongly in $\mathcal{DM}^{\ext}(U)$ and the estimate
  \begin{equation}
    \lVert \FF \rVert_{\mathcal{DM}^{\ext}(U)} \leq C \lVert m \rVert_{\text{\AE}(\partial U)}
  \end{equation} 
  holds, where the dependence of constants is the same as in Lemma~\ref{lem:elementary_surject}.
  By \eqref{eq:preextension_trace} and using the linearity of the normal trace,
  \begin{equation}
    \mathrm{N}_U(\FF_k) = m_1 + \sum_{j=2}^k (m_{j} - m_{j-1}) = m_k
  \end{equation} 
  for all $k$, so
  using the strong convergence of $m_k$ and $\FF_k$, for any $\phi \in \CC^1_{\mathrm{b}}(U)$ we have
  \begin{equation}
    \begin{split}
      \langle \FF \cdot \nu, \phi \rangle_{\partial U} 
      &= -\int_{U} \nabla \phi \cdot \d\FF - \int_{U} \phi \,\d(\div \FF) \\
      &= \lim_{k \to \infty} \left( -\int_{U} \nabla \phi \cdot \d\FF_k - \int_{U} \phi \,\d(\div \FF_k) \right)  \\ 
      &= \lim_{k \to \infty} \langle m_k, \phi \rvert_{\partial U}\rangle_{\text{\AE}} 
      = \langle m, \phi\rvert_{\partial U} \rangle_{\text{\AE}}.
    \end{split}
  \end{equation} 
  Since this characterises the normal trace by Theorem~\ref{lem:optimal_trace}, it follows that
  $\mathrm{N}_U(\FF) = m$ in $\text{\AE}(\partial U)$.
  Therefore we can define $\mathrm{E}_U(m) = \FF$, which satisfies \eqref{eq:extension_property} and \eqref{eq:extension_estimate2} as claimed.
  Also since each $\div \FF_k$ is supported on the discrete set $\Lambda$ and $\div \FF_k \to \div \FF$ strongly in $\mathcal M(U)$, the same holds for $\div \FF$.
\end{proof}

Applying this to both $U$ and $\overline U^{\rmc}$, we also obtain the following two-sided variant.

\begin{thm}\label{cor:trace_surject_twosided}
  Let $U \subset \mathbb R^n$ be an open set such that both $U$, $\overline U^{\rmc}$ satisfies Definition~\ref{defn:domain_lrc} and that $\partial U = \partial \overline U^{\rmc}$.
  Additionally let $\Lambda \subset \mathbb R^n \setminus \partial U$ such that both $\Lambda \cap U$ and $\Lambda \setminus U$ satisfies the conclusion of Lemma~\ref{lem:concentration_set} in $U$ and $\overline U^{\rmc}$ respectively.
  Then there exists a (not necessarily linear) mapping 
  \begin{equation}
    \widetilde{\mathrm{E}}_U \colon \text{\AE}(\partial U) \to \mathcal{DM}^{\ext}(\mathbb R^n)
  \end{equation} 
  satisfying
  \begin{equation}
    \mathrm{N}_U \circ \widetilde{\mathrm{E}}_U = - \mathrm{N}_{\overline U^{\rmc}} \circ \widetilde{\mathrm{E}}_U = \Id_{\text{\AE}(\partial U)},
  \end{equation} 
  which is bounded in that
  \begin{equation}\label{eq:bounded_twosided}
    \lVert \widetilde{\mathrm{E}}_U(m) \rVert_{\mathcal{DM}^{\ext}(\mathbb R^n)} \leq C \lVert m \rVert_{\text{\AE}(\partial U)}
    \quad\mbox{for all $m \in \text{\AE}(\partial U)$.}
  \end{equation} 
  Moreover each $\FF = \widetilde{\mathrm{E}}_U(m)$ is divergence-free away from $\Lambda$ in $\mathbb R^n$, and satisfies $\lvert \FF\rvert(\partial U) = 0$.
\end{thm}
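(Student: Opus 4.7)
The plan is to apply Theorem~\ref{thm:surjective_trace} separately on $U$ and on $\overline U^{\rmc}$, and then glue the resulting fields by extending each by zero to $\mathbb R^n$. Note that by hypothesis $\partial U = \partial \overline U^{\rmc}$, so $\text{\AE}(\partial U) = \text{\AE}(\partial \overline U^{\rmc})$; moreover, $\Lambda \cap U$ meets the requirement of Lemma~\ref{lem:concentration_set} in $U$ and $\Lambda \setminus U$ in $\overline U^{\rmc}$, so Theorem~\ref{thm:surjective_trace} is applicable on both sides, yielding left-inverses $\mathrm E_U$ and $\mathrm E_{\overline U^{\rmc}}$ with singularities concentrated on these respective parts of $\Lambda$.

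Given $m \in \text{\AE}(\partial U)$, set $\FF_1 := \mathrm E_U(m) \in \mathcal{DM}^{\ext}(U)$ and $\FF_2 := \mathrm E_{\overline U^{\rmc}}(-m) \in \mathcal{DM}^{\ext}(\overline U^{\rmc})$, and define $\widetilde{\mathrm E}_U(m) = \FF$ to be the sum of their zero-extensions to $\mathbb R^n$. The key computation is to check that $\FF \in \mathcal{DM}^{\ext}(\mathbb R^n)$. For any $\phi \in \CC^1_{\rmc}(\mathbb R^n)$, using \eqref{eq:optimal_tracedefinition} on $U$ and $\overline U^{\rmc}$ respectively,
\begin{align*}
-\int_{\mathbb R^n} \nabla\phi \cdot \d\FF &= -\int_U \nabla\phi \cdot \d\FF_1 - \int_{\overline U^{\rmc}} \nabla\phi \cdot \d\FF_2 \\
&= \langle m,\phi\rvert_{\partial U}\rangle_{\text{\AE}} + \langle -m,\phi\rvert_{\partial U}\rangle_{\text{\AE}} + \int_U \phi\,\d(\div \FF_1) + \int_{\overline U^{\rmc}} \phi\,\d(\div \FF_2).
\end{align*}
The Arens-Eells contributions from the two sides cancel precisely because we prescribed opposite traces $m$ and $-m$, and hence the distribution $\div \FF$ is represented on $\mathbb R^n$ by the finite measure $(\div \FF_1)\mres U + (\div \FF_2)\mres \overline U^{\rmc}$.

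The remaining properties follow directly. Since $\FF_1$ and $\FF_2$ are supported in $U$ and $\overline U^{\rmc}$ respectively, which are disjoint from $\partial U$, we have $\lvert\FF\rvert(\partial U) = 0$; combined with the above divergence identity this also gives $\mathrm N_U(\FF) = \mathrm N_U(\FF_1) = m$ and $\mathrm N_{\overline U^{\rmc}}(\FF) = \mathrm N_{\overline U^{\rmc}}(\FF_2) = -m$ via the defining formula \eqref{eq:optimal_tracedefinition}. Support of $\div \FF$ on $\Lambda$ follows from the analogous conclusion of Theorem~\ref{thm:surjective_trace} on each side, and the bound \eqref{eq:bounded_twosided} is immediate from $\lVert \FF\rVert_{\mathcal{DM}^{\ext}(\mathbb R^n)} \leq \lVert \FF_1\rVert_{\mathcal{DM}^{\ext}(U)} + \lVert \FF_2\rVert_{\mathcal{DM}^{\ext}(\overline U^{\rmc})} \leq C\lVert m\rVert_{\text{\AE}(\partial U)}$, applying \eqref{eq:extension_estimate2} twice.

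The only genuine subtlety — which is also the main point — is the divergence computation: a priori, extending a divergence-measure field by zero can create a boundary-supported distributional divergence that fails to be a measure (this is precisely the content of the normal trace being in $\text{\AE}(\partial U)$ rather than $\mathcal M(\partial U)$). Here the condition $\partial U = \partial \overline U^{\rmc}$ guarantees that the traces of $\FF_1$ and $\FF_2$ are paired on the \emph{same} Arens-Eells space, so that the cancellation $m + (-m) = 0$ makes sense and removes this obstruction. Without this topological condition one could in principle have a ``buffer region'' $\overline U \setminus \partial \overline U^{\rmc}$ of positive measure where the two trace distributions cannot meet and cancel, which is why the condition cannot be omitted (as Example~\ref{eg:extension_counterexample} will confirm).
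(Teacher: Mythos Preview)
Your proof is correct and follows essentially the same approach as the paper: apply Theorem~\ref{thm:surjective_trace} on both $U$ and $\overline U^{\rmc}$, extend by zero, and verify via the defining Gauss--Green identity that the boundary contributions cancel so that $\div\FF$ is a finite measure supported on $\Lambda$. The only cosmetic difference is that the paper applies $\mathrm E_{\overline U^{\rmc}}$ to $m$ and then \emph{subtracts} the outer field, whereas you apply it to $-m$ and \emph{add}; since $\mathrm E_{\overline U^{\rmc}}$ need not be linear these produce different fields in general, but both satisfy all the required conclusions.
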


\begin{proof}
  Given the mappings $\mathrm{E}_U$ and $\mathrm{E}_{\overline U^{\rmc}}$ from Theorem~\ref{thm:surjective_trace}, for $m \in \text{\AE}(\partial U) = \text{\AE}(\partial \overline U^{\rmc})$ put
  \begin{equation}
    \FF_{\tin} := \mathrm{E}_U(m) \mres U, \quad \FF_{\tout} := \mathrm{E}_{\overline U^{\rmc}}(m) \mres \overline U^{\rmc},
  \end{equation} 
  which we view as measures on $\mathbb R^n$.
  Then we define $\FF = \widetilde{\mathrm{E}}_U(m) := \FF_{\tin} - \FF_{\tout}$, which satisfies $\lvert \FF\rvert(\partial U)= 0$ by construction.
  We claim $\FF$ satisfies the claimed properties; given $\phi \in \CC^1_{\rmb}(\mathbb R^n)$, we can compute the distributional divergence of $\FF$ as
  \begin{equation}
    \begin{split}
      \int_{\mathbb R^n} \nabla\phi \cdot \FF 
      &= \int_U \nabla \phi \cdot \FF_{\tin} - \int_{\overline U^{\rmc}} \nabla\phi \cdot \FF_{\tout} \\
      &= -\langle \FF_{\tin} \cdot \nu,\phi \rangle_{\partial U} - \int_U \phi \,\d(\div \FF_{\tin}) + \langle \FF_{\tout} \cdot \nu,\phi \rangle_{\partial \overline U^{\rmc}} + \int_{\overline U^{\rmc}} \phi\,\d(\div \FF_{\tout})\\
      &= - \left(\int_U \phi \,\d(\div \FF_{\tin})-\int_{\overline U^{\rmc}} \phi\,\d(\div \FF_{\tout}) \right),
    \end{split}
  \end{equation} 
  by noting that
  \begin{equation}
    \langle \FF_{\tin} \cdot \nu,\phi \rangle_{\partial U} = \langle m,\phi \rvert_{\partial U}\rangle_{\text{\AE}(\partial U)} = \langle \FF_{\tout} \cdot \nu,\phi \rangle_{\partial \overline U^{\rmc}}.
  \end{equation} 
  Hence $\FF \in \mathcal{DM}^{\ext}(U)$ with
  \begin{equation}
    \div \FF = \div \FF_{\tin} \mres U - \div \FF_{\tout} \mres \overline{U}^{\rmc},
  \end{equation} 
  which is supported on $\Lambda$ and satisfies the estimate \eqref{eq:bounded_twosided} by \eqref{eq:extension_estimate2} applied to $\FF_{\tin}$ and $\FF_{\tout}$.
  We can also verify the traces are attained since for $\phi \in \CC^1_{\rmb}(U)$, noting that $\FF \mres U = \FF_{\tin}$ we have
  \begin{equation}
    \langle \FF \cdot \nu, \phi\rangle_{\partial U} = \langle \FF_{\tin}\cdot\nu,\phi\rangle_{\partial U} = \langle m,\phi\rvert_{\partial U}\rangle_{\text{\AE}(\partial U)},
  \end{equation} 
  so $\mathrm{N}_U(\FF) = m$ by Theorem~\ref{lem:optimal_trace}.
  Similarly $\mathrm{N}_{\overline U^{\rmc}}(\FF) = -\mathrm{N}_{\overline U^{\rmc}}(\FF_{\tout}) = -m$, as required.
\end{proof}

\subsection{Extension of divergence-measure fields}\label{sec:extension}

As as application of Theorem~\ref{thm:surjective_trace}, we obtain the following extension theorem for fields in $\mathcal{DM}^{\ext}(\Omega)$.

\begin{thm}\label{thm:extension_main}
  Let $U \subset \mathbb R^n$ be an open set such that $\overline U^{\rmc}$ is locally uniformly quasiconvex in the sense of Definition~\ref{defn:domain_lrc} and $\partial U = \partial \overline{U}^{\rmc}$, and let $\Lambda \subset \overline{U}^{\rmc}$ satisfy the conclusion of Lemma~\ref{lem:concentration_set}.
  Then there exists a (not necessarily linear) extension operator
  \begin{equation}
    \mathcal{E}_{U} \colon \mathcal{DM}^{\ext}(U) \to \mathcal{DM}^{\ext}(\mathbb R^n)
  \end{equation} 
  such that $\mathcal{E}_{U}(\FF) \mres {U} = \FF$ for all $\FF \in \mathcal{DM}^{\ext}(U)$ and we have 
  \begin{equation}\label{eq:dm_extension_estimate}
    \lVert \mathcal{E}_{U}(\FF) \rVert_{\mathcal{DM}^{\ext}(U)} \leq C \lVert \FF \rVert_{\mathcal{DM}^{\ext}(U)} \quad\mbox{for all } \FF \in \mathcal{DM}^{\ext}(U)
  \end{equation} 
  where $C = C(n,\eps_{\overline{U}^{\rmc}},\delta_{\overline{U}^{\rmc}},\sep_{\gamma}(\Lambda,\overline{U}^{\rmc}),\dist(\Lambda,\partial U))$.
  Moreover, $\mathcal{E}_{U}(\FF)$ can be chosen to be divergence-free in $\mathbb R^n \setminus (U \cup \Lambda)$.
\end{thm}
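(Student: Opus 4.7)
The plan is to reduce the problem to producing a ``matching'' field on the complement whose normal trace cancels that of $\FF$, and then to glue. More precisely, given $\FF \in \mathcal{DM}^{\ext}(U)$, I would set
\[
  m := \mathrm{N}_U(\FF) \in \text{\AE}(\partial U), \qquad \GG := \mathrm{E}_{\overline U^{\rmc}}(-m) \in \mathcal{DM}^{\ext}(\overline U^{\rmc}),
\]
where the first object is available thanks to Theorem~\ref{lem:optimal_trace} and the second via Theorem~\ref{thm:surjective_trace} applied to $\overline U^{\rmc}$, whose boundary coincides with $\partial U$ by assumption. I would then define the extension by gluing, namely
\[
  \mathcal{E}_U(\FF) := \mathbbm{1}_U \FF + \mathbbm{1}_{\overline U^{\rmc}} \GG
\]
viewed as a measure on $\mathbb R^n$, noting that $U$ and $\overline U^{\rmc}$ are disjoint open sets so the two contributions do not overlap, and in particular $\mathcal{E}_U(\FF) \mres U = \FF$.

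The main verification is that $\mathcal{E}_U(\FF) \in \mathcal{DM}^{\ext}(\mathbb R^n)$. I would compute the distributional divergence by testing against $\phi \in \CC^1_{\rmc}(\mathbb R^n)$: using \eqref{eq:normaltrace_distributional} on each side gives
\[
  \div \mathcal{E}_U(\FF) = \mathbbm{1}_U \div \FF + \mathbbm{1}_{\overline U^{\rmc}} \div \GG + \langle \FF\cdot\nu,\cdot\rangle_{\partial U} + \langle \GG\cdot\nu,\cdot\rangle_{\partial \overline U^{\rmc}}
\]
as distributions on $\mathbb R^n$. By construction $\mathrm{N}_{\overline U^{\rmc}}(\GG) = -m = -\mathrm{N}_U(\FF)$ in $\text{\AE}(\partial U)$, so the two boundary terms cancel (here I use that the identification $\text{\AE}(\partial U) \subset \Lip_{\rmb}(\partial U)^\ast$ from Proposition~\ref{prop:ae_isometric} together with Theorem~\ref{lem:optimal_trace} identifies these functionals). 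This leaves
\[
  \div \mathcal{E}_U(\FF) = \mathbbm{1}_U \div \FF + \mathbbm{1}_{\overline U^{\rmc}} \div \GG,
\]
which is a finite Radon measure on $\mathbb R^n$ as required.

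The quantitative estimate \eqref{eq:dm_extension_estimate} then follows by stacking the bounds: Theorem~\ref{lem:optimal_trace} gives $\lVert m\rVert_{\text{\AE}(\partial U)} \leq C(n)\lVert \FF\rVert_{\mathcal{DM}^{\ext}(U)}$, and Theorem~\ref{thm:surjective_trace} applied on $\overline U^{\rmc}$ yields $\lVert \GG\rVert_{\mathcal{DM}^{\ext}(\overline U^{\rmc})} \leq C\lVert m\rVert_{\text{\AE}(\partial U)}$ with $C$ depending on $n,\eps_{\overline U^{\rmc}},\delta_{\overline U^{\rmc}},\sep_\gamma(\Lambda,\overline U^{\rmc})$ and $\dist(\Lambda,\partial \overline U^{\rmc}) = \dist(\Lambda,\partial U)$. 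Combining these with the triangle inequality $\lVert \mathcal{E}_U(\FF)\rVert_{\mathcal{DM}^{\ext}(\mathbb R^n)} \leq \lVert \FF\rVert_{\mathcal{DM}^{\ext}(U)} + \lVert \GG\rVert_{\mathcal{DM}^{\ext}(\overline U^{\rmc})}$ yields \eqref{eq:dm_extension_estimate}. Finally, because Theorem~\ref{thm:surjective_trace} arranges $\div \GG$ to be supported in $\Lambda$, the above formula for $\div \mathcal{E}_U(\FF)$ shows it is supported in $U \cup \Lambda$, proving the last assertion.

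There is no serious obstacle beyond bookkeeping: the entire argument rests on having the surjective trace operator at our disposal, so the only delicate point is the sign convention in the cancellation of the two boundary distributions, which is fixed by the inward-normal convention built into \eqref{eq:normal_trace} and \eqref{eq:normaltrace_distributional}. Non-linearity of $\mathcal{E}_U$ is inherited from $\mathrm{E}_{\overline U^{\rmc}}$ and is allowed by the statement.
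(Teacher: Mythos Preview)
Your proposal is correct and follows essentially the same approach as the paper: take the normal trace $m = \mathrm{N}_U(\FF)$, use Theorem~\ref{thm:surjective_trace} on $\overline U^{\rmc}$ to produce a matching field $\GG$ with opposite trace, glue $\FF$ and $\GG$ across $\partial U = \partial\overline U^{\rmc}$, and verify by direct computation that the boundary contributions cancel so that $\div(\mathbbm{1}_U\FF + \mathbbm{1}_{\overline U^{\rmc}}\GG) = \mathbbm{1}_U\div\FF + \mathbbm{1}_{\overline U^{\rmc}}\div\GG$. The only cosmetic difference is your choice of sign (you apply $\mathrm{E}_{\overline U^{\rmc}}$ to $-m$ and add, whereas the paper phrases it as applying $\mathrm{E}_{\overline U^{\rmc}}$ to $m$ and arranging the cancellation accordingly), and your bookkeeping of the estimate and of the support of $\div\GG$ in $\Lambda$ matches the paper's.
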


\begin{proof}
  Let $\FF \in \mathcal{DM}^{\ext}(U)$, then by Theorem~\ref{lem:optimal_trace} we have $m := \mathrm{N}_{U}(\FF) \in \text{\AE}(\partial U) = \text{\AE}(\partial \overline{U}^{\rmc})$, with $\lVert m \rVert_{\text{\AE}(\partial U)} \leq C(n) \lVert \FF \rVert_{\mathcal{DM}^{\ext}(U)}$.
  Then by Theorem~\ref{thm:surjective_trace}, we obtain $\GG := \mathrm{E}_{\overline U^{\rmc}}(m)$ satisfying $\mathrm{N}_{\overline U^{\rmc}}(\GG) = -m$ and such that $\div \GG$ in $\overline{U}^{\rmc}$ is supported on $\Lambda$.
  Therefore if we set
  \begin{equation}
    \widetilde{\FF} = \FF \mres U + \GG \mres \overline{U}^{\rmc} \in \mathcal{M}(\mathbb R^n),
  \end{equation} 
  for any $\phi \in \CC_{\rmc}^{1}(\mathbb R^n)$ we have
  \begin{equation}
    \begin{split}
      \int_{\mathbb R^n} \nabla \phi \cdot \d\widetilde{\FF} 
      &= \int_{U} \nabla \phi \cdot \d\FF + \int_{\overline{U}^{\rmc}} \nabla\phi\cdot \d\GG \\
      &= -\langle \FF \cdot \nu,\phi\rangle_{\partial U} - \int_{U} \phi \,\d(\div \FF) - \langle \GG \cdot \nu,\phi\rangle_{\partial\overline{U}^{\rmc}} - \int_{\overline{U}^{\rmc}} \phi\,\d(\div \GG).
    \end{split}
  \end{equation} 
  Then since $\langle \FF \cdot \nu,\phi\rangle_{\partial U} = - \langle \GG \cdot \nu,\phi\rangle_{\partial\overline{U}^{\rmc}} = \langle m, \phi\rvert_{\partial U} \rangle_{\text{\AE}(\partial U)}$, the trace terms cancel to give
  \begin{equation}
    \div \widetilde{\FF} = \div \FF \mres \Omega + \div \GG \mres \overline{\Omega}^{\rmc} \in \mathcal{M}(\Omega).
  \end{equation} 
  Hence $\widetilde{\FF} \in \mathcal{DM}^{\ext}(\mathbb R^n)$ is an extension of $\FF$. Since \eqref{eq:dm_extension_estimate} follows from the corresponding estimate for $\GG$, the result follows.
\end{proof}

We also obtain extension results for divergence-free fields, which follows from the fact that, in our construction, $\div \FF$ only concentrates on the set $\Lambda$ which we can specify.
Recall from \eqref{eq:divfree_meas} in the introduction that $\mathcal{M}_{\div}(U;\mathbb R^n) \subset \mathcal{DM}^{\ext}(U)$ denotes the space of divergence-free measures in $U$.

\begin{cor}\label{cor:divfree_extension1}
Let $U \subset \mathbb R^n$ be an open set such that $\overline{U}^{\rmc}$ satisfies Definition~\ref{defn:domain_lrc} and that $\partial U = \partial \overline U^{\rmc}$.
  Then there exists an open set $\widetilde U \subset \mathbb R^n$ containing $\overline U$ such that $\dist(U,\partial \widetilde U) > 0$, and a extension operator
  \begin{equation}
    \mathcal{E}_{U,\widetilde U} \colon \mathcal{M}_{\div}(U) \to \mathcal{M}_{\div}(\widetilde U)
  \end{equation} 
  such that for all $\FF \in \mathcal{M}_{\div}(U;\mathbb R^n)$, we have $\mathcal{E}_{U,\widetilde U}(\FF) \mres U = \FF$ along with the estimate
  \begin{equation}
    \lvert \mathcal{E}_{U,\widetilde U}(\FF) \rvert(\widetilde U) \leq C \lvert \FF\rvert(U).
  \end{equation} 
\end{cor}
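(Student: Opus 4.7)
The plan is to exploit the freedom in Theorem \ref{thm:extension_main} to prescribe where the singular part of the divergence of the extension concentrates, and then simply \emph{carve out} those concentration points from $\mathbb R^n$ to define $\widetilde U$. Since the hypotheses on $U$ here match those of Theorem \ref{thm:extension_main}, this should reduce the proof to a careful choice of $\widetilde U$ combined with a restriction of the extension measure.

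First I would invoke Lemma \ref{lem:concentration_set} applied to $\overline U^{\rmc}$ to fix a discrete set $\Lambda \subset \overline U^{\rmc}$ satisfying $\dist(\Lambda,\partial U) > 0$ and $\sep_\gamma(\Lambda,\overline U^{\rmc}) < \infty$. I would then define
\begin{equation*}
  \widetilde U := \mathbb R^n \setminus \Lambda.
\end{equation*}
Since $\Lambda$ is discrete it is closed, so $\widetilde U$ is open with $\partial \widetilde U = \Lambda$. The containment $\overline U \subset \widetilde U$ is immediate from $\Lambda \cap \overline U = \emptyset$ (a consequence of $\Lambda \subset \overline U^{\rmc}$ together with $\dist(\Lambda,\partial U) > 0$), and the positivity $\dist(U,\partial \widetilde U) = \dist(U,\Lambda) \geq \dist(\partial U,\Lambda) > 0$ follows from the same distance estimate.

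Next I would apply Theorem \ref{thm:extension_main} with this specific $\Lambda$ to obtain $\mathcal E_U(\FF) \in \mathcal{DM}^{\ext}(\mathbb R^n)$ restricting to $\FF$ on $U$, satisfying the quantitative bound \eqref{eq:dm_extension_estimate}, and divergence-free on $\mathbb R^n \setminus (U \cup \Lambda)$. For $\FF \in \mathcal M_{\div}(U;\mathbb R^n)$, this says $\div \mathcal E_U(\FF)$ vanishes off $\Lambda$ and vanishes on $U$, so it is supported entirely on $\Lambda$. I would then set
\begin{equation*}
  \mathcal E_{U,\widetilde U}(\FF) := \mathcal E_U(\FF) \mres \widetilde U \in \mathcal M(\widetilde U;\mathbb R^n).
\end{equation*}
Testing against any $\varphi \in \CC^1_{\rmc}(\widetilde U)$, extended by zero to $\mathbb R^n$, the support of $\varphi$ is disjoint from $\Lambda$, so the $\Lambda$-supported divergence contributes nothing and the result is divergence-free in $\widetilde U$. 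The restriction property and the bound $|\mathcal E_{U,\widetilde U}(\FF)|(\widetilde U) \leq |\mathcal E_U(\FF)|(\mathbb R^n) \leq C\,|\FF|(U)$ are inherited directly from Theorem \ref{thm:extension_main}.

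The statement is essentially a repackaging of Theorem \ref{thm:extension_main} once one realises that $\Lambda$ can serve double duty as both the concentration set of the extension's divergence and the complement of the target domain. The only real subtlety, and thus what I would treat as the main point to verify carefully, is that the chosen $\widetilde U$ genuinely has the required topological properties — in particular the strict positivity $\dist(U,\partial \widetilde U) > 0$ — which reduces to the uniformity $\dist(\Lambda,\partial U) > 0$ furnished by Lemma \ref{lem:concentration_set} rather than a mere pointwise non-intersection.
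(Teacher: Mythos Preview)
Your proposal is correct and matches the paper's proof essentially line for line: the paper also applies Theorem~\ref{thm:extension_main} with a fixed $\Lambda \subset \overline U^{\rmc}$ from Lemma~\ref{lem:concentration_set}, sets $\widetilde U = \mathbb R^n \setminus \Lambda$, and uses $\dist(\Lambda,\partial U)>0$ to verify $\dist(U,\partial\widetilde U)>0$. The only caveat is the claim ``since $\Lambda$ is discrete it is closed'', which is not automatic for an arbitrary discrete set, but the $\Lambda$ produced by Lemma~\ref{lem:concentration_set} can be taken uniformly discrete (e.g.\ on a lattice) and hence closed, so this does not affect the argument.
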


\begin{proof}
  By applying Theorem~\ref{thm:extension_main} we obtain an extension operator $\mathcal{E}_U$, where the divergence is prescribed in a given $\Lambda \subset \overline U^{\rmc}$. Then taking $\widetilde U = \mathbb R^n \setminus \Lambda$, Lemma~\ref{lem:concentration_set} ensures that $\dist(U,\partial\widetilde U)=\dist(\Lambda,\partial U) > 0$, and if $\FF \in \mathcal{M}_{\div}(U;\mathbb R^n)$ then $\mathcal{E}_U(\FF)$ is divergence-free away from $\Lambda$, from which the result follows.
\end{proof}

\begin{cor}\label{cor:divfree_extension2}
  Let $U \subset \mathbb R^n$ be a bounded open set such that $\overline{U}^{\rmc}$ is connected and satisfies Definition~\ref{defn:domain_lrc}, and that $\partial U = \partial \overline U^{\rmc}$.
  Then there exists a global extension operator
  \begin{equation}
    \mathcal E_{U,\mathbb R^n} \colon \mathcal{M}_{\div}(U) \to \mathcal{M}_{\div}(\mathbb R^n)
  \end{equation} 
  such that $\mathcal{E}_{U,\mathbb R^n}(\FF) \mres U = \FF$ and $\lvert \mathcal{E}_{U,\mathbb R^n}\rvert(\mathbb R^n) \leq \lvert \FF\rvert(U)$ for all $\FF \in \mathcal{M}_{\div}(U;\mathbb R^n)$.
\end{cor}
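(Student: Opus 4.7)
The plan is to apply Theorem~\ref{thm:extension_main} with an economical choice of singular set $\Lambda$, namely a single point in $\overline U^{\rmc}$. The connectivity of $\overline U^{\rmc}$ and the boundedness of $\partial U$ will enable this choice, and then $\div \mathcal E_U(\FF)$ will be forced to be a single Dirac mass; a separate argument showing that $\mathcal E_U(\FF)$ is compactly supported will then force the coefficient of this Dirac to vanish.

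To begin, since $U$ is bounded $\partial U$ is bounded, and since $\overline U^{\rmc}$ is connected, the last statement of Lemma~\ref{lem:concentration_set} applied to $\overline U^{\rmc}$ in place of $U$ lets us take $\Lambda = \{p_0\}$ for any fixed $p_0 \in \overline U^{\rmc}$; for this choice $\sep_{\gamma}(\Lambda,\overline U^{\rmc}) < \infty$ and $\dist(\Lambda,\partial U)>0$. Theorem~\ref{thm:extension_main} with this $\Lambda$ then produces $\widetilde\FF := \mathcal E_U(\FF) \in \mathcal{DM}^{\ext}(\mathbb R^n)$ satisfying $\widetilde\FF \mres U = \FF$ and such that $\div \widetilde\FF$ is supported in $U \cup \{p_0\}$. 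As $\div \FF = 0$ on $U$, this reduces to $\div \widetilde\FF = c\,\delta_{p_0}$ for some $c \in \mathbb R$.

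The principal technical point, which I expect to be the main obstacle, is verifying that $\widetilde\FF$ is compactly supported. Writing $\widetilde\FF = \FF \mres U + \GG \mres \overline U^{\rmc}$ with $\GG = \mathrm E_{\overline U^{\rmc}}(\mathrm N_U(\FF))$, I would inspect the construction in Lemma~\ref{lem:elementary_surject} (with $e=p_0$): every curve $\alpha_i, \beta_i$ built there has length at most $L := \max\{\eps^{-1}\delta,\,2\sep_{\gamma}(\{p_0\},\overline U^{\rmc})\}$ and starts from a point of $\partial U \cup \{p_0\}$, so it lies entirely in the bounded set
\begin{equation*}
  K := \{ x \in \mathbb R^n : \dist(x,\partial U \cup \{p_0\}) \leq L \}.
\end{equation*}
Consequently every approximation $\GG_k$ appearing in the density argument of Theorem~\ref{thm:surjective_trace} is supported in $K$, and $\GG$ inherits this via its $\mathcal{DM}^{\ext}$-convergence. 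Combined with the boundedness of $U$, this shows $\widetilde\FF$ has compact support.

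To conclude, I would test the divergence identity against some $\phi \in \CC^1_{\rmc}(\mathbb R^n)$ with $\phi \equiv 1$ on a neighbourhood of $\spt(\widetilde\FF) \cup \{p_0\}$:
\begin{equation*}
  0 = \int_{\mathbb R^n} \nabla\phi \cdot \d\widetilde\FF = -\int_{\mathbb R^n} \phi \,\d(\div \widetilde\FF) = -c,
\end{equation*}
forcing $c = 0$, so $\widetilde\FF \in \mathcal M_{\div}(\mathbb R^n;\mathbb R^n)$. Setting $\mathcal E_{U,\mathbb R^n}(\FF) := \widetilde\FF$ yields the required extension operator, and the total variation estimate follows by combining \eqref{eq:dm_extension_estimate} with the bound $\lVert \mathrm N_U(\FF) \rVert_{\text{\AE}(\partial U)} \leq C\,\lvert \FF \rvert(U)$ from Theorem~\ref{lem:optimal_trace}.
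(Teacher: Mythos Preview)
Your argument is correct and essentially parallel to the paper's up to the point where $\div\widetilde\FF = c\,\delta_{p_0}$; the divergence is in how you show $c=0$. You establish that $\widetilde\FF$ has compact support by tracing through the curve construction in Lemma~\ref{lem:elementary_surject} and Theorem~\ref{thm:surjective_trace}, and then test against a cutoff identically $1$ on the support. The paper instead avoids the compact-support verification: it observes that $\langle\widetilde\FF\cdot\nu,\mathbbm 1\rangle_{\partial U} = 0$ (since $\div\FF = 0$ on $U$) and that $\lvert\widetilde\FF\rvert(\partial U)=0$ by construction, then invokes a trace-cancellation result from \cite{ChenEtAl2024} (with a cutoff $\chi\equiv 1$ only near $\partial U$) to conclude $\langle\widetilde\FF\cdot\nu,\mathbbm 1\rangle_{\partial\overline U^{\rmc}} = -\langle\widetilde\FF\cdot\nu,\mathbbm 1\rangle_{\partial U} = 0$, whence $c = -\langle\widetilde\FF\cdot\nu,\mathbbm 1\rangle_{\partial\overline U^{\rmc}} = 0$. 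Your route is more self-contained (no external citation) but requires unpacking two layers of construction; the paper's route is shorter once that black box is granted. For the final estimate, note that \eqref{eq:dm_extension_estimate} alone already gives $\lvert\widetilde\FF\rvert(\mathbb R^n)\leq C\lvert\FF\rvert(U)$, since $\lVert\FF\rVert_{\mathcal{DM}^{\ext}(U)}=\lvert\FF\rvert(U)$ for divergence-free $\FF$; the extra appeal to Theorem~\ref{lem:optimal_trace} is unnecessary.
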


\begin{proof}
  Since $\overline{U}^{\rmc}$ is connected, by the second part of Lemma~\ref{lem:concentration_set} we can choose any $e \in \overline U^{\rmc}$ and take $\Lambda = \{e\}$.
  Then, for this choice, by Theorem~\ref{thm:extension_main} we obtain an extension operator $\mathcal{E}_U$, and we claim this extension preserves divergence-free fields.

  To see this, let $\FF \in \mathcal{M}_{\div}(U;\mathbb R^n)$ and put $\widetilde{\FF} = \mathcal{E}_U(\FF)$.
  Since $\div \widetilde{\FF}$ is divergence-free both $U$ and $\mathbb R^n \setminus (U \cup \Lambda)$, it follows that $\div \widetilde{\FF}$ is concentrated on $\{e\}$, and hence equals $\lambda \delta_e$ for some $\lambda \in \mathbb R$.
  To determine this constant, observe that $\langle \widetilde{\FF} \cdot \nu, \mathbbm{1}_{\mathbb R^n} \rangle_{\partial U} = \div(U) = 0$, and that $\lvert \widetilde{\FF}\rvert(\partial U) = 0$ by construction.
  Then by \cite[Cor.\,2.11,Rmk.\,2.12]{ChenEtAl2024} applied to $\chi \in \CC^1_{\rmc}(\mathbb R^n)$ such that $\chi \equiv 1$ in a neighbourhood of $\partial U$, it holds that
  \begin{equation}
    \langle \widetilde{\FF}\cdot\nu,\mathbbm{1}_{\mathbb R^n} \rangle_{\partial \overline U^{\rmc}} = \langle \widetilde{\FF}\cdot\nu,\chi \rangle_{\partial \overline U^{\rmc}} = -\langle \widetilde{\FF}\cdot\nu,\chi \rangle_{\partial U}= -\langle \widetilde{\FF}\cdot\nu,\mathbbm{1}_{\mathbb R^n}\rangle_{\partial U} = 0.
  \end{equation} 
  Hence
  \begin{equation}
    \lambda = \int_{\overline U^{\rmc}} \d(\div \widetilde{\FF}) = - \langle \widetilde{\FF}\cdot\nu,\mathbbm{1}_{\mathbb R^n} \rangle_{\partial \overline U^{\rmc}} = 0,
  \end{equation} 
  and so $\div \widetilde{\FF} = 0$ in $\mathbb R^n$, as required.
\end{proof}

\begin{rem}
  We note that an extension theorem for divergence-free fields in $\LL^1$ was recently established by \textsc{Gmeineder \& Schiffer} in \cite{GmeinederSchiffer2024}, based on an entirely different approach.
  While their results are restricted to bounded Lipschitz domains, in contrast to our extension results the extension operator they construct is linear and preserves $\LL^p$-regularity in the full range $1 \leq p \leq \infty$.
\end{rem}

As the following example shows, the topological condition $\partial U = \partial \overline U^{\rmc}$ imposed in Theorem~\ref{thm:extension_main} is in general necessary.

\begin{eg}\label{eg:extension_counterexample}
  We will construct a domain $U$ and a divergence-free $\LL^1$-field which admits no $\mathcal{DM}^{\ext}$-extension to any neighbourhood of $\overline U$.
  Given any $a,b \in \mathbb R^2$ such that $a \neq b$, we define $\FF_{a,b} \in \mathcal{DM}^1_{\loc}(\mathbb R^2)$ by setting
  \begin{equation}
    \FF_{a,b}(x) = \frac{x-b}{\lvert x - b\rvert^2} - \frac{x-a}{\lvert x- a\rvert^2},
  \end{equation} 
  which satisfies $\div \FF_{a,b} = 2\pi (\delta_b - \delta_a)$, along with the estimate
  \begin{equation}
    \int_{B_R(a)} \lvert \FF_{a,b}\rvert \,\d x \leq C \lvert b - a\rvert \log\left(1+ \frac{R}{\lvert b-a\rvert} \right)
  \end{equation} 
  for all $R > \lvert b - a\rvert$.
  We use this with $a = 0$ and $b = 2^{-k}\mathrm{e}_1$, setting $\FF_k = \FF_{0,2^{-k}\mathrm{e}_1}$. Then we can bound
  \begin{equation}
    \sum_{k=1}^{\infty} \int_{B_R(0)} \lvert \FF_k\rvert \,\d x \leq C \sum_{k=1}^{\infty} 2^{-k} \log\left( 1+2^k R \right)  < \infty,
  \end{equation} 
  so $\FF := \sum_{k=1}^{\infty} \FF_k$ is a well-defined vector field in $\LL^1_{\loc}(\mathbb R^2;\mathbb R^2)$.
  Furthermore setting $\Lambda = \{0\} \cup \{ 2^{-k} : k \in \mathbb N\}$ and $U = B_1(0) \setminus \Lambda$,
  we have $\div \FF = 0$ on $U$ and hence that $\FF \in \mathcal{DM}^{1}(U)$. 
  However, we claim that $\FF$ \emph{does not} admit a $\mathcal{DM}^{\ext}$-extension to $\overline U = \overline B_1(0)$. 
  Indeed suppose an extension $\widetilde{\FF} \in \mathcal{DM}^{\ext}(B_1(0))$ exists, then since $\widetilde{\FF} \mres U = \FF\,\mathcal L^2$, there exists a vector measure $\mu$ supported on $\Lambda$ such that $\widetilde{\FF} = \FF \mathcal L^2 \mres U + \mu$.
  By \cite[\S 8]{Silhavy2008} we know that $\mu \ll \mathcal H^1$ however, so $\mu = 0$ necessarily.
  On the other hand, for any $\phi \in \CC_{\rmc}^1(B_1(0))$ we have
  \begin{equation}
    \int_{B_1(0)} \nabla\phi \cdot \widetilde\FF \,\d x =  \sum_{k=1}^{\infty} \int_{B_R(0)} \nabla\phi\cdot \FF_k \,\d x = -2\pi \sum_{k=1}^{\infty} \left( \phi(2^{-k}\mathrm{e_1}) - \phi(0) \right),
  \end{equation} 
  and so
  \begin{equation}
    \div \widetilde{\FF} = 2\pi \sum_{k=1}^{\infty} (\delta_{2^{-k}\mathrm{e}_1} - \delta_0 ) \quad\text{as distributions in $B_1(0)$}.
  \end{equation} 
  However $\div \widetilde{\FF}(\{0\})$ is ill-defined, which contradicts the existence of any such extension.
\end{eg}

\subsection{The case of \texorpdfstring{$\LL^1$}{L1} fields}\label{sec:dm1}

We can also extend some of our results to hold in $\mathcal{DM}^1$, which is based on the following consequence of Proposition \ref{prop:normaltrace_weakcont}.

\begin{prop}\label{prop:dm1_rep}
  Given an open set $U \subset \mathbb R^n$, let $\FF \in \mathcal{DM}^{\ext}(U)$. Then there exists $\GG \in \mathcal{DM}^1(U)$ with $\div \GG \in \LL^1(U)$ such that
  \begin{equation}\label{eq:dm1_trace}
    \langle \FF \cdot \nu, \,\cdot\, \rangle_{\partial U} = \langle \GG \cdot \nu, \,\cdot\,\rangle_{\partial U} \quad\mbox{in $\WW^{1,\infty}(U)$},
  \end{equation} 
and $\GG$ satisfies the estimate
  \begin{equation}
    \lVert \GG \rVert_{\mathcal{DM}^1(U)} := \lVert \GG \rVert_{\LL^1(U)} + \lvert \div \GG\rvert(U) \leq 2 \lVert \FF \rVert_{\mathcal{DM}^{\ext}(U)}.
  \end{equation} 
\end{prop}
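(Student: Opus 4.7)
The plan is to convert the normal trace of $\FF$ into an element of the predual $\WW^{-1,1}(U)$ supplied by Lemma~\ref{lem:w1infty_predual}, and then read off the desired $\LL^1$-field and its divergence directly from the components of that predual representation.

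First, by Proposition~\ref{prop:normaltrace_weakcont}, the normal trace $T_{\FF}(\phi) := \langle \FF \cdot \nu, \phi \rangle_{\partial U}$ extends uniquely to a weakly${}^{\ast}$-continuous linear functional on $\WW^{1,\infty}(U)$. Using the defining formula and the elementary estimate
\begin{equation*}
  \lvert T_{\FF}(\phi)\rvert \leq \lVert \nabla\phi\rVert_{\LL^\infty} \lvert \FF\rvert(U) + \lVert \phi\rVert_{\LL^\infty} \lvert \div \FF\rvert(U) \leq \lVert \phi\rVert_{\WW^{1,\infty}(U)}\,\lVert \FF\rVert_{\mathcal{DM}^{\ext}(U)},
\end{equation*}
we see $\lVert T_{\FF}\rVert_{\WW^{1,\infty}(U)^{\ast}} \leq \lVert \FF\rVert_{\mathcal{DM}^{\ext}(U)}$. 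By Lemma~\ref{lem:w1infty_predual} and the standard fact that weakly${}^{\ast}$-continuous linear functionals on a dual Banach space $Y^{\ast}$ are precisely the elements of $Y$, there exists a unique $g \in \WW^{-1,1}(U)$ with $T_{\FF}(\phi) = \langle g, \phi\rangle$ for all $\phi \in \WW^{1,\infty}(U)$, satisfying $\lVert g\rVert_{\WW^{-1,1}(U)} \leq \lVert \FF\rVert_{\mathcal{DM}^{\ext}(U)}$. By definition of the $\WW^{-1,1}$-norm, we can select a representation $g = f_0 + \sum_{i=1}^n \partial_{x_i} f_i$ with $f_0, \ldots, f_n \in \LL^1(U)$ and $\sum_{i=0}^n \lVert f_i\rVert_{\LL^1(U)} \leq 2\lVert g\rVert_{\WW^{-1,1}(U)}$.

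The key observation is now that $T_{\FF}$ vanishes on $\CC^{\infty}_{\rmc}(U)$ by the very definition of $\div \FF$, so $g$, regarded as a distribution in $\mathcal{D}'(U)$, equals zero; that is, $f_0 + \sum_i \partial_{x_i} f_i = 0$ in $\mathcal{D}'(U)$. Setting $\GG := (f_1, \ldots, f_n) \in \LL^1(U;\mathbb R^n)$, this identity reads exactly $\div \GG = -f_0 \in \LL^1(U)$, so $\GG \in \mathcal{DM}^1(U)$ with divergence in $\LL^1(U)$. For any $\phi \in \WW^{1,\infty}(U)$ one then computes
\begin{equation*}
  \langle \GG \cdot \nu, \phi\rangle_{\partial U} = -\sum_{i=1}^n \int_U f_i \partial_{x_i}\phi \,\d x + \int_U f_0 \phi\,\d x = \langle g, \phi\rangle = T_{\FF}(\phi),
\end{equation*}
establishing \eqref{eq:dm1_trace}. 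The norm bound is immediate: $\lVert \GG\rVert_{\mathcal{DM}^1(U)} = \sum_{i=1}^n \lVert f_i\rVert_{\LL^1} + \lVert f_0\rVert_{\LL^1} \leq 2\lVert g\rVert_{\WW^{-1,1}(U)} \leq 2\lVert \FF\rVert_{\mathcal{DM}^{\ext}(U)}$.

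The only nontrivial point is recognising that the ``interior distributional'' content of the representative $g \in \WW^{-1,1}(U)$ must vanish, so the decomposition $(f_0,\ldots,f_n)$ is automatically compatible with the requirement $\div(f_1,\ldots,f_n) = -f_0$; without this, the $n+1$ components would be free and could not be glued together into a single $\mathcal{DM}^1$-field. Once this compatibility is recognised as a consequence of the trace being a purely boundary object, the remainder of the argument reduces to a direct verification.
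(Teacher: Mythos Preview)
Your proof is correct and follows essentially the same route as the paper: identify the normal trace with an element of $\WW^{-1,1}(U)$ via its weak${}^{\ast}$-continuity (Proposition~\ref{prop:normaltrace_weakcont} and Lemma~\ref{lem:w1infty_predual}), then read off $\GG$ and $\div\GG$ from a near-optimal representation once one notices the trace vanishes on test functions. Your justification that $T_{\FF}$ vanishes on $\CC^{\infty}_{\rmc}(U)$ directly from the definition of $\div\FF$ is slightly more elementary than the paper's invocation of Proposition~\ref{prop:improved_support}, but otherwise the arguments coincide.
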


\begin{proof}
  By Proposition~\ref{prop:normaltrace_weakcont}, we know that $\langle \FF \cdot \nu\,\cdot\,\rangle_{\partial U}$ is weakly${}^{\ast}$-continuous on $\WW^{1,\infty}(U)$. Since $\WW^{1,\infty}(U) \cong \WW^{-1,1}(U)^{\ast}$ by Lemma \ref{lem:w1infty_predual}, viewing the normal trace as an element of $\WW^{-1,1}(U)^{\ast\ast}$ as in the proof of Theorem~\ref{lem:optimal_trace}, we infer that $\langle \FF \cdot \nu, \,\cdot\,\rangle_{\partial U} \in \WW^{-1,1}(U)$.
  That is, there exists $g_0,g_1,\cdots,g_n \in \LL^1(U)$ such that writing $\GG = (g_1,\cdots,g_n)$,
  \begin{equation}\label{eq:trace_negative_representation}
    \langle \FF \cdot \nu, \phi \rangle_{\partial U} = -\int_U \phi\, g_0  + \nabla \phi \cdot \GG \,\d x \quad\mbox{for all $\phi \in \WW^{1,\infty}(U)$},
  \end{equation} 
  and that 
  \begin{equation}\label{eq:negative_norm}
   \lVert g_0 \rVert_{\LL^1(U)} +  \lVert \GG \rVert_{\LL^1(U)} \leq 2 \lVert \FF \rVert_{\mathcal{DM}^1(U)}.
  \end{equation} 
  Since \eqref{eq:trace_negative_representation} vanishes for all $\phi \in \CC^1_{\rmc}(U)$ (by Proposition~\ref{prop:improved_support}), we see that 
    \begin{equation}
      \GG \in \mathcal{DM}^1(U) \quad \mbox{with } \div \GG = g_0 \in \LL^1(U),
    \end{equation} 
    so \eqref{eq:trace_negative_representation} reads as
    \begin{equation}
      \langle \FF \cdot \nu,\phi \rangle_{\partial U} = - \int_U \phi \,\div \GG \,\d x - \int_U \nabla \phi \cdot \GG \,\d x = \langle \GG \cdot \nu,\phi \rangle_{\partial U}
    \end{equation} 
    for all $\phi \in \WW^{1,\infty}(U)$, and \eqref{eq:negative_norm} reads as $\lVert \GG \rVert_{\mathcal{DM}^{1}(U)} \leq 2\lVert \FF \rVert_{\mathcal{DM}^{\ext}(U)}$.
\end{proof}

This implies the following surjectivity result for traces of $\mathcal{DM}^1$ fields. Compared to Theorem~\ref{thm:surjective_trace} we cannot specify where $\div \GG$ concentrates, however the divergence instead lies in $\LL^1$.

\begin{thm}\label{thm:dm1_surject}
  Let $U \subset \mathbb R^n$ be an open set that is locally uniformly quasiconvex in the sense of Definition~\ref{defn:domain_lrc}.
  Then restriction of the trace operator
  \begin{equation}
    \mathrm{N}_U \colon \mathcal{DM}^1(U) \twoheadrightarrow\text{\AE}(\partial U) \quad\mbox{is surjective.}
  \end{equation} 
  More precisely, for each $m \in \text{\AE}(\partial U)$ there exists $\GG \in \mathcal{DM}^1(U)$ such that $\mathrm{N}_U(\GG)= m$ and the estimate
  \begin{equation}\label{eq:dm1_estimate}
    \lVert \GG \rVert_{\mathcal{DM}^1(U)} \leq C \lVert m \rVert_{\text{\AE}(\partial U)}
  \end{equation} 
  holds, where $C = C(n,\eps_{U},\delta_U)$. Furthermore, $\div \GG \in \LL^1(U)$.
\end{thm}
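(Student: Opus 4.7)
The plan is to reduce this directly to what we have already built: Theorem~\ref{thm:surjective_trace}, which produces an extension into $\mathcal{DM}^{\ext}(U)$, and Proposition~\ref{prop:dm1_rep}, which upgrades any $\mathcal{DM}^{\ext}$-field to a $\mathcal{DM}^1$-field with the same normal trace. So the argument is essentially a two-step composition: extend into $\mathcal{DM}^{\ext}$, then pass from $\mathcal{DM}^{\ext}$ to $\mathcal{DM}^1$.

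Concretely, given $m \in \text{\AE}(\partial U)$, I would first invoke Theorem~\ref{thm:surjective_trace} to produce $\FF := \mathrm{E}_U(m) \in \mathcal{DM}^{\ext}(U)$ satisfying $\mathrm{N}_U(\FF) = m$ together with the estimate $\lVert \FF\rVert_{\mathcal{DM}^{\ext}(U)} \leq C \lVert m\rVert_{\text{\AE}(\partial U)}$; by Remark~\ref{rem:choice_lambda}, by choosing $\Lambda$ as in the proof of Lemma~\ref{lem:concentration_set}, the constant depends only on $n,\eps_U,\delta_U$. Then I would apply Proposition~\ref{prop:dm1_rep} to $\FF$ to obtain $\GG \in \mathcal{DM}^1(U)$ with $\div \GG \in \LL^1(U)$, satisfying
\begin{equation}
\langle \GG \cdot \nu,\,\cdot\,\rangle_{\partial U} = \langle \FF \cdot \nu,\,\cdot\,\rangle_{\partial U} \quad\mbox{on } \WW^{1,\infty}(U),
\end{equation}
together with $\lVert \GG \rVert_{\mathcal{DM}^1(U)} \leq 2 \lVert \FF \rVert_{\mathcal{DM}^{\ext}(U)}$.

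To conclude, I would observe that since both normal traces coincide on $\CC_{\rmb}^1(U) \subset \WW^{1,\infty}(U)$, the characterisation in Theorem~\ref{lem:optimal_trace} forces $\mathrm{N}_U(\GG) = \mathrm{N}_U(\FF) = m$ as elements of $\text{\AE}(\partial U)$. Combining the two norm bounds gives $\lVert \GG \rVert_{\mathcal{DM}^1(U)} \leq 2C\lVert m \rVert_{\text{\AE}(\partial U)}$, which is \eqref{eq:dm1_estimate}, and $\div \GG \in \LL^1(U)$ is automatic from Proposition~\ref{prop:dm1_rep}.

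There is no genuine obstacle here since all the heavy lifting has already been done; the only subtle point is ensuring the identification of traces lifts from the pairing on $\WW^{1,\infty}(U)$ to equality in $\text{\AE}(\partial U)$, which is handled by the uniqueness clause of Theorem~\ref{lem:optimal_trace}. One might wonder whether a direct construction of $\GG$ (bypassing $\mathcal{DM}^{\ext}$) would be more natural, but the detour through Proposition~\ref{prop:dm1_rep} is what allows us to trade the concentrated divergence on $\Lambda$ for an $\LL^1$-divergence while retaining the same trace.
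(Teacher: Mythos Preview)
Your proposal is correct and matches the paper's proof essentially line for line: apply Theorem~\ref{thm:surjective_trace} (with $\Lambda$ chosen as in Remark~\ref{rem:choice_lambda} so that $C=C(n,\eps_U,\delta_U)$) to get $\FF\in\mathcal{DM}^{\ext}(U)$ with $\mathrm{N}_U(\FF)=m$, then apply Proposition~\ref{prop:dm1_rep} to replace $\FF$ by $\GG\in\mathcal{DM}^1(U)$ with $\div\GG\in\LL^1(U)$ and the same trace, and combine the two estimates. Your observation that the uniqueness clause in Theorem~\ref{lem:optimal_trace} is what identifies $\mathrm{N}_U(\GG)$ with $\mathrm{N}_U(\FF)$ is exactly the point the paper is using implicitly.
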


\begin{proof}
  Since $\mathcal{DM}^1(U) \subset \mathcal{DM}^{\ext}(U)$, the trace operator $\mathrm{N}_U$ is well-defined by Theorem~\ref{lem:optimal_trace}.
  Given $m \in \text{\AE}(\partial U)$, by Theorem~\ref{thm:surjective_trace} there exists $\FF \in \mathcal{DM}^{\ext}(U)$ such that $\mathrm{N}_U(\FF) = m$, with the estimate $\lVert \FF \rVert_{\mathcal{DM}^{\ext}(U)} \leq C \lVert m \rVert_{\text{\AE}(\partial U)}$. By choosing $\Lambda$ as in the proof of Lemma~\ref{lem:concentration_set}, we can ensure that $C = C(n,\eps_U,\delta_U)$ (see Remark~\ref{rem:choice_lambda}).
  Now by Proposition~\ref{prop:dm1_rep}, there exists $\GG \in \mathcal{DM}^1(U)$ such that \eqref{eq:dm1_trace} holds, which implies that $\mathrm{N}_U(\GG) = \mathrm{N}_U(\FF) = m$. Also by \eqref{eq:dm1_estimate} we can estimate $\lVert \GG \rVert_{\mathcal{DM}^1(U)} \leq 2\lVert \FF \rVert_{\mathcal{DM}^{\ext}(U)} \leq C \lVert m \rVert_{\text{\AE}(\partial U)}$, so the result follows.
\end{proof}

\begin{rem}
  This result is in contrast to the following result of \textsc{Schuricht} in \cite[Prop.\,6.5]{Schuricht2007}: for $\GG \in \mathcal{DM}^1(\Omega)$, and any $U \subset \Omega$ in a suitable class of subsets $\mathcal P_h$ (depending on $\GG$) introduced in \cite[Def.\,4.1, Prop.\,4.3]{Silhavy1991}, one has $(\GG \cdot \nu)_{\partial U} \ll \mathcal H^{n-1} + \lvert \div \GG\rvert$.
  Theorem~\ref{thm:dm1_surject} shows that this is \emph{not} a general phenomenon, and is rather special to the class $\mathcal P_h$, since the fields we construct have divergences in $\LL^1$.
\end{rem}

By applying the above result to $U$ and $\overline{U}^{\rmc}$, we also obtain the following two-sided version.
We omit the proof, which is identical to that of Theorem~\ref{cor:trace_surject_twosided}, except that we apply Theorem~\ref{thm:dm1_surject} in place of Theorem~\ref{thm:surjective_trace} to obtain $\mathcal{DM}^1$-fields $\GG_{\tin}, \GG_{\tout}$ in $U$ and $\overline U^{\rmc}$ respectively.

\begin{cor}
  Let $U \subset \mathbb R^n$ be an open set such that $U$ and $\overline{U}^{\rmc}$ satisfies Definition~\ref{defn:domain_lrc}, and that $\partial U = \partial \overline U^{\rmc}$.
  Then for each $m \in \text{\AE}(\partial U)$, there exists $\GG \in \mathcal{DM}^1(\mathbb R^n)$ with $\div \GG \in \LL^1(\mathbb R^n)$ such that
  \begin{equation}
    \mathrm{N}_U(\GG) = - \mathrm{N}_{\overline U^{\rmc}}(\GG) = m,
  \end{equation} 
  and
  \begin{equation}
    \lVert \GG \rVert_{\mathcal{DM}^1(\mathbb R^n)} \leq C \lVert m \rVert_{\text{\AE}(\partial U)}.
  \end{equation} 
\end{cor}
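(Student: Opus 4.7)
The plan is to mimic the proof of Theorem~\ref{cor:trace_surject_twosided} verbatim, substituting Theorem~\ref{thm:dm1_surject} for Theorem~\ref{thm:surjective_trace} at each step, so that the building blocks are $\mathcal{DM}^1$-fields with $\LL^1$-divergences rather than general extended divergence-measure fields. First I would apply Theorem~\ref{thm:dm1_surject} separately to $U$ and to $\overline{U}^{\rmc}$, which are both open sets satisfying Definition~\ref{defn:domain_lrc}, and note that $\partial U = \partial\overline U^{\rmc}$ gives the identification $\text{\AE}(\partial U) = \text{\AE}(\partial\overline U^{\rmc})$ of the target spaces. For a given $m \in \text{\AE}(\partial U)$ this produces fields $\GG_{\tin} \in \mathcal{DM}^1(U)$ and $\GG_{\tout} \in \mathcal{DM}^1(\overline U^{\rmc})$ with $\div \GG_{\tin} \in \LL^1(U)$, $\div \GG_{\tout} \in \LL^1(\overline U^{\rmc})$, $\mathrm{N}_U(\GG_{\tin}) = m = \mathrm{N}_{\overline U^{\rmc}}(\GG_{\tout})$, and bounds $\lVert \GG_{\tin}\rVert_{\mathcal{DM}^1(U)}, \lVert \GG_{\tout}\rVert_{\mathcal{DM}^1(\overline U^{\rmc})} \leq C\lVert m\rVert_{\text{\AE}(\partial U)}$.

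I would then glue these into a single field on $\mathbb R^n$ by setting
\begin{equation*}
  \GG = \mathbbm{1}_U \GG_{\tin} - \mathbbm{1}_{\overline U^{\rmc}} \GG_{\tout},
\end{equation*}
understood as an equivalence class in $\LL^1(\mathbb R^n;\mathbb R^n)$ (the value on $\partial U$ is irrelevant for the Lebesgue integral). By construction $\lVert \GG\rVert_{\LL^1(\mathbb R^n)} \leq \lVert\GG_{\tin}\rVert_{\LL^1(U)} + \lVert\GG_{\tout}\rVert_{\LL^1(\overline U^{\rmc})}$. To identify $\div \GG$ distributionally, I would test against $\phi \in \CC^1_{\rmc}(\mathbb R^n)$, split the integral over $U$ and $\overline U^{\rmc}$, and invoke the trace identities: the two boundary contributions $-\langle \GG_{\tin}\cdot\nu,\phi\rangle_{\partial U} + \langle \GG_{\tout}\cdot\nu,\phi\rangle_{\partial\overline U^{\rmc}} = -\langle m,\phi\rvert_{\partial U}\rangle_{\text{\AE}} + \langle m,\phi\rvert_{\partial U}\rangle_{\text{\AE}} = 0$ cancel exactly because $\mathrm{N}_U(\GG_{\tin}) = m = \mathrm{N}_{\overline U^{\rmc}}(\GG_{\tout})$. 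This leaves
\begin{equation*}
  \div \GG = \mathbbm{1}_U \,\div \GG_{\tin} - \mathbbm{1}_{\overline U^{\rmc}}\,\div\GG_{\tout} \in \LL^1(\mathbb R^n),
\end{equation*}
with the claimed estimate on $\lVert \GG\rVert_{\mathcal{DM}^1(\mathbb R^n)}$ following directly from the two one-sided estimates.

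Finally I would verify the trace identities: since $\GG\mres U = \GG_{\tin}$ as $\LL^1$-fields on $U$ and the normal trace is determined by the field on $U$ via Theorem~\ref{lem:optimal_trace}, one has $\mathrm{N}_U(\GG) = \mathrm{N}_U(\GG_{\tin}) = m$; likewise $\GG\mres \overline U^{\rmc} = -\GG_{\tout}$ yields $\mathrm{N}_{\overline U^{\rmc}}(\GG) = -\mathrm{N}_{\overline U^{\rmc}}(\GG_{\tout}) = -m$, completing the proof. There is no real obstacle here: the only point of mild care is that, unlike the measure-theoretic construction in Theorem~\ref{cor:trace_surject_twosided} where one explicitly records $\lvert\FF\rvert(\partial U) = 0$, in the $\LL^1$-setting the boundary $\partial U$ is automatically invisible to the construction because we only integrate against Lebesgue measure on the open pieces $U$ and $\overline U^{\rmc}$.
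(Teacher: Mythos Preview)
Your proposal is correct and matches the paper's own approach exactly: the paper explicitly states that the proof is identical to that of Theorem~\ref{cor:trace_surject_twosided}, applying Theorem~\ref{thm:dm1_surject} in place of Theorem~\ref{thm:surjective_trace} to obtain $\mathcal{DM}^1$-fields $\GG_{\tin}$, $\GG_{\tout}$ on $U$ and $\overline U^{\rmc}$ respectively. Your observation that the boundary $\partial U$ is automatically Lebesgue-null for the glued field (since $\GG$ vanishes there by construction) is the correct $\LL^1$-analogue of the condition $\lvert\FF\rvert(\partial U)=0$ recorded in the measure-valued case.
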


\begin{thm}
  Let $U \subset \mathbb R^n$ such that $\overline U^{\rmc}$ satisfies Definition~\ref{defn:domain_lrc} and $\partial U = \partial \overline U^{\rmc}$.
  Then there exists a (not necessarily linear) extension operator
  \begin{equation}
    \mathcal{E}_U \colon \mathcal{DM}^1(U) \to \mathcal{DM}^{1}(\mathbb R^n)
  \end{equation} 
  such that for all $\FF \in \mathcal{DM}^1(U)$, we have $\mathcal{E}_U(\FF) \rvert_U = \FF$ and the estimate 
  \begin{equation}
    \lVert \mathcal{E}_U(\FF) \rVert_{\mathcal{DM}^1(\mathbb R^n )} \leq C\lVert \FF \rVert_{\mathcal{DM}^1(U)}
  \end{equation} 
  holds, where $C = C(n,\eps_{\overline U^{\rmc}},\delta_{\overline U^{\rmc}})$.
\end{thm}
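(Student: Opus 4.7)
The plan is to mirror the proof of Theorem~\ref{thm:extension_main} verbatim, but to replace the application of Theorem~\ref{thm:surjective_trace} (which produces an $\mathcal{DM}^{\ext}$ extension with divergence concentrated on a discrete set) with an application of Theorem~\ref{thm:dm1_surject} (which produces a $\mathcal{DM}^{1}$ extension with divergence in $\LL^1$). Since the trace matching argument is purely distributional and only uses the identity $\mathrm{N}_U = -\mathrm{N}_{\overline U^{\rmc}}$ on the glued field, nothing in that argument is sensitive to the precise structure of the divergence.

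Concretely, given $\FF \in \mathcal{DM}^1(U)$, I first apply Theorem~\ref{lem:optimal_trace} to extract $m := \mathrm{N}_U(\FF) \in \text{\AE}(\partial U)$ with $\lVert m\rVert_{\text{\AE}(\partial U)} \leq C(n) \lVert \FF \rVert_{\mathcal{DM}^{\ext}(U)} \leq C(n)\lVert \FF \rVert_{\mathcal{DM}^1(U)}$. Using $\partial U = \partial\overline U^{\rmc}$, I identify $m \in \text{\AE}(\partial\overline U^{\rmc})$ and apply Theorem~\ref{thm:dm1_surject} to the locally rectifiably convex open set $\overline U^{\rmc}$ with the element $-m$. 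This produces $\GG \in \mathcal{DM}^1(\overline U^{\rmc})$ with $\div \GG \in \LL^1(\overline U^{\rmc})$, satisfying $\mathrm{N}_{\overline U^{\rmc}}(\GG) = -m$ and $\lVert \GG \rVert_{\mathcal{DM}^1(\overline U^{\rmc})} \leq C \lVert m \rVert_{\text{\AE}(\partial U)}$, where the constant depends only on $n,\eps_{\overline U^{\rmc}},\delta_{\overline U^{\rmc}}$.

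Next I define
\begin{equation}
  \mathcal{E}_U(\FF) := \FF \,\mathbbm{1}_{U} + \GG \,\mathbbm{1}_{\overline U^{\rmc}} \in \LL^1(\mathbb R^n;\mathbb R^n),
\end{equation}
which is well-defined as an $\LL^1$ vector field irrespective of the Lebesgue measure of $\partial U$. To verify it lies in $\mathcal{DM}^1(\mathbb R^n)$, I test against $\phi \in \CC^1_{\rmc}(\mathbb R^n)$, split the integral over $U$ and $\overline U^{\rmc}$, and apply the definition of the normal trace on each piece. The two boundary contributions then read $\langle m,\phi\rvert_{\partial U}\rangle_{\text{\AE}}$ and $\langle -m,\phi\rvert_{\partial U}\rangle_{\text{\AE}}$, and hence cancel, leaving
\begin{equation}
  \div \mathcal{E}_U(\FF) = (\div \FF)\,\mathbbm{1}_U + (\div \GG)\,\mathbbm{1}_{\overline U^{\rmc}} \in \LL^1(\mathbb R^n)
\end{equation}
as distributions on $\mathbb R^n$. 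The bound $\lVert \mathcal{E}_U(\FF) \rVert_{\mathcal{DM}^1(\mathbb R^n)} \leq \lVert \FF\rVert_{\mathcal{DM}^1(U)} + \lVert \GG \rVert_{\mathcal{DM}^1(\overline U^{\rmc})} \leq C \lVert \FF \rVert_{\mathcal{DM}^1(U)}$ follows immediately, with the stated dependence of $C$.

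There is no genuine obstacle here; the work was already done in Theorem~\ref{thm:dm1_surject} and in the trace-cancellation computation of Theorem~\ref{thm:extension_main}. The only subtle point worth checking is that replacing the singular term $\GG \mres \overline U^{\rmc}$ of the $\mathcal{DM}^{\ext}$ extension by an $\LL^1$ representative does not introduce any boundary contribution on $\partial U$: this is automatic since both $\FF \mathbbm{1}_U$ and $\GG \mathbbm{1}_{\overline U^{\rmc}}$ are absolutely continuous with respect to $\mathcal L^n$ in $\mathbb R^n$ (regardless of whether $\mathcal L^n(\partial U) = 0$), so no $(n-1)$-dimensional jump term appears in the distributional divergence of their sum.
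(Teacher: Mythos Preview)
Your proposal is correct and follows essentially the same approach as the paper's proof, which likewise adapts Theorem~\ref{thm:extension_main} by invoking Theorem~\ref{thm:dm1_surject} on $\overline U^{\rmc}$ in place of Theorem~\ref{thm:surjective_trace} and gluing $\FF\mathbbm{1}_U + \GG\mathbbm{1}_{\overline U^{\rmc}}$. One minor slip: for general $\FF \in \mathcal{DM}^1(U)$ the divergence $\div \FF$ is only a measure, not an $\LL^1$ function, so your conclusion $\div \mathcal{E}_U(\FF) \in \LL^1(\mathbb R^n)$ should read $\in \mathcal M(\mathbb R^n)$; this does not affect the argument since the theorem only asserts $\mathcal{E}_U(\FF) \in \mathcal{DM}^1(\mathbb R^n)$.
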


\begin{proof}
  The proof is analogous to that of Theorem~\ref{thm:extension_main}, so we will only sketch the modifications.
  Given $\FF \in \mathcal{DM}^1(U)$, put $m = \mathrm{N}_U(\FF) \in\text{\AE}(\partial U)=\text{\AE}_{\partial\overline U^{\rmc}}$. Then by Theorem~\ref{thm:dm1_surject} applied to $\overline U^{\rmc}$ there is $\GG \in \mathcal{DM}^1(\overline U^{\rmc})$ such that $\mathrm{N}_U(\GG) = - m$, along with the estimate $\lVert \GG \rVert_{\mathcal{DM}^1(\overline U^{\rmc})} \leq C\lVert m \rVert_{\text{\AE}(\partial U)} \leq C \lVert \FF \rVert_{\mathcal{DM}^1(U)}$. Then $\widetilde \FF = \mathbbm{1}_U \FF + \mathbbm{1}_{\overline{U}^{\rmc}}\GG$ gives the desired extension.
\end{proof}

\appendix

\section{Proof of Smirnov's theorem}\label{sec:smirnov_proof}

For completeness, we will provide a proof of Smirnov's decomposition theorem, in the form stated in \S\ref{sec:smirnov}.
While the proof follows the same strategy as in the original paper, due to several technical simplifications and since we do not require a ``complete'' decomposition, the argument we give here is considerably shorter.
We point out that an alternative proof based on polyhedral approximation was established by \textsc{Paolini \& Stepanov} in \cite{PaoliniStepanov2012,PaoliniStepanov2013}, which applies in a more general setting, however we will not follow their approach.

We will use the fact that, since $\mathscr{C}_1$ is locally compact and metrisable, we can identify $\mathcal M(\mathscr{C}_1) \cong \CC_0(\mathscr{C}_1)^{\ast}$. Here $\CC_0(\mathscr{C}_1)$ is the space of continuous functions $f \colon \mathscr{C}_1 \to\mathbb R$ vanishing at infinity in the sense that $f(\gamma_k) \to 0$ whenever $\lVert \gamma_k \rVert_{\LL^{\infty}([0,1])} \to \infty$.

\begin{proof}[Proof of Theorem~\ref{thm:decomposition_fullspace}]
  We will divide the proof into two steps, starting with:

  \emph{Step 1}.\linkdest{step1} (Solenoidal case): Assume that $\div \FF = 0$. We will first prove Theorem~\ref{thm:decomposition_fullspace} under this additional assumption.

  \noindent\emph{1.1}.\,(Approximate decomposition):
  Given a standard mollifier $\eta_{\eps}$, we will set
  \begin{equation}
    f_{\eps} = \FF \ast \eta_{\eps}, \quad \tau_{\eps} = \lvert \FF \rvert\ast \eta_{\eps} + \eps \beta, \quad \sigma_{\eps} = \frac{f_{\eps}}{\tau_{\eps}},
  \end{equation} 
  where $\beta$ is everywhere positive such that $\int_{\mathbb R^n} \beta(x) \,\d x = 1$ (for instance the unit Gaussian).
  Then for each $x \in \mathbb R^n$, consider the solutions to the initial value problem
  \begin{equation}\label{eq:flow_ivp}
    \begin{cases}
      \gamma^\prime_x(t) = \sigma_{\eps}(\gamma_x(t)) &\mbox{for $t \in \mathbb R$,} \\ \gamma_x(0) = x. &
    \end{cases}
  \end{equation} 
  By the semi-group property of autonomous ODEs, this gives a $1$-parameter family of diffeomorphisms $G_t(x) = \gamma_x(t)$.
  Now by Jacobi's formula and using \eqref{eq:flow_ivp}, we can compute
  \begin{equation}
    \begin{split}
    \frac{\d}{\d t} (\det \nabla G_t(x)) 
    &= (\det \nabla G_t(x)) \tr\left( \nabla G_t(x)^{-1} \cdot \frac{\d}{\d t} \nabla G_t(x)\right)  \\
    &= (\det \nabla G_t(x)) \tr\left( \nabla \sigma_{\eps}(G_t(x)) \right)  \\
    &= (\det \nabla G_t(x)) \frac{1}{\tau_{\eps}^2} \left({\tau_{\eps}} \div f_{\eps} - \tr(f_{\eps} \cdot \nabla \tau_{\eps})\right)(G_t(x)),
    \end{split}
  \end{equation} 
  so using the chain rule and that $f_{\eps}$ is divergence-free, it follows that
  \begin{equation}\label{eq:jacobian_evolution}
    \frac{\d}{\d t} ( \tau_{\eps}(G_t(x))\det \nabla G_t(x))  = 0.
  \end{equation} 
  Evaluating at $t =0$ and noting that $\det \nabla G_t(x) \neq 0$ for all $t$, we infer that
  \begin{equation}
    \tau_{\eps}(G_t(x))\lvert \det \nabla G_t(x))\rvert = \tau_{\eps}(G_0(x))\lvert \det \nabla G_0(x))\rvert = \tau_{\eps}(x)\quad\mbox{for all } t \in \mathbb R.
  \end{equation} 
  Therefore using the change of variables $x \mapsto G_t(x)$ for each $t \in [0,1]$ and averaging over all such $t$, for any $\Phi \in \CC_{\rmb}(\mathbb R^n)$ we have
  \begin{equation}\label{eq:key_computation}
    \begin{split}
      \langle \Phi, f_{\eps} \rangle 
      &= \int_0^1 \int_{\mathbb R^n}\Phi(x)\cdot f_{\eps}(x) \,\d x \,\d t \\
      &= \int_0^1 \int_{\mathbb R^n} \Phi(\gamma_x(t))\cdot\sigma_{\eps}(\gamma_x(t)) \tau_{\eps}(\gamma_x(t)) \lvert \det \nabla G_t(x)\rvert \,\d x\,\d t \\
      &= \int_{\mathbb R^n}  \int_0^1 \Phi(\gamma_x(t)) \cdot \gamma^\prime_x(t)  \,\d t \,\tau_{\eps}(x)\,\d x\\
      &= \int_{\mathbb R^n}  \langle \llbracket \gamma_x\rvert_{[0,1]} \rrbracket, \Phi \rangle \,\tau_{\eps}(x)\,\d x,
    \end{split}
  \end{equation} 
  thereby giving a decomposition into curves for the approximating fields $f_{\eps}$.

  \noindent\emph{1.2}. (Limiting measure):
  Define the mapping
  \begin{equation}
    R_{\eps} \colon \mathbb R^n \to \mathscr C_1 \quad x \mapsto \gamma_x\rvert_{[0,1]}.
  \end{equation} 
  Then by the continuous dependence of the ODE system \eqref{eq:flow_ivp} with respect to the initial data, the mapping $R_{\eps}$ is continuous, where we equip $\mathscr{C}_1$ with the topology of uniform convergence. 
  Therefore we can define $\nu_{\eps} = (R_{\eps})_{\#} ( \tau_{\eps} \mathcal L^n)$, which is a Borel measure on $\mathscr C_1$,
  which satisfies the uniform bound 
  \begin{equation}
    \nu_{\eps}(\mathscr C_1) = \int_{\mathbb R^n} \tau_{\eps} \,\d x \leq \lvert \FF\rvert(\mathbb R^n) + \eps \quad\mbox{for each $\eps>0$}.
  \end{equation} 
  Hence by weak${}^{\ast}$-compactness of $(\nu_{\eps})_{\eps}$ in $\mathcal M(\mathscr C_1) \cong \CC_0(\mathscr C_1)^{\ast}$, there exists a subsequence $\eps_k \searrow 0$ such that $\nu_{\eps_k} \weaksto \nu$ weakly${}^{\ast}$ to a limiting measure $\nu$ which satisfies $\lvert \nu\rvert(\mathscr C_1) \leq \lvert \FF\rvert(\mathbb R^n)$. 
 To pass to the limit in \eqref{eq:key_computation}, for $\Phi \in \CC_0(\mathbb R^n;\mathbb R^n)$ we define $\ell_{\Phi} \colon \mathscr C_1 \to \mathbb R$ by 
 \begin{equation}\label{eq:ellphi_definition}
   \ell_{\Phi}(\gamma) =   \langle \llbracket \gamma \rrbracket, \Phi \rangle = \int_0^1 \Phi(\gamma(t)) \cdot \gamma^\prime(t) \,\d t \quad\mbox{for all } \gamma \in \mathscr{C}_1.
 \end{equation} 
 We claim that $\ell_{\Phi} \in \CC_0(\mathscr C_1)$, so to establish the continuity let $(\gamma_k)_k \subset \mathscr C_1$ such that $\gamma_k \to \gamma$ uniformly on $[0,1]$.
 Then as $(\gamma_k)_k$ is bounded in $\WW^{1,\infty}((0,1);\mathbb R^n)$, we also have $\gamma^\prime_k \to \gamma^\prime$ weakly${}^{\ast}$ in $\LL^{\infty}$.
 By combining this and the uniform convergence of $\Phi(\gamma_k)$, it follows that $\ell_{\Phi}(\gamma_k) \to \ell_{\Phi}(\gamma)$ as $k \to \infty$.
 Also for each $\eps>0$, since $\Phi \in \CC_0(\mathbb R^n)$, there exists $R>0$ such that $\lvert \Phi(x)\rvert \leq \eps$ for all $x \in \mathbb R^n$ with $\lvert x\rvert \geq R$. Thus if $\gamma \in \mathscr C_1$ such that $\lVert \gamma \rVert_{\LL^{\infty}([0,1])} \geq R+1$, then $\lvert \gamma(t) \rvert \geq R$ for all $t \in [0,1]$ and hence that $\lvert \ell_{\Phi}(\gamma)\rvert \leq \eps$,
 thereby showing that $\ell_{\Phi} \in \CC_0(\mathscr{C}_1)$.

 Therefore for $\Phi \in \CC_{0}(\mathbb R^n;\mathbb R^n)$, the weak${}^{\ast}$ convergence of $\nu_{\eps}$ in $\CC_0(\mathscr C_1)^{\ast}$ gives
  \begin{equation}
    \lim_{k \to \infty} \int_{\mathscr C_1} \langle\llbracket \gamma \rrbracket,\Phi \rangle \,\d \nu_{\eps_k}(\gamma) = \lim_{k \to \infty} \langle \ell_{\Phi}, \nu_{\eps_k}\rangle =\langle \ell_{\Phi}, \nu\rangle = \int_{\mathscr C_1} \langle\llbracket \gamma \rrbracket,\Phi \rangle \,\d \nu(\gamma).
  \end{equation} 
  Note the $\nu$-integral is well-defined since $\ell_{\Phi}$ is continuous on $\mathscr C_1$.
  On the other hand, by using \eqref{eq:key_computation} and the strict convergence of mollifications, we have
  \begin{equation}
    \lim_{k \to \infty} \int_{\mathscr C_1} \langle\llbracket \gamma \rrbracket,\Phi \rangle \,\d \nu_{\eps_k}(\gamma)
    = \lim_{k \to \infty} \int_{\mathbb R^n} \Phi \cdot f_{\eps_k} \,\d x = \int_{\mathbb R^n} \Phi \cdot \d \FF.
  \end{equation} 
  Equating the above two limits we obtain the decomposition
  \begin{equation}
    \int_{\mathbb R^n} \Phi \cdot \d \FF=\int_{\mathscr C_1} \langle\llbracket \gamma \rrbracket,\Phi \rangle \,\d \nu(\gamma) \quad\mbox{for all $\Phi \in \CC_{0}(\mathbb R^n;\mathbb R^n)$}.
  \end{equation} 
  Furthermore for any such $\Phi$ we can estimate
  \begin{equation}
    \begin{split}
    \left\lvert\int_{\mathbb R^n} \Phi \cdot \d \FF \right\rvert 
    &\leq  \int_{\mathscr C_1} \lvert \langle \llbracket \gamma \rrbracket,\Phi \rangle\rvert \,\d \nu(\gamma) \\
    &\leq \lVert \Phi \rVert_{\LL^{\infty}(\mathbb R^n)} \int_{\mathscr C_1} \ell(\gamma) \,\d \nu(\gamma) \\
    &\leq \lVert \Phi \rVert_{\LL^{\infty}(\mathbb R^n)}\,\nu(\mathscr C_1) \leq \lVert \Phi \rVert_{\LL^{\infty}(\mathbb R^n)}\lvert \FF\rvert(\mathbb R^n),
    \end{split}
  \end{equation} 
  since $\ell(\gamma) \leq 1$ for all $\gamma \in \mathscr C_1$.
  Taking the supremum over all $\Phi \in \CC_0(\mathbb R^n)$ with $\lVert \Phi \rVert_{\LL^\infty(\mathbb R^n)} \leq 1$, we infer that
  \begin{equation}\label{eq:decomposition_complete_fullspace}
    \lvert \FF\rvert(\mathbb R^n) \leq \int_{\mathscr C_1} {\ell(\gamma)} \,\d \nu(\gamma) \leq  \nu(\mathscr C_1) \leq \lvert \FF\rvert(\mathbb R^n),
  \end{equation} 
  so equality holds throughout.
  That is, $\nu(\mathscr C_1) = \lvert \FF\rvert(\mathbb R^n)$ and $\ell(\gamma) = 1$ for $\nu$-a.e.\,$\gamma \in \mathscr C_1$, so in particular any such $\gamma$ satisfies $\lvert \gamma^\prime(t)\rvert=1$ for $\mathcal L^1$-a.e.\,$t \in [0,1]$.

  \emph{2.3}. (Equality as measures):
  We now establish \eqref{eq:decomposition_main}, \eqref{eq:decomposition_tv} in the general case that $\Phi$ and $\phi$ are bounded Borel functions.
  For this we let $\mathcal E$ denote the set of Borel subsets $B \subset \mathbb R^n$ such that
  \begin{enumerate}[label=(\roman*)]
    \item\label{eq:E_criteria_i} the map $\ell_B$ defined to send $\gamma \mapsto \llbracket\gamma\rrbracket(B)$ is Borel measurable on $\mathscr C_1$,
    \item\label{eq:E_criteria_ii} $\displaystyle\FF(B) = \int_{\mathscr{C_1}} \llbracket\gamma\rrbracket(B) \,\d \nu(\gamma)$.
  \end{enumerate}
  We will show that $\mathcal E$ contains all Borel sets, which will imply the assertion.

  To see this, let $B = U \subset \mathbb R^n$ be an open set, and recall we have shown in the previous step that for each $\Phi \in \CC_0(\mathbb R^n)$, the mapping $\gamma \mapsto\langle \llbracket\gamma\rrbracket,\Phi \rangle$ is continuous on $\mathscr C_1$.
Now let $(\phi_k)_k\subset \CC_{\rmc}(\mathbb R^n)$ such that $0 \leq \phi_k(x) \leq 1$ and $\phi_k(x) \to \mathbbm{1}_U(x)$ as $k \to \infty$ for all $x \in \mathbb R^n$.
  Then for any $\gamma \in \mathscr C_1$ and $1 \leq i \leq n$, by the dominated convergence theorem,
  \begin{equation}\label{eq:open_approx_gamma}
    \lim_{k \to \infty} \langle \llbracket \gamma \rrbracket, \phi_k\,\mathrm{e}_i \rangle = \lim_{k \to \infty} \int_0^1 \phi_k(\gamma(t))\,\mathrm{e}_i \cdot \gamma^\prime(t) \,\d t = \llbracket \gamma \rrbracket(U) \cdot \mathrm{e}_i.
  \end{equation} 
  Therefore $\gamma \mapsto \llbracket \gamma \rrbracket (U)$ is Borel measurable, as it is componentwise the pointwise limit of continuous functions on $\mathscr{C}_1$.
  Also since $\lvert \langle\llbracket\gamma\rrbracket, \phi_k\,\mathrm{e}_i\rangle\rvert \leq 1$ for all $k$, using the pointwise convergence from \eqref{eq:open_approx_gamma}, we can apply the dominated convergence theorem which gives 
  \begin{equation}
    \FF(U) \cdot \mathrm{e}_i = \lim_{k \to \infty} \int_{\mathbb R^n} \phi_k \,\mathrm{e}_i \cdot \d \FF = \lim_{k \to \infty} \int_{\mathscr{C_1}} \langle\llbracket \gamma \rrbracket, \phi_k \,\mathrm{e}_i\rangle \,\d \nu(\gamma) = \int_{\mathscr{C}_1} \llbracket \gamma \rrbracket(U) \cdot e_i \,\d \nu(\gamma).
  \end{equation} 
  Hence $U \in \mathcal E$, and so $\mathcal E$ contains all open subsets of $\mathbb R^n$.

  Now observe that $\mathcal E$ is closed under complements, since if $B \in \mathcal E$, then we have $\ell_{\mathbb R^n \setminus B} = \ell_{\mathbb R^n} - \ell_{B}$ is Borel measurable and that \ref{eq:E_criteria_ii} is also satisfied by the linearity of the $\nu$-integral, so $\mathbb R^n \setminus B \in \mathcal E$.
  Also if $\{B_k \}_k \subset \mathcal E$ is a countable increasing sequence, setting $B = \bigcup_k B_k$ we see that $\ell_{B_k}(\gamma) \to \ell_B(\gamma)$ pointwise as $k \to \infty$ for each $\gamma \in \mathscr C_1$, so it follows that $\ell_B$ is measurable since each $\ell_{B_k}$ is.
  Moreover since $\lvert \ell_B(\gamma)\rvert \leq 1$ for all $\gamma,$ by the dominated convergence theorem we also have
  \begin{equation}
    \FF(B) = \lim_{k \to \infty} \FF(B_k) = \lim_{k \to \infty} \int_{\mathscr C_1} \llbracket \gamma \rrbracket(B_k) \,\d\nu(\gamma) = \int_{\mathscr{C}_1} \llbracket \gamma \rrbracket(B) \,\d \nu(\gamma),
  \end{equation}  
  so we also have $B \in \mathcal E$.
  Hence by the $\pi$-$\lambda$ theorem (see for instance \cite[Rmk.\,1.9]{AmbrosioEtAl2000}), $\mathcal E$ contains all Borel subsets, from which \eqref{eq:decomposition_main} follows by a density argument.

  Finally to establish \eqref{eq:decomposition_tv}, by taking a polar decomposition (using \emph{e.g.}\,\cite[Thm.\,2.22]{AmbrosioEtAl2000}), there exists $\xi \colon \mathbb R^n \to \mathbb S^{n-1}$ Borel measurable such that $\xi\cdot \FF = \lvert \FF\rvert$ as measures. Then for any Borel measurable set $B \subset \mathbb R^n$ we obtain the upper bound 
  \begin{equation}\label{eq:tv_equality_borel}
    \lvert \FF\rvert(B) = \int_{\mathbb R^n} \mathbbm{1}_B \,\xi \cdot \d \FF = \int_{\mathscr C_1} \langle \llbracket \gamma \rrbracket, \mathbbm{1}_B\,\xi \rangle \,\d \nu(\gamma) \leq \int_{\mathscr C_1} \mu_{\llbracket \gamma \rrbracket}(B)\,\d \nu(\gamma).
  \end{equation} 
  Since the same holds for $\mathbb R^n \setminus B$, and since $\lvert \FF\rvert(\mathbb R^n) = \int_{\mathscr C_1} \mu_{\llbracket \gamma \rrbracket}(\mathbb R^n) \,\d \nu$ by \eqref{eq:decomposition_complete_fullspace}, it follows that equality must hold in \eqref{eq:tv_equality_borel}, thereby establishing the result in the solenoidal case.

  \emph{Step 2}.\linkdest{step2} (Divergence-measure case):
  Now consider the case of a general $\FF \in \mathcal{DM}^{\ext}(\mathbb R^n)$. 
  We will define a field on $\mathbb R^{n+1}$ by
  \begin{equation}\label{eq:defn_S}
    {\boldsymbol S} = \FF \times (\delta_{1} - \delta_0) + \mathrm{e}_{n+1}\, \div \FF \times (\mathcal L^1 \mres (0,1)) \in \mathcal M(\mathbb R^{n+1}),
  \end{equation} 
  which satisfies $\div {\boldsymbol S} = 0$.
  Then by Step~\hyperlink{step1}{1}, there exists a measure $\tilde\nu$ on ${\mathscr C}^{n+1}_1 = \{ \tilde\gamma \in \Lip([0,1];\mathbb R^{n+1}) : \Lip(\gamma) \leq 1\}$ for which \eqref{eq:decomposition_main}, \eqref{eq:decomposition_tv} holds for ${\bm S}$.
  We then introduce the set
  \begin{equation}
    \mathcal M = \{ \tilde\gamma \in {\mathscr C}^{n+1}_1 : \tilde\gamma((0,1)) \cap (\mathbb R^n \times \{0\}) \neq \emptyset \}.
  \end{equation} 

  \emph{2.1}. (Structure of curves): We claim that, for $\tilde\nu$-a.e.\,$\tilde\gamma = (\gamma,\gamma_{n+1}) \in \mathcal M$, there exists a unique subinterval $[a,b] \subset [0,1]$ such that $\gamma = (\gamma_1,\cdots,\gamma_n)$ is constant on $[0,a]$ and $[b,1]$, and $\gamma_{n+1} \equiv 0$ on $[a,b]$.

  Indeed let $E_+, E_- \subset \mathbb R^{n}$ be a measurable partition such that $\div \FF$ is non-negative on $E_+$ and non-positive on $E_-$ (so $E_{\pm}$ is a Hahn decomposition).
  Then applying \eqref{eq:decomposition_main} and \eqref{eq:decomposition_tv} to $\mathbbm{1}_{E_{\pm} \times (0,1)}$ and using the definition of $\boldsymbol{S}$, we have
  \begin{align}
    \boldsymbol{S}(E_{\pm} \times (0,1)) &= \int_{\mathscr C_{1}^{n+1}} \int_0^1 {\tilde\gamma}'(t) \mathbbm{1}_{E_{\pm} \times (0,1)}(\gamma(t)) \,\d t \,\d \tilde\nu(\tilde\gamma) = \pm \mathrm{e}_{n+1}  \lvert \div \FF\rvert(E_{\pm}) \\
    \lvert \boldsymbol{S}\rvert(E_{\pm} \times (0,1)) &= \int_{\mathscr C_1^{n+1}} \int_0^1  \mathbbm{1}_{E_{\pm} \times I}(\gamma(t)) \,\d t \,\d \tilde\nu(\tilde\gamma) =   \lvert \div F\rvert(E_{\pm}),
  \end{align}
  where the second equality in both lines follow by noting that $\lvert \tilde{\gamma}'(t) \rvert = 1$ holds $\mathcal L^1$-a.e.\,on $[0,1]$ for $\tilde\nu$-a.e.\,$\tilde\gamma \in {\mathscr{C}}^{n+1}_1$.
  Hence, since $\lvert \boldsymbol S(E_{\pm}\times (0,1))\rvert = \lvert\boldsymbol S\rvert(E_{\pm}\times (0,1))$, it follows that
  \begin{equation}\label{eq:tildegamma_equal}
    \int_0^1 \tilde\gamma^\prime(t) \mathbbm{1}_{E_{\pm} \times (0,1)}(\gamma(t)) \,\d t = \pm \mathrm{e}_{n+1}\int_0^1 \mathbbm{1}_{E_{\pm} \times (0,1)}(\gamma(t)) \,\d t \quad\mbox{for $\nu$-a.e.\,$\tilde\gamma \in \mathscr C_{1}^{n+1}$,}
  \end{equation} 
  and thereby for any such $\tilde\gamma$, it holds that $\tilde\gamma^\prime(t) = \pm \mathrm{e}_{n+1}$ for $\mathcal L^1$-a.e.\,$t \in [0,1]$ such that $\tilde\gamma(t) \in E_{\pm} \times (0,1)$.

  Now let $\tilde\gamma  = (\gamma,\gamma_{n+1})\in \mathcal M$ such $\tilde\gamma$ is parametrised by arclength, and that \eqref{eq:tildegamma_equal} holds, which is satisfied by $\tilde\nu$-a.e.\,such $\tilde\gamma$.
  We then set
  \begin{equation}\label{eq:new_endpoints}
    a = \inf\{ t \in [0,1] \colon \gamma_{n+1}(t) = 0 \}, \quad b = \sup\{t \in [0,1] \colon \gamma_{n+1}(t) = 0 \},
  \end{equation} 
  noting that $0 \leq a \leq b \leq 1$ since $\tilde\gamma \in \mathcal M$.
  Note that\eqref{eq:tildegamma_equal} implies that $\gamma^{\prime} \equiv 0$ on $[0,a]$ and hence that $\gamma$ is constant on the same interval (observe this is vacuous if $a = 0$). Also since $\gamma_{n+1}(a) = 0$ necessarily, we must have $\gamma_{n+1}^{\prime}(a)<0$. 
  Hence it follows that $\gamma(0) \in E_-$ and that $\tilde\gamma(t) = (\gamma(0),a-t)$ on $[0,a]$.
  Similarly for $\mathcal L^1$-a.e.\,$t \in [0,1]$ for which $t>a$ and $\gamma_{n+1}(t)>0$, it follows that $\tilde\gamma^\prime(t) = \mathrm{e}_{n+1}$.
  Hence $\gamma_{n+1}(t) = 0$ for all $t \in (a,b)$ and that $\tilde\gamma(t) = (\gamma(b),t-b)$ on $[b,1]$, thereby establishing the claim.

  \emph{2.2}. (Projection): We define the mapping
  \begin{equation}
    \mathcal P \colon \mathcal M \to \mathscr{C}_1,\quad \tilde\gamma = (\gamma,\gamma_{n+1}) \mapsto \gamma(a + (b-a) \,\cdot\,) \rvert_{[0,1]},
  \end{equation} 
  where $a,b$ are defined via \eqref{eq:new_endpoints} and depends on $\tilde\gamma$.
  Since $\ell(\mathcal P(\tilde\gamma)) \leq b-a \leq 1$, this mapping is well-defined, and if $\tilde\gamma$ is of constant speed, then the same holds of $\mathcal P(\tilde\gamma)$.
  Moreover by continuity of $\tilde\gamma \in \mathcal M$, the supremum and infimum in \eqref{eq:new_endpoints} can be taken over $t \in [0,1] \cap \mathbb Q$, from which it follows that $\gamma \mapsto a, b$ are measurable as mappings $\mathcal M \to [0,1]$.
  Thus the mapping $\mathcal P$ is Borel measurable if we equip both spaces with the topology of uniform convergence, so $\nu := \mathcal P_{\#}(\tilde\nu \mres \mathcal M)$ defines a Borel measure on $\mathscr{C}_1$. 
  We will show this gives rise to the desired decomposition.

  To see this, we first observe that $\tilde\nu(\mathscr C_1^{n+1}) = \lvert {\bm S} \rvert(\mathbb R^n)$ is finite by \eqref{eq:decomposition_complete_fullspace} from Step~\hyperlink{step1}{1}, so it follows that $\nu$ is a finite Borel measure.
  Now given $\Phi \in \mathcal B_{\rmb}(\mathbb R^n;\mathbb R^n)$, define $\tilde\Phi \in \mathcal B_{\rmb}(\mathbb R^{n+1};\mathbb R^{n+1})$ by setting
  \begin{equation}
    \tilde\Phi(x,t) = 
    \begin{cases}
      (\Phi(x), 0) &\text{if } t = 0, \\
      0 &\text{if } t \neq 0.
    \end{cases}
  \end{equation} 
  Then for $\tilde\nu$-a.e.\,$\tilde\gamma\in\mathcal M$ for which the assertion of the previous step holds, we have
  \begin{equation}\label{eq:project_equality}
    \langle \llbracket \mathcal P(\tilde\gamma)\rrbracket, \Phi \rangle = \int_a^b \gamma^\prime(t)\cdot \Phi(\gamma(t)) \,\d t = \int_0^1 \tilde\gamma^\prime(t) \cdot \tilde\Phi(\tilde\gamma(t)) \,\d t = \langle \llbracket \tilde\gamma \rrbracket, \tilde\Phi \rangle.
  \end{equation} 
  Moreover if $\tilde\gamma \in \mathscr{C}_1^{n+1} \setminus \mathcal M$, then $\langle \llbracket \tilde\gamma \rrbracket, \tilde\Phi \rangle = 0$ by definition of $\mathcal M$ and $\tilde\Phi$.
  Hence 
  \begin{equation}\label{eq:project_laststep}
    \begin{split}
      \int_{\mathbb R^n} \Phi \cdot \d\FF = \int_{\mathbb R^{n+1}} \tilde\Phi \cdot \d {\bm S} 
      &= \int_{\mathscr C_1^{n+1}} \langle\llbracket\tilde\gamma\rrbracket\rangle \,\d\tilde\nu(\tilde\gamma) \\
      &= \int_{\mathcal M} \langle\llbracket\mathcal P(\tilde\gamma)\rrbracket,\Phi \rangle \,\d \tilde\nu(\tilde\gamma) = \int_{\mathscr C_1} \langle \llbracket \gamma \rrbracket, \Phi \rangle \,\d \nu(\gamma),
    \end{split}
  \end{equation} 
  establishing \eqref{eq:decomposition_main} in the general case.
  For the total variation, observe that for any $\phi \in \mathcal B_b(\mathbb R^n)$ non-negative we can  apply \eqref{eq:project_equality} with any $\Phi \in \mathcal B_b(\mathbb R^n;\mathbb R^n)$ satisfying $\lvert\Phi\rvert\leq \phi $ to deduce that
  \begin{equation}\label{eq:tv_project_equality}
    \langle \mu_{\llbracket \mathcal P \tilde\gamma \rrbracket}, \phi \rangle = \sup_{\lvert \Phi \rvert \leq \phi} \lvert \langle \llbracket\mathcal P(\tilde\gamma)\rrbracket, \Phi \rangle \rvert \leq \langle \mu_{\llbracket \tilde\gamma\rrbracket}, \tilde\phi \rangle \quad\mbox{for $\tilde\nu$-a.e.\,$\tilde\gamma\in\mathcal M$},
  \end{equation} 
  where $\phi \in \mathcal B_b(\mathbb R^n)$ is defined by setting $\tilde\phi(x,0)=\phi(x)$ and $\tilde\phi(x,t) = 0$ for all $t \neq 0$.
  Now for any such $\tilde\gamma$, since 
  \begin{equation}
    \mu_{\llbracket\mathcal P \tilde\gamma \rrbracket}(\mathbb R^n \times \{0\}) = \ell(\tilde\gamma\lvert_{[a,b]}) = \ell(\mathcal P(\tilde \gamma)) = \mu_{\llbracket\tilde\gamma \rrbracket}(\mathbb R^n),
  \end{equation} 
  we have \eqref{eq:tv_project_equality} holds with equality, and furthermore extends to signed $\phi$ by linearity.
  Hence arguing as in \eqref{eq:project_laststep} we have
  \begin{equation}
    \int_{\mathscr C_1} \langle \mu_{\llbracket\gamma \rrbracket}, \phi \rangle \,\d\nu(\gamma)=\int_{\mathcal M} \langle \mu_{\llbracket\tilde\gamma\rrbracket},\tilde\phi\rangle,\d\tilde\nu(\tilde\gamma) = \int_{\mathbb R^{n+1}} \tilde\phi \,\d\lvert {\bm S} \rvert = \int_{\mathbb R^n} \phi \,\d \lvert \FF\rvert,
  \end{equation} 
  establishing \eqref{eq:decomposition_tv}.
\end{proof}

\addcontentsline{toc}{section}{Acknowledgements}
\phantomsection
\noindent\textbf{Acknowledgements}.
This work was carried out while the author was a postdoc at Technical University Dortmund.
The author would like to thank Gui-Qiang Chen and Monica Torres for the insightful discussions concerning extensions of divergence-measure fields, and Bogdan Rai\cb{t}\u{a} for commenting on a preliminary version of the manuscript.
\printbibliography

@article{Anzellotti1983,
  title = {Pairings between Measures and Bounded Functions and Compensated Compactness},
  author = {Anzellotti, Gabriele},
  year = {1983},
  month = dec,
  journal = {Ann. Mat. Pura Appl.},
  volume = {135},
  number = {1},
  pages = {293--318},
  issn = {1618-1891},
  doi = {10.1007/BF01781073},
  urldate = {2024-02-10},
  abstract = {For all vectorfields {$\psi$} {$\varepsilon$} L{$\infty$}({\textohm},Rn) whose divergence is in Ln({\textohm}) and for all vector measures {$M$} in {\textohm} whose curl is a measure we define a real valued measure ({$\psi$}, {$M$}) in {\textohm}, that can be considered a suitable generalization of the scalar product of {$\psi$} and {$M$}. Several properties of the pairing ({$\psi$}, {$M$}) are then obtained.},
  langid = {english},
  keywords = {Scalar Product,Suitable Generalization},
  file = {/home/chris/Documents/Maths/LaTeX/Citations/storage/X5YVVFKC/Anzellotti - 1983 - Pairings between measures and bounded functions an.pdf}
}

@article{AmbrosioEtAl2005,
  title = {Traces and Fine Properties of a {$BD$} Class of Vector Fields and Applications},
  author = {Ambrosio, Luigi and Crippa, Gianluca and Maniglia, Stefania},
  year = {2005},
  journal = {Ann. Fac. Sci. Toulouse Math.},
  volume = {14},
  number = {4},
  pages = {527--561},
  issn = {2258-7519},
  urldate = {2025-04-14},
  langid = {english}
}

@article{AlbertiMarchese2016,
  title = {On the Differentiability of {{Lipschitz}} Functions with Respect to Measures in the {{Euclidean}} Space},
  author = {Alberti, Giovanni and Marchese, Andrea},
  year = {2016},
  month = feb,
  journal = {Geom. Funct. Anal.},
  volume = {26},
  number = {1},
  pages = {1--66},
  issn = {1016-443X, 1420-8970},
  doi = {10.1007/s00039-016-0354-y},
  urldate = {2024-02-10},
  abstract = {For every finite measure {$\mu$} on Rn we define a decomposability bundle V ({$\mu$}, {$\cdot$}) related to the decompositions of {$\mu$} in terms of rectifiable one-dimensional measures. We then show that every Lipschitz function on Rn is differentiable at {$\mu$}-a.e. x with respect to the subspace V ({$\mu$}, x), and prove that this differentiability result is optimal, in the sense that, following (Alberti et al., Structure of null sets, differentiability of Lipschitz functions, and other problems, 2016), we can construct Lipschitz functions which are not differentiable at {$\mu$}-a.e. x in any direction which is not in V ({$\mu$}, x). As a consequence we obtain a differentiability result for Lipschitz functions with respect to (measures associated to) k-dimensional normal currents, which we use to extend certain basic formulas involving normal currents and maps of class C1 to Lipschitz maps.},
  langid = {english},
  file = {/home/chris/Documents/Maths/LaTeX/Citations/storage/CGWP6DHY/Alberti and Marchese - 2016 - On the differentiability of Lipschitz functions wi.pdf}
}

@book{AmbrosioEtAl2000,
  title = {Functions of Bounded Variation and Free Discontinuity Problems},
  author = {Ambrosio, Luigi and Fusco, Nicola and Pallara, Diego},
  year = {2000},
  pages = {434},
  publisher = {Clarendon Press},
  abstract = {1. Measure theory -- 1.1. Abstract measure theory -- 1.2. Weak convergence in L[superscript p] spaces -- 1.3. Measures in metric spaces -- 1.4. Outer measures and weak convergence -- 1.5. Operations on measures -- 2. Basic geometric measure theory -- 2.1. Convolution -- 2.2. Sobolev spaces -- 2.3. Lipschitz functions -- 2.4. Covering and derivation of measures -- 2.5. Disintegration -- 2.6. Functionals defined on measures -- 2.7. Tangent measures -- 2.8. Hausdorff measures -- 2.9. Rectifiable sets -- 2.10. Area formula -- 2.11. Approximate tangent space -- 2.12. Coarea formula -- 2.13. Minkowski content -- 3. Functions of bounded variation -- 3.1. The space BV -- 3.2. BV functions of one variable -- 3.3. Sets of finite perimeter -- 3.4. Embedding theorems and isoperimetric inequalities -- 3.5. Structure of sets of finite perimeter -- 3.6. Approximate continuity and differentiability -- 3.7. Fine properties of BV functions -- 3.8. Decomposability of BV and boundary trace theorems -- 3.9. Decomposition of derivative and rank one properties -- 3.10. The chain rule in BV -- 3.11. One-dimensional restrictions of BV functions -- 3.12. A brief historical note on BV functions -- 4. Special functions of bounded variation -- 4.1. The space SBV -- 4.2. Proof of the closure and compactness theorems -- 4.3. Poincare inequality in SBV -- 4.4. Caccioppoli partitions -- 4.5. Generalised functions of bounded variation. 4.6. Introduction to free discontinuity problems -- 5. Semicontinuity in BV -- 5.1. Isotropic functionals in BV -- 5.2. Convex volume energies -- 5.3. Surface energies for partitions -- 5.4. Lower semicontinuous functionals in SBV -- 5.5. Functionals with linear growth in BV -- 6. The Mumford-Shah functional -- 6.1. Weak and strong solutions -- 6.2. Regularity theory: the state of the art -- 6.3. Local and global minimisers -- 6.4. Variational approximation and discrete models -- 7. Minimisers of free discontinuity problems -- 7.1. Limit behaviour of sequences in SBV -- 7.2. The density lower bound -- 7.3. First variation of the area and mean curvature -- 7.4. The Euler-Lagrange equation -- 7.5. Harmonic functions -- 7.6. Regularity of solutions of the Neumann problem -- 7.7. Equations of mean curvature type -- 8. Regularity of the free discontinuity set -- 8.1. Limit behaviour of sequences of quasi-minimisers -- 8.2. Lipschitz approximation -- 8.3. Flatness improvement -- 8.4. Energy improvement -- 8.5. Proof of the decay theorem.},
  isbn = {978-0-19-850245-6}
}

@article{ArensEells1956,
  title = {On Embedding Uniform and Topological Spaces},
  author = {Arens, Richard and Eells, James},
  year = {1956},
  month = sep,
  journal = {Pacific J. Math.},
  volume = {6},
  number = {3},
  pages = {397--403},
  publisher = {Mathematical Sciences Publishers},
  issn = {0030-8730, 0030-8730},
  doi = {10.2140/pjm.1956.6.397},
  urldate = {2025-01-14},
  langid = {english},
  file = {/home/chris/Documents/Maths/LaTeX/Citations/storage/2MFDXKRD/Arens and Eells - 1956 - On embedding uniform and topological spaces.pdf}
}

@book{Brezis2011,
  title = {Functional Analysis, {{Sobolev}} Spaces and Partial Differential Equations},
  author = {Brezis, Haim},
  year = {2011},
  publisher = {Springer New York},
  address = {New York, NY},
  doi = {10.1007/978-0-387-70914-7},
  urldate = {2022-08-07},
  isbn = {978-0-387-70913-0 978-0-387-70914-7},
  langid = {english}
}

@article{ChenFrid1999,
  title = {Divergence-{{Measure Fields}} and {{Hyperbolic Conservation Laws}}},
  author = {Chen, Gui-Qiang G. and Frid, Hermano},
  year = {1999},
  month = jun,
  journal = {Arch. Rational Mech. Anal.},
  volume = {147},
  number = {2},
  pages = {89--118},
  issn = {1432-0673},
  doi = {10.1007/s002050050146},
  urldate = {2023-11-30},
  abstract = {. We analyze a class of \$L{\textasciicircum}{\textbackslash}infty\$vector fields, called divergence-measure fields. We establish the Gauss-Green formula, the normal traces over subsets of Lipschitz boundaries, and the product rule for this class of \$L{\textasciicircum}{\textbackslash}infty\$fields. Then we apply this theory to analyze \$L{\textasciicircum}{\textbackslash}infty\$entropy solutions of initial-boundary-value problems for hyperbolic conservation laws and to study the ways in which the solutions assume their initial and boundary data. The examples of conservation laws include multidimensional scalar equations, the system of nonlinear elasticity, and a class of \$m{\textbackslash}X m\$systems with affine characteristic hypersurfaces. The analysis in \$L{\textasciicircum}{\textbackslash}infty\$also extends to \$L{\textasciicircum}p\$.},
  langid = {english},
  keywords = {Boundary Data,Entropy,Product Rule,Scalar Equation,Vector Field},
  file = {/home/chris/Documents/Maths/LaTeX/Citations/storage/M7RD2I2H/Chen and Frid - 1999 - Divergence‐Measure Fields and Hyperbolic Conservat.pdf}
}

@article{ChenFrid2003,
  title = {Extended {{Divergence-Measure Fields}} and the {{Euler Equations}} for {{Gas Dynamics}}},
  author = {Chen, Gui-Qiang G. and Frid, Hermano},
  year = {2003},
  month = may,
  journal = {Commun. Math. Phys.},
  volume = {236},
  number = {2},
  pages = {251--280},
  issn = {1432-0916},
  doi = {10.1007/s00220-003-0823-7},
  urldate = {2023-11-30},
  abstract = {A class of extended vector fields, called extended divergence-measure fields, is analyzed. These fields include vector fields in Lpand vector-valued Radon measures, whose divergences are Radon measures. Such extended vector fields naturally arise in the study of the behavior of entropy solutions of the Euler equations for gas dynamics and other nonlinear systems of conservation laws. A new notion of normal traces over Lipschitz deformable surfaces is developed under which a generalized Gauss-Green theorem is established even for these extended fields. An explicit formula is obtained to calculate the normal traces over any Lipschitz deformable surface, suitable for applications, by using the neighborhood information of the fields near the surface and the level set function of the Lipschitz deformation surfaces. As an application, we prove the uniqueness and stability of Riemann solutions that may contain vacuum in the class of entropy solutions of the Euler equations for gas dynamics.},
  langid = {english},
  keywords = {Deformation Surface,Euler Equation,Radon,Radon Measure,Vector Field},
  file = {/home/chris/Documents/Maths/LaTeX/Citations/storage/QMB4BM8E/Chen and Frid - 2003 - Extended Divergence-Measure Fields and the Euler E.pdf}
}

@article{ChenLiTorres2020,
  title = {Traces and {{Extensions}} of {{Bounded Divergence-Measure Fields}} on {{Rough Open Sets}}},
  author = {Chen, Gui-Qiang and Li, Qinfeng and Torres, Monica},
  year = {2020},
  journal = {Indiana Univ. Math. J.},
  volume = {69},
  number = {1},
  eprint = {26959170},
  eprinttype = {jstor},
  pages = {229--264},
  publisher = {Indiana University Mathematics Department},
  issn = {0022-2518},
  urldate = {2025-02-25},
  abstract = {We prove that an open set {\textohm} {$\subset$} {$\mathbb{R}$}n can be approximated by smooth sets of uniformly bounded perimeter from the interior if and only if the open set {\textohm} satisfies ({$\ast$}) {$\mathmit{H}$}n-1({$\partial\Omega$} {\textbackslash} {\textohm}{$^0$}) {$<$} {$\infty$}, where {\textohm}{$^0$} is the measure-theoretic exterior of {\textohm}. Furthermore, we show that condition ({$\ast$}) implies that the open set {\textohm} is an extension domain for bounded divergence-measure fields, which improves the previous results that require a strong condition that {$\mathmit{H}$}n-1({$\partial\Omega$}) {$<$} {$\infty$}. As an application, we establish a Gauss-Green formula up to the boundary on any open set {\textohm} satisfying condition ({$\ast$}) for bounded divergence-measure fields, for which the corresponding normal trace is shown to be a bounded function concentrated on {$\partial\Omega$} {\textbackslash} {\textohm}{$^0$}. This new formula does not require the set of integration to be compactly contained in the domain where the vector field is defined. In addition, we also analyze the solvability of the divergence equation on a rough domain with prescribed trace on the boundary, as well as the extension domains for bounded BV functions.},
  file = {/home/chris/Documents/Maths/LaTeX/Citations/storage/34EFMD9V/Chen et al. - 2020 - Traces and Extensions of Bounded Divergence-Measure Fields on Rough Open Sets.pdf}
}

@article{ChenTorres2005,
  title = {Divergence-{{Measure Fields}}, {{Sets}} of {{Finite Perimeter}}, and {{Conservation Laws}}},
  author = {Chen, Gui-Qiang G. and Torres, Monica},
  year = {2005},
  month = feb,
  journal = {Arch. Rational Mech. Anal.},
  volume = {175},
  number = {2},
  pages = {245--267},
  issn = {1432-0673},
  doi = {10.1007/s00205-004-0346-1},
  urldate = {2023-11-30},
  abstract = {Divergence-measure fields in L{$\infty$} over sets of finite perimeter are analyzed. A notion of normal traces over boundaries of sets of finite perimeter is introduced, and the Gauss-Green formula over sets of finite perimeter is established for divergence-measure fields in L{$\infty$}. The normal trace introduced here over a class of surfaces of finite perimeter is shown to be the weak-star limit of the normal traces introduced in Chen \& Frid [6] over the Lipschitz deformation surfaces, which implies their consistency. As a corollary, an extension theorem of divergence-measure fields in L{$\infty$} over sets of finite perimeter is also established. Then we apply the theory to the initial-boundary value problem of nonlinear hyperbolic conservation laws over sets of finite perimeter.},
  langid = {english},
  keywords = {Complex System,Deformation Surface,Electromagnetism,Neural Network,Nonlinear Dynamics},
  file = {/home/chris/Documents/Maths/LaTeX/Citations/storage/QIMBEPPF/Chen and Torres - 2005 - Divergence-Measure Fields, Sets of Finite Perimete.pdf}
}

@article{ChenTorresZiemer2009,
  title = {Gauss-{{Green}} Theorem for Weakly Differentiable Vector Fields, Sets of Finite Perimeter, and Balance Laws},
  author = {Chen, Gui-Qiang G. and Torres, Monica and Ziemer, William P.},
  year = {2009},
  journal = {Commun. Pure Appl. Math.},
  volume = {62},
  number = {2},
  pages = {242--304},
  issn = {1097-0312},
  doi = {10.1002/cpa.20262},
  urldate = {2023-11-30},
  abstract = {We analyze a class of weakly differentiable vector fields F : {$\mathbb{R}$}n {$\rightarrow$} {$\mathbb{R}$}n with the property that F {$\in$} L{$\infty$} and div F is a (signed) Radon measure. These fields are called bounded divergence-measure fields. The primary focus of our investigation is to introduce a suitable notion of the normal trace of any divergence-measure field F over the boundary of an arbitrary set of finite perimeter that ensures the validity of the Gauss-Green theorem. To achieve this, we first establish a fundamental approximation theorem which states that, given a (signed) Radon measure {$\mu$} that is absolutely continuous with respect to {$\mathcal{H}$}︁N - 1 on {$\mathbb{R}$}N, any set of finite perimeter can be approximated by a family of sets with smooth boundary essentially from the measure-theoretic interior of the set with respect to the measure {\textbar}{\textbar}{$\mu\vert\vert$}, the total variation measure. We employ this approximation theorem to derive the normal trace of F on the boundary of any set of finite perimeter E as the limit of the normal traces of F on the boundaries of the approximate sets with smooth boundary so that the Gauss-Green theorem for F holds on E. With these results, we analyze the Cauchy flux that is bounded by a nonnegative Radon measure over any oriented surface (i.e., an (N - 1)-dimensional surface that is a part of the boundary of a set of finite perimeter) and thereby develop a general mathematical formulation of the physical principle of the balance law through the Cauchy flux. Finally, we apply this framework to the derivation of systems of balance laws with measure-valued source terms from the formulation of the balance law. This framework also allows the recovery of Cauchy entropy flux through the Lax entropy inequality for entropy solutions of hyperbolic conservation laws. {\copyright} 2008 Wiley Periodicals, Inc.},
  copyright = {Copyright {\copyright} 2008 Wiley Periodicals, Inc.},
  langid = {english},
  file = {/home/chris/Documents/Maths/LaTeX/Citations/storage/P8PVH4T2/Chen et al. - 2009 - Gauss-Green theorem for weakly differentiable vect.pdf}
}

@article{ChenEtAl2019,
  title = {Cauchy {{Fluxes}} and {{Gauss}}--{{Green Formulas}} for {{Divergence-Measure Fields Over General Open Sets}}},
  author = {Chen, Gui-Qiang G. and Comi, Giovanni E. and Torres, Monica},
  year = {2019},
  month = jul,
  journal = {Arch. Rational Mech. Anal.},
  volume = {233},
  number = {1},
  pages = {87--166},
  issn = {1432-0673},
  doi = {10.1007/s00205-018-01355-4},
  urldate = {2023-11-30},
  abstract = {We establish the interior and exterior Gauss--Green formulas for divergence-measure fields in Lp over general open sets, motivated by the rigorous mathematical formulation of the physical principle of balance law via the Cauchy flux in the axiomatic foundation, for continuum mechanics allowing discontinuities and singularities. The method, based on a distance function, allows us to give a representation of the interior (resp. exterior) normal trace of the field on the boundary of any given open set as the limit of classical normal traces over the boundaries of interior (resp. exterior) smooth approximations of the open set. In the particular case of open sets with a continuous boundary, the approximating smooth sets can explicitly be characterized by using a regularized distance. We also show that any open set with Lipschitz boundary has a regular Lipschitz deformable boundary from the interior. In addition, some new product rules for divergence-measure fields and suitable scalar functions are presented, and the connection between these product rules and the representation of the normal trace of the field as a Radon measure is explored. With these formulas to hand, we introduce the notion of Cauchy fluxes as functionals defined on the boundaries of general bounded open sets for the rigorous mathematical formulation of the physical principle of balance law, and show that the Cauchy fluxes can be represented by corresponding divergence-measure fields.},
  langid = {english},
  file = {/home/chris/Documents/Maths/LaTeX/Citations/storage/UZ3M4BNB/Chen et al. - 2019 - Cauchy Fluxes and Gauss–Green Formulas for Diverge.pdf}
}

@article{ChenEtAl2024,
  title = {Extended {{Divergence-Measure Fields}}, the {{Gauss-Green Formula}} and {{Cauchy Fluxes}}},
  author = {Chen, Gui-Qiang G. and Irving, Christopher and Torres, Monica},
  date = {2025-11-25},
  journal = {Arch. Rational Mech. Anal.},
  volume = {249},
  number = {6},
  pages = {79},
  issn = {1432-0673},
  doi = {10.1007/s00205-025-02135-7},
  url = {https://doi.org/10.1007/s00205-025-02135-7},
  urldate = {2025-12-10},
  abstract = {We establish the Gauss-Green formula for extended divergence-measure fields (i.e., vector-valued measures whose distributional divergences are Radon measures) over open sets. We prove that, for almost every open set, the normal trace is a measure supported on the boundary of the set. Moreover, for any open set, we provide a representation of the normal trace of the field over the boundary of the open set as the limit of measure-valued normal traces over the boundaries of approximating sets. Furthermore, using this theory, we extend the balance law from classical continuum physics to a general framework in which the production on any open set is measured with a Radon measure and the associated Cauchy flux is bounded by a Radon measure concentrated on the boundary of the set. We prove that there exists an extended divergence-measure field such that the Cauchy flux can be recovered through the field, locally on almost every open set and globally on every open set. Our results generalize the classical Cauchy’s Theorem (that is only valid for continuous vector fields) and extend the previous formulations of the Cauchy flux (that generate vector fields within \$\$L\textasciicircum\{p\}\$\$). Thereby, we establish the equivalence between entropy solutions of the multidimensional nonlinear partial differential equations of divergence form and of the mathematical formulation of physical balance laws via the Cauchy flux through the constitutive relations in the axiomatic foundation of Continuum Physics.},
  langid = {english},
  keywords = {26B05,26B12,26B20,26B30,26B40,28A05,28A25,35D30,35L65,35L67,Primary: 28C05,Secondary: 28A75},
  file = {/Users/ci152/Zotero/storage/RN8763ES/Chen et al. - 2025 - Extended Divergence-Measure Fields, the Gauss-Green Formula and Cauchy Fluxes.pdf}
}

@book{Conway2007,
  title = {A {{Course}} in {{Functional Analysis}}},
  author = {Conway, John B.},
  year = {2007},
  series = {Graduate {{Texts}} in {{Mathematics}}},
  volume = {96},
  publisher = {Springer},
  address = {New York, NY},
  doi = {10.1007/978-1-4757-4383-8},
  urldate = {2025-03-04},
  copyright = {http://www.springer.com/tdm},
  isbn = {978-1-4419-3092-7 978-1-4757-4383-8},
  langid = {english},
  keywords = {Analysis,Banach Space,C*-algebra,calculus,differential equation,functional analysis,Hilbert space,measure}
}

@article{ComiEtAl2024,
  title = {Representation Formulas for Pairings between Divergence-Measure Fields and {{{\emph{BV}}}} Functions},
  author = {Comi, Giovanni E. and Crasta, Graziano and De Cicco, Virginia and Malusa, Annalisa},
  year = {2024},
  month = jan,
  journal = {J. Funct. Anal.},
  volume = {286},
  number = {1},
  pages = {110192},
  issn = {0022-1236},
  doi = {10.1016/j.jfa.2023.110192},
  urldate = {2025-02-24},
  abstract = {The purpose of this paper is to find pointwise representation formulas for the density of the pairing between divergence-measure fields and BV functions, in this way continuing the research started in [17], [20]. In particular, we extend a representation formula from an unpublished paper of Anzellotti [7] involving the limit of cylindrical averages for normal traces, and we exploit a result of [35] in order to derive another representation in terms of limits of averages in half balls.},
  keywords = {Divergence-measure fields,Functions of bounded variation,Gauss-Green formula,Normal traces},
  file = {/home/chris/Documents/Maths/LaTeX/Citations/storage/7IUJEBF5/Comi et al. - 2024 - Representation formulas for pairings between divergence-measure fields and BV functions.pdf;/home/chris/Documents/Maths/LaTeX/Citations/storage/64LXHQKG/S002212362300349X.html}
}

@misc{ComiEtAl2023,
  title = {Beyond {$BV$}: New Pairings and {{Gauss-Green}} Formulas for Measure Fields with Divergence Measure},
  shorttitle = {Beyond {$BV$}},
  author = {Comi, Giovanni Eugenio and De Cicco, Virginia and Scilla, Giovanni},
  year = {2023},
  month = nov,
  number = {arXiv:2310.18730},
  eprint = {2310.18730},
  primaryclass = {math},
  publisher = {arXiv},
  urldate = {2023-11-30},
  abstract = {A new notion of pairing between measure vector fields with divergence measure and scalar functions, which are not required to be weakly differentiable, is introduced. In particular, in the case of essentially bounded divergence-measure fields, the functions may not be of bounded variation. This naturally leads to the definition of \$BV\$-like function classes on which these pairings are well defined. Despite the lack of fine properties for such functions, our pairings surprisingly preserve many features of the recently introduced \${\textbackslash}lambda\$-pairings (Crasta, De Cicco, Malusa 2022, arXiv:1902.06052), as coarea formula, lower semicontinuity, Leibniz rules, and Gauss-Green formulas. Moreover, in a natural way new anisotropic "degenerate" perimeters are defined, possibly allowing for sets with fractal boundary.},
  archiveprefix = {arXiv},
  keywords = {26B30 49Q15,Mathematics - Functional Analysis},
  file = {/home/chris/Documents/Maths/LaTeX/Citations/storage/AU7BY7AZ/Comi et al. - 2023 - Beyond $BV$ new pairings and Gauss-Green formulas.pdf;/home/chris/Documents/Maths/LaTeX/Citations/storage/QSB7QA5W/2310.html}
}

@article{ComiLeonardi2025,
  title = {Measures in the Dual of {${BV}$}: Perimeter Bounds and Relations with Divergence-Measure Fields},
  shorttitle = {Measures in the Dual of {{BV}}{$<$}math{$><$}mrow Is="true"{$><$}mi Is="true"{$>$}{{B}}{$<$}/Mi{$><$}mi Is="true"{$>$}{{V}}{$<$}/Mi{$><$}/Mrow{$><$}/Math{$>$}},
  author = {Comi, Giovanni E. and Leonardi, Gian Paolo},
  year = {2025},
  month = feb,
  journal = {Nonlinear Anal.},
  volume = {251},
  pages = {113686},
  issn = {0362-546X},
  doi = {10.1016/j.na.2024.113686},
  urldate = {2025-02-24},
  abstract = {We analyze some properties of the measures in the dual of the space BV, by considering (signed) Radon measures satisfying a perimeter bound condition, which means that the absolute value of the measure of a set is controlled by the perimeter of the set itself, and whose total variations also belong to the dual of BV. We exploit and refine the results of Cong Phuc and Torres (2017), in particular exploring the relation with divergence-measure fields and proving the stability of the perimeter bound from sets to BV functions under a suitable approximation of the given measure. As an important tool, we obtain a refinement of Anzellotti-Giaquinta approximation for BV functions, which is of separate interest in itself and, in the context of Anzellotti's pairing theory for divergence-measure fields, implies a new way of approximating {$\lambda$}-pairings, as well as new bounds for their total variation. These results are also relevant due to their application in the study of weak solutions to the non-parametric prescribed mean curvature equation with measure data, which is explored in a subsequent work.},
  keywords = {Divergence-measure fields,Dual of,Functions of bounded variation,Gauss-green formulas},
  file = {/home/chris/Documents/Maths/LaTeX/Citations/storage/5MBLAWVG/S0362546X24002050.html}
}

@article{CrastaDeCicco2019,
  title = {Anzellotti's Pairing Theory and the {{Gauss}}--{{Green}} Theorem},
  author = {Crasta, Graziano and De Cicco, Virginia},
  year = {2019},
  month = feb,
  journal = {Adv. Math.},
  volume = {343},
  pages = {935--970},
  issn = {0001-8708},
  doi = {10.1016/j.aim.2018.12.007},
  urldate = {2025-02-24},
  abstract = {In this paper we obtain a very general Gauss--Green formula for weakly differentiable functions and sets of finite perimeter. This result is obtained by revisiting Anzellotti's pairing theory and by characterizing the measure pairing (A,Du) when A is a bounded divergence measure vector field and u is a bounded function of bounded variation.},
  keywords = {Anzellotti's pairing,Coarea formula,Divergence-measure fields,Gauss-Green formula},
  file = {/home/chris/Documents/Maths/LaTeX/Citations/storage/DGFLQCAH/Crasta and De Cicco - 2019 - Anzellotti's pairing theory and the Gauss–Green theorem.pdf;/home/chris/Documents/Maths/LaTeX/Citations/storage/L89827PA/S0001870818304997.html}
}

@article{DegiovanniEtAl1999,
  title = {Cauchy {{Fluxes Associated}} with {{Tensor Fields Having Divergence Measure}}},
  author = {Degiovanni, Marco and Marzocchi, Alfredo and Musesti, Alessandro},
  year = {1999},
  month = aug,
  journal = {Arch. Rational Mech. Anal.},
  volume = {147},
  number = {3},
  pages = {197--223},
  issn = {1432-0673},
  doi = {10.1007/s002050050149},
  urldate = {2023-11-30},
  abstract = {. Cauchy fluxes induced by locally summable tensor fields with divergence measure are characterized. The equivalence between integral formulations involving subsets of finite perimeter and much more restricted classes of subsets is proved.},
  langid = {english},
  keywords = {Divergence Measure,Finite Perimeter,Integral Formulation,Restricted Classis,Tensor Field},
  file = {/home/chris/Documents/Maths/LaTeX/Citations/storage/N4RF53JM/Degiovanni et al. - 1999 - Cauchy Fluxes Associated with Tensor Fields Having.pdf}
}

@book{EvansGariepy2015,
  title = {Measure Theory and Fine Properties of Functions},
  author = {Evans, Lawrence Craig and Gariepy, Ronald Francis},
  year = {2015},
  abstract = {Revised edition. "A Chapman \& Hall book." Measure Theory and Fine Properties of Functions, Revised Edition provides a detailed examination of the central assertions of measure theory in n-dimensional Euclidean space. The book emphasizes the roles of Hausdorff measure and capacity in characterizing the fine properties of sets and functions. Topics covered include a quick review of abstract measure theory, theorems and differentiation in {$\mathbb{R}$}n, Hausdorff measures, area and coarea formulas for Lipschitz mappings and related change-of-variable formulas, and Sobolev functions as well as functions of bounded variation.The text provides complet. General Measure Theory -- Hausdorff Measures -- Area and Coarea Formulas -- Sobolev Functions -- Functions of Bounded Variation, Sets of Finite Perimeter -- Differentiability, Approximation by C1 Functions.},
  isbn = {978-1-4822-4238-6}
}

@misc{GmeinederSchiffer2024,
  title = {Extensions of Divergence-Free Fields in {$\mathrm L^1$}-Based Function Spaces},
  author = {Gmeineder, Franz and Schiffer, Stefan},
  year = {2024},
  month = aug,
  number = {arXiv:2408.04513},
  eprint = {2408.04513},
  primaryclass = {math},
  publisher = {arXiv},
  urldate = {2025-01-22},
  abstract = {We establish the first extension results for divergence-free (or solenoidal) elements of \${\textbackslash}mathrm\{L\}{\textasciicircum}\{1\}\$-based function spaces. Here, the key point is to preserve the solenoidality constraint while simultaneously keeping the underlying \${\textbackslash}mathrm\{L\}{\textasciicircum}\{1\}\$-boundedness. While previous results as in Kato et al. [Extension and representation of divergence-free vector fields on bounded domains, Math. Res. Lett., 2000] for \${\textbackslash}mathrm\{L\}{\textasciicircum}\{p\}\$-based function spaces, \$1},
  archiveprefix = {arXiv},
  keywords = {Mathematics - Analysis of PDEs},
  file = {/home/chris/Documents/Maths/LaTeX/Citations/storage/N98WCDN8/Gmeineder and Schiffer - 2024 - Extensions of divergence-free fields in $mathrm L ^ 1 $-based function spaces.pdf;/home/chris/Documents/Maths/LaTeX/Citations/storage/RYJ9UH5L/2408.html}
}

@book{Heinonen2005,
  title = {Lectures on {{Lipschitz}} Analysis},
  author = {Heinonen, Juha},
  year = {2005},
  series = {Report / {{University}} of {{Jyv{\"a}skyl{\"a}}}, {{Department}} of {{Mathematics}}},
  number = {100},
  publisher = {Jyv{\"a}skyl{\"a}n Yliopistopaino},
  address = {Jyv{\"a}skyl{\"a}},
  isbn = {978-951-39-2318-1},
  langid = {english}
}

@article{JohnsonEtAl1986,
  title = {Extensions of Lipschitz Maps into {{Banach}} Spaces},
  author = {Johnson, William B. and Lindenstrauss, Joram and Schechtman, Gideon},
  year = {1986},
  month = jun,
  journal = {Israel J. Math.},
  volume = {54},
  number = {2},
  pages = {129--138},
  issn = {1565-8511},
  doi = {10.1007/BF02764938},
  urldate = {2024-10-09},
  abstract = {It is proved that ifY {$\subset$}X are metric spaces withY havingn{$\geqq$}2 points then any mapf fromY into a Banach spaceZ can be extended to a map\$\${\textbackslash}hat f\$\$fromX intoZ so that\$\${\textbackslash}left{\textbackslash}{\textbar} \{{\textbackslash}hat f\} {\textbackslash}right{\textbackslash}{\textbar}\_\{lip\}  {\textbackslash}leqq c log n{\textbackslash}left{\textbackslash}{\textbar} f {\textbackslash}right{\textbackslash}{\textbar}\_\{lip\} \$\$wherec is an absolute constant. A related result is obtained for the case whereX is assumed to be a finite-dimensional normed space andY is an arbitrary subset ofX.},
  langid = {english},
  keywords = {Absolute Constant,Arbitrary Subset,Banach Space,Hilbert Space,Open Unit Ball},
  file = {/home/chris/Documents/Maths/LaTeX/Citations/storage/74A5AWT2/Johnson et al. - 1986 - Extensions of lipschitz maps into Banach spaces.pdf}
}

@article{Jones1981,
  title = {Quasiconformal Mappings and Extendability of Functions in Sobolev Spaces},
  author = {Jones, Peter W.},
  year = {1981},
  journal = {Acta Math.},
  volume = {147},
  number = {0},
  pages = {71--88},
  issn = {0001-5962},
  doi = {10.1007/BF02392869},
  urldate = {2025-01-17},
  langid = {english},
  file = {/home/chris/Documents/Maths/LaTeX/Citations/storage/ZFEYA69G/Jones - 1981 - Quasiconformal mappings and extendability of functions in sobolev spaces.pdf}
}

@article{PaoliniStepanov2012,
  title = {Decomposition of Acyclic Normal Currents in a Metric Space},
  author = {Paolini, Emanuele and Stepanov, Eugene},
  year = {2012},
  month = dec,
  journal = {J. Funct. Anal.},
  volume = {263},
  number = {11},
  pages = {3358--3390},
  issn = {0022-1236},
  doi = {10.1016/j.jfa.2012.08.009},
  urldate = {2024-02-26},
  abstract = {We prove that every acyclic normal one-dimensional real Ambrosio--Kirchheim current in a Polish (i.e. complete separable metric) space can be decomposed in curves, thus generalizing the analogous classical result proven by S. Smirnov in Euclidean space setting. The same assertion is true for every complete metric space under a suitable set-theoretic assumption.},
  keywords = {Lipschitz curves,Metric currents,Normal currents},
  file = {/home/chris/Documents/Maths/LaTeX/Citations/storage/QCT5M4I4/Paolini and Stepanov - 2012 - Decomposition of acyclic normal currents in a metr.pdf;/home/chris/Documents/Maths/LaTeX/Citations/storage/TYMEB2UF/S0022123612003059.html}
}

@article{PaoliniStepanov2013,
  title = {Structure of Metric Cycles and Normal One-Dimensional Currents},
  author = {Paolini, Emanuele and Stepanov, Eugene},
  year = {2013},
  month = mar,
  journal = {J. Funct. Anal.},
  volume = {264},
  number = {6},
  pages = {1269--1295},
  issn = {0022-1236},
  doi = {10.1016/j.jfa.2012.12.007},
  urldate = {2024-02-26},
  abstract = {We prove that every one-dimensional real Ambrosio--Kirchheim normal current in a Polish (i.e. complete separable metric) space can be naturally represented as an integral of simpler currents associated to Lipschitz curves. As a consequence a representation of every such current with zero boundary (i.e. a cycle) as an integral of so-called elementary solenoids (which are, very roughly speaking, more or less the same as asymptotic cycles introduced by S. Schwartzman) is obtained. The latter result on cycles is in fact a generalization of the analogous result proven by S. Smirnov for classical Whitney currents in a Euclidean space. The same results are true for every complete metric space under suitable set-theoretic assumptions.},
  keywords = {Decomposition,Metric currents,Solenoids},
  file = {/home/chris/Documents/Maths/LaTeX/Citations/storage/TGHEXMAW/Paolini and Stepanov - 2013 - Structure of metric cycles and normal one-dimensio.pdf;/home/chris/Documents/Maths/LaTeX/Citations/storage/4XPAMXUR/S0022123612004715.html}
}

@article{PhucTorres2008,
  title = {Characterizations of the Existence and Removable Singularities of Divergence-Measure Vector Fields},
  author = {Phuc, Nguyen Cong and Torres, Monica},
  year = {2008},
  journal = {Indiana Univ. Math. J.},
  volume = {57},
  number = {4},
  pages = {1573--1598},
  issn = {0022-2518},
  doi = {10.1512/iumj.2008.57.3312},
  urldate = {2025-03-03},
  langid = {english}
}

@article{SchevenSchmidt2016,
  title = {{{BV}} Supersolutions to Equations of 1-{{Laplace}} and Minimal Surface Type},
  author = {Scheven, Christoph and Schmidt, Thomas},
  year = {2016},
  month = aug,
  journal = {J. Differ. Equ.},
  volume = {261},
  number = {3},
  pages = {1904--1932},
  issn = {0022-0396},
  doi = {10.1016/j.jde.2016.04.015},
  urldate = {2025-03-04},
  abstract = {We propose notions of BV supersolutions to (the Dirichlet problem for) the 1-Laplace equation, the minimal surface equation, and equations of similar type. We then establish some related compactness and consistency results. Our main technical tool is a generalized product of L{$\infty$} divergence-measure fields and gradient measures of BV functions. This product crucially depends on the choice of a representative of the BV function, and the proofs of its basic properties involve results on one-sided approximation and fine (semi)continuity in the BV context.}
}

@book{SchonherrSchuricht2025,
  title = {A {{Theory}} of {{Traces}} and the {{Divergence Theorem}}},
  author = {Schuricht, Friedemann and Schönherr, Moritz},
  date = {2025},
  series = {Lecture {{Notes}} in {{Mathematics}}},
  volume = {2372},
  publisher = {Springer Nature Switzerland},
  location = {Cham},
  doi = {10.1007/978-3-031-86664-7},
  url = {https://link.springer.com/10.1007/978-3-031-86664-7},
  urldate = {2025-09-17},
  isbn = {978-3-031-86663-0 978-3-031-86664-7},
  langid = {english},
  keywords = {Density of a Set,Divergence Theorems,Finitely Additive Measures and Related Integrals,Gauss-Green Formulas,Normal Measures,Partial Differential Equations,Precise Representative,Sobolev and BV Functions,Traces of Functions,Vector Fields Having Divergence Measure}
}

@article{Schuricht2007,
  title = {A {{New Mathematical Foundation}} for {{Contact Interactions}} in {{Continuum Physics}}},
  author = {Schuricht, Friedemann},
  year = {2007},
  month = jun,
  journal = {Arch. Rational Mech. Anal.},
  volume = {184},
  number = {3},
  pages = {495--551},
  issn = {1432-0673},
  doi = {10.1007/s00205-006-0032-6},
  urldate = {2023-11-30},
  abstract = {The investigation of contact interactions, such as traction and heat flux, that are exerted by contiguous bodies across the common boundary is a fundamental issue in continuum physics. However, the traditional theory of stress established by Cauchy and extended by Noll and his successors is insufficient for handling the lack of regularity in continuum physics due to shocks, corner singularities, and fracture. This paper provides a new mathematical foundation for the treatment of contact interactions. Based on mild physically motivated postulates, which differ essentially from those used before, the existence of a corresponding interaction tensor is established. While in earlier treatments contact interactions are basically defined on surfaces, here contact interactions are rigorously considered as maps on pairs of subbodies. This allows the action exerted on a subbody to be defined not only, as usual, for sets with a sufficiently regular boundary, but also for Borel sets (which include all open and all closed sets). In addition to the classical representation of such interactions by means of integrals on smooth surfaces, a general representation using the distributional divergence of the tensor is derived. In the case where concentrations occur, this new approach allows a description of contact phenomena more precise than before.},
  langid = {english},
  keywords = {Contact Interaction,Measure Zero,Partial Restriction,Radon Measure,Representation Formula},
  file = {/home/chris/Documents/Maths/LaTeX/Citations/storage/SI82H2AU/Schuricht - 2007 - A New Mathematical Foundation for Contact Interact.pdf}
}

@article{Silhavy1991,
  title = {Cauchy's Stress Theorem and Tensor Fields with Divergences in {{$L^p$}}},
  author = {{\v S}ilhav{\'y}, M.},
  year = {1991},
  month = sep,
  journal = {Arch. Rational Mech. Anal.},
  volume = {116},
  number = {3},
  pages = {223--255},
  issn = {1432-0673},
  doi = {10.1007/BF00375122},
  urldate = {2023-11-30},
  langid = {english},
  keywords = {Complex System,Electromagnetism,Neural Network,Nonlinear Dynamics,Tensor Field},
  file = {/home/chris/Documents/Maths/LaTeX/Citations/storage/DLHBVIN7/Šilhavý - 1991 - Cauchy's stress theorem and tensor fields with div.pdf}
}

@article{Silhavy2005,
  title = {Divergence Measure Fields and {{Cauchy}}'s Stress Theorem},
  author = {{\v S}ilhav{\'y}, M.},
  year = {2005},
  journal = {Rend. Semin. Mat. Univ. Padova},
  volume = {113},
  pages = {15--45},
  publisher = {Seminario Matematico of the University of Padua},
  issn = {0041-8994},
  urldate = {2023-11-30},
  langid = {english}
}

@article{Silhavy2008,
  title = {Normal {{Currents}}: {{Structure}}, {{Duality Pairings}} and Div--Curl {{Lemmas}}},
  shorttitle = {Normal {{Currents}}},
  author = {{\v S}ilhav{\'y}, M.},
  year = {2008},
  month = dec,
  journal = {Milan J. Math.},
  volume = {76},
  number = {1},
  pages = {275--306},
  issn = {1424-9294},
  doi = {10.1007/s00032-007-0081-9},
  urldate = {2024-09-03},
  abstract = {The paper gives a decomposition of a general normal r-dimensional current [5] into the sum of three measures of which the first is an r-dimensional rectifiable measure, the second is the Cantor part of the current, and the third is Lebesgue absolutely continuous. This is analogous to the well-known decomposition of the derivative of a function of bounded variation into the jump, Cantor, and absolutely continuous parts; in fact the last is a special case of the result for (n--1)-dimensional normal currents. Further, Whitney's cap product [15] is recast in the language of the approach to flat chains by Federer [5] and a special case (viz., currents of dimension n -- 1) is shown to be closely related to the measure-valued duality pairings between vector measures with curl a measure and L{$\infty$} vectorfields with L{$\infty$} divergence as established by Anzellotti [2] and Kohn \& T{\'e}mam [6]. Finally, the cap product is shown to be jointly weak* continuous in the two factors of the product in a way similar to the compensated compactness theory; in the cases of (n -- 1)-dimensional objects this reduces to results closely related to the div--curl lemmas of the standard compensated compactness theory.},
  langid = {english},
  keywords = {compensated compactness,measure-valued duality pairing,Rectifiable currents},
  file = {/home/chris/Documents/Maths/LaTeX/Citations/storage/KLPIRBBR/Šilhavý - 2008 - Normal Currents Structure, Duality Pairings and div–curl Lemmas.pdf}
}

@article{Silhavy2009,
  title = {The {{Divergence Theorem}} for {{Divergence Measure Vectorfields}} on {{Sets}} with {{Fractal Boundaries}}},
  author = {{\v S}ilhav{\'y}, M.},
  year = {2009},
  month = jul,
  journal = {Math. Mech. Solids},
  volume = {14},
  number = {5},
  pages = {445--455},
  publisher = {SAGE Publications Ltd STM},
  issn = {1081-2865},
  doi = {10.1177/1081286507081960},
  urldate = {2023-11-30},
  abstract = {A divergence measure vectorfield is an {$\mathbb{R}$}n valued measure on an open subset U of {$\mathbb{R}$}n whose weak divergence in U is a (signed) measure. The paper uses the product rule for the product of the divergence measure by a function from W 1,{$\infty$}(U) 3 established in {\v S}ilhav{\'y} [{\v S}ilhav{\'y}, M., submitted, 2007] to prove the divergence theorem for the divergence measure vectorfields on bounded open sets U. It is shown that the surface integral of the normal component of the vectorfield occurring in the classical divergence theorem has to be replaced by a continuous linear functional on the space of Lipschitz functions on the boundary1 the volume integral contains the duality pairing occurring in the product rule. The boundary of U is arbitrary, it can be even fractal in the sense that the normal to {$\partial$}U cannot be defined.},
  langid = {english},
  file = {/home/chris/Documents/Maths/LaTeX/Citations/storage/TLYLB9HP/Šilhavý - 2009 - The Divergence Theorem for Divergence Measure Vect.pdf}
}

@article{Smirnov1993,
  title = {Decomposition of Solenoidal Vector Charges into Elementary Solenoids and the Structure of Normal One-Dimensional Currents},
  author = {Smirnov, S. K.},
  year = {1993},
  journal = {St. Petersburg Math. J.},
  volume = {5},
  number = {4},
  pages = {206--238},
  issn = {1061-0022},
  langid = {english},
  keywords = {28A75,49Q15,49Q20,58A25},
  file = {/home/chris/Documents/Maths/LaTeX/Citations/storage/MLI9N7RZ/zbmath.org.html}
}

@book{Weaver2018,
  title = {Lipschitz {{Algebras}}},
  author = {Weaver, Nik},
  year = {2018},
  month = jun,
  edition = {2},
  publisher = {World Scientific},
  doi = {10.1142/9911},
  urldate = {2024-10-09},
  isbn = {978-981-4740-63-0 978-981-4740-64-7},
  langid = {english}
}
\end{document}